\definecolor{vertfonce}{rgb}{0.20, 0.46, 0.25}
\definecolor{rougefonce}{rgb}{0.64, 0.09, 0.20}
\newcommand{\DD}{\mathrm{d}}
\newcommand{\Cinf}{C^\infty}
\newcommand{\ham}[1]{\mathcal{X}_{#1}}
\newcommand{\phy}{\varphi}
\newcommand{\deriv}[2]{\frac{\partial #1}{\partial #2}}
\newcommand{\lie}{\mathcal{L}}
\newcommand{\pscal}[2]{\langle #1,#2\rangle}
\newcommand{\cqfd}{\hfill $\square$\par\vspace{1ex}}
\newcommand{\norm}[1]{\left\|#1\right\|}
\newcommand{\abs}[1]{\left|#1\right|}
\newcommand{\OP}{{\rm Op}_\hbar}
\newcommand{\h}{\hbar}
\newcommand{\C}{\mathcal{C}}
\renewcommand{\O}{\mathcal{O}}
\newcommand{\formel}[1]{[\![#1]\!]}
\newcommand{\RM}{\mathbb{R}}
\newcommand{\ZM}{\mathbb{Z}}
\newcommand{\NM}{\mathbb{N}}
\newcommand{\CM}{\mathbb{C}}
\newcommand{\T}{\mathbb{T}}
\newcommand{\f}{{\vec{f}}}
\newcommand{\cO}{{\mathcal{O}}}
\newcommand{\theor}{Theorem}
\newcommand{\defin}{Definition}
\newcommand{\lemma}{Lemma}
\newcommand{\remar}{Remark}
\newcommand{\exemp}{Example}
\newcommand{\corol}{Corollary}
\newcommand{\propo}{Proposition}
\newcommand{\demon}{Proof}
\newcommand{\probl}{Problem}
\newtheorem{theo}{\theor}[section]
\newtheorem{theo*}{\theor}
\newtheorem{defi}[theo]{\defin}
\newtheorem{defi*}[theo*]{\defin}
\newtheorem{prop}[theo]{\propo}
\newtheorem{lemm}[theo]{\lemma}
\newtheorem{coro}[theo]{\corol}
\newtheorem{conj}[theo]{Conjecture}
\newcommand{\demons}[1][$\!\!$]{\noindent\textbf{\demon\ }\textsl{#1}\textbf{.}~}
\newenvironment{rema}
{\par\vspace{1ex}\refstepcounter{theo}%
\noindent\textbf{\remar~\thetheo.} }
{~\hfill\mbox{$\triangle$}\par\vspace{1ex}}
\newenvironment{demo}[1][$\!\!$]
{\demons[#1]\ }
{\cqfd}
\newcommand{\R}{\mathbb{R}}
\newcommand{\Z}{\mathbb{Z}}
\newcommand{\is}{\left(M,\omega,F\right)}
\theoremstyle{definition}
\newtheorem{exm}[theo]{Example}
\newtheorem{exer}[theo]{Exercise}
\newtheorem*{claim}{Claim}
\title{Integrable systems, symmetries, and quantization}
\author{Daniele \textsc{Sepe}\footnote{Universidade Federal
    Fluminense, Instituto de Matem\'atica, Campus do Valonguinho CEP
    24020-140, Niter\'oi (Brazil), email: \texttt{danielesepe@id.uff.br}} \and \textsc{Vũ
    Ngọc} 
  San\footnote{IRMAR (UMR 6625), Universit{\'e} de Rennes 1, Campus de
    Beaulieu, 35042 Rennes cedex (France), email: \texttt{san.vu-ngoc@univ-rennes1.fr}}}
\begin{document}
\maketitle

\begin{footnotesize}
  \noindent \textbf{Keywords :} classical and quantum integrable
  systems, moment(um) maps, spectral theory, quantization.\\
  \noindent \textbf{MS Classification:} 37J35, 53D05, 70H06, 81Q20.
\end{footnotesize}

\begin{abstract}
  These notes are an expanded version of a mini-course given at the
  Poisson 2016 conference in Geneva. Starting from classical
  integrable systems in the sense of Liouville, we explore the notion
  of non-degenerate singularities and expose recent research in
  connection with semi-toric systems. The quantum and semiclassical
  counterpart are also presented, in the viewpoint of the inverse
  question: from the quantum mechanical spectrum, can one recover the
  classical system?
\end{abstract}

\section{Foreword}

These notes, after a general introduction, are split into four parts:
\begin{itemize}[leftmargin=*]
\item Integrable systems and action-angle coordinates
  (Section~\ref{sec:lecture1}), where the basic notions about
  Liouville integrable systems are recalled.
\item Almost-toric singular fibers
  (Section~\ref{sec:almost-toric-sing}), where emphasis is laid on a
  Morse-like theory of integrable systems.
\item Semi-toric systems (Section~\ref{sec:lecture-3-semi}), where we
  introduce the recent semi-toric systems and their classification.
\item Quantum systems and the inverse problem
  (Section~\ref{sec:quantum}), where the geometric study is applied to
  the world of quantum mechanics.
\end{itemize}

The main objective is to serve as an introduction to recent research
in the field of classical and quantum integrable systems, in
particular in the relatively new and expanding theory of semi-toric
systems, in which the authors have taken an active part in the last
ten years (cf.\cite{san_polygon,pel_san_inv,pel_san_acta,hss} and
references therein).

We are also very glad to propose, for the tireless reader, a link to
some exercises that have been given during this Summer School for this
lecture, by Yohann Le Floch and Joseph Palmer, to whom we want to
express our gratitude for their excellent work
(cf. \cite{lf-p-s-vn_exercises}).

\section{Introduction}

\subsection{Classical mechanics}

One of the main motivations for studying integrable Hamiltonian
systems is classical mechanics. Recall Newton's equation, for the
position $q\in\RM^n$ of a particle of mass $m$ under the action of a
force $\vec F(q)$:
\begin{equation}
  m \ddot q = \vec F(q)
  \label{equ:newton}
\end{equation}
This ordinary differential equation can be easily solved locally,
either theoretically via Cauchy-Lipschitz, or numerically, if the
force $\vec F$ is sufficiently smooth. However, as is well-known, the
problem of understanding the behavior for long times can be quite
tricky, in the sense that the sensitivity to initial conditions can
prohibit both theoretical and numerical approaches to obtaining
relevant qualitative and quantitative description of the
trajectories. A natural way to deal with this issue is to discover (or
impose) \emph{conserved quantities}, as these reduce the
dimension of the space where the trajectories lie.

A particularly useful setting for finding conserved quantities is the
Hamiltonian formulation of classical mechanics.  We shall assume that
forces are conservative and thus derive from a smooth potential
function $V$:
\[
\vec F(q)= - \nabla V(q).
\]
Then Newton's equation becomes equivalent to the following so-called
Hamiltonian system:
\begin{equation}
  \label{equ:hamilton_system}
  \begin{cases}
    \dot q = & \displaystyle \deriv{H}{p} \\[1em]
    \dot p = & \displaystyle -\deriv{H}{q},
  \end{cases}
\end{equation}
where the \emph{Hamiltonian} (or Hamilton function) is
\begin{equation}
  H(q,p) = \frac{\norm{p}^2}{2m} + V(q),
  \label{equ:hamiltonianV}
\end{equation}
and the new variable $p$, called the impulsion or the \emph{momentum},
corresponds to $m\dot q$ in Newton's equation. The function $H$
defined in \eqref{equ:hamiltonianV} can be interpreted physically as
the {\em energy} of the system, where the first summand represents
kinetic energy, while the second is potential energy. The above is a
dynamical system in the space $\RM^{2n}$ of the variables $(q,p)$,
called the \emph{phase space}. The physical dimension $n$ is usually
called the \emph{number of degrees of freedom} of the system. A
fundamental feature of Hamilton's formulation is that it admits a
coordinate-free presentation, where the phase space is not restricted
to $\RM^{2n}$, but can be any symplectic manifold, see
Section~\ref{sec:lecture1}.

A Hamiltonian system, in general, is a dynamical system of the
form~\eqref{equ:hamilton_system} for some function $H$ defined on
phase space.  Hamiltonian systems are ubiquitous in mechanics. The
particular form of Equation~\eqref{equ:hamiltonianV} can serve to
obtain the motion of a massive particle subject to gravity, or a
charged particle subject to an electric field $V$. Electromagnetism
can easily enter the picture: the Lorentz force
$\vec F = e \dot q \wedge \vec B$, where $n=3$ and $\vec B$ is the
magnetic vector field in $\RM^3$, also possesses a Hamiltonian
formulation, as follows. Let $A(q)$ be a magnetic potential,
\emph{i.e.} $\textup{curl} A = \vec B$. Then the motion of a particle
of charge $e$ in the electro-magnetic field $(V,\vec B)$ is obtained
by the Hamiltonian:
\[
H(q,p) = \frac{\norm{p- eA(q)}^2}{2m} + V(q),
\]
Note that the Hamiltonian formulation is not limited to finite
dimensional systems; many evolution PDEs have a `formal' Hamiltonian
structure, which gives important insights via conservation laws
(cf. \cite{palais} and references therein for many beautiful
examples).

In a Hamiltonian system, the energy is conserved along any trajectory;
thus the motion is restricted to a hypersurface of constant energy.
Of course this is very important: a Hamiltonian system never
explores the whole phase space. However, in general one cannot say much
more. The motion can be ergodic (which is sometimes said to
be a form of `chaotic' behaviour) in the sense that the motion
explores a dense subset of the energy hypersurface.  Geodesic flows
on hyperbolic surfaces have this property (cf. \cite{anosov} and
\cite[Appendix]{ballmann}).

Because of energy conservation, Hamiltonian systems of one degree of
freedom ($n=1$) become very special. Indeed, the hypersurface is
(generically) a submanifold of dimension $1$, \emph{i.e.} a curve: it necessarly coincides with the trajectory itself! For such systems,
the geometry of the energy levels is tightly related to the dynamics
of the system. In particular, an immediate corollary of energy
conservation is the following: if the hypersurface is compact, the
motion is periodic. This is very strong indeed! (For many dynamical
systems, the mere question of finding a single periodic trajectory is
open and can lead to formidable mathematical developments.)  For a one
degree of freedom Hamiltonian system, not only are almost all
trajectories periodic, but we have much more: motion is symplectically
conjugate to rotation at a `constant' speed. It is important to remark
that, in general, the `constant' depends on the energy. This is
the content of the action-angle theorem, see Theorem \ref{thm:aa} in
Section \ref{sec:lecture1}.

\subsection{Quantum mechanics}

Another motivation for these lectures is quantum mechanics, as
developed by Heisenberg, Dirac, Schrödinger and others in the first
decades of the twentieth century.  This `old' quantum mechanics can
still produce very intriguing results ({\it e.g.} superposition
principle (Schrödinger's cat), intrication, quantum computing), and is
an active area of research in mathematics (or mathematical physics),
not to mention recent Nobel prizes in physics (in particular
Thouless--Haldane--Kosterlitz in 2016, Haroche--Wineland in
2012). Some of the results that we present here have been used by
quantum chemists when studying the light spectrum of simple molecules
({\it e.g.} water, $\textup{CO}_2$; cf. for
instance~\cite{child,child-tennyson, cushman-prl, joyeux-birkhoff,
  stewart}).

The starting point is Schrödinger's equation, which can be written as
follows:
\begin{equation}
  \label{equ:schrodinger}
  -i\hbar\partial_t \psi = \hat H \psi, \quad 
  \hat H := \frac{-\h^2}{2m}\Delta + V.
\end{equation}
where the unknown is the `wave function' $\psi\in L^2(\RM^n)$. The
solution of this infinite dimensional dynamical system is a trajectory
$t\mapsto \psi_t$ living in this Hilbert space.  This
equation bears strong similarities with the
$\textup{XVII}^{\textup{th}}$ century Newton
equation~\eqref{equ:newton}, and in particular its Hamiltonian
formulation~\eqref{equ:hamilton_system}--\eqref{equ:hamiltonianV}. Given
a normalized initial condition $\psi_0$ at $t=0$, it can be solved
formally by the evolution group $\psi_t = e^{it\hat H/\h}\psi_0$.
However the physical interpretation of $\psi_t$ remains quite
mysterious, even nowadays, for several reasons. The first one is that
$\psi_t$ does not provide the deterministic position of the quantum
particle. It can only give a probabilistic answer~:
$\abs{\psi_t(x)}^2 dx$ is the probability measure to find the particle
at time $t$ at the position $x$. The second oddity follows directly
from the linearity of the equation: if one finds two solutions, their
\emph{sum} is again a solution. This cannot have anything to do with
classical mechanics!

Since the coefficients of the Schrödinger evolution equation do not
depend on time, a first natural step is to perform a partial Fourier
transform with respect to $t$; this amounts to searching for solutions
of the form
\begin{equation}
  \psi_t(x) = e^{i\lambda t/\h} u(x);\label{equ:stationary}
\end{equation}
these are called `stationary solutions', because their modulus (and
hence the associated probability measure) does not change as time
varies. Thus, the new time-independent wave function $u$ satisfies the
stationary Schrödinger equation
\[
\lambda u = \hat H u.
\]
In other words, the initial evolution equation is transformed into a
classic eigenproblem: finding eigenvectors and eigenvalues of the
operator $\hat H$. In order for this problem to be well-posed, one
needs to specify the space where $u$ should live. This, both in terms
of functional analysis and from the physics viewpoint, is out of the
scope of these notes. Localized eigenfunctions correspond to spaces
requiring a decay at infinity, most often the popular $L^2(\RM^n)$
space, equipped with Lebesgue measure. On the other hand, scattering
problems typically involve `generalized' eigenfunctions, which do
not belong to the standard $L^2$ space, but can be interpreted as
elements of another $L^2$ space equipped with a suitable weight
rapidly decreasing at infinity. In both cases, one has to take care of
the fact that the spectrum of the operator need not contain only
eigenvalues; continuous spectrum can show up, and is the signature of
non-localized solutions. In these notes, we only deal with cases where
the quantum particule is `confined', which leads to purely discrete
spectra: isolated eigenvalues with finite multiplicity.

One of the goals of these notes is to emphasize some interplay between
the Hamiltonian dynamics of the classical Hamiltonian $H$, and the
structure of the discrete spectrum of the quantum Hamiltonian
$\hat H$.  In particular we are interested in the following inverse
spectral problem. \emph{Assume that one knows the spectrum of the
  Schrödinger operator }(or, a more general quantum
operator)\emph{. Can one determine the underlying classical
  mechanics?}

In order to have a rigorous link between quantum and classical
mechanics, one needs to introduce the so-called semiclassical limit.
Following a long tradition (cf. the Landau-Lifshitz book
\cite{landau-lifshitz}), we think of the noncommutative algebra of
quantum operators on $L^2(\RM)$ as a deformation (in the algebraic
sense) of the commutative algebra of functions on $\RM^n$. In order to
write down such a deformation, we need a formal parameter, that we
call $\h$ in honor of the Planck constant.

As can be guessed from Equations~\eqref{equ:schrodinger}
and~\eqref{equ:stationary}, the limit $\h\to 0$ is highly singular. It
should not be considered as a perturbation theory. Formally, letting
$\h=0$ in~\eqref{equ:schrodinger} kills the term
$\frac{-\h^2}{2m}\Delta$, which is the quantum kinetic energy, leaving
only the potential $V$; however, in the correct semiclassical limit,
we want to recover the full classical mechanics, \emph{i.e.} both
kinetic and potential energies must survive to leading order.

The way to understand the correct limiting procedure is to introduce
\emph{fast oscillations}, not only in time
(Equation~\eqref{equ:stationary}), but also in space. (And indeed, the
semiclassical theory applies to a wide range of problems involving
high frequencies, not necessarily emanating from quantum mechanical
problems.) If $u$ oscillates at a frequency proportional to $\h^{-1}$,
then each derivative of $u$ gets multiplied by $\h^{-1}$, and
$\frac{-\h^2}{2m}\Delta$ becomes of zero order, as required. We refer
to~\cite[Exercise 5]{lf-p-s-vn_exercises}, where the basic idea of the
oscillating WKB ansatz is worked out.

The inverse spectral problem that we want to solve has to be thought
of in the semiclassical limit as well. This means that we are
interested in recovering the geometry from the asymptotic beheviour of
the spectrum as $\h\to 0$. In the case of non-degenerate (or Morse)
Hamiltonians in one degree of freedom, a positive answer is given
in~\cite{san-inverse}.  Even more recently, the symplectic
classification of the so-called semi-toric systems has paved to way to
the solution of this `spectral conjecture' for this class of
systems. An important goal of these lectures is to present semi-toric
systems (see Section \ref{sec:lecture-3-semi}).

\subsection{Integrability}

Hamiltonian systems of only one degree of freedom are `integrable',
simply because the energy $H$ is conserved along the
trajectories. Thus, by a mere application of the implicit function
theorem (solving $H(x,\xi)=\textup{const}$), one can essentially solve
the dynamical system, up to time reparameterization. Of course, the
situation can be delicate at critical points of $H$, but even then,
the fact that the two-dimensional phase space is foliated by the
possibly singular curves $H(x,\xi)=\textup{const}$ can be seen as an
`integration' of the dynamical system.

What about higher dimensions? What are the situations where one can
`geometrically integrate' the dynamics?

The first, very natural idea, is to consider systems with symmetries.
Indeed, one can hope to \emph{reduce} the symmetry and descend to a
one degree of freedom system, which then is integrable. Such systems,
for which this can be done, are called {\em integrable}. The aim of
Section \ref{sec:lecture1} is to give a precise definition, which is more
general: the strength of the theory lies in the fact that one does not
need a true symmetry: an \emph{infinitesimal} symmetry is enough.

A good example of a two-degree of freedom integral system with a
global symmetry is the spherical pendulum, which dates back to
Huygens~\cite{huygens}, and was revived by Cushman and
Duistermaat~\cite{cushman-duist}; see
also~\cite[Chap. 3]{san-panoramas}.  Symplectic geometers who prefer
compact phase spaces may be more interested in the model introduced in
\cite{sad_zhi}. This describes, amongst others, the so-called
spin-orbit system, whose phase space is $S^2 \times S^2$, and contains
a rich geometry. Both admit a quantum version, see
Section~\ref{sec:quantum}. Properties of the spherical pendulum are
worked out in~\cite[Exercises 4 and 6]{lf-p-s-vn_exercises}.

\section{Integrable systems and action-angle coordinates}
\label{sec:lecture1}

\subsection{Hamiltonian systems on symplectic manifolds}


Throughout this section, fix a $2n$-dimensional symplectic manifold
$(M,\omega)$. Non-degeneracy of $\omega$ implies that, associated to
any $f\in \Cinf(M)$, there is a unique vector field
$\ham{f} \in \mathfrak{X}(M)$ defined by
\[
\omega(\ham{f},\cdot) = -\DD f.
\]
\begin{defi}\label{defn:ham_vf}
  Given any $f\in \Cinf(M)$, the vector field $\ham{f}$ is called the
  {\em Hamiltonian vector field} associated to $f$, while its flow is
  the {\em Hamiltonian flow} of $f$.
\end{defi}
Hamiltonian vector fields and their flows are symmetries of symplectic
manifolds.
\begin{lemm}\label{lemm:ham_symmetries}
  For any $f \in \Cinf(M)$, $\lie_{\ham{f}}\omega = 0$, {\it i.e.}
  $\ham{f}$ is an infinitesimal symmetry of $(M,\omega)$.
\end{lemm}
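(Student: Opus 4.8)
The plan is to invoke Cartan's magic formula, which expresses the Lie derivative of a differential form along a vector field $X$ as $\lie_X = \iota_X \circ \DD + \DD \circ \iota_X$. Applying this to $\omega$ and the Hamiltonian vector field $\ham{f}$ gives
\[
\lie_{\ham{f}}\omega = \iota_{\ham{f}}\,\DD\omega + \DD\bigl(\iota_{\ham{f}}\omega\bigr),
\]
so the statement will follow once both terms on the right are shown to vanish.

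For the first term, I would use that $(M,\omega)$ is symplectic, hence $\omega$ is by definition closed: $\DD\omega = 0$, so $\iota_{\ham{f}}\,\DD\omega = 0$ with no further work. For the second term, I would unwind the defining relation $\omega(\ham{f},\cdot) = -\DD f$, which is precisely the statement that the interior product $\iota_{\ham{f}}\omega$ equals $-\DD f$; therefore $\DD(\iota_{\ham{f}}\omega) = -\DD\DD f = 0$ since $\DD^2 = 0$. Combining the two vanishing contributions yields $\lie_{\ham{f}}\omega = 0$, which is the claim.

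I do not anticipate a genuine obstacle here: the result is a formal consequence of closedness of $\omega$ and $\DD^2 = 0$. The only point deserving a word of care is the sign and normalization convention in the definition of $\ham{f}$ (namely $\omega(\ham{f},\cdot) = -\DD f$, equivalently $\iota_{\ham{f}}\omega = -\DD f$), but since the argument only uses that $\iota_{\ham{f}}\omega$ is some exact one-form, the conclusion is insensitive to the choice of sign. One may optionally remark that this computation also shows the flow of $\ham{f}$ consists of symplectomorphisms, by integrating $\lie_{\ham{f}}\omega = 0$ along the flow, which is the geometric content advertised before the lemma.
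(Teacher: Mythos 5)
Your argument is correct and is exactly the paper's proof: Cartan's formula, the closedness of $\omega$, and $\DD(\iota_{\ham{f}}\omega) = \DD(-\DD f) = 0$. Nothing to add.
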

\begin{proof}
  Using Cartan's formula, obtain that
  \[
  \lie_{\ham{f}}\omega = \iota_{\ham{f}}\DD\omega +
  \DD(\iota_{\ham{f}}\omega) = 0 + \DD (-\DD f) =0.
  \]
\end{proof}

\begin{exm}\label{exm:ham_vf_can_coords}
  Recall that any symplectic manifold admits local Darboux
  coordinates, {\it i.e.} for any $p \in M$, there exists an open
  neighborhood $U \subset M$ of $p$ and a coordinate chart
  $\varphi: \left(V, \omega_{\mathrm{can}}\right) \to (U, \omega)$,
  where $V \subset \R^{2n}$ is an open neighbourhood of the origin and
  $$ \omega_{\mathrm{can}} := \sum\limits_{i=1}^n \DD \xi_i \wedge\DD x_i =: \DD
  \xi\wedge\DD x, $$
  \noindent
  such that $\varphi^*\omega = \omega_{\mathrm{can}}$
  (cf. \cite[Theorem 3.15]{mcduff-salamon}). In these coordinates, it
  is instructive to calculate $\ham{x_j}$ and $\ham{\xi_j}$. For
  instance, if $v \in \R^{2n}$, comparing
  \[
  \omega(\ham{\xi_j},v) = (\DD \xi\wedge\DD x)(\ham{\xi_j},v) =
  \pscal{\DD\xi(\ham{\xi_j})}{\DD x(v)} - \pscal{\DD\xi(v)}{\DD
    x(\ham{\xi_j})}
  \]
  and
  \[
  \omega(\ham{\xi_j},v)=-d\xi_j(v),
  \]
  we see that, for all $i$, $\DD\xi_i(\ham{\xi_j})=0$ and
  $\DD x_i(\ham{\xi_j})=\delta^i_j$. Therefore
  $\ham{\xi_j}=\deriv{}{x_j}$. Similarly,
  $\ham{x_j}=-\deriv{}{\xi_j}$.
\end{exm}

The symplectic form $\omega$ endows $\Cinf(M)$ with the following
algebraic structure, which is, in some sense, compatible with the
underlying smooth
structure. 

\begin{defi}\label{defi:poisson_symp}
  The {\em Poisson bracket} induced by $\omega$ on $\Cinf(M)$ is the
  $\R$-bilinear, skew-symmetric bracket
  $\{\cdot , \cdot \} : \Cinf(M) \times \Cinf(M) \to \Cinf(M)$ defined
  as
  \[
  \{f,g\} = \omega(\ham{f},\ham{g}),
  \]
  \noindent
  for any $f, g \in \Cinf(M)$.
\end{defi}
The following lemma, whose proof is left as an exercise to the reader
(cf. \cite[Section 3]{mcduff-salamon}), illustrates some fundamental
properties of the above Poisson bracket.

\begin{lemm}\label{lemm:properties_Poi}
  \mbox{}
  \begin{itemize}[leftmargin=*]
  \item The Poisson bracket $\{\cdot,\cdot\}$ makes $\Cinf(M)$ into a
    Lie algebra, called the algebra of {\em classical observables} or
    {\em Hamiltonians}.
  \item The Poisson bracket $\{\cdot,\cdot\}$ satisfies {\em the
      Leibniz identity}, {\it i.e.} for all $f,g,h \in \Cinf(M)$,
    \[
    \{f,gh\}=g\{f,h\}+\{f,g\}h.
    \]
  \item For any $f, g \in \Cinf(M)$ and any (possibly locally defined)
    smooth map $\chi: \R \to \R$, the Poisson bracket
    $\{\cdot,\cdot\}$ satisfies
    \[
    \{f,\chi(g)\} = \{f,g\}\chi'(g).
    \]
  \item The map
    \begin{equation*}
      \begin{split}
        \left(\Cinf(M), \{\cdot,\cdot\}\right) &\to
        \left(\mathfrak{X},[\cdot,\cdot]\right) \\
        f &\mapsto \ham{f},
      \end{split}
    \end{equation*}
    \noindent
    where $[\cdot,\cdot]$ denotes the standard Lie bracket on vector
    fields, is a Lie algebra homomorphism.
  \end{itemize}
\end{lemm}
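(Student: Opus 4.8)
The plan is to reduce the whole lemma to two ingredients: the elementary reformulation $\{f,g\}=\ham{f}(g)$, and the bracket--commutator relation $[\ham{f},\ham{g}]=\ham{\{f,g\}}$ (the fourth item), from which the Jacobi identity will drop out essentially for free. First I would record the reformulation. Since $\iota_{\ham{g}}\omega=-\DD g$ by the very definition of the Hamiltonian vector field, evaluating on $\ham{f}$ gives $\omega(\ham{g},\ham{f})=-\DD g(\ham{f})=-\ham{f}(g)$; combined with the skew-symmetry of $\omega$ this yields $\{f,g\}=\omega(\ham{f},\ham{g})=\ham{f}(g)=-\ham{g}(f)$. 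Skew-symmetry of $\{\cdot,\cdot\}$ is then immediate from that of $\omega$, and $\RM$-bilinearity follows because $f\mapsto\ham{f}$ is $\RM$-linear (a consequence of the non-degeneracy of $\omega$ and the linearity of $\DD$) while $\omega$ is bilinear; so the only substantive content of the first item is the Jacobi identity, which I postpone.

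Next I would prove the fourth item, which is the key step. Apply $\lie_{\ham{f}}$ to the identity $\iota_{\ham{g}}\omega=-\DD g$. On the left, the Leibniz rule for the Lie derivative together with Lemma~\ref{lemm:ham_symmetries} gives $\lie_{\ham{f}}(\iota_{\ham{g}}\omega)=\iota_{[\ham{f},\ham{g}]}\omega+\iota_{\ham{g}}(\lie_{\ham{f}}\omega)=\iota_{[\ham{f},\ham{g}]}\omega$. On the right, since the Lie derivative commutes with $\DD$ and $\lie_{\ham{f}}g=\ham{f}(g)=\{f,g\}$ by the preliminary identity, we get $\lie_{\ham{f}}(-\DD g)=-\DD\{f,g\}$. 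Hence $\iota_{[\ham{f},\ham{g}]}\omega=-\DD\{f,g\}=\iota_{\ham{\{f,g\}}}\omega$, and non-degeneracy of $\omega$ forces $[\ham{f},\ham{g}]=\ham{\{f,g\}}$, i.e. $f\mapsto\ham{f}$ is a Lie algebra homomorphism.

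The Leibniz identity and the chain-rule identity are then nothing more than the statement that each $\ham{f}$ is a derivation of $\Cinf(M)$: $\{f,gh\}=\ham{f}(gh)=\ham{f}(g)\,h+g\,\ham{f}(h)=\{f,g\}h+\{f,h\}g$, and likewise $\{f,\chi(g)\}=\ham{f}(\chi(g))=\chi'(g)\,\ham{f}(g)=\{f,g\}\chi'(g)$. Finally, for the Jacobi identity I would use the reformulation to write, using skew-symmetry to put it in ``Leibniz'' form, $\{f,\{g,h\}\}-\{g,\{f,h\}\}=\ham{f}(\ham{g}(h))-\ham{g}(\ham{f}(h))=[\ham{f},\ham{g}](h)=\ham{\{f,g\}}(h)=\{\{f,g\},h\}$, which is exactly the Jacobi identity rearranged. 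The only genuinely non-formal input is the fourth item, and even that is a one-line Cartan-calculus computation; the main thing to watch is the sign bookkeeping in the preliminary identity, since a flipped sign there would propagate into whether one obtains a homomorphism or an anti-homomorphism in the fourth item (and, correspondingly, a sign in Jacobi).
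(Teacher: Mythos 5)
Your proof is correct, and the sign bookkeeping matches the paper's conventions: the paper itself records $\{f,g\}=\DD g(\ham{f})=\ham{f}(g)$ right after the lemma, which is exactly your preliminary identity. The paper gives no proof of this lemma (it is left as an exercise, with a pointer to \cite[Section 3]{mcduff-salamon}), and your argument — proving $[\ham{f},\ham{g}]=\ham{\{f,g\}}$ by Cartan calculus and Lemma~\ref{lemm:ham_symmetries}, then reading off Leibniz, the chain rule, and Jacobi from the derivation property of $\ham{f}$ — is the standard intended one, with no circularity since the commutator identity is established before Jacobi.
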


Poisson brackets arose naturally in the study of Hamiltonian mechanics
(cf. \cite{weinstein_poisson} and references therein); to this end, it
is worthwhile mentioning that, for any $f \in \Cinf(M)$, $\ham{f}$, as
a derivation, is just the Poisson bracket by $f$; in other words the
evolution of a function $g$ under the flow of $\ham{f}$ is given by
the equation
\[
\dot{g}=\{f,g\}.
\]
Indeed, $\{f,g\}=\omega(\ham{f},\ham{g})=\DD g(\ham{f})$.

\begin{exm}\label{exm:Poisson_Darboux}
  In local Darboux coordinates $(x,\xi)$ (cf. Example
  \ref{exm:ham_vf_can_coords}), it can be shown that, for all
  $i,j = 1,\ldots,n$, $\{\xi_i,x_j\}=\delta_{ij}$,
  $\{\xi_i,\xi_j\}=0= \{x_i,x_j\}$. Thus, for any $f,g \in \Cinf(M)$,
  locally the Poisson bracket equals
  \[
  \{f,g\} = \DD g(\ham{f})=
  \sum\limits_{i=1}^n\left(\deriv{f}{\xi_i}\deriv{g}{x_i} -
    \deriv{f}{x_i}\deriv{g}{\xi_i}\right).
  \]
\end{exm}

\begin{rema}\label{rmk:natural}
  Definitions \ref{defn:ham_vf} and \ref{defi:poisson_symp} only
  depend on the symplectic structure and hence must behave naturally
  with respect to symplectomorphisms. For instance
  $\{f,g\}\circ\phy=\{f\circ\phy,g\circ\phy\}$ if
  $\phy : (M,\omega) \to (M,\omega)$ is a symplectomorphism (and this
  can even be taken as a characterisation of
  symplectomorphisms). Therefore, the Hamiltonian flow of
  $\phy^*f=f\circ\phy$ is mapped by $\phy$ to the Hamiltonian flow of
  $f$. In fact the naturality of the definitions yields, for any
  symplectomorphism $\phy$,
  \[
  \ham{\phy^*f}=\phy^*\ham{f}:=d\phy^{-1}\ham{f}\circ\phy.
  \]
\end{rema}

\subsection{Integrals of motion and Liouville integrability}

In the Hamiltonian formulation of classical mechanics, one of the most
general problems is to study the dynamics of the Hamiltonian vector
field $\ham{H}$ defined by a physically relevant smooth function
$H : (M,\omega) \to \R$, {\it e.g.} the sum of kinetic and potential
energy. 
Fix a smooth function $H : (M,\omega) \to \R$; the dynamics of
$\ham{H}$ are confined to level sets of the function $H$, since
$\DD H\left(\ham{H}\right)=\{H,H\} =0$. The simplest possible
(non-trivial) scenario is when it is possible to constrain the
dynamics of $\ham{H}$ `as much as possible' by using symmetries or,
using Noether's theorem, constants of motion.

\begin{defi}\label{defn:integral}
  An \emph{(first) integral} of a Hamiltonian $H\in\Cinf(M)$ is a
  function that is invariant under the flow of $\ham{H}$, {\it i.e.} a
  function $f \in \Cinf(M)$ such that $\{H,f\}=0$.
\end{defi}

Suppose that $H$ admits such a first integral $f_2$ (other that $H$
itself) on a symplectic manifold $M$ of dimension $2n$. Near any point
$m$ where $df_2\neq 0$, one can perform a reduction of the dynamics to
a new Hamiltonian system defined by the restriction of $H$ to the
space $M_{f_2,h}$ of local $f_2$-orbits near $m$ living in the level
set $\{f_2= f_2(m)=:c_2\}$:
\[
M_{f_2,h} := (f_2^{-1}(c_2), m)/\ham{f_2},
\]
where we use the manifold pair (or germ) notation to indicate that we
restrict to a sufficiently small neighborhood of $m$. We leave to the
reader to prove that this (local) $2n-2$ manifold is again symplectic
($\ham{f_2}$ is both tangent and symplectically orthogonal to
$f_2^{-1}(c_2)$). Now, if $f_3$ is a new first integral for $H$, which
is independent of both $H$ and $f_2$, and whose symmetry (in the sense
of Noether) commutes with that of $f_2$, then we can repeat the
process of reduction. After $n$ steps, the Hamiltonian system is
completely reduced on a zero-dimensional manifold and hence becomes
trivial. Unfolding the reduction steps back, we are in principle able
to `completely integrate' the original dynamics, which leads to the
following definition
(cf. Proposition \ref{prop:fibre}).

\begin{defi}\label{defn:ci}
  A Hamiltonian $H \in \Cinf(M)$ is \emph{completely integrable} if
  there exist $n-1$ independent functions $f_2,\dots,f_n$ which are
  integrals of $H$ and moreover \emph{pairwise Poisson commute}, {\it
    i.e.} for all $i, j = 2,\ldots, n$, $\{f_i,f_j\}=0$.
\end{defi}


While from a mechanical perspective, the function $H$ may be
significant, Definition \ref{defn:ci} implies that it does not have
any distinguished mathematical role from the other functions
$f_2,\dots,f_n$.  This is the perspective of these notes, which
motivates the following definition.

\begin{defi} \label{defi:CI} A \emph{completely integrable Hamiltonian
    system} is a triple $\left(M,\omega,F = (f_1,\ldots,f_n)\right)$,
  where $(M,\omega)$ is a $2n$-dimensional symplectic manifold and the
  components of $F : (M,\omega) \to \R^n$ are
  \begin{itemize}[leftmargin=*]
  \item pairwise in involution, {\it i.e.} for all
    $i,j = 1,\ldots, n$, $\{f_i,f_j\}=0$, and
  \item functionally independent almost everywhere, {\it i.e.} for
    almost all $p \in M$, $\DD_p F$ is onto.
  \end{itemize}
  The number $n$ denotes the {\em degrees of freedom} of
  $\left(M,\omega,F = (f_1,\ldots,f_n)\right)$
\end{defi}

Throughout these notes, a triple as in Definition \ref{defi:CI} is
simply referred to as an integrable system. In order to explain why
the dimension of the symplectic manifold in Definition \ref{defi:CI}
is twice the number of functions, it is worthwhile observing that
integrable systems are intimately linked to {\em Lagrangian
  foliations}. To make sense of this object, some more notions are
introduced (cf. \cite[Chapter 2]{mcduff-salamon} for a more thorough
treatment of the objects discussed below).


\begin{defi}\label{defn:symp_orth}
  Given a subspace $W$ of a symplectic vector space $(V,\omega)$, its
  {\em symplectic orthogonal} $W^{\omega}$ is the subspace defined by
  $$ W^{\omega} := \left\{ v \in V \mid \forall\, w \in W \,  \,
    \omega(v,w) = 0 \right\}.$$
\end{defi}

Given a symplectic vector space $\left(V,\omega\right)$ and a subspace
$W$, non-degeneracy of $\omega$ implies that
$ \dim W + \dim W^{\omega} = \dim V$. The following are important
types of subspaces of symplectic vector spaces.


\begin{defi}\label{defi:subspaces}
  A subspace $W$
  of a symplectic vector space $(V,\omega)$ is said to be
  \begin{itemize}[leftmargin=*]
  \item {\em isotropic} if $W \subset W^{\omega}$;
  \item {\em coisotropic} if $W^{\omega} \subset W$;
  \item {\em Lagrangian} if it is both isotropic and coisotropic, {\it
      i.e.} $W = W^{\omega}$.
  \end{itemize}
\end{defi}

The condition of $W$ being isotropic is equivalent to
$\omega|_W \equiv 0$; moreover, if $W$ is isotropic, then
$\dim W \leq \frac{1}{2} \dim V$ with equality if and only if it is
Lagrangian. Thus Lagrangian subspaces are precisely the maximally
isotropic ones.

\begin{defi}\label{defi:symp_subm}
  A submanifold $N$ of a symplectic manifold $(M,\omega)$ is said to
  be {\em isotropic} (respectively {\em coisotropic}, {\em
    Lagrangian}) if, for all $p \in N$, the subspace
  $T_pN \subset (T_p M,\omega_p)$ is isotropic (respectively
  coisotropic, Lagrangian).
\end{defi}

Lagrangian submanifolds are very important in the study of symplectic
topology\footnote{So much so that Weinstein once wrote `Everything is
  a Lagrangian submanifold!' (cf. \cite{weinstein}).}; for the purpose
at hand, `families' of Lagrangian submanifolds play a particularly
important r\^ole as they are given locally by integrable systems. The
following result, stated below without proof (as it is Exercise 9 in
\cite{lf-p-s-vn_exercises}), makes the above precise
(cf. \cite[Proposition 7.3]{weinstein_symp}).

\begin{prop}
  \label{prop:fibre}
  Let $(M,\omega)$ be a $2n$-dimensional symplectic manifold and let
  $F=(f_1,\dots,f_n):U\subset M\to\RM^n$ be a smooth map such that the
  differentials $\DD f_1,\dots, \DD f_n$ are linearly independent at
  each point of $U$. Then the connected components of the level sets
  $F^{-1}(c)$, $c\in\RM^n$ are Lagrangian if and only if, for all
  $i,j = 1, \ldots, n$, $\{f_i,f_j\} =0$. In this case, the
  Hamiltonian vector fields $\ham{f_j}$, $i=1,\dots,n$ span the
  tangent space of the leaves $F^{-1}(c)$.
\end{prop}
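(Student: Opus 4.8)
The plan is to exploit the fundamental relation between the tangent space to a regular level set and the Hamiltonian vector fields of the defining functions. First I would fix a point $p \in U$ and set $c = F(p)$, writing $L := F^{-1}(c)$ for the level set through $p$ (or its connected component). Since $\DD f_1, \dots, \DD f_n$ are linearly independent at $p$, the level set $L$ is a smooth submanifold of dimension $n$ near $p$, and its tangent space is $T_p L = \bigcap_{j=1}^n \ker \DD_p f_j$. The key observation is that, by the defining relation $\omega(\ham{f_j}, \cdot) = -\DD f_j$, a vector $v \in T_p M$ lies in $\ker \DD_p f_j$ if and only if $\omega_p(\ham{f_j}, v) = 0$, {\it i.e.} if and only if $v \in (\ham{f_j})^{\omega}$ (the symplectic orthogonal of the line spanned by $\ham{f_j}$). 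Intersecting over $j$ gives $T_p L = W^{\omega}$, where $W := \mathrm{span}\{\ham{f_1}(p), \dots, \ham{f_n}(p)\}$.

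Next I would note that the $\ham{f_j}(p)$ are themselves linearly independent: since $f \mapsto \ham{f}$ is injective on differentials (non-degeneracy of $\omega$ means $\ham{f_j}(p) = 0$ would force $\DD_p f_j = 0$), and more precisely any linear relation $\sum_j a_j \ham{f_j}(p) = 0$ would give $\sum_j a_j \DD_p f_j = 0$, contradicting independence unless all $a_j = 0$. Hence $\dim W = n$, and by the dimension formula for symplectic orthogonals (stated in the excerpt, $\dim W + \dim W^{\omega} = \dim V$) we get $\dim W^{\omega} = 2n - n = n = \dim W$. Combined with $T_p L = W^{\omega}$ and $\dim T_p L = n$, this already shows that the Hamiltonian vector fields $\ham{f_j}$ span $T_p L$ precisely when $W \subseteq W^{\omega} = T_p L$ — but in general we only know $W^\omega = T_pL$, so I must still locate $W$ relative to $T_pL$.

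Now I would bring in the involutivity hypothesis. The condition $\{f_i, f_j\} = 0$ for all $i,j$ says exactly that $\omega_p(\ham{f_i}(p), \ham{f_j}(p)) = 0$, {\it i.e.} that $W$ is isotropic: $W \subseteq W^{\omega}$. Since $\dim W = n = \dim W^{\omega}$, this forces $W = W^{\omega} = T_p L$. This proves both assertions of the proposition in the ``if'' direction at once: $T_p L$ is Lagrangian (it equals its own symplectic orthogonal), and the $\ham{f_j}(p)$ span $T_p L$. For the converse, suppose the connected components of the level sets are Lagrangian near $p$; then $T_p L = (T_p L)^{\omega}$. Since $\ham{f_i}(p) \in T_p L$ (because $\DD f_j(\ham{f_i}) = \{f_i, f_j\}$... wait — this is what we want to show, so instead argue directly): we have $W^{\omega} = T_p L = (T_p L)^\omega$, and taking symplectic orthogonals of the outer equality (using $(W^\omega)^\omega = W$ in a symplectic vector space) gives $W = T_p L$; in particular each $\ham{f_i}(p) \in T_p L = \bigcap_j \ker \DD_p f_j$, so $\{f_i, f_j\}(p) = \DD_p f_j(\ham{f_i}(p)) = 0$. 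As $p$ was arbitrary in $U$, this holds everywhere.

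I do not anticipate a genuine obstacle here — the argument is essentially bookkeeping with symplectic orthogonals and the dictionary ``$\DD f_j = -\omega(\ham{f_j}, \cdot)$''. The only point requiring mild care is the linear independence of the $\ham{f_j}(p)$, which one should state explicitly since it underpins the dimension count; it follows immediately from the injectivity of $f \mapsto \ham{f}$ on the level of values, itself a restatement of non-degeneracy of $\omega$. A secondary subtlety is making sure the ``if and only if'' is handled symmetrically via the involution $(W^\omega)^\omega = W$ rather than proving the two directions by separate {\it ad hoc} computations.
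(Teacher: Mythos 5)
Your argument is correct, and there is nothing in the paper to compare it against: Proposition~\ref{prop:fibre} is stated there without proof (it is deferred to Exercise~9 of the companion exercise sheet, with a pointer to Weinstein). Your route is the standard one and it is complete: the identity $T_pL=\bigcap_j\ker\DD_pf_j=W^{\omega}$ with $W=\mathrm{span}\{\ham{f_1}(p),\dots,\ham{f_n}(p)\}$, the linear independence of the $\ham{f_j}(p)$ (which you rightly make explicit, since it feeds the dimension count $\dim W^{\omega}=n$), and the dictionary $\{f_i,f_j\}=\omega(\ham{f_i},\ham{f_j})$ reduce both implications to the linear fact that an $n$-dimensional isotropic subspace of a $2n$-dimensional symplectic vector space is Lagrangian, together with the involution $(W^{\omega})^{\omega}=W$ for the converse. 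You also caught and repaired the one circularity that tends to appear in the converse (one may not assume $\ham{f_i}(p)\in T_pL$ before knowing it); your fix via $W=(T_pL)^{\omega}=T_pL$ is exactly right, and the spanning statement in the final sentence of the proposition falls out of the same equality $W=T_pL$ in the involutive case.
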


To conclude this section, we prove that integrable systems induce
infinitesimal Hamiltonian $\R^n$-actions.

\begin{lemm}\label{lemm:cihs_as_inf_act}
  Given an integrable system
  $\left(M,\omega, F = (f_1,\ldots,f_n)\right)$, the map
  \begin{equation}
    \label{eq:6}
    \begin{split}
      \R^n & \to \mathfrak{X}_{\mathrm{Ham}}(M) \\
      (t_1,\ldots,t_n) &\mapsto \sum\limits_{i=1}^n t_i \ham{f_i}
    \end{split}
  \end{equation}
  is a Lie algebra homomorphism, where
  $\mathfrak{X}_{\mathrm{Ham}}(M)$ denotes the subalgebra of
  Hamiltonian vector fields of $\mathfrak{X}(M)$.
\end{lemm}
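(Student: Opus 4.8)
The map in \eqref{eq:6} is manifestly $\R$-linear, so the only thing to check is that it respects Lie brackets. Since the source Lie algebra is the abelian Lie algebra $\R^n$, the bracket of any two elements there is zero; hence what must be shown is that the image is an abelian subalgebra, i.e. $[\ham{f_i},\ham{f_j}] = 0$ for all $i,j = 1,\ldots,n$. (Then $[\sum_i s_i\ham{f_i}, \sum_j t_j \ham{f_j}] = \sum_{i,j} s_i t_j [\ham{f_i},\ham{f_j}] = 0$ by bilinearity of the Lie bracket on $\mathfrak{X}(M)$, matching the image of $[(s),(t)] = 0$.) I would also remark that the image does land in $\mathfrak{X}_{\mathrm{Ham}}(M)$ by definition, and that this subspace is indeed a subalgebra — this follows from the last bullet of Lemma \ref{lemm:properties_Poi}, since $[\ham f, \ham g] = \ham{\{f,g\}}$.

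The key step is precisely this last identity. By the fourth item of Lemma \ref{lemm:properties_Poi}, the assignment $f \mapsto \ham f$ is a Lie algebra homomorphism $(\Cinf(M),\{\cdot,\cdot\}) \to (\mathfrak{X}(M),[\cdot,\cdot])$, which means $[\ham{f},\ham{g}] = \ham{\{f,g\}}$ for all $f,g\in\Cinf(M)$. Applying this to $f = f_i$, $g = f_j$ and using the involutivity hypothesis $\{f_i,f_j\} = 0$ from Definition \ref{defi:CI}, we get $[\ham{f_i},\ham{f_j}] = \ham{\{f_i,f_j\}} = \ham{0} = 0$, since the zero function has zero Hamiltonian vector field (immediate from $\omega(\ham 0,\cdot) = -\DD 0 = 0$ and non-degeneracy of $\omega$). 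This is the whole content of the lemma; there is no real obstacle, as everything has been set up in the preceding lemmas. If one prefers not to invoke the homomorphism property of $f\mapsto \ham f$ as a black box, one can instead argue directly: $\lie_{\ham{f_i}}\ham{f_j} = [\ham{f_i},\ham{f_j}]$ is the Hamiltonian vector field of $\ham{f_i}\cdot f_j = \{f_i,f_j\} = 0$, using that the flow of $\ham{f_i}$ preserves $\omega$ (Lemma \ref{lemm:ham_symmetries}) and hence conjugates $\ham{f_j}$ into $\ham{f_j \circ (\text{flow})}$ (Remark \ref{rmk:natural}), then differentiating in time at $0$.

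The only point deserving a word of care is the logical matching of brackets: one must say explicitly that because $\R^n$ carries the trivial (abelian) Lie algebra structure, the homomorphism condition reduces to the vanishing of all brackets in the image, so that no further naturality in the parameters $t_i$ needs to be checked beyond bilinearity. With that observed, the proof is a two-line consequence of Lemma \ref{lemm:properties_Poi} and Definition \ref{defi:CI}.
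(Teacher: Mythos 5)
Your proposal is correct and follows essentially the same route as the paper: observe linearity, reduce the homomorphism condition (source being abelian) to showing $[\ham{f_i},\ham{f_j}]=0$, and conclude via the identity $[\ham{f_i},\ham{f_j}]=\ham{\{f_i,f_j\}}=\ham{0}=0$ from Lemma~\ref{lemm:properties_Poi} together with the involutivity in Definition~\ref{defi:CI}. The extra remarks you add (why the abelian structure on $\R^n$ suffices, and the alternative flow argument) are fine but not needed beyond the paper's two-line argument.
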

\begin{proof}
  The map of equation \eqref{eq:6} is manifestly linear. Thus, to
  prove the result, it suffices to show that, for all
  $i,j=1,\ldots,n$, $[\ham{f_i},\ham{f_j}] = 0$. However, by Lemma
  \ref{lemm:properties_Poi}, for any $i,j=1,\ldots, n$,
  $[\ham{f_i},\ham{f_j}] = \ham{\{f_i,f_j\}} = \ham{0} = 0$.
\end{proof}

\subsection{Examples of integrable systems}\label{sec:exampl-integr-syst}
Before proceeding to prove further properties of integrable systems
(cf. Section \ref{sec:local-normal-forms}), we introduce a few
examples which are going to be used throughout the notes.

\begin{exm}[The case $n=1$]\label{exm:n=1}
  An integrable system on a 2-dimensional symplectic manifold is a
  smooth function whose differential is non-zero almost everywhere. As
  a family of more concrete examples, consider a closed, orientable
  surface of genus $g \geq 0$ embedded in $\R^3$ endowed with the
  natural area form; the height function defines an integrable system.
\end{exm}

Next, we mention a few examples from Hamiltonian mechanics.

\begin{exm}[The classical spherical pendulum]\label{exm:sp}
  Identify $T\R^3 \cong T^*\R^3$ using the standard Euclidean metric,
  so that $T\R^3$ inherits a symplectic form, which, in standard
  coordinates $(x,y)$, is given by
  $\Omega =\sum\limits_{i=1}^3 \DD y_i \wedge \DD x_i$. Consider
  furthermore the unit sphere $S^2 \hookrightarrow \R^3$; the
  restriction of $\Omega$ to the submanifold
  $TS^2 \hookrightarrow T \R^3$ defines a symplectic form on $TS^2$,
  henceforth denoted by $\omega$. The restrictions of the functions
  $H(x,y) = \frac{1}{2}\lVert y \rVert^2 + x_3$ and
  $J(x,y) = x_1y_2 - x_2 y_1$ to $TS^2$ define an integrable system
  known as the spherical pendulum, which has been extensively studied
  as it is one of the first integrable systems in which {\em
    Hamiltonian monodromy} was observed (cf. \cite{dui} and references
  therein).
\end{exm}
\begin{figure}[h]
  \centering
  \includegraphics[width=0.6\linewidth]{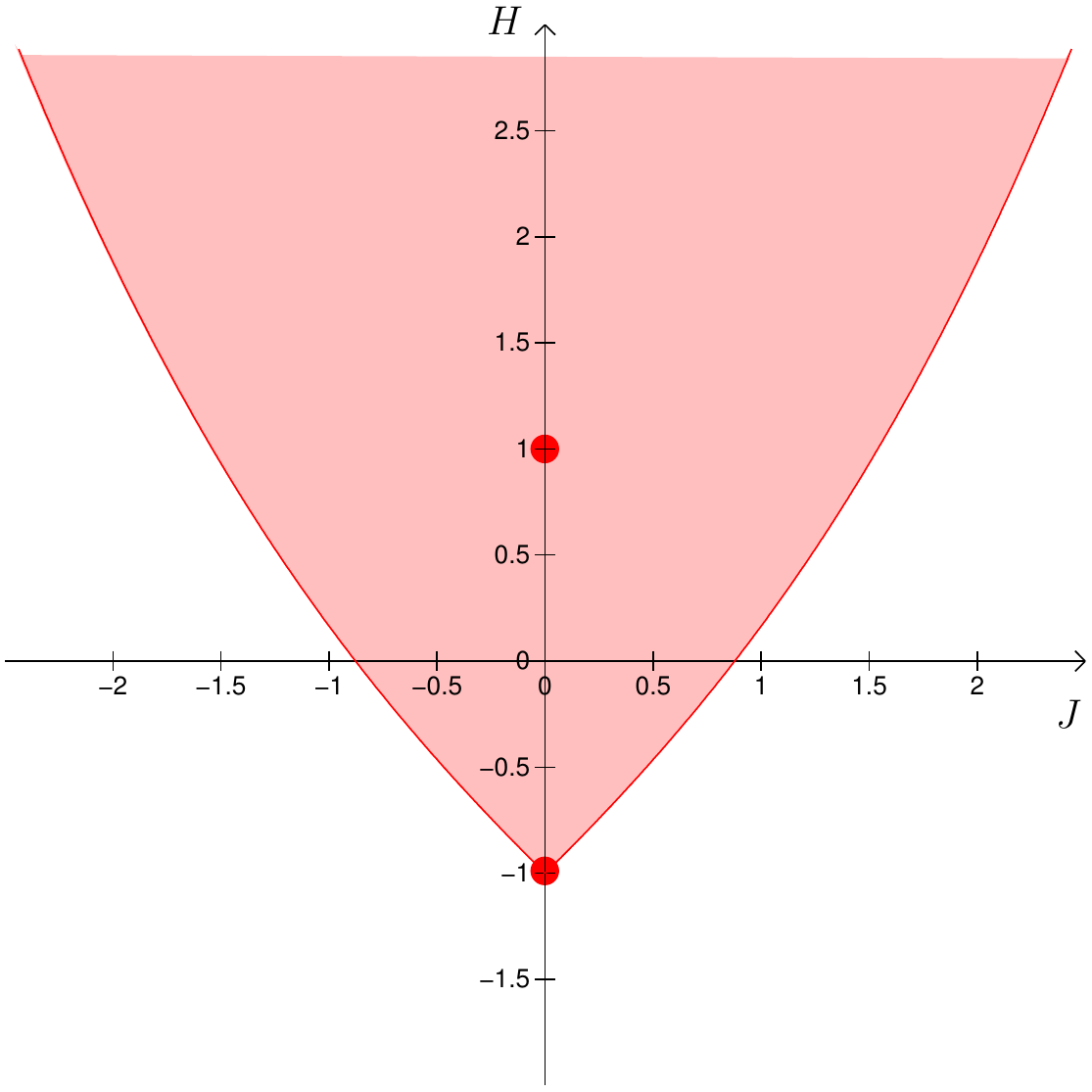}
  \caption{The image of the map $F=(J,H)$ of the spherical pendulum
    (Example~\ref{exm:sp}). The red dots are critical values of rank
    0, and the red curve is the set of critical values of rank 1, see
    Definition~\ref{defn:regular}.}
  \label{fig:classical-pendulum}
\end{figure}

\begin{exm}[Coupled angular momenta on $S^2 \times S^2$,
  cf. \cite{sad_zhi}]
  \label{exm:cou_ang_mom}
  Denote the spheres of radius $a,b > 0$ centered at the origin in
  $\R^3$ by $S^2_a$ and $S^2_b$. Each sphere is endowed with the
  standard area form which gives the corresponding sphere the expected
  area ($4\pi a^2$ and $4\pi b^2$ respectively); these are denoted by
  $\omega_a$ and $ \omega_b$ respectively. Consider the symplectic
  manifold $\left(S^2 \times S^2,\omega_{a,b}\right)$, where
  $\omega_{a,b} = \mathrm{pr}_1^* \omega_a + \mathrm{pr}_2^*
  \omega_b$,
  and, for $i=1,2$, $\mathrm{pr}_i$ denotes projection onto the $i$th
  component. For any $ 0 \leq t \leq 1$, the functions
  \[
  H_t := \frac{1-t}{a} y_3 + \frac{t}{ab} \langle x, y \rangle
  \qquad J:= y_3 + x_3,
  \]
  where $x = (x_1,x_2,x_3)$ and $y = (y_1,y_2,y_3)$ are coordinates on
  the ambient $\R^3$ for the first and second sphere respectively,
  define an integrable system on
  $\left(S^2 \times S^2, \omega_{a,b}\right)$. For all but two values
  of $t$, the corresponding integrable system is an example of a
  semi-toric system (cf. Section \ref{sec:lecture-3-semi}).
\end{exm}

The next family of examples comes from Hamiltonian group actions.

\begin{exm}[Symplectic toric manifolds]\label{exm:symp_toric}
  Suppose that an $n$-dimensional torus $\mathbb{T}^n$ acts on a
  $2n$-dimensional symplectic manifold $(M,\omega)$
  effectively\footnote{The only element that acts as the identity at
    all points of $M$ is the identity of the group.} by
  symplectomorphisms. If $\mathfrak{t}$ denotes the Lie algebra of
  $\mathbb{T}^n$, there is an induced homomorphism of Lie algebras
  $\mathfrak{t} \to \mathfrak{X}(M)$, sending $\eta$ to $X_{\eta}$,
  where, for $p \in M$,
  \[
  X_{\eta}(p):= \frac{\DD}{\DD t}\bigg|_{t=0} \exp(t \eta) \cdot p,
  \]
  where $\exp : \mathfrak{t} \to \mathbb{T}^n$ denotes the exponential
  map and $\cdot$ denotes the torus action. Since $\mathfrak{t}$ is
  abelian, it can be checked that the above map defines an
  infinitesimal Hamiltonian $\mathfrak{t} \cong \R^n$-action
  (cf. Lemma \ref{lemm:cihs_as_inf_act}). The group action is said to
  be {\em Hamiltonian} if there exists a map, called {\em moment map}
  $\mu : M \to \mathfrak{t}^*$, such that, for all
  $\eta \in \mathfrak{t}$,
  \[
  \omega(X_{\eta}, \cdot) = - \DD \langle \mu, \eta \rangle,
  \]
  where $\langle \cdot, \cdot \rangle$ denotes the canonical pairing
  between $\mathfrak{t}$ and $\mathfrak{t}^*$. If the above group
  action is Hamiltonian, the triple $(M,\omega,\mu)$ is known as a
  {\em symplectic toric manifold}. Identifying
  $\mathfrak{t}^* \cong \R^n$, a symplectic toric manifold defines an
  integrable system, which is referred to as being {\em toric}.
\end{exm}

The last two examples provide suitable {\em local normal forms} for
integrable systems, cf. Section \ref{sec:local-normal-forms}.

\begin{exm}[Local normal forms]\label{exm:local_normal_form_regular_point}
  \mbox{}
  \begin{enumerate}[label= \alph*), ref= (\alph*), leftmargin=*]
  \item \label{item:13} Consider the standard symplectic vector space
    $\left(\R^{2n},\omega_{\mathrm{can}}\right)$ with coordinates
    $(x,\xi)$ as in Example \ref{exm:ham_vf_can_coords}. The map
    $\xi:= (\xi_1,\ldots,\xi_n) : \R^{2n} \to \R^n$ defines an
    integrable system.
  \item \label{item:14} Consider the manifold
    $\R^n \times \mathbb{T}^n \cong T^* \mathbb{T}^n$ equipped with
    the canonical symplectic form
    $\omega_0 = \sum\limits_{i=1}^n \DD \xi_i \wedge \DD \theta_i$,
    where $\xi$ and $\theta$ are the standard coordinates on $\R^n$
    and $\mathbb{T}^n = \R^n/\Z^n$ respectively. The map
    $\xi:= (\xi_1,\ldots,\xi_n) : \left(\R^n \times
      \mathbb{T}^n,\omega_0\right) \to \R^n$
    defines an integrable system all of whose fibers are compact.
  \end{enumerate}
\end{exm}

\subsection{The classification problem}\label{sec:class-probl}
To prove the first non-trivial properties of integrable systems
(cf. Section \ref{sec:local-normal-forms}), some notion of {\em
  equivalence} of integrable systems has to be established. There are
several distinct notions of equivalence of integrable systems in the
literature (cf.~\cite[Definition 3.2.6]{san-panoramas}, \cite[Section
1.9]{bolsinov-fomenko-book} and \cite[Section 3.1]{wang} for thorough
reviews). We concentrate on notions, which, loosely speaking, yield
that two integrable systems are isomorphic if and only if they possess
`isomorphic singular Lagrangian foliations'. This is a subtle issue,
due to the presence of flat functions in the $\Cinf$ category. To make
formal sense of the above idea, we use an algebraic approach.

\begin{defi}\label{defn:comm}
  Let $\left(M,\omega, F=(f_1,\ldots,f_n)\right)$ be an integrable
  system and let $U \subset M$ be open. The {\em commutant} of
  $\left(M,\omega, F\right)$ in $U$ is
  $$ \C_F(U) := \left\{ g \in C^{\infty}(U) \mid \forall i=1,\ldots,n
    \, \left\{f_i,g\right\} = 0 \right\}. $$
\end{defi}

The Leibniz and Jacobi identities yield the following result.

\begin{coro}\label{cor:subalg}
  The commutant $\C_F(U)$ of $\left(M,\omega, F\right)$ in $U$ is a
  Poisson subalgebra of
  $\left(C^{\infty}(U),\left\{\cdot,\cdot\right\}|_U\right)$.
\end{coro}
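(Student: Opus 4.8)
The plan is to check directly that $\C_F(U)$ is stable under each of the three operations that make $\bigl(C^\infty(U),\{\cdot,\cdot\}|_U\bigr)$ a Poisson algebra: $\RM$-linear combinations, pointwise products, and the Poisson bracket itself. So fix $g,h\in\C_F(U)$ and $\lambda\in\RM$; by definition $\{f_i,g\}=\{f_i,h\}=0$ for every $i=1,\dots,n$, and one must produce the same vanishing for $\lambda g + h$, for $gh$, and for $\{g,h\}$.

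For linear combinations this is immediate from the $\RM$-bilinearity of the bracket: $\{f_i,\lambda g + h\}=\lambda\{f_i,g\}+\{f_i,h\}=0$. For products it follows from the Leibniz identity of Lemma~\ref{lemm:properties_Poi}: $\{f_i,gh\}=g\{f_i,h\}+\{f_i,g\}h=0$. (Note also that $\C_F(U)$ is nonempty: it contains all locally constant functions, as well as the restrictions $f_1|_U,\dots,f_n|_U$.) Together these already exhibit $\C_F(U)$ as a unital associative subalgebra of $C^\infty(U)$.

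The substantive point — the one that upgrades ``subalgebra'' to ``Poisson subalgebra'' — is closure under the bracket, which is precisely the content of the Jacobi identity. Since $\{\cdot,\cdot\}$ makes $C^\infty(U)$ into a Lie algebra (again Lemma~\ref{lemm:properties_Poi}), we get
\[
\{f_i,\{g,h\}\}=\{\{f_i,g\},h\}+\{g,\{f_i,h\}\}=\{0,h\}+\{g,0\}=0,
\]
so $\{g,h\}\in\C_F(U)$.

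There is no real obstacle here; if one wishes, the whole argument can be packaged by writing $\C_F(U)=\bigcap_{i=1}^n\ker\bigl(\{f_i,\cdot\}\bigr)$ and observing that each operator $\{f_i,\cdot\}$ is simultaneously a derivation of the commutative product (Leibniz) and a derivation of the bracket (Jacobi), so each kernel — and hence their intersection — is a Poisson subalgebra. I would keep the short direct verification above, since every identity it uses was already collected in Lemma~\ref{lemm:properties_Poi}.
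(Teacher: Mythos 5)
Your verification is correct and is exactly the argument the paper intends: the text introduces the corollary with ``The Leibniz and Jacobi identities yield the following result,'' i.e.\ closure under products via Leibniz and closure under the bracket via Jacobi (plus the trivial $\RM$-linearity), which is precisely what you spell out. No gaps; your proof simply makes the paper's one-line justification explicit.
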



Commutants provide the correct algebraic tool to state (local)
equivalence of integrable systems defined on a given symplectic
manifold.

\begin{defi}
  \label{defi:eq-faible}
  Let $\left(M,\omega,F\right)$ and $\left(M,\omega,G\right)$ be
  integrable systems. Say that $F$ and $G$ are {\em equivalent on an
    open subset $U \subset M$} if
  \begin{equation*}
    \C_{F}(U)=\C_{G}(U).
  \end{equation*}
  We denote it as $F\sim_U G$. If, in the above, $U = M$,
  $\left(M,\omega,F\right)$ and $\left(M,\omega,G\right)$ are said to
  be {\em equivalent}; this is denoted by $F \sim G$.
\end{defi}

The subscript $U$ is omitted whenever it is not ambiguous from the
context. It is possible to provide a {\em geometric} interpretation of
Definition \ref{defi:eq-faible}, by introducing the following object.

\begin{defi}\label{defn:leaf_space}
  Given an integrable system $\left(M,\omega,F\right)$,
  \begin{itemize}[leftmargin=*]
  \item a connected component of a fiber of $F$ is called a {\em
      leaf};
  \item its {\em leaf space} $\mathcal{L}$ is the quotient of $M$ by
    the equivalence relation which identifies points on the same leaf.
  \end{itemize}
\end{defi}

\begin{rema}\label{rmk:top_not_smooth}
  The above notion of leaf is {\em topological} in nature, for there
  is no guarantee that a leaf in the above sense be a(n immersed)
  submanifold of the ambient symplectic manifold.
\end{rema}

In general, the leaf space $\mathcal{L}$ of an integrable system
$\left(M,\omega,F\right)$ need not be a `nice' topological
space. However, continuous functions on it can be identified with
continuous functions on $M$ which are constant along the connected
components of the fibers of $F$. On the other hand, the commutant
$\C_F(M)$ is the algebra of {\em smooth} functions that are constant
on the connected components of the fibers of $F$. Thus it is possible
to declare that $\C_F(M)$ to be the space of smooth functions on
$\mathcal{L}$. Having done this, $F\sim G$ establishes a (local) {\em
  smooth} equivalence between the leaf spaces of the integrable
systems $\left(M,\omega,F\right)$ and $\left(M,\omega,G\right)$.

Finally, we can define a notion of symplectic equivalence between
integrable systems.

\begin{defi}\label{defn:equivalence}
  For $i=1,2$, let $\left(M_i,\omega_i, F_i\right)$ be an integrable
  system. Say that $\left(M_1,\omega_1, F_1\right)$ is {\em
    symplectically equivalent} to $\left(M_2,\omega_2, F_2\right)$ if
  there exists a symplectomorphism
  $\phy: \left(M_1,\omega_1\right)\to \left(M_2,\omega_2\right) $ such
  that $F_1 \sim \phy^*F_2:=F_2 \circ \phy$.
\end{defi}

When considering some special families of integrable systems whose
leaf spaces are particularly `nice' (as in the case of the semi-toric
systems considered in Section \ref{sec:lecture-3-semi}), Definition
\ref{defn:equivalence} is equivalent to the following notion of
equivalence of integrable systems, which is geometrically simpler, but
{\it a priori}, stronger.

\begin{defi}\label{defn:strong_equiv}
  Two integrable systems $\left(M_1,\omega_1, F_1\right)$ and
  $\left(M_2,\omega_2, F_2\right)$ are said to be {\em strongly
    symplectically equivalent} if there exists a pair
  $\left(\varphi, g \right)$, where
  $\varphi : (M_1,\omega_1) \to (M_2,\omega_2)$ is a symplectomorphism
  and $g: F_1(M_1) \to F_2(M_2)$ is a diffeomorphism\footnote{{\it A
      priori} there is no guarantee that $F_1(M_1)$ is an open subset
    of $\R^n$. Throughout these notes, if $A \subset \R^n$ is any
    subset, a map $H: A \to \R^m$ is said to be {\em smooth} if for
    every $x \in A$, there exists an open neighbourhood $W$ and a
    smooth map $H_W : W \to \R^m$ extending $H|_{A \cap W}$. A
    diffeomorphism is therefore a smooth map whose inverse is also
    smooth in the above sense.}, making the following diagram commute
  \begin{equation*}
    \xymatrix{(M_1,\omega_1) \ar[r]^-{\varphi} \ar[d]_-{F_1} &
      (M_2,\omega_2) \ar[d]^-{F_2} \\
      F_1(M_1) \ar[r]_-{g} & F_2(M_2).}
  \end{equation*}
\end{defi}

\begin{coro}\label{coro:stronger}
  Two strongly symplectically equivalent integrable systems are
  symplectically equivalent.
\end{coro}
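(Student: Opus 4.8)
The plan is to unwind the definitions and check that a strong symplectic equivalence produces, in particular, an equality of commutants after pulling back by the symplectomorphism. Suppose $\left(M_1,\omega_1,F_1\right)$ and $\left(M_2,\omega_2,F_2\right)$ are strongly symplectically equivalent, witnessed by a pair $(\varphi,g)$ as in Definition~\ref{defn:strong_equiv}, so that $\varphi:(M_1,\omega_1)\to(M_2,\omega_2)$ is a symplectomorphism, $g:F_1(M_1)\to F_2(M_2)$ is a diffeomorphism, and $F_2\circ\varphi = g\circ F_1$. To conclude symplectic equivalence in the sense of Definition~\ref{defn:equivalence}, it suffices to exhibit a symplectomorphism $\psi:(M_1,\omega_1)\to(M_2,\omega_2)$ with $F_1\sim \psi^*F_2$, and the natural candidate is simply $\psi=\varphi$. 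Thus the whole content is to show $F_1 \sim \varphi^*F_2$, i.e.\ $\C_{F_1}(M_1) = \C_{\varphi^*F_2}(M_1)$.

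First I would observe that $\varphi^*F_2 = F_2\circ\varphi = g\circ F_1$, so the components of $\varphi^*F_2$ are $(g_1\circ F_1,\dots,g_n\circ F_1)$ where $g=(g_1,\dots,g_n)$; in other words each component of $\varphi^*F_2$ is a smooth function of the components of $F_1$. Conversely, since $g$ is a diffeomorphism with smooth inverse $g^{-1}$ (smooth in the sense of the footnote to Definition~\ref{defn:strong_equiv}), each component of $F_1$ is likewise a smooth function of the components of $\varphi^*F_2$, namely $F_1 = g^{-1}\circ(\varphi^*F_2)$. Then for $g\in C^\infty(M_1)$ and each $i$, writing $(\varphi^*F_2)_i = g_i\circ F_1$ and using the chain-rule property of the Poisson bracket from Lemma~\ref{lemm:properties_Poi} (the identity $\{f,\chi(g)\}=\{f,g\}\chi'(g)$, applied componentwise, together with bilinearity), one gets that $\{(\varphi^*F_2)_i, h\}$ is an $C^\infty(M_1)$-linear combination of the brackets $\{(f_1)_j,h\}$; hence $\{(f_1)_j,h\}=0$ for all $j$ implies $\{(\varphi^*F_2)_i,h\}=0$ for all $i$, giving $\C_{F_1}(M_1)\subset \C_{\varphi^*F_2}(M_1)$. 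The reverse inclusion is identical using $F_1 = g^{-1}\circ(\varphi^*F_2)$ in place of $\varphi^*F_2 = g\circ F_1$. Therefore $\C_{F_1}(M_1)=\C_{\varphi^*F_2}(M_1)$, i.e.\ $F_1\sim\varphi^*F_2$, which is exactly the condition in Definition~\ref{defn:equivalence} for $\varphi$.

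One small technical point worth being careful about: the chain-rule identity in Lemma~\ref{lemm:properties_Poi} is stated for $\chi:\R\to\R$ applied to a single function, whereas here $g:F_1(M_1)\subset\R^n\to\R^n$ is a map defined on a possibly non-open subset of $\R^n$. To handle this cleanly I would invoke the smoothness convention of the footnote to extend each $g_i$ locally to a genuine smooth function $\tilde g_i$ on an open neighbourhood of $F_1(M_1)$ in $\R^n$, so that $(\varphi^*F_2)_i = \tilde g_i\circ F_1$ on $M_1$; since the Poisson bracket is local (it only involves first derivatives), the multivariate chain rule $\{h,\tilde g_i\circ F_1\} = \sum_j \{h,(f_1)_j\}\,\partial_j\tilde g_i(F_1)$ holds on $M_1$ and is all that is needed. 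This is the only place where anything beyond a formal manipulation occurs, and it is routine, so I do not expect any genuine obstacle; the proof is essentially a definitional unravelling, and the main `work' is just recording that $g$ being a diffeomorphism makes the dependence between $F_1$ and $\varphi^*F_2$ go both ways.
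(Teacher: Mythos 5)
Your argument is correct: since $\varphi^*F_2=g\circ F_1$ with $g$ a diffeomorphism (in the extended smoothness sense of the footnote), the locality of the Poisson bracket and the multivariate chain rule give $\C_{F_1}(M_1)=\C_{\varphi^*F_2}(M_1)$, which is exactly the condition $F_1\sim\varphi^*F_2$ of Definition~\ref{defn:equivalence}. The paper states this corollary without proof, treating it as immediate from the definitions, and your write-up is precisely the intended unravelling, including the only point needing care (extending the components of $g$ locally so the chain rule applies on $F_1(M_1)$, which need not be open).
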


\begin{rema}\label{rk:strictly_stronger}
  There are examples of symplectically equivalent integrable systems
  which fail to be strongly symplectically equivalent
  (cf. \cite[Appendix]{chaperon}).
\end{rema}
\begin{rema}
  In the setting of strong equivalence it is natural to consider a
  slightly stronger notion of \emph{orientation preserving} strong
  equivalence, where we demand that $g$ preserves the orientation of
  the base, which means $\det \DD g >0$. This often leads to
  simpler classification results.
\end{rema}
\subsection{Local normal forms: the case of regular points and
  leaves}\label{sec:local-normal-forms}
The aim of this section is to describe the local structure of an
integrable system near regular points and leaves (cf. Theorems
\ref{thm:dar_car} and \ref{thm:aa}). Intuitively, these results stem
from finding `integrations' of the infinitesimal $\R^n$-action
attached to an integrable system as in Lemma
\ref{lemm:cihs_as_inf_act}.

\begin{defi}\label{defn:regular}
  Let $(M,\omega,F)$ be an integrable system with $n$ degrees of
  freedom.
  \begin{itemize}[leftmargin=*]
  \item A point $m\in M$ is said to be {\em regular} for
    $(M,\omega,F)$ if $dF(m)$ has maximal rank, equal to
    $n$. Otherwise, $m$ is said to be {\em singular}.
  \item A leaf $\Lambda \subset M$ is said to be {\em regular} if all
    of its points are regular. Otherwise, it is said to be {\em
      singular}.
  \end{itemize}
\end{defi}

To simplify the statement of results regarding the local normal form
of integrable systems, it is useful to observe that they behave well
under restrictions.

\begin{defi}\label{defn:subsystem}
  Given an integrable system $\left(M,\omega,F\right)$ and an open
  subset $U \subset M$, its {\em subsystem relative to $U$} is the
  integrable system $\left(U,\omega|_U, F|_U\right)$.
\end{defi}

Fix an integrable system $(M,\omega,F)$ and suppose that $m \in M$ is
a regular point. Darboux's theorem (cf. \cite[Theorem
3.15]{mcduff-salamon}) states that, locally near $m$, the symplectic
form $\omega$ can be put in standard form; on the other hand, since
$F$ is a submersion at $m$, the local normal form for submersions
implies that, near $m$, $F$ is simply given by a projection. Thus a
natural question is to ask whether it is possible to attain the two
above local normal forms {\em at once}. This is the content of the
following well-known result.


\begin{theo}[Darboux-Carathéodory]\label{thm:dar_car}
  Let $\left(M,\omega,F\right)$ be an integrable system with $n$
  degrees of freedom and let $m \in M$ be regular. Then there exist
  open neighborhoods $U \subset M$, $V \subset \R^{2n}$ of $m$ and of
  the origin respectively, such that the subsystem of
  $\left(M,\omega,F\right)$ relative to $U$ is strongly symplectically
  equivalent to the subsystem of
  $\left(\R^{2n},\omega_{\mathrm{can}}, \xi\right)$ (cf. Example
  \ref{exm:local_normal_form_regular_point} \ref{item:13}) relative to
  $V$ via a pair of the form
  $\left(\varphi,\mathrm{id}\right)$. 
\end{theo}
\begin{proof}
  Using Darboux's theorem and the local normal form for submersions,
  it may be assumed, without loss of generality, that
  \begin{itemize}[leftmargin=*]
  \item
    $\left(M,\omega\right)
    =\left(\R^{2n},\omega_{\mathrm{can}}\right)$, $m = 0$, $F(0) = 0$;
  \item $F : \R^{2n} \to \R^n$ is a surjective submersion with
    connected fibers which admits a smooth section
    $\sigma : \R^n \to \R^{2n}$ with $\sigma(0) = 0$.
  \end{itemize}
  The infinitesimal action of Lemma \ref{lemm:cihs_as_inf_act} yields
  an action of the bundle of abelian Lie algebras $T^* \R^n \to \R^n$
  (\textit{i.e.} viewing each cotangent space as an abelian Lie
  algebra) on $F: \R^{2n} \to \R^n$, {\it i.e.} a Lie algebra
  homomorphism
  $\mathfrak{a}: \Gamma\left(T^*\R^n\right) =
  \Omega^1\left(\R^n\right) \to \mathfrak{X}\left(\R^{2n}\right)$
  with the property that, for all
  $\alpha \in \Omega^1\left(\R^n\right)$,
  $\mathfrak{a}\left(\alpha\right) \in \Gamma\left(\ker DF\right)$.
  Explicitly, if $\alpha \in \Omega^1(\R^n)$, then
  $\mathfrak{a}\left(\alpha\right) =
  \omega_{\mathrm{can}}^{-1}\left(F^*\alpha\right)$,
  {\it i.e.} the unique vector field on $\R^{2n}$ which, when
  contracted with $\omega$, equals $F^*\alpha$. To see the connection
  with the action of Lemma \ref{lemm:cihs_as_inf_act}, let
  $a = (a_1,\ldots, a_n)$ be the standard coordinates on $\R^n$ and
  write
  $\alpha = \sum\limits_{i=1}^n \alpha_i \DD a_i \in \Omega^1(\R^n)$.
  Then
  \begin{equation}
    \label{eq:8}
    \mathfrak{a}\left(\alpha\right)=
    \sum\limits_{i=1}^n\mathfrak{a}\left(\alpha_i \DD a_i\right) =
    \sum\limits_{i=1}^n \left(F^* \alpha_i\right)
    \mathfrak{a}\left(\DD a_i\right) = \sum\limits_{i=1}^n \left(F^*\alpha_i\right)
    \ham{f_i}.
  \end{equation}
  For any $\alpha \in \Omega^1(\R^n)$, let $\phi^t_{\alpha}$ denote
  the flow at time $t$ of $\mathfrak{a}\left(\alpha\right)$; observe
  that, whenever it is defined, $F \circ \phi^t_{\alpha} = F$. Just as
  in the case of actions of Lie algebras, such an action can be
  integrated to an action of a bundle of local abelian Lie groups,
  {\it i.e.} there exists an open neighborhood $W \subset T^*\R^n$ of
  the zero section such that the map
  \begin{equation}
    \label{eq:7}
    \begin{split}
      A : W \times_{\R^n} \R^{2n} &\to \R^{2n} \\
      \left(\alpha,p\right) &\mapsto \phi^{2\pi}_{\alpha}(p)
    \end{split}
  \end{equation}
  is an action\footnote{This means that all the standard axioms for
    actions are verified whenever the compositions are possible.}  of
  $W$ on $F: \R^{2n} \to \R^n$, where $\times_{\R^n}$ denotes fibered
  product over $\R^n$. This means that, for all $c \in \R^n$, there
  exists an open neighborhood $W_c \subset T_c^* \R^n$ of the origin
  and an action of the local abelian Lie group $W_c$ on
  $F^{-1}(c)$. Consider the map $\Psi_{\sigma} : W \to \R^{2n}$
  defined by
  $\Psi_{\sigma}(\alpha)
  :=A\left(\alpha,\sigma\left(\mathrm{pr}\left(\alpha\right)\right)\right)$,
  where $\mathrm{pr} : T^* \R^n \to \R^n$ is the natural projection;
  since near the zero section the differential $D \Psi_{\sigma}$ is
  invertible and by local freeness of the action of equation
  \eqref{eq:7}, it follows that $\Psi_{\sigma}$ is a diffeomorphism of
  some open neighborhood of the zero section in $W$ to some open
  neighborhood of $\sigma(\R^n) \subset \R^{2n}$. Moreover,
  $F \circ \Psi_{\sigma} = \mathrm{pr}$ by construction and
  $\Psi_{\sigma}^*\omega_{\mathrm{can}} = \omega_{\mathrm{can}} +
  \mathrm{pr}^* \sigma^* \omega_{\mathrm{can}}$
  (cf. \cite[Theorem 3.1]{daz_delz}). Therefore the integrable system
  $\left(\R^{2n},\omega_{\mathrm{can}},F\right)$ is strongly
  symplectically equivalent (locally near $0$) to
  $\left(T^*\R^{n},\omega_{\mathrm{can}} + \mathrm{pr}^* \beta,
    \mathrm{pr}\right)$,
  where $\beta = \sigma^* \omega_{\mathrm{can}}$, via a pair of the
  form $\left(\varphi,\mathrm{id}\right)$. Since $\R^n$ is
  contractible, $\beta = \DD \gamma$ for some 1-form $\gamma$; the
  section $-\gamma : \R^n \to T^*\R^n$ is Lagrangian for the
  symplectic form $\omega_{\mathrm{can}} + \mathrm{pr}^*\beta$.
  Repeating the above argument, we obtain that
  $\left(T^*\R^{n},\omega_{\mathrm{can}} + \mathrm{pr}^* \beta,
    \mathrm{pr}\right)$
  is strongly symplectically equivalent to
  $\left(T^*\R^{n},\omega_{\mathrm{can}}, \mathrm{pr}\right)$ via a
  pair of the form $\left(\varphi,\mathrm{id}\right)$.  Upon using the
  standard trivialization for $T^*\R^n$,
  $\left(T^*\R^{n},\omega_{\mathrm{can}}, \mathrm{pr}\right)$ can be
  identified with $\left(\R^{2n},\omega_{\mathrm{can}},
    \xi\right)$. This completes the proof.
\end{proof}

\begin{rema}\label{rmk:loc_coord}
  It is a useful exercise to unravel the above proof using local
  coordinates. If $F = \left(f_1,\ldots,f_n\right)$ and $m \in M$ is
  regular, then Theorem \ref{thm:dar_car} simply says that there exist
  smooth functions $\xi_1,\ldots,\xi_n$ defined locally near $m$ such
  that, $f_1-f_1(m),\ldots,f_n -f_n(m),\xi_1,\ldots,\xi_n$ are Darboux
  coordinates near $m$.
\end{rema}

A nice application of Theorem \ref{thm:dar_car} is the following
strengthening of Corollary \ref{cor:subalg}.

\begin{coro}\label{cor:comm_comm}
  Let $\left(M,\omega,F\right)$ be an integrable system and let
  $U \subset M$ be an open subset. The commutant $\C_F(U)$ is an
  abelian Poisson subalgebra of
  $\left(C^{\infty}(U),\left\{\cdot,\cdot\right\}|_U\right)$.
\end{coro}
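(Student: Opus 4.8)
The plan is to reduce the statement to a purely local question and then invoke Theorem \ref{thm:dar_car}. By Corollary \ref{cor:subalg} we already know that $\C_F(U)$ is a Poisson subalgebra, so the only thing left to prove is that it is \emph{abelian}, i.e. that $\{g,h\} = 0$ for all $g,h \in \C_F(U)$. Since a smooth function vanishes on $U$ if and only if it vanishes in a neighbourhood of every point of $U$, it suffices to show that $\{g,h\}$ vanishes near every $m \in U$.

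First I would dispose of the regular points. Fix $m \in U$ with $\DD F(m)$ of maximal rank. By Theorem \ref{thm:dar_car} (and Remark \ref{rmk:loc_coord}), there is an open neighbourhood $U' \subset U$ of $m$ and Darboux coordinates $(y_1,\dots,y_n,\xi_1,\dots,\xi_n)$ on $U'$ in which $f_i - f_i(m) = \xi_i$ for $i = 1,\dots,n$. In these coordinates the condition $\{f_i,g\} = 0$ for all $i$ reads $\{\xi_i, g\} = 0$, i.e. $\deriv{g}{y_i} = 0$ for all $i$ (using the bracket formula of Example \ref{exm:Poisson_Darboux}); hence $g = g(\xi_1,\dots,\xi_n)$ depends only on the $\xi$-variables on $U'$, and likewise $h = h(\xi_1,\dots,\xi_n)$. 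Two functions of $\xi$ alone Poisson commute, since $\{\xi_i,\xi_j\} = 0$ and the bracket is a derivation in each slot (Leibniz identity, Lemma \ref{lemm:properties_Poi}); more precisely $\{g,h\} = \sum_{i,j} \deriv{g}{\xi_i}\deriv{h}{\xi_j}\{\xi_i,\xi_j\} = 0$ on $U'$. Thus $\{g,h\}$ vanishes in a neighbourhood of every regular point of $U$.

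It remains to handle singular points, and this is the step I expect to be the main (mild) obstacle, since no local normal form is available there. The key observation is that the set $U_{\mathrm{reg}}$ of regular points is \emph{dense} in $U$: by the functional independence almost everywhere in Definition \ref{defi:CI}, the singular locus has measure zero, hence empty interior, so $U_{\mathrm{reg}}$ is open and dense in $U$. We have just shown $\{g,h\} = 0$ on $U_{\mathrm{reg}}$; since $\{g,h\} \in C^\infty(U)$ is continuous and vanishes on a dense subset, it vanishes identically on $U$. Therefore $\C_F(U)$ is abelian, and combined with Corollary \ref{cor:subalg} this completes the proof. (One can avoid even invoking density by noting directly that $\{g,h\}$, being continuous, is determined by its values on the dense open set of regular points; the argument is the same.)
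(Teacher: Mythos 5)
Your proof is correct and follows essentially the same route as the paper: reduce to regular points via Theorem \ref{thm:dar_car} (the paper phrases the local step by saying elements of the commutant are $\xi$-basic using connectedness of the fibers, whereas you compute directly in the Darboux coordinates of Remark \ref{rmk:loc_coord}, which amounts to the same thing), and then handle singular points by density of the regular set together with continuity of $\{g,h\}$, exactly as in the paper's final step.
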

\begin{proof}
  First we show that, if $U$ is a connected neighborhood of a regular
  point as in the statement of Theorem \ref{thm:dar_car}, then
  $\C_F(U)$ is abelian. Suppose that $U$ is such a neighborhood; using
  Corollary \ref{coro:stronger} and Theorem \ref{thm:dar_car}, it
  suffices to consider the case in which $U$ is an open neighborhood
  of the origin and
  $\left(M,\omega,F\right) = \left(\R^{2n},\omega_{\mathrm{can}},
    \xi\right)$
  (cf. Example \ref{exm:local_normal_form_regular_point}). The proof
  of Theorem \ref{thm:dar_car} implies that $\xi|_U$ has connected
  fibers. Then any element of $\mathcal{C}_{\xi}(U)$ is $\xi$-basic,
  {\it i.e.} of the form $\xi^*g$ for some smooth function $g$ defined
  on $\xi(U)$. This is because any element of $\mathcal{C}_\xi(U)$ is
  locally constant on the fibers of $\xi$, the fibers of $\xi$ are
  connected and $\xi$ is a submersion onto $\xi(U)$. The fact that the
  components of $\xi$ Poisson commute implies that, for any
  $g, h \in C^{\infty}\left(\xi(U)\right)$,
  $\left\{\xi^*g,\xi^*h\right\} = 0$. Thus, in this case,
  $\mathcal{C}_\xi(U)$ is abelian, as desired.

  In fact, the above argument implies that, if $U$ consists solely of
  regular points, then $\C_F(U)$ is abelian, for it suffices to
  restrict to open subsets as in the statement of Theorem
  \ref{thm:dar_car}. Finally, if $U$ contains singular points, observe
  that, by definition of integrable systems, there exists an open,
  dense subset $U' \subset U$ consisting of regular points. This
  reduces the problem to the previous case and, thus, completes the
  proof.
\end{proof}



Having established Theorem \ref{thm:dar_car}, we turn to the question
of describing the structure of an integrable system in a neighborhood
of a {\em compact} regular leaf. This is the content of the following
result, which is is usually associated with the names of Liouville,
Mineur and Arnol'd.

\begin{theo}[Action-angle variables]\label{thm:aa}
  Let $\left(M,\omega,F\right)$ be an integrable system with $n$
  degrees of freedom and suppose that $\Lambda_c \subset F^{-1}(c)$ is
  a compact regular leaf. Then there exist open neighborhoods
  $U \subset M$,
  $V \subset T^* \mathbb{T}^n \cong \R^n \times \mathbb{T}^n$ of
  $\Lambda_c$ and of the zero section respectively, saturated with
  respect to the maps $F$ and
  $\mathrm{pr} : \R^n \times \mathbb{T}^n \to \R^n$ respectively, such
  that the subsystem of $\left(M,\omega,F\right)$ relative to $U$ is
  strongly symplectically equivalent to the subsystem of
  $\left(\R^n \times \mathbb{T}^n, \omega_0, \xi\right)$ (cf. Example
  \ref{exm:local_normal_form_regular_point}) relative to $V$.
\end{theo}

An immediate consequence of Theorem \ref{thm:aa} is the following
result, which is usually stated as part of the existence of
action-angle variables.

\begin{coro}\label{cor:aa}
  Let $\left(M,\omega,F\right)$ be an integrable system with $n$
  degrees of freedom and suppose that $\Lambda_c \subset F^{-1}(c)$ is
  a compact regular leaf. Then $\Lambda_c \cong \mathbb{T}^n$ and
  there exists an $F$-saturated, open neighborhood $U \subset M$ such
  that $F|_U$ is locally trivial.
\end{coro}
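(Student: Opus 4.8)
The plan is to deduce the corollary directly from Theorem~\ref{thm:aa}, since it merely extracts the topological content of the action-angle normal form. First I would invoke Theorem~\ref{thm:aa} to obtain the $F$-saturated neighborhood $U$ of $\Lambda_c$, the $\mathrm{pr}$-saturated neighborhood $V \subset \R^n \times \T^n$ of the zero section, and the pair $(\varphi,g)$ realizing the strong symplectic equivalence: thus $\varphi : (U,\omega|_U) \to (V,\omega_0|_V)$ is a symplectomorphism, $g : F(U) \to \xi(V)$ is a diffeomorphism, and $\xi\circ\varphi = g\circ F|_U$. Since $V$ is saturated for $\mathrm{pr} : \R^n\times\T^n \to \R^n$ and the fibers of $\mathrm{pr}$ are the tori $\{a\}\times\T^n$, we have $V = W\times\T^n$ with $W := \mathrm{pr}(V) \subset \R^n$ open, so that every fiber of $\xi|_V$ is a full torus, in particular connected.

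Next I would identify $\Lambda_c$. From $\xi\circ\varphi = g\circ F|_U$ together with the bijectivity of $\varphi$ and $g$ one gets $\varphi\bigl(F^{-1}(c)\cap U\bigr) = \xi^{-1}(g(c))\cap V = \{g(c)\}\times\T^n$, which is connected. Since $\Lambda_c$ is, by definition, a connected component of $F^{-1}(c)$ and is contained in $U$, while $F^{-1}(c)\cap U$ is connected and contains $\Lambda_c$, it follows that $F^{-1}(c)\cap U = \Lambda_c$; hence $\varphi$ restricts to a diffeomorphism of $\Lambda_c$ onto $\{g(c)\}\times\T^n \cong \T^n$, which gives the first assertion.

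For the second assertion I would transport the obvious triviality of the model. The projection $\mathrm{pr} : W\times\T^n \to W$ is a (globally, hence locally) trivial fibration with fiber $\T^n$, and the identity $F|_U = g^{-1}\circ \mathrm{pr}\circ\varphi$ exhibits $F|_U$ as $\mathrm{pr}$ composed with the diffeomorphism $\varphi$ on the source and the diffeomorphism $g^{-1}$ on the base. Concretely, pulling back a local trivialization of $\mathrm{pr}$ over an open set $W'\subset W$ through $\varphi$, and relabelling the base via $g^{-1}$, produces a trivialization of $F|_U$ over $g^{-1}(W')$ compatible with the projections; thus $F|_U$ is locally trivial with fiber $\T^n$.

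I do not expect a genuine obstacle here: the substance lies entirely in Theorem~\ref{thm:aa}, and the only points needing a little care are bookkeeping — reconciling the $F$-saturation of $U$ with the $\mathrm{pr}$-saturation of $V$, and verifying that $F^{-1}(c)\cap U$ collapses to the single leaf $\Lambda_c$ rather than to several connected components of the fiber.
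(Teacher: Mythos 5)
Your argument is correct and is exactly the deduction the paper intends: Corollary~\ref{cor:aa} is stated there as an immediate consequence of Theorem~\ref{thm:aa}, with no separate proof beyond reading off the topology of the model $\left(\R^n\times\mathbb{T}^n,\omega_0,\xi\right)$ through the strong symplectic equivalence. Your careful bookkeeping (using $\mathrm{pr}$-saturation of $V$ to write $V=W\times\mathbb{T}^n$, and the connectedness of $\xi^{-1}(g(c))\cap V$ to conclude $F^{-1}(c)\cap U=\Lambda_c$) fills in precisely the steps the paper leaves implicit.
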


After the pioneer work of Mineur~\cite{mineur-action-angle}, one can
find several proofs of slight variations of the statement of Theorem
\ref{thm:aa} in the literature (cf. \cite{dui,bates_snia},
\cite[Appendix A2]{hofer-zehnder}, \cite[Section 44]{gui_ste_book}
amongst others). In the proof presented below we use an argument to
reduce tideas from the problem to the one considered in
\cite[Section 44]{gui_ste_book}.

\begin{proof}
  The first step is to show that, by restricting to a suitable open
  neighborhood of $\Lambda_c$, it may be assumed, without loss of
  generality, that $F$ is a proper submersion with connected
  fibers. This is a consequence of the following differential
  topological fact. 
  
  \begin{claim}
    Let $F :M \to N$ be a smooth map and suppose that
    $\Lambda \subset F^{-1}(q)$ is a compact, connected component
    consisting solely of regular points. Then there exists an
    $F$-saturated open neighborhood $U \subset M$ of $\Lambda$ such
    that $F|_U$ is a proper submersion with connected fibers.
  \end{claim}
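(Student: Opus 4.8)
The plan is to build $U$ in two moves. First I would isolate the distinguished piece $\Lambda$ of the fibre $F^{-1}(q)$ inside a relatively compact open set; then I would shrink the base around $q$ so that only the ``good'' fibres survive, and read off the submersion property, properness, and connectedness of the fibres from Ehresmann's fibration theorem.

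In more detail: let $R\subset M$ be the open set of regular points of $F$, so that $\Lambda\subset R$ and $F|_R$ is a submersion. The first thing I would establish is that $\Lambda$ is clopen in $F^{-1}(q)$: it is closed because it is a connected component, and it is open because, by the local normal form for submersions applied on $R$, each of its points has a connected neighbourhood inside $F^{-1}(q)$, which is therefore contained in $\Lambda$. Consequently $F^{-1}(q)\setminus\Lambda$ is a closed subset of $M$ disjoint from the compact set $\Lambda$, and since $M$ is metrizable I can choose a relatively compact open neighbourhood $U_0$ of $\Lambda$ with $\overline{U_0}\subset R$ and $\overline{U_0}\cap F^{-1}(q)=\Lambda$. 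Setting $\partial U_0:=\overline{U_0}\setminus U_0$, the set $A:=F(\partial U_0)$ is then compact and, by the last condition, does not contain $q$.

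Next I would fix an open connected neighbourhood $V$ of $q$ with $V\cap A=\emptyset$ and put $U:=U_0\cap F^{-1}(V)$, an open neighbourhood of $\Lambda$ contained in $R$; the submersion property of $F|_U$ is then immediate. For properness I would note that for compact $C\subset V$ one has $(F|_U)^{-1}(C)=F^{-1}(C)\cap U_0=F^{-1}(C)\cap\overline{U_0}$, the passage from $U_0$ to $\overline{U_0}$ being legitimate because $F^{-1}(C)$ avoids $\partial U_0$ (as $C\subset V$ and $F(\partial U_0)=A$), so this preimage is a closed subset of the compact set $\overline{U_0}$, hence compact; being moreover an open map onto the connected set $V$, $F|_U$ is also surjective. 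Ehresmann's theorem then makes $F|_U:U\to V$ a locally trivial fibration, so the number of connected components of its fibres is locally constant on $V$; since $(F|_U)^{-1}(q)=F^{-1}(q)\cap\overline{U_0}=\Lambda$ is connected and $V$ is connected, every fibre of $F|_U$ is connected. Finally, to see that $U$ is $F$-saturated I would argue that for $y\in V$ a connected component of $F^{-1}(y)$ meeting $U_0$ cannot meet $M\setminus\overline{U_0}$ without crossing $\partial U_0$, which is excluded because $F(\partial U_0)\cap V=\emptyset$; hence $(F|_U)^{-1}(y)$ is a union of connected components of $F^{-1}(y)$, and, being connected, is exactly one of them, so $U$ is a union of leaves.

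I expect the only genuine subtlety to be the separation in Step~1, namely that $\Lambda$ can be cut off from the rest of $F^{-1}(q)$ by a relatively compact neighbourhood: this is precisely where the hypothesis that $\Lambda$ consists of regular points enters, guaranteeing that $F^{-1}(q)$ is locally Euclidean near $\Lambda$ and hence that $\Lambda$ is open in it. I would also flag a point of interpretation: $U$ cannot be literally a union of fibres of $F$ (that would force $F^{-1}(q)\subset U$ and so a disconnected fibre of $F|_U$), so ``$F$-saturated'' must be read here as ``a union of connected components of fibres'', i.e.\ of leaves in the sense of Definition~\ref{defn:leaf_space} --- which is exactly what the construction delivers and what the rest of the proof of Theorem~\ref{thm:aa} uses.
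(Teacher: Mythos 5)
Your proof is correct, and it actually fills a gap the paper leaves open: the Claim is stated inside the proof of Theorem~\ref{thm:aa} as a ``differential topological fact'' and never proved there, so there is no argument of the authors to compare yours against. Your route is the standard one and all the essential steps are in place: $\Lambda$ is open in $F^{-1}(q)$ by the submersion normal form (this is indeed where regularity of the points of $\Lambda$ is used), so it can be separated from the other components by a relatively compact open set $U_0$ with $\overline{U_0}\subset R$ and $\overline{U_0}\cap F^{-1}(q)=\Lambda$; shrinking the base to a connected $V$ disjoint from $F(\partial U_0)$ then gives properness of $F|_U$ over $V$ by the boundary-avoidance argument, and Ehresmann plus connectedness of $V$ gives connected fibres. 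Two small points. First, ``open map onto the connected set $V$'' does not by itself yield surjectivity: you also need the image to be closed in $V$, which follows because a proper map into the locally compact Hausdorff space $V$ is closed; with the image open, closed, nonempty and $V$ connected, surjectivity follows, and then Ehresmann applies. Second, your closing caveats are correct and worth keeping: what you prove (and what the rest of the proof of Theorem~\ref{thm:aa} uses) is properness of $F|_U$ onto $V=F(U)$, not onto $N$, and ``$F$-saturated'' must be read as ``union of leaves'' in the sense of Definition~\ref{defn:leaf_space}, since a neighborhood that is literally a union of fibres would contain all of $F^{-1}(q)$ and could not have connected fibres when that fibre is disconnected; your component-trapping argument via $\partial U_0$ establishes exactly this leafwise saturation.
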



  Thus suppose that $F: M \to \R^n$ is a proper submersion whose
  fibers are connected; without loss of generality, it may be assumed
  to be onto. Henceforth, the argument is as in \cite[Section
  44]{gui_ste_book} and is outlined below for completeness. As in the
  proof of Theorem \ref{thm:dar_car}, consider the infinitesimal
  action $\mathfrak{a} : \Omega^1(\R^n) \to \mathfrak{X}(M)$ of
  $T^* \R^n \to \R^n$ on $F : M \to \R^n$ given by equation
  \eqref{eq:8}. Compactness of the fibers of $F$ imply that the flow
  $\phi^t_{\alpha}$ of $\mathfrak{a}\left(\alpha\right)$ exists for
  all $t \in \R$, where $\alpha \in \Omega^1(\R^n)$. Therefore, the
  infinitesimal action $\mathfrak{a}$ integrates to an action of the
  bundle of abelian Lie groups $T^* \R^n \to \R^n$ on $F : M \to \R^n$
  which is given by equation \eqref{eq:7}. (The only difference being
  that this integrated action is defined on the whole of $T^*\R^n$!)
  For each $c \in \R^n$, the abelian Lie group $T^*_c \R^n$ acts on
  $F^{-1}(c)$; for $p \in F^{-1}(c)$, let
  $A_p:=A(-,p) : T^*_c \R^n \to F^{-1}(c)$ be the smooth map induced
  by the action $A$. Since, for all $p \in F^{-1}(c)$, $D_0A_p$ is an
  isomorphism, connectedness of $F^{-1}(c)$ implies that the action of
  $T^*_c \R^n$ on $F^{-1}(c)$ is transitive (cf. \cite[Page
  351]{gui_ste_book}). Fix $c \in \R^n$; for $p \in F^{-1}(c)$,
  consider the isotropy at $p$ of the action of $T^*_c \R^n$ on
  $F^{-1}(c)$,
  \[
  \Sigma_p : = \left\{ \alpha \in T^*_c \R^n \mid
    \phi^{2\pi}_{\alpha}(p) = p \right\}.
  \]
  Since $T_c^*\R^n$ is abelian, the isotropy subgroups of any two
  points $p,p' \in F^{-1}(c)$ are {\em canonically} isomorphic; thus
  obtain a well-defined subgroup
  \[
  \Sigma_c := \left\{ \alpha \in T^*_c \R^n \mid \exists \, p \in
    F^{-1}(c) \,, \, \phi^{2\pi}_{\alpha}(p) = p \right\},
  \]
  and $F^{-1}(c) \cong T^*_c\R^n/\Sigma_c$. Since $T^*_c\R^n$ is
  abelian and has the same dimension as the compact submanifold
  $F^{-1}(c)$, it follows that $\Sigma_c \cong \Z^n$ and that
  $F^{-1}(c) \cong \mathbb{T}^n$. Set
  \begin{equation}
    \label{eq:9}
    \Sigma := \bigcup\limits_{c \in \R^n} \Sigma_c \subset T^* \R^n;
  \end{equation}
  it is a {\em smooth Lagrangian} submanifold and the projection
  $\Sigma \to \R^n$ is a {\em $\Z^n$-bundle}, {\it i.e.} it is a fiber
  bundle with fiber isomorphic to $\Z^n$ and whose structure group is
  $\mathrm{GL}(n;\Z)$. (It is important to observe that smoothness of
  $\Sigma$ is not trivial to prove, cf. \cite[Theorem
  44.4]{gui_ste_book} and references therein.) Since $\R^n$ is
  contractible, the map $F: M \to \R^n$ admits a globally defined
  smooth section $\sigma : \R^n \to M$. As in the proof of Theorem
  \ref{thm:dar_car}, consider the smooth map
  $\Psi_{\sigma} : T^*\R^n \to M$ induced by the above action $A$,
  {\it i.e.}
  $\Psi_{\sigma}(\alpha) = A\left(\alpha,
    \sigma\left(\mathrm{pr}(\alpha)\right)\right)$,
  where $\mathrm{pr} : T^*\R^n \to \R^n$ denotes the standard
  projection. This map descends to a diffeomorphism
  $\hat{\Psi}_{\sigma} : T^*\R^n/\Sigma \to M$ which sends the zero
  section to $\sigma$, since $\Sigma$ is precisely the isotropy of the
  action $A$. Since $\Sigma$ is Lagrangian, $T^*\R^n/\Sigma$ inherits
  a symplectic form $\omega_0$ and
  $\hat{\Psi}_{\sigma}^*\omega = \omega_0 +
  \mathrm{pr}^*\sigma^*\omega$
  (cf. \cite[Theorem 3.1]{daz_delz}). Thus the integrable system
  $(M,\omega,F)$ is strongly symplectically equivalent to
  $\left(T^*\R^n/\Sigma,\omega_0 + \mathrm{pr}^*\beta,
    \mathrm{pr}\right)$,
  where $\beta$ is a closed 2-form and, by abuse of notation,
  $\mathrm{pr} : T^*\R^n/\Sigma \to \R^n$ denotes the induced
  projection. In fact, arguing as in the proof of Theorem
  \ref{thm:dar_car}, we can show that
  $\left(T^*\R^n/\Sigma,\omega_0 + \mathrm{pr}^*\beta,
    \mathrm{pr}\right)$
  is strongly symplectically equivalent to
  $\left(T^*\R^n/\Sigma,\omega_0, \mathrm{pr}\right)$: the form
  $\beta$ is exact and any of its antiderivatives induces a Lagrangian
  section. 
  Since $\R^n$ is contractible, the $\Z^n$-bundle $\Sigma \to \R^n$ is
  trivializable. Fixing a trivialization allows to identify
  $\left(T^*\R^n/\Sigma,\omega_0, \mathrm{pr}\right)$ with the
  required integrable system
  $\left(\R^n \times \mathbb{T}^n, \omega_0, \xi\right)$, thus
  completing the proof.
\end{proof}

\begin{rema}\label{rmk:aa}
  Theorem \ref{thm:aa} can be interpreted as saying that, as a
  Lagrangian foliation, the only invariant of an integrable system
  $(M,\omega,F)$ in a neighborhood of a regular, compact connected
  component of a fiber of $F$ is the number of degrees of freedom.
  However, a closer look at the proof of Theorem \ref{thm:aa} yields
  {\em symplectic} invariants of the map $F$ as follows.  Observe that
  the {\em bundle of periods} $\Sigma \subset T^*\R^n$ being
  Lagrangian follows from the fact that any (local) section of
  $\Sigma \to \R^n$ is a {\em closed} form (cf. \cite[Section
  44]{gui_ste_book} and references therein). Let
  $\alpha_1,\ldots,\alpha_n$ denote a frame for $\Sigma \to \R^n$;
  $\alpha_1,\ldots,\alpha_n$ are symplectic invariants of $F$ itself
  since they are determined by periods of periodic trajectories of the
  initial system. Moreover, (locally) these are exact forms, so there
  exist functions $g_1,\ldots,g_n$ such that, for all $i=1,\ldots,n$,
  $\alpha_i = \DD g_i$. Observe that the composition
  $g \circ F : M \to \R^n$, where $g:=\left(g_1,\ldots,g_n\right)$ can
  be viewed as the moment map of an effective Hamiltonian
  $\mathbb{T}^n$-action, {\it i.e.} for each $i$,
  $\ham{\left(g \circ F\right)_i}$ has flow which is periodic with
  period 1. Finally, it is worth mentioning that there is an explicit
  formula for the functions $g_1,\ldots,g_n$. Let
  $\gamma_1,\ldots,\gamma_n: \R^n \to \mathrm{H}_1(\mathbb{T}^n;\Z)$
  be a smooth map associating to each $c \in \R^n$ a base of
  $\mathrm{H}_1\left(F^{-1}(c);\Z\right) \cong
  \mathrm{H}_1(\mathbb{T}^n;\Z)$
  (locally such a map always exists). Fix $c_0 \in \R^n$; then, in an
  $F$-saturated neighborhood $W \subset M$ of $F^{-1}(c_0)$, the
  symplectic form $\omega$ is exact, say equal to $\DD \sigma$ . Then,
  for $i=1,\ldots, n$,
  \begin{equation}
    \label{eq:12}
    g_i(c):= \frac{1}{2\pi} \int\limits_{\gamma_i(c)} \sigma,
  \end{equation}
  (cf. \cite{dui}).
\end{rema}

\begin{rema}\label{rmk:generalize}
  The advantage of the proofs of Theorems \ref{thm:dar_car} and
  \ref{thm:aa} as presented above is that they easily generalize to
  more general settings of Hamiltonian integrability (the so-called
  {\em non-commutative} case, cf. \cite{daz_delz}), and to more
  general geometric structures than symplectic forms ({\it e.g.}
  almost-symplectic, contact, Poisson structures,
  cf. \cite{jovanovic,sans_sepe,sal_sepe,flgv}).
\end{rema}

\subsection{The global counterpart of action-angle coordinates:
  integral affine structures}\label{sec:glob-count-acti}

While Theorem \ref{thm:aa} establishes the existence of {\em local}
action-angle variables, there are topological obstructions to the
existence of {\em global} ones, as first observed in
\cite{dui}. However, even if an integrable system does not admit
global action-angle variables, there is a globally defined geometric
structure, invariant under strong symplectic equivalence
(cf. Definition \ref{defn:strong_equiv}), which encodes all possible
local action variables, an observation which is also due to
Duistermaat in \cite{dui}. Throughout this section, fix an integrable
system $(M,\omega,F)$ all of whose fibers are compact and let
$\mathcal{L}$ denote its leaf space (cf. Definition
\ref{defn:leaf_space}).

\begin{defi}\label{defn:regular_leaf}
  The subset $\mathcal{L}_{\mathrm{reg}} \subset \mathcal{L}$
  corresponding to regular leaves is called the {\em regular leaf
    space} of $(M,\omega,F)$.
\end{defi}

The pair $\left(\mathcal{L},\mathcal{L}_{\mathrm{reg}}\right)$ is an
invariant of a $(M,\omega,F)$ in the following sense.

\begin{coro}\label{cor:invariant}
  If two integrable systems $(M_1,\omega_1,F_1)$, $(M_2,\omega_2,F_2)$
  are strongly symplectically equivalent, then their pairs of leaf and
  regular leaf spaces are homeomorphic as pairs.
\end{coro}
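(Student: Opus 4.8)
The plan is to extract the homeomorphism of pairs $\left(\mathcal{L}_1,(\mathcal{L}_1)_{\mathrm{reg}}\right) \cong \left(\mathcal{L}_2,(\mathcal{L}_2)_{\mathrm{reg}}\right)$ directly from the data of a strong symplectic equivalence. Recall (Definition~\ref{defn:strong_equiv}) that such an equivalence consists of a symplectomorphism $\varphi : (M_1,\omega_1) \to (M_2,\omega_2)$ and a diffeomorphism $g : F_1(M_1) \to F_2(M_2)$ with $F_2 \circ \varphi = g \circ F_1$. First I would observe that $\varphi$, being a homeomorphism of $M_1$ onto $M_2$ intertwining $F_1$ and $F_2$ (up to the homeomorphism $g$ of the bases), carries fibers of $F_1$ onto fibers of $F_2$: for $c \in F_1(M_1)$, $\varphi\left(F_1^{-1}(c)\right) = F_2^{-1}\left(g(c)\right)$. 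Since $\varphi$ is a homeomorphism, it sends connected components of $F_1^{-1}(c)$ bijectively onto connected components of $F_2^{-1}(g(c))$; that is, $\varphi$ maps leaves of $(M_1,\omega_1,F_1)$ onto leaves of $(M_2,\omega_2,F_2)$.

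Next I would use this to descend $\varphi$ to the leaf spaces. Since $\varphi$ respects the leaf-equivalence relation (two points of $M_1$ lie on the same leaf if and only if their images lie on the same leaf of $F_2$, by the previous paragraph applied to $\varphi$ and to $\varphi^{-1}$), there is an induced bijection $\bar\varphi : \mathcal{L}_1 \to \mathcal{L}_2$. By definition of the quotient topology on the leaf spaces, $\bar\varphi$ is continuous (its composition with the projection $M_1 \to \mathcal{L}_1$ equals the projection $M_2 \to \mathcal{L}_2$ postcomposed with a continuous map, namely the projection of $\varphi$), and the same argument applied to $\varphi^{-1}$ shows $\bar\varphi^{-1}$ is continuous; hence $\bar\varphi$ is a homeomorphism.

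It remains to check that $\bar\varphi$ restricts to a bijection between the regular leaf spaces. Here I would invoke the definition of regular leaf (Definition~\ref{defn:regular}): a leaf $\Lambda$ is regular precisely when $\DD F$ has rank $n$ at every point of $\Lambda$. Since $\varphi$ is a diffeomorphism and $F_2 \circ \varphi = g \circ F_1$ with $g$ a diffeomorphism, for $p \in M_1$ we have $\DD_{\varphi(p)} F_2 \circ \DD_p \varphi = \DD_{F_1(p)} g \circ \DD_p F_1$, and both $\DD_p \varphi$ and $\DD_{F_1(p)} g$ are linear isomorphisms; therefore $\DD_{\varphi(p)} F_2$ has rank $n$ if and only if $\DD_p F_1$ does. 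Thus $\varphi$ maps regular points to regular points and back, hence regular leaves to regular leaves, and $\bar\varphi\left((\mathcal{L}_1)_{\mathrm{reg}}\right) = (\mathcal{L}_2)_{\mathrm{reg}}$, giving the homeomorphism of pairs.

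The proof is essentially bookkeeping, so there is no real obstacle; the one point requiring a little care is the continuity of $\bar\varphi$ and $\bar\varphi^{-1}$ with respect to the quotient topologies, which I would handle via the universal property of the quotient as indicated above. (Note that we never need $g$ to be defined on an open subset of $\R^n$ — only that it is a homeomorphism of the bases — so the subtlety flagged in the footnote to Definition~\ref{defn:strong_equiv} plays no role here.)
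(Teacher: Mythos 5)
Your argument is correct, and since the paper states Corollary~\ref{cor:invariant} without proof, what you wrote is precisely the bookkeeping the authors leave implicit: $\varphi$ carries fibers onto fibers because $g$ is a bijection intertwining $F_1$ and $F_2$, the induced map $\bar\varphi$ of leaf spaces is a homeomorphism by the universal property of the quotient topology (applied to $q_2\circ\varphi$ and $q_1\circ\varphi^{-1}$), and regularity is preserved pointwise, hence leafwise.

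One caveat about your closing parenthetical: the footnote to Definition~\ref{defn:strong_equiv} is not quite irrelevant, because your rank computation invokes $\DD_{F_1(p)}g$, and $F_1(p)$ need not be an interior point of $F_1(M_1)$ (think of an elliptic--elliptic value on the boundary of a toric moment image); at such a point $g$ is only smooth in the extended sense of that footnote, so "$\DD_{F_1(p)}g$ is a linear isomorphism" is not immediate. The clean fix is to use the footnote: choose a local smooth extension $g_W$ of $g$ near $F_1(p)$ and $h_{W'}$ of $g^{-1}$ near $F_2(\varphi(p))$; then $g_W\circ F_1=F_2\circ\varphi$ on $F_1^{-1}(W)$ and $h_{W'}\circ F_2=F_1\circ\varphi^{-1}$ on $F_2^{-1}(W')$, and the chain rule (with $\DD\varphi$, $\DD\varphi^{-1}$ isomorphisms) gives $\operatorname{rank}\DD_{\varphi(p)}F_2\leq\operatorname{rank}\DD_pF_1$ and $\operatorname{rank}\DD_pF_1\leq\operatorname{rank}\DD_{\varphi(p)}F_2$, so regular points correspond to regular points without ever needing $\DD g$ to be intrinsically defined or invertible there. (Alternatively one can argue that $\DD g_W(F_1(p))$ is independent of the extension and invertible, using that every point of $F_1(M_1)$ lies in the closure of the open set $F_1(M_1^{\mathrm{reg}})$, but the two-sided rank inequality is shorter.) With that adjustment your proof is complete.
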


Under the assumption that all fibers are compact, regular leaf spaces
are well-behaved topologically (cf. \cite[Section 2.4]{moerd_mrcun}).

\begin{lemm}\label{lemma:reg_ls}
  If all fibers are compact, the regular leaf space
  $\mathcal{L}_{\mathrm{reg}}$ of $(M,\omega,F)$ is open in
  $\mathcal{L}$, is Hausdorff, locally compact and second countable.
\end{lemm}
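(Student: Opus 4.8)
The plan is to bootstrap everything from the action-angle theorem (Theorem~\ref{thm:aa}) together with Corollary~\ref{cor:aa}. Since every fibre of $F$ is compact, every leaf (a connected component of a fibre, hence a closed subset of that compact fibre) is compact; in particular every regular leaf is a \emph{compact} regular leaf, so Theorem~\ref{thm:aa} applies to it. Thus, for each regular leaf $\Lambda$, we obtain an $F$-saturated open neighbourhood $U_\Lambda\subset M$ together with a strong symplectic equivalence onto an open neighbourhood $V_\Lambda=B_\Lambda\times\mathbb{T}^n$ of the zero section in $(\RM^n\times\mathbb{T}^n,\omega_0,\xi)$, with $B_\Lambda\subset\RM^n$ open. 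On $U_\Lambda$ the equivalence conjugates $F$ to the submersion $\xi$ (up to the diffeomorphism $g$ of the base and the symplectomorphism $\varphi$), so $dF$ has rank $n$ everywhere on $U_\Lambda$ and $U_\Lambda$ consists only of regular points; moreover the fibres of $F$ meeting $U_\Lambda$ lie inside it and correspond to the connected tori $\{b\}\times\mathbb{T}^n$, so inside $U_\Lambda$ leaves coincide with fibres of $F$. Writing $\pi\colon M\to\mathcal{L}$ for the quotient map, $U_\Lambda$ is therefore $\pi$-saturated, and $\pi$ identifies $\pi(U_\Lambda)$ homeomorphically with the open set $F(U_\Lambda)\subset\RM^n$; in particular $\pi(U_\Lambda)$ is open in $\mathcal{L}$.

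Openness of $\mathcal{L}_{\mathrm{reg}}$ is then immediate: $\pi^{-1}(\mathcal{L}_{\mathrm{reg}})$ is the union of all regular leaves, and since each such leaf is contained in one of the open sets $U_\Lambda$ (all of whose leaves are regular), this preimage equals $\bigcup_\Lambda U_\Lambda$, which is open in $M$; hence $\mathcal{L}_{\mathrm{reg}}$ is open in $\mathcal{L}$. Local compactness and second countability also follow from the charts $\pi(U_\Lambda)$: each point of $\mathcal{L}_{\mathrm{reg}}$ has a neighbourhood homeomorphic to an open subset of $\RM^n$, which is locally compact; and $\pi^{-1}(\mathcal{L}_{\mathrm{reg}})$, being open in the second-countable manifold $M$, is Lindel\"of, so the cover $\{U_\Lambda\}$ has a countable subcover, whence the corresponding countably many open sets $\pi(U_\Lambda)$ cover $\mathcal{L}_{\mathrm{reg}}$, and a space which is a countable union of open second-countable subspaces is second countable.

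The substantive point is the Hausdorff property, and this is exactly where compactness of leaves enters essentially. Given distinct regular leaves $\Lambda_1\neq\Lambda_2$, first use that $M$ is Hausdorff and the $\Lambda_i$ are compact and disjoint to separate them by disjoint open sets $V_1,V_2\subset M$. Now pass to the action-angle chart around $\Lambda_i$, in which $\Lambda_i$ becomes a compact torus $\{c_i\}\times\mathbb{T}^n$ sitting inside the open set $V_i\cap U_i$; the tube lemma produces an open neighbourhood $B_i'\ni c_i$ with $B_i'\times\mathbb{T}^n\subset V_i\cap U_i$, and let $W_i\subset M$ denote the corresponding $F$-saturated open subset. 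Each $W_i$ is a $\pi$-saturated open neighbourhood of $\Lambda_i$ (it is a union of leaves of $F$ by the first paragraph), and $W_1\cap W_2\subset V_1\cap V_2=\emptyset$; since the $W_i$ are saturated, $\pi(W_1)$ and $\pi(W_2)$ are disjoint open neighbourhoods in $\mathcal{L}$ of the classes of $\Lambda_1$ and $\Lambda_2$, and intersecting with the open subset $\mathcal{L}_{\mathrm{reg}}$ gives Hausdorffness.

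I expect the Hausdorff step to be the only genuine obstacle; the rest is routine manipulation of the quotient topology once Theorem~\ref{thm:aa} supplies the Euclidean charts. The one point requiring care throughout is checking that the neighbourhoods produced ($U_\Lambda$, $W_i$) are genuinely saturated for the leaf equivalence, which reduces to the observation that inside the action-angle model a fibre of $\xi$ is a single connected torus, so fibre and leaf coincide there and $F$-saturation implies leaf-saturation.
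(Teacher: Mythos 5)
Your proof is correct, and it is worth noting that the paper itself offers no argument for this lemma: it is stated without proof, with a pointer to the general theory of foliations with compact leaves (Moerdijk--Mr\v{c}un, Section 2.4), where the relevant input is local Reeb stability. Your route replaces that external machinery by Theorem~\ref{thm:aa}, which in the integrable-system setting plays exactly the role of local stability with trivial holonomy: the action-angle chart gives each compact regular leaf a leaf-saturated product neighbourhood $B_\Lambda\times\mathbb{T}^n$, from which openness of $\mathcal{L}_{\mathrm{reg}}$, local Euclidean charts for the quotient (hence local compactness and, via Lindel\"ofness of an open subset of $M$, second countability), and the tube-lemma separation argument for Hausdorffness all follow as you describe. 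This makes the lemma self-contained within the paper, at the cost of invoking the (much deeper) action-angle theorem where the foliation-theoretic argument would only need elementary stability theory. One small caution: when you say that ``the fibres of $F$ meeting $U_\Lambda$ lie inside it'', you are taking the phrase ``saturated with respect to $F$'' in Theorem~\ref{thm:aa} literally; read that way it would force global connectedness of the fibres over $F(U_\Lambda)$, which need not hold. What the theorem actually provides, and all your argument uses (as you yourself observe at the end), is saturation with respect to the foliation by leaves, i.e.\ $U_\Lambda$ is a union of connected components of fibres, each corresponding to a torus $\{b\}\times\mathbb{T}^n$; with that reading the saturation claims for $U_\Lambda$ and $W_i$, and hence the openness and disjointness of their images in $\mathcal{L}$, are exactly right.
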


In fact, Theorem \ref{thm:aa} can be used to endow
$\mathcal{L}_{\mathrm{reg}}$ with the structure of a smooth manifold
whose changes of coordinates are `rigid'.

\begin{defi}\label{defn:int_aff}
  An {\em integral affine structure} on a Hausdorff, second countable,
  locally compact topological space $N$ is a smooth atlas
  $\mathcal{A}=\left\{\left(W_i,\chi_i\right)\right\}$ such that, for
  all $i,j$ with $W_i \cap W_j \neq \emptyset$, the restriction of
  $\chi_j \circ \chi_i^{-1}:\chi_i\left(W_i \cap W_j\right) \subset
  \R^n \to \chi_j\left(W_i \cap W_j\right) \subset \R^n$
  to each connected component of $\chi_i\left(W_i \cap W_j\right)$ is
  (the restriction of) an element of the group
  $$ \mathrm{AGL}(n;\Z) := \mathrm{GL}(n;\Z) \ltimes \R^n. $$
  A pair $\left(N,\mathcal{A}\right)$ where $N$ and $\mathcal{A}$ are
  as above is said to be an {\em integral affine manifold}.
\end{defi}

\begin{coro}\label{cor:ia_reg_ls}
  Let $\left(M,\omega,F\right)$ be an integrable system all of whose
  fibers are compact. Then its regular leaf space
  $\mathcal{L}_{\mathrm{reg}}$ inherits an integral affine structure
  $\mathcal{A}$ uniquely defined by the property that
  $\chi : W \subset \mathcal{L}_{\mathrm{reg}} \to \R^n$ is an
  integral affine coordinate chart if and only if
  $q \circ \chi : \left(q^{-1}(W),\omega|_{q^{-1}(W)}\right) \to \R^n$
  is the moment map of an effective Hamiltonian $\mathbb{T}^n$-action,
  where $q : M \to \mathcal{L}$ is the quotient map.
\end{coro}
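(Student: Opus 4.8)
The plan is to extract the integral affine structure directly from the local action-angle coordinates furnished by Theorem \ref{thm:aa}, and then verify that the transition maps are forced to lie in $\mathrm{AGL}(n;\Z)$. First I would recall that by Lemma \ref{lemma:reg_ls} the regular leaf space $\mathcal{L}_{\mathrm{reg}}$ is already Hausdorff, locally compact, and second countable, so it is a legitimate candidate for an integral affine manifold; it remains to produce the atlas. Fix a regular leaf $\Lambda_c$; since all fibers are compact, it is a compact regular leaf, so Theorem \ref{thm:aa} provides an $F$-saturated open neighborhood $U \subset M$ together with a strong symplectic equivalence of $(U,\omega|_U,F|_U)$ with a subsystem of $(\R^n \times \mathbb{T}^n,\omega_0,\xi)$. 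Saturation means $U = q^{-1}(W)$ for an open $W \subset \mathcal{L}_{\mathrm{reg}}$, and the equivalence descends to a homeomorphism $\chi : W \to \chi(W) \subset \R^n$ with the property that $q \circ \chi$ — more precisely the map $U \to \R^n$ whose fibers are the leaves and which in the model equals $\xi$ — is the moment map of an effective Hamiltonian $\mathbb{T}^n$-action (the action integrating $\ham{\xi_1},\dots,\ham{\xi_n}$, each of whose flows is $1$-periodic). This gives the required charts; taking the collection over all regular leaves yields an atlas $\mathcal{A}$.

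Next I would pin down the transition maps. Suppose $\chi_i : W_i \to \R^n$ and $\chi_j : W_j \to \R^n$ are two such charts with $W_i \cap W_j \neq \emptyset$, arising from effective Hamiltonian $\mathbb{T}^n$-actions with moment maps $\mu_i = q\circ\chi_i$ and $\mu_j = q\circ\chi_j$ on the common saturated open set $q^{-1}(W_i \cap W_j)$. On each connected component the two moment maps differ by $\mu_j = g \circ \mu_i$ for a diffeomorphism $g$ of the relevant open subsets of $\R^n$ (this is exactly the content of $F$ having connected fibers plus the fact that both are submersions onto their images with the same fibers). I would then argue that $g \in \mathrm{AGL}(n;\Z)$ using the standard period-lattice argument (as in \cite[Section 44]{gui_ste_book}, or along the lines of Remark \ref{rmk:aa}): the differential $\DD g$ at each point must carry the period lattice $\Sigma_c$ of the Hamiltonian flows — which is canonically $\mathrm{H}_1$ of the fiber, hence intrinsic to $F$ and independent of the chart — to itself, because both $\mu_i$ and $\mu_j$ generate the same $\mathbb{T}^n = \R^n/\Z^n$ up to the change of frame, so $\DD g \in \mathrm{GL}(n;\Z)$; a connectedness argument then shows $\DD g$ is locally constant on each component, whence $g$ is affine with integral linear part, i.e. $g \in \mathrm{AGL}(n;\Z)$. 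This establishes that $\mathcal{A}$ is an integral affine atlas.

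Finally I would address the uniqueness and the "if and only if" clause. For uniqueness: the condition stated — that $\chi$ is an integral affine chart precisely when $q\circ\chi$ is the moment map of an effective Hamiltonian $\mathbb{T}^n$-action — determines $\mathcal{A}$ because any two such structures share the same charts by fiat, so I need only check that the charts I produced are maximal with respect to this property and that conversely any chart compatible with $\mathcal{A}$ satisfies it. The forward direction is the construction above. For the converse, if $q\circ\chi$ is the moment map of an effective Hamiltonian $\mathbb{T}^n$-action on $q^{-1}(W)$, then locally around each point it agrees (up to the $\mathrm{AGL}(n;\Z)$-ambiguity) with the action-angle moment map produced by Theorem \ref{thm:aa}, since an effective Hamiltonian $\mathbb{T}^n$-action on a $2n$-manifold is locally the toric model and its moment map components are action variables; hence $\chi$ differs from an element of $\mathcal{A}$ by an $\mathrm{AGL}(n;\Z)$ transition, so it belongs to the maximal atlas. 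The main obstacle is the rigidity step — showing the transition maps land in $\mathrm{AGL}(n;\Z)$ rather than merely being smooth — which hinges on the intrinsic nature of the period lattice; but this is exactly the bundle-of-periods analysis already invoked in the proof of Theorem \ref{thm:aa} and recorded in Remark \ref{rmk:aa}, so it can be cited rather than redone. A minor subtlety worth a sentence is that different choices of trivialization of $\Sigma \to \R^n$ in Theorem \ref{thm:aa} are precisely what produce the $\mathrm{GL}(n;\Z)$ ambiguity, and different choices of Lagrangian section produce the $\R^n$ translation ambiguity, which together account for all of $\mathrm{AGL}(n;\Z)$.
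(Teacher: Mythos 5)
Your proposal is correct and follows essentially the same route as the paper: charts coming from the action-angle theorem (Theorem \ref{thm:aa} and Remark \ref{rmk:aa}) whose composition with the quotient map is a toric moment map, with the transition maps forced into $\mathrm{AGL}(n;\Z)$ because their differentials preserve the bundle of periods, whose structure group is $\mathrm{GL}(n;\Z)$, and then integrating. The only difference is that you also sketch the uniqueness/``if and only if'' step, which the paper leaves as an exercise to the reader.
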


\begin{proof}
  Let $[p] \in \mathcal{L}_{\mathrm{reg}}$; this corresponds to a
  compact, regular leaf $\Lambda$. Unravelling the definitions,
  Theorem \ref{thm:aa} guarantees the existence of an open
  neighborhood $W \subset \mathcal{L}_{\mathrm{reg}}$ of $[p]$
  together with a map $\chi: W \to \R^n$ with the property that
  $q \circ \chi$ is the moment map of an effective Hamiltonian
  $\mathbb{T}^n$-action (cf. Remark \ref{rmk:aa}). This defines the
  atlas $\mathcal{A}$. To check that it induces an integral affine
  structure, it suffices to observe that the differential of the
  components of $\chi$ locally generate the bundle of periods; thus if
  $\chi_i,\chi_j$ are coordinate maps defined on a connected,
  non-empty open set, their differentials are related by an element of
  $\mathrm{GL}(n;\Z)$ since this is the structure group of the bundle
  of periods. To obtain the required result, integrate this
  equality. Checking that this integral affine structure is uniquely
  defined by the above property is left as an exercise to the reader.
\end{proof}

\begin{rema}\label{rmk:connected_fibers}
  It is useful to unravel the above constructions for an integrable
  system $\left(M,\omega,F\right)$ all of whose fibers are compact and
  connected. In this case, $\mathcal{L}_{\mathrm{reg}}$ can be
  identified with the open subset of $\R^n$ given by the intersection
  of $F(M)$ and the set of regular values of $F$. If
  $q :M \to \mathcal{L}$ denotes the quotient map, setting
  $M_{\mathrm{reg}}:= q^{-1}\left(\mathcal{L}_{\mathrm{reg}}\right)$,
  we have that the above open subset of $\R^n$ equals
  $F\left(M_{\mathrm{reg}}\right)$. The subsystem of
  $\left(M,\omega,F\right)$ relative to $M_{\mathrm{reg}}$ has compact
  and connected fibers and contains no singular point by
  construction. Therefore, proceeding as in the proof of Theorem
  \ref{thm:aa}, it is possible to associate to it a bundle of periods
  $\Sigma \to F\left(M_{\mathrm{reg}}\right)$. If
  $\DD g_1,\ldots, \DD g_n$ denotes a local frame for $\Sigma$, then
  the map $g = \left(g_1,\ldots,g_n\right)$ is an integral affine
  chart for the atlas $\mathcal{A}$ constructed as in Corollary
  \ref{cor:ia_reg_ls}.
\end{rema}


The integral affine manifold
$\left(\mathcal{L}_{\mathrm{reg}},\mathcal{A}\right)$ is an invariant
of integrable systems up to strong symplectic equivalence. Instead of
showing this fact in general, we prove it only for the family of
integrable systems with compact and connected fibers.

\begin{coro}\label{cor:ias_invt}
  For $i=1,2$, let $\left(M_i,\omega_i,F_i\right)$ be an integrable
  system all of whose fibers are compact and connected, and suppose
  that $\left(M_1,\omega_1,F_1\right)$ and
  $\left(M_2,\omega_2,F_2\right)$ are strongly symplectically
  equivalent via $\left(\phy,g\right)$. If, for $i=1,2$, $\Sigma_i$
  denotes the bundle of periods constructed as in Remark
  \ref{rmk:connected_fibers}, then $g^* \Sigma_2 = \Sigma_1$.
\end{coro}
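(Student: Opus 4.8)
The plan is to show that the symplectomorphism $\phy$ intertwines, via the base diffeomorphism $g$, the two infinitesimal $T^*\RM^n$-actions that appear in the proof of Theorem~\ref{thm:aa}, and then to read off $g^*\Sigma_2=\Sigma_1$ from the fact that the bundles of periods are, by construction, the unions of the isotropy subgroups of those actions. Here $g^*\Sigma_2$ denotes the pullback of $\Sigma_2\subset T^*F_2(M_{2,\mathrm{reg}})$ under the cotangent lift of $g$, that is, the sub-bundle whose fibre over $c$ is $(\DD g_c)^*\bigl(\Sigma_{2,g(c)}\bigr)$.

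First I would reduce to systems without singular points. Differentiating $F_2\circ\phy=g\circ F_1$ gives $\DD(F_2)_{\phy(p)}\circ\DD\phy_p=\DD g_{F_1(p)}\circ\DD(F_1)_p$; since $\DD\phy_p$ and $\DD g_{F_1(p)}$ are isomorphisms, $p$ is regular for $F_1$ if and only if $\phy(p)$ is regular for $F_2$. Hence $\phy$ restricts to a symplectomorphism between the two regular subsystems and $g$ restricts to a diffeomorphism $F_1(M_{1,\mathrm{reg}})\to F_2(M_{2,\mathrm{reg}})$, these being precisely the open subsets of $\RM^n$ over which $\Sigma_1$ and $\Sigma_2$ live (cf. Remark~\ref{rmk:connected_fibers}). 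So we may assume that $F_1$ and $F_2$ are proper submersions with connected fibres, exactly the situation treated in the proof of Theorem~\ref{thm:aa}.

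The heart of the matter is the equivariance of the infinitesimal actions. Let $\mathfrak{a}_i:\Omega^1(\RM^n)\to\mathfrak{X}(M_i)$ be the action of equation~\eqref{eq:8}, characterised by $\iota_{\mathfrak{a}_i(\alpha)}\omega_i=F_i^*\alpha$, and let $\phi^{i,2\pi}_\alpha$ be the time-$2\pi$ flow of $\mathfrak{a}_i(\alpha)$. For $\beta\in\Omega^1(\RM^n)$, using $\phy^*\omega_2=\omega_1$, naturality of the interior product under diffeomorphisms, and $F_2\circ\phy=g\circ F_1$,
\[
\iota_{\phy^*\mathfrak{a}_2(\beta)}\,\omega_1=\phy^*\bigl(\iota_{\mathfrak{a}_2(\beta)}\omega_2\bigr)=\phy^*F_2^*\beta=(g\circ F_1)^*\beta=F_1^*(g^*\beta),
\]
so $\phy^*\mathfrak{a}_2(\beta)=\mathfrak{a}_1(g^*\beta)$; equivalently $\phy$ pushes the vector field $\mathfrak{a}_1(g^*\beta)$ forward to $\mathfrak{a}_2(\beta)$, hence $\phy\circ\phi^{1,2\pi}_{g^*\beta}=\phi^{2,2\pi}_\beta\circ\phy$ wherever defined. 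Moreover, since $\mathfrak{a}_i$ is $C^\infty(\RM^n)$-linear by \eqref{eq:8}, the restriction of $\mathfrak{a}_i(\alpha)$ to a fibre $F_i^{-1}(c)$, and therefore the restriction of $\phi^{i,2\pi}_\alpha$ to that fibre, depends only on the covector $\alpha(c)\in T^*_c\RM^n$; this is what makes the isotropy subgroups $\Sigma_{i,c}$ well defined, and it shows that $\phi^{1,2\pi}_{g^*\beta}|_{F_1^{-1}(c)}$ depends only on $(\DD g_c)^*\bigl(\beta(g(c))\bigr)$.

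It remains to transfer isotropy. Fix $c\in F_1(M_{1,\mathrm{reg}})$ and $\eta\in\Sigma_{2,g(c)}$, and pick $\beta\in\Omega^1(\RM^n)$ with $\beta(g(c))=\eta$. By definition of $\Sigma_2$ there is $p'\in F_2^{-1}(g(c))$ with $\phi^{2,2\pi}_\beta(p')=p'$; writing $p'=\phy(p)$, the identity $F_2\circ\phy=g\circ F_1$ and injectivity of $g$ force $p\in F_1^{-1}(c)$, and the flow conjugation gives $\phy\bigl(\phi^{1,2\pi}_{g^*\beta}(p)\bigr)=\phi^{2,2\pi}_\beta(p')=\phy(p)$, whence $\phi^{1,2\pi}_{g^*\beta}(p)=p$. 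Thus $(\DD g_c)^*\eta=(g^*\beta)(c)\in\Sigma_{1,c}$, so $(\DD g_c)^*\Sigma_{2,g(c)}\subset\Sigma_{1,c}$; applying the same argument to the strong symplectic equivalence $(\phy^{-1},g^{-1})$ gives the reverse inclusion, and as $c$ was arbitrary, $g^*\Sigma_2=\Sigma_1$. I expect the only genuinely delicate step to be the observation at the end of the previous paragraph — that the bundle of periods can be described purely through the infinitesimal $T^*\RM^n$-action and depends only on the pointwise values of $1$-forms on the base; once this is in place, the equivariance $g^*\Sigma_2=\Sigma_1$ is forced by $\phy$ being a symplectomorphism compatible with $g$.
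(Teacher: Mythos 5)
Your proof is correct and follows essentially the same route as the paper: restrict to the regular subsystems, observe that a strong symplectic equivalence intertwines the $T^*\RM^n$-actions from the proof of Theorem~\ref{thm:aa} (which you verify explicitly via $\iota_{\phy^*\mathfrak{a}_2(\beta)}\omega_1=F_1^*(g^*\beta)$, a computation the paper merely asserts), deduce $g^*\Sigma_2\subset\Sigma_1$, and obtain the reverse inclusion by applying the same argument to $(\phy^{-1},g^{-1})$. The only difference is that you spell out the intertwining and isotropy-transfer details, which the paper leaves implicit.
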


\begin{demo}
  For $i=1,2$, let $M_{\mathrm{reg},i} \subset M_i$ denote the open
  subset of $M$ equal to
  $q_i^{-1}\left(\mathcal{L}_{\mathrm{reg},i}\right)$ as in Remark
  \ref{rmk:connected_fibers}. The strong symplectic equivalence
  $\left(\varphi,g\right)$ between $\left(M_1,\omega_1,F_1\right)$ and
  $\left(M_2,\omega_2,F_2\right)$ restricts to a strong symplectic
  equivalence between the subsystems of
  $\left(M_1,\omega_1,F_1\right)$ and $\left(M_2,\omega_2,F_2\right)$
  relative to $M_{\mathrm{reg},1}$ and $M_{\mathrm{reg},2}$
  respectively. By abuse of notation, denote this induced strong
  symplectic equivalence also by $\left(\varphi,g\right)$. Since
  $\left(\varphi,g\right)$ is a strong symplectic equivalence,
  $g^*\Sigma_2 \subset \Sigma_1$, for a strong symplectic equivalence
  intertwines the actions of $T^* \R^n$ considered in the proof of
  Theorem \ref{thm:aa}. Reversing the roles of $\Sigma_1$ and
  $\Sigma_2$ and using the inverse strong symplectic equivalence
  $\left(\phy^{-1},g^{-1}\right)$, we have that
  $\left(g^{-1}\right)^* \Sigma_1 \subset \Sigma_2$. Thus
  \[
  \Sigma_1 = g^*\left(g^{-1}\right)^* \Sigma_1 \subset g^* \Sigma_2
  \subset \Sigma_1,
  \]
  which implies the desired equality.
\end{demo}


\begin{rema}\label{rmk:int_aff_elsewhere}
  Integral affine structures appear naturally in other problems
  related to Poisson geometry: for instance, in the study of Poisson
  manifolds of compact types (cf.
  \cite{pmct2,pmct1,zung_proper}). 
\end{rema}



\section{Almost-toric singular fibers}\label{sec:almost-toric-sing}
Throughout this section, any integrable system is assumed to have
compact fibers unless otherwise stated. The aim of this section is to
study a family of singular leaves/fibers of integrable systems; the
starting point is the following result, stated without proof.

\begin{lemm}\label{lemma:cihs_ham_act}
  Given an integrable system
  $\left(M,\omega, F=(f_1,\ldots,f_n)\right)$ with compact fibers, for
  each $i=1,\ldots, n$, the Hamiltonian vector field $\ham{f_i}$ is
  complete. In particular, the map
  \begin{equation*}
    \begin{split}
      \R^n \times M &\to M \\
      \left((t_1,\ldots,t_n),p\right) &\mapsto \phi^{t_1}_1\circ
      \ldots \circ \phi^{t_n}_n(p),
    \end{split}
  \end{equation*}
  where, for each $i=1,\ldots, n$, $\phi^s_i$ is the flow of
  $\ham{f_i}$ at time $s$, defines a Hamiltonian $\R^n$-action on
  $(M,\omega)$ one of whose moment maps is $F$.
\end{lemm}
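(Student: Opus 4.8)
The plan is to proceed in three steps: first establish completeness of each $\ham{f_i}$, then check that the map in the statement defines a genuine $\R^n$-action, and finally verify that this action is Hamiltonian with moment map $F$.

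For completeness, I would first note that for all $i,j$ one has $\DD f_j(\ham{f_i}) = \{f_i,f_j\} = 0$, so the (local) flow of $\ham{f_i}$ preserves each function $f_j$ and hence leaves invariant every fiber $F^{-1}(c)$. Fixing $p \in M$ and setting $c = F(p)$, the maximal integral curve of $\ham{f_i}$ through $p$ is then entirely contained in the compact set $F^{-1}(c)$. The escape lemma for integral curves (an integral curve defined on a bounded maximal interval must eventually leave every compact subset) forces the maximal interval to be all of $\R$; equivalently, one may cover $F^{-1}(c)$ by finitely many flow boxes to obtain a uniform existence time $\varepsilon > 0$ valid at every point of the fiber and then iterate. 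Either way, $\ham{f_i}$ is complete.

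Next, by Lemma~\ref{lemm:properties_Poi} (or Lemma~\ref{lemm:cihs_as_inf_act}) we have $[\ham{f_i},\ham{f_j}] = \ham{\{f_i,f_j\}} = 0$, and complete vector fields with vanishing Lie bracket have commuting flows, so $\phi^s_i$ and $\phi^t_j$ commute for all $i,j$ and all $s,t \in \R$. Consequently the map $\Phi((t_1,\ldots,t_n),p) := \phi^{t_1}_1\circ\cdots\circ\phi^{t_n}_n(p)$ is independent of the order of composition, is smooth, satisfies $\Phi(0,p)=p$, and satisfies $\Phi(t+s,p) = \Phi(t,\Phi(s,p))$ (using the one-parameter group property of each $\phi^\bullet_i$ together with the commutativity of the flows to rearrange factors). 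Hence $\Phi$ is a smooth $\R^n$-action.

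Finally, the infinitesimal generator associated to $\eta = (\eta_1,\ldots,\eta_n) \in \R^n$ is $X_\eta = \sum_i \eta_i \ham{f_i}$, a Hamiltonian vector field, and by Lemma~\ref{lemm:cihs_as_inf_act} the assignment $\eta \mapsto X_\eta$ is a Lie algebra homomorphism, so the action is Hamiltonian in the infinitesimal sense. Identifying $(\R^n)^* \cong \R^n$, it then suffices to check that $F$ is a moment map: for each $\eta$ one has $\langle F,\eta\rangle = \sum_i \eta_i f_i$, hence $\omega(X_\eta,\cdot) = \sum_i \eta_i\,\omega(\ham{f_i},\cdot) = -\sum_i \eta_i\,\DD f_i = -\DD\langle F,\eta\rangle$, as required. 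I expect the only genuinely non-formal step to be the completeness argument; once that is in hand, the rest is a matter of assembling facts already established in the excerpt.
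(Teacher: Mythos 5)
Your proof is correct, and since the paper states this lemma explicitly without proof, your argument supplies exactly the standard reasoning one would expect: completeness via invariance of the compact fibers plus the escape lemma, commutativity of the flows from $[\ham{f_i},\ham{f_j}]=\ham{\{f_i,f_j\}}=0$, and the moment map verification $\omega(X_\eta,\cdot)=-\DD\langle F,\eta\rangle$, which is consistent with the sign conventions of Definition~\ref{defn:ham_vf} and Example~\ref{exm:symp_toric}. No gaps to report.
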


Lemma \ref{lemma:cihs_ham_act} motivates referring to $F$ as the {\em
  moment map} of the integrable system $\left(M,\omega,F\right)$. With
the above Hamiltonian $\R^n$-action at hand, it is possible to extend
the notions introduced in Definition \ref{defn:regular}.

\begin{defi}\label{defn:singular}
  Let $\left(M,\omega,F\right)$ be a completely integrable system.
  \begin{itemize} [leftmargin=*]
  \item The {\em rank} of a point $p \in M$ is defined to be
    $\mathrm{rk}\, p:= \mathrm{rk}\, D_pF$.
  \item If $p \in M$ is singular (respectively regular), the
    $\R^n$-orbit $\mathcal{O}_p$ through $p$ is said to be {\em
      singular} (respectively {\em regular}).
  \end{itemize}
\end{defi}


\begin{coro}\label{coro:sing}
  Let $\left(M,\omega,F\right)$ be a completely integrable system. The
  rank of a point $p \in M$ equals the dimension of its orbit
  $\mathcal{O}_p$ which is diffeomorphic to
  $\mathbb{T}^{c(p)} \times \R^{\mathrm{rk}\, p - c(p)}$ for some
  non-negative integer $c(p)$.
\end{coro}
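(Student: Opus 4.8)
The plan is to analyze the $\R^n$-action generated by the flows $\phi^{t_i}_i$ of the Hamiltonian vector fields $\ham{f_i}$ (which are complete by Lemma~\ref{lemma:cihs_ham_act}) and identify the orbit $\mathcal{O}_p$ with a quotient $\R^n/\Sigma_p$ by the isotropy subgroup $\Sigma_p := \{t \in \R^n \mid \phi^{t_1}_1 \circ \cdots \circ \phi^{t_n}_n(p) = p\}$. The first step is to compute the dimension of the orbit: the orbit map $a_p : \R^n \to M$, $t \mapsto \phi^{t_1}_1 \circ \cdots \circ \phi^{t_n}_n(p)$, has differential at $0$ whose image is spanned by $\ham{f_1}(p),\ldots,\ham{f_n}(p)$. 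By Proposition~\ref{prop:fibre} (applied on an open set where the $\DD f_i$ are independent) or more directly by the identity $\omega(\ham{f_i},\cdot) = -\DD f_i$, the rank of this spanning set equals the rank of $D_pF$: indeed, a linear combination $\sum \lambda_i \ham{f_i}(p)$ vanishes iff $\sum \lambda_i \DD_p f_i = 0$, since $\omega_p$ is nondegenerate. Hence $\dim \mathcal{O}_p = \mathrm{rk}\, D_pF = \mathrm{rk}\, p$, and $\Sigma_p$ is a closed subgroup of $\R^n$ of corank $\mathrm{rk}\, p$ (so $\dim \Sigma_p = n - \mathrm{rk}\, p$).

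Next I would invoke the structure theorem for closed subgroups of $\R^n$: every such subgroup is isomorphic to $\Z^k \times \R^l$ for some $k, l \ge 0$ with $k + l \le n$. Since the orbit is $\R^n/\Sigma_p$, writing $\Sigma_p \cong \Z^{c(p)} \times \R^l$ with $c(p) + l = n - \mathrm{rk}\, p$ gives $\mathcal{O}_p \cong \R^n/(\Z^{c(p)} \times \R^l) \cong \mathbb{T}^{c(p)} \times \R^{n - \mathrm{rk}\, p - l} = \mathbb{T}^{c(p)} \times \R^{\mathrm{rk}\, p - c(p)}$, where the last equality uses $n - \mathrm{rk}\, p - l = n - \mathrm{rk}\, p - (n - \mathrm{rk}\, p - c(p)) = c(p)$... wait, one must be careful: $l = n - \mathrm{rk}\,p - c(p)$, so $\mathrm{rk}\,p - c(p)$ is the free-direction count of the orbit only if we track dimensions correctly. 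Concretely: $\dim \mathcal{O}_p = n - \dim\Sigma_p = n - (c(p)+l) = \mathrm{rk}\,p$; the compact (toral) part of $\R^n/\Sigma_p$ has dimension equal to the number of $\Z$-factors, which is $c(p)$, and the non-compact part has dimension $\mathrm{rk}\,p - c(p)$. This yields exactly the claimed diffeomorphism type.

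The one genuinely delicate point — and the step I expect to need the most care — is justifying that the orbit map descends to a \emph{diffeomorphism} (not merely a continuous bijection) $\R^n/\Sigma_p \to \mathcal{O}_p$, equivalently that $a_p$ is an immersion everywhere (not just at $0$) and that $\mathcal{O}_p$ carries a manifold structure for which this holds. Since the action is by a single abelian group $\R^n$ acting by the \emph{same} formula at every point, translation-invariance of the action reduces the immersion property at an arbitrary point to the property at $0$: if $q = a_p(s)$ lies on the orbit, then $a_q(t) = a_p(s+t)$ (using commutativity of the flows, which holds because $[\ham{f_i},\ham{f_j}] = \ham{\{f_i,f_j\}} = 0$ by Lemma~\ref{lemm:properties_Poi} and Lemma~\ref{lemm:cihs_as_inf_act}), so $D_s a_p$ has the same rank as $D_0 a_q$, which is $\mathrm{rk}\, q$; but $q$ and $p$ lie on the same orbit, hence in the same fiber of $F$, and one checks $\mathrm{rk}$ is constant along orbits. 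Finally, the standard fact that an injective immersion of a second-countable manifold that is also an orbit of a Lie group action is an embedding onto a submanifold (here using compactness of the $\mathbb{T}^{c(p)}$ factor to control the topology; see e.g.~\cite{mcduff-salamon}) upgrades the bijection to a diffeomorphism. Putting these together completes the proof.
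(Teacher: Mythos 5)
Your overall route --- the orbit map $a_p:\R^n\to M$, the computation of $\mathrm{rk}\,D_0a_p$ via non-degeneracy of $\omega_p$, the isotropy subgroup $\Sigma_p$, and the structure theorem for closed subgroups of $\R^n$ --- is the standard argument (the paper states this corollary without proof), and your first paragraph is correct. However, the bookkeeping around $\Sigma_p$ is inconsistent. Since the Lie algebra of $\Sigma_p$ is $\ker D_0a_p$, writing $\Sigma_p= V\oplus\Lambda$ with $V$ a linear subspace and $\Lambda\cong\Z^{c(p)}$ a lattice in a complement, the correct relations are $\dim V=l=n-\mathrm{rk}\,p$ and $c(p)\le\mathrm{rk}\,p$; there is no relation $c(p)+l=n-\mathrm{rk}\,p$, and $\dim\Sigma_p$ equals $l$, not $c(p)+l$ (the lattice factor is $0$-dimensional). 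With your relation the quotient $\R^n/\Sigma_p\cong\T^{c(p)}\times\R^{\,n-l-c(p)}$ would be $\T^{c(p)}\times\R^{\mathrm{rk}\,p}$, which has the wrong dimension; with the correct relation it is $\T^{c(p)}\times\R^{\,\mathrm{rk}\,p-c(p)}$, as claimed. You do write the right formula at the end, but only after asserting $\dim\mathcal{O}_p=n-(c(p)+l)$, which is false; this step needs to be redone cleanly.

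The more serious problem is the concluding step: orbits of this $\R^n$-action need \emph{not} be embedded, and there is no standard fact to the effect that an injectively immersed orbit with a compact torus factor is an embedding. A counterexample within the standing hypotheses (compact fibers) is $M=T^*\T^2$ with $F=(\xi_1+\alpha\xi_2,\ \xi_2^2)$, $\alpha$ irrational: every point of the fiber over $(c_1,0)$ has rank $1$, and its orbit is a dense line winding in the $2$-torus fiber, an injectively immersed but non-embedded copy of $\R$ (here $c(p)=0$). The fix is that no embedding is needed for the statement: the corollary concerns the orbit with its intrinsic smooth structure. Your translation argument (in the clean form $a_q=\phi^s\circ a_p$ for $q=a_p(s)$, so the rank of $D_ta_p$ is constant) shows that the induced map $\bar a_p:\R^n/\Sigma_p\to M$ is an injective immersion; the orbit, as an immersed submanifold, is then by construction diffeomorphic to $\R^n/\Sigma_p\cong\T^{c(p)}\times\R^{\,\mathrm{rk}\,p-c(p)}$, which is exactly the assertion (compare the paper's Exercise on orbits, which only claims an \emph{immersed} submanifold). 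Embeddedness holds only in special cases, for instance when the orbit is compact, i.e.\ $c(p)=\mathrm{rk}\,p$.
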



When studying the topology or (symplectic) geometry near a singular
point of an integrable system, there are several `scales' that can be
adopted, namely near the point, near the orbit through the point, or
near the leaf containing the point. The first two `scales' are often
referred to in the literature as {\em local}, while the last one is
known as {\em semi-local}. The first difference between the regular
and singular cases is that in the latter it is not necessarily true
that an orbit need coincide with a leaf. In full generality, there are
no results characterizing neighborhoods of singular
points/orbits/leaves: intuitively, this is because, given a completely
integrable system, the underlying Hamiltonian $\R^n$-action is not
necessarily proper. Thus we need to introduce a `suitable' restriction
on the types of singular points/orbits that allows for a geometric
treatment. Here, `suitable' means that it appears naturally in many
physical problems while also being generic in an appropriate
mathematical sense.

\subsection{Non-degeneracy and Eliasson's
  theorem}\label{sec:defin-char-non}


The property of singular points/orbits introduced below can be thought
of as a `symplectic Morse-Bott condition' (cf. \cite[Exercise
2]{lf-p-s-vn_exercises}).
In what follows, the definition of non-degenerate singular points is
split in two cases: first, fixed points (\textit{i.e.}  of rank 0) are
dealt with and then the general case is considered. Fix an integrable
system $\left(M,\omega,F = (f_1,\ldots,f_n) \right)$ and suppose that
$p \in M$ is a singular point of rank 0. This means that, for all
$i = 1,\ldots,n$, $\ham{f_i}(p) = 0$. For
$t =(t_1,\ldots,t_n) \in \R^n$, set
$\phi^{t}_{F}:=\phi^{t_1}_1 \circ \ldots \circ \phi^{t_n}_n$. The
condition on $p$ having rank equal to 0 means that, for all
$t \in \R^n$, $\phi^t_{F}(p) = p$. Thus, for all $t \in \R^n$,
$D_p\phi^t_{F} : T_p M \to T_pM$ is an isomorphism; moreover, since
$\phi^{t}_{F}$ is a symplectomorphism of $(M,\omega)$, it follows that
$D_p \phi^t_{F}$ is a linear symplectomorphism, {\it i.e.}
$D_p \phi^t_{F} \in \mathrm{Sp}\left(T_pM,\omega_p\right)$. By linear
algebra,
$\mathrm{Sp}\left(T_pM,\omega_p\right) =
\mathrm{Sp}\left(\R^{2n},\omega_{\mathrm{can}}\right)$,
where $\omega_{\mathrm{can}}$ denotes the canonical symplectic
form. Thus we obtain a Lie group morphism
$ \R^n \to \mathrm{Sp}\left(\R^{2n},\omega_{\mathrm{can}} \right)$,
which induces a Lie algebra morphism $\R^n \to \mathfrak{sp}(2n;\R) $
whose image is henceforth denoted by $\mathfrak{h}_p$.

\begin{defi}\label{defn:non-deg_fixed_point}
  A singular point $p \in M$ of rank 0 is said to be {\em
    non-degenerate} if $\mathfrak{h}_p$ is a Cartan subalgebra of
  $\mathfrak{sp}(2n;\R)$.
\end{defi}

\begin{rema}\label{rmk:cartan}
  By definition, $\mathfrak{h}_p \subset \mathfrak{sp}(2n;\R)$ is an
  abelian subalgebra. It is Cartan if it has maximal dimension equal
  to $n$ and if it is {\em self-normalising}, {\it i.e.} if
  $\mathfrak{h}_p = \left\{A \in \mathfrak{sp}(2n;\R) \mid \forall\, B
    \in \mathfrak{h}_p\quad [A,B] \in \mathfrak{h}_p \right\}. $
\end{rema}

\begin{rema}\label{label:linearisation}
  The above construction can be viewed equivalently as follows. Since
  the Hamiltonian vector fields $\ham{f_1}(p),\ldots,\ham{f_n}(p)$ all
  vanish, it is possible to consider their linearisations at $p$,
  $\ham{f_1}^{\mathrm{lin}}(p),\ldots,\ham{f_n}^{\mathrm{lin}}(p) \in
  \mathfrak{gl}\left(T_pM\right)$.
  These linear operators pairwise commute, since for all $i,j$,
  $[\ham{f_i},\ham{f_j}]= 0$, and are, in fact, symplectic
  . Therefore, obtain a representation
  $\R^n \to \mathfrak{sp}\left(T_pM,\omega_p\right) =
  \mathfrak{sp}\left(2n,\R\right)$,
  which is precisely the above homomorphism of Lie algebras.
\end{rema}

To deal with the general case, suppose that $p \in M$ is a singular
point of rank $k$. Without loss of generality, it may be assumed that
for all $i=1,\ldots,n-k$, $\DD_p f_i = 0$, so that
$\DD_p f_{n-k+1} \wedge \ldots \wedge \DD_p f_n \neq 0$ . Let
$\mathcal{O}_p \subset M$ denote the $\R^n$-orbit through $p$. In
analogy with the argument in the rank 0 case, get a Lie algebra
morphism $\R^{n-k} \to \mathfrak{sp}(2n;\R)$ whose image is denoted by
$\tilde{\mathfrak{h}}_p$. However, it can be shown that if
$A \in \tilde{\mathfrak{h}}_p$, then
$T_p \mathcal{O}_p \subset \ker A$ and that $\mathrm{im}\, A$ is
contained in the symplectic orthogonal
$\left(T_p \mathcal{O}_p\right)^{\Omega}$. Therefore, obtain a Lie
algebra homomorphism
$\R^{n-k} \to \mathfrak{sp}\left(2(n-k);\R\right)$, whose image is
denoted by $\mathfrak{h}_p$.

\begin{defi}\label{defn:non-deg}
  A singular point $p \in M$ of rank $k$ is said to be {\em
    non-degenerate} if $\mathfrak{h}_p$ is a Cartan subalgebra of
  $\mathfrak{sp}\left(2(n-k);\R\right)$.
\end{defi}

The above notion of non-degeneracy is {\em infinitesimal} (or {\em
  linear}), which makes it (theoretically) easy to check. Moreover,
Cartan subalgebras of $\mathfrak{sp}(2n;\R)$ have been classified up
to conjugation in \cite{williamson,williamson40}; this result is
recalled below without proof. In the statement of Theorem
\ref{thm:williamson}, the isomorphism of Lie algebras between
$\mathfrak{sp}(2n;\R)$ and $\mathrm{Sym}(2n;\R)$ of symmetric bilinear
form on the canonical symplectic vector space
$(\R^{2n},\omega_{\mathrm{can}})$ is used (cf.~\cite[Exercise
13]{lf-p-s-vn_exercises}).

\begin{theo}[Williamson, \cite{williamson}]\label{thm:williamson}
  \mbox{}
  \begin{itemize}[leftmargin=*]
  \item Let $\mathfrak{h} \subset \mathrm{Sym}(2n;\R)$ be a Cartan
    subalgebra. Then there exists canonical coordinates $x_j,y_j$ for
    $\R^{2n}$, a triple $(k_e,k_h,k_{ff}) \in \mathbb{Z}^3_{\geq 0}$
    with $k_e + k_h + 2k_{ff} = n$, and a basis $f_1,\ldots,f_n$ of
    $\mathfrak{h}$ such that
    \begin{equation}
      \label{eq:5}
      f_i =
      \begin{cases}
        \frac{x^2_i + y^2_i}{2} & \text{ if } i=1,\ldots,k_e \\
        x_iy_i & \text{ if } i=k_e+ 1,\ldots,k_e+k_h
      \end{cases}
    \end{equation}
    and if $i=k_e+ k_h+ 2l-1$, where $l = 1,\ldots, k_{ff}$, then
    $f_i = x_i y_{i+1} - x_{i+1}y_i $ and
    $f_{i+1} = x_iy_i + x_{i+1}y_{i+1} $.
  \item Two Cartan subalgebras
    $\mathfrak{h}, \mathfrak{h}' \subset \mathrm{Sym}(2n;\R)$ are
    conjugate if and only if their corresponding triples are equal.
  \end{itemize}
\end{theo}

\begin{defi}\label{defn:e_h_ff}
  Elements of the basis of a Cartan subalgebra
  $\mathfrak{h} \subset \mathrm{Sym}(2n;\R)$ as in Theorem
  \ref{thm:williamson} are said to be {\em elliptic}, {\em hyperbolic}
  or of {\em focus-focus} type according to whether they are of the
  form $\frac{x^2_i + y^2_i}{2}$, $x_iy_i$ or a pair
  $ x_i y_{i+1} - x_{i+1}y_i$, $x_iy_i + x_{i+1}y_{i+1}$ respectively.
\end{defi}
One way of phrasing Theorem \ref{thm:williamson} is that a Cartan
subalgebra of $\mathfrak{sp}(2n;\R)$ is completely determined by a
triple $(k_e,k_h,k_{ff}) \in \mathbb{Z}^3_{\geq 0}$ satisfying
$k_e + k_h+k_{ff} = n$, where $k_e$ (respectively $k_h,k_{ff}$)
denotes the number of elliptic components (respectively hyperbolic
components and focus-focus pairs) appearing in the basis constructed
above.

\begin{defi}[Zung, \cite{zung-st1}]\label{defn:williamson_type}
  Let $p \in M$ be a non-degenerate singular point of rank
  $0 \leq k < n$. Its {\em Williamson type} is a quadruple
  $(k,k_e,k_h,k_{ff}) \in \mathbb{Z}^4_{\geq 0}$ satisfying
  $k + k_e +k_h +2k_{ff} = n$, where $(k_e,k_h,k_{ff})$ is the triple
  associated to the Cartan subalgebra
  $\mathfrak{h}_p \subset \mathfrak{sp}\left(2(n-k);\R\right)$.
\end{defi}

The above notion is invariant along orbits, {\it i.e.} if $p$ is
non-degenerate, then all points on $\mathcal{O}_p$ are non-degenerate
of the same Williamson type (cf. \cite[Exercise
15]{lf-p-s-vn_exercises}).
Therefore, it makes sense to talk about the Williamson type of a
non-degenerate {\em orbit}.

\begin{rema}\label{rmk:Wiliamson_fibre}
  In general, it does not make sense to talk about the Williamson type
  of a leaf. For instance, a singular leaf may contain a regular orbit
  (cf. Remark \ref{rmk:local_model_will}). Moreover, the distinct
  singular orbits lying on a singular leaf need not all have the same
  Williamson type. However, for a fixed singular leaf, all singular
  points whose rank is minimal have equal Williamson type
  (cf. \cite[Proposition 3.6]{zung-st1}).
\end{rema}

In fact, the Williamson type of a non-degenerate point is an invariant
under taking (strong) symplectic equivalences. This is the content of
the following well-known result, stated below without proof (cf. for
instance~\cite[Theorem 3.1.24]{wang}; in fact, the result is stronger,
for the Williamson type is actually a topological invariant of the
foliation, see~\cite[Theorems 6.1, 7.3]{zung-st1}).
\begin{theo}\label{thm:will_preserved}
  Local symplectic equivalences preserve non-degenerate points and
  their Williamson types.
\end{theo}

Any integrable system can be {\em linearized} in a neighborhood of a
non-degenerate singular point: this is the content of Theorem
\ref{thm:Eliasson}. Let $\left(M,\omega,F\right)$ be an integrable
system and suppose that $p \in M$ is a non-degenerate singular point
of Williamson type $(k,k_e,k_h,k_f)$. The following definition
associates to such a quadruple a `model' integrable system.

\begin{defi}\label{defn:local_model}
  Given a quadruple
  $\mathbf{k}=(k,k_e,k_h,k_{ff}) \in \mathbb{Z}^4_{\geq 0}$ satisfying
  $k + k_e+k_h + 2k_{ff} = n$, the {\em local model of a singular
    point of Williamson type $\mathbf{k}$} is the integrable system
  $\left(\R^{2n},\omega_{\mathrm{can}}, Q_{\mathbf{k}} =
    (q_1,\ldots,q_n)\right)$,
  where
  $\omega_{\mathrm{can}}=\sum\limits_{i=1}^n \DD x_i \wedge \DD y_i$
  is the canonical symplectic form, and
  \begin{equation}
    \label{eq:2}
    q_i(x_1,y_1,\ldots,x_n,y_n) =
    \begin{cases}
      y_i & \text{ if } i=1,\ldots,k, \\
      \frac{x^2_i + y^2_i}{2} & \text{ if } i=k+1,\ldots,k+k_e, \\
      x_iy_i & \text{ if } i=k+k_e+1,\ldots,k+k_e+k_h,
    \end{cases}
  \end{equation}
  and the remaining ones are focus-focus pairs in the coordinates
  $(x_i,y_i, x_{i+1},y_{i+1})$, for $i=k+k_e+2l-1$, where
  $l =1,\ldots,k_{ff}$.
\end{defi}

For any local model of a singular point of Williamson type
$\mathbf{k}$, it is easily checked that all singular points are
non-degenerate and that the origin has Williamson type $\mathbf{k}$
(cf. \cite[Exercise 7]{lf-p-s-vn_exercises}).

\begin{rema}\label{rmk:local_model_will}
  Given a local model of Williamson type
  $\mathbf{k}=(k,k_e,k_h,k_{ff})$ with $k+k_e+k_h+2k_{ff} = n$, the
  following hold:
  \begin{itemize}[leftmargin=*]
  \item There is a Hamiltonian $\R^n$-action on
    $\left(\R^{2n},\omega_{\mathrm{can}}\right)$ one of whose moment
    maps is $Q_{\mathbf{k}}$.
  \item The fibres of $Q_{\mathbf{k}}$ are not connected if and only
    if $k_h >0$.
  \item There exist leaves which contain more than one orbit of the
    above Hamiltonian $\R^n$-action if and only if $k_h + k_{ff} > 0$.
  \end{itemize}
\end{rema}

A natural question to ask is whether a sufficiently small
neighbourhood of a non-degenerate singular point of Williamson type
$\mathbf{k}$ is equivalent to the local model of Williamson type
$\mathbf{k}$. This is the content of the following result.

\begin{theo}[Eliasson, \cite{eliasson_thesis}]\label{thm:Eliasson}
  Suppose that $\left(M,\omega,F\right) $ is an integrable system and
  let $p \in M$ be a non-degenerate singular point of Williamson type
  $\mathbf{k} = (k,k_e,k_h,k_{ff})$. Then there exist open
  neighbourhoods $U \subset M$ of $p$ and $V \subset \R^{2n}$ of the
  origin such that the subsystems of $\left(M,\omega,F\right) $ and of
  $\left(\R^{2n},\omega_{\mathrm{can}},Q_{\mathbf{k}}\right)$ relative
  to $U$ and $V$ respectively are symplectically equivalent. If
  $k_h =0$, the above subsystems are strongly symplectically
  equivalent.
\end{theo}

\begin{rema}\label{rmk:history}
  While Theorem \ref{thm:Eliasson} is generally referred to as
  Eliasson's theorem, it is worthwhile mentioning that it was first
  proven by Vey~\cite{vey} in the analytic case. We are not aware of a
  reference in the literature
  that contains a complete, self-contained proof of the $\Cinf$ case
  (cf. \cite{vey,russmann,cdvv,eliasson_thesis,duf_mol,eliasson,
    cdv_san,san_wacheux,chaperon,
    miranda_san,miranda_thesis,san-panoramas,wang} amongst others for
  various partial results and attempts to understand the general
  picture).
\end{rema}



\begin{rema}\label{rmk:hyperbolic_Eliasson}
  In the real analytic setting, Theorem \ref{thm:Eliasson} can be
  strengthened to make the above subsystems strongly symplectically
  equivalence (cf. \cite{russmann} for the case of two degrees of
  freedom and \cite{vey} for the general case). The issue with the
  smooth case is the presence of flat functions, which, when
  $k_h \neq 0$, may prevent strong equivalence. To illustrate the
  situation if $k_h \neq 0$, consider the following two results. In
  the case of one degree of freedom, strong symplectic equivalence can
  be attained {\it even} if $k_h =1$ (cf. \cite{cdvv}); on the other
  hand, starting with $n \geq 2$ degrees of freedom, there are
  counterexamples to strong symplectic equivalence
  (cf. \cite[Appendix]{chaperon}).
\end{rema}

While singular orbits with $k_h >0$ arise naturally in many
mathematical and physical problems ({\it e.g.} the height function of
a torus, the mathematical pendulum), in what follows, only singular
orbits with $k_h = 0$ are considered. Before restricting further to
the case of 2 degrees of freedom (cf. Section
\ref{sec:almost-toric-systems}), we mention a useful extension of
Theorem \ref{thm:Eliasson} to the case of {\em compact} orbits whose
Williamson type is of the form $(k,k_e,0,0)$, whose proof can be found
in \cite{duf_mol}. Such orbits are henceforth referred to as {\em
  purely elliptic}. In analogy with Definition \ref{defn:local_model},
we introduce the following local model in a neighborhood of such a
compact orbit\footnote{It is possible to introduce a local model for a
  compact orbit of {\em any} Williamson type
  (cf. \cite{miranda_zung}), but this is beyond the scope of these
  notes.}.

\begin{defi}\label{defn:local_model_cpct}
  Given a quadruple $\mathbf{k}=(k,k_e,0,0) \in \mathbb{Z}^4_{\geq 0}$
  satisfying $k + k_e = n$, the {\em local model of a compact singular
    orbit of Williamson type $(k,k_e,0,0)$} is the integrable system
  \[
  \left(T^* \mathbb{T}^k \times \R^{2(n-k)},\omega = \omega_0 \oplus
    \omega_{\mathrm{can}}, Q_{\mathbf{k}} = (q_1,\ldots,q_n)\right),
  \]
  where
  $\Omega_{\mathrm{can}} = \sum\limits_{j=1}^k \DD \xi_i \wedge \DD
  \theta_i$,
  $\omega_{\mathrm{can}}=\sum\limits_{i=1}^{n-k} \DD x_i \wedge \DD
  y_i$ are the canonical symplectic forms, and
  \begin{equation}
    \label{eq:3}
    q_i(\theta_1,\ldots,\theta_k,\xi_1,\ldots,\xi_k,x_1,\ldots, x_{n-k},y_1,\ldots,y_{n-k}) =
    \begin{cases}
      \xi_i & \text{ if } i=1,\ldots,k, \\
      \frac{x^2_i + y^2_i}{2} & \text{ if } i=k+1,\ldots,n.
    \end{cases}
  \end{equation}
\end{defi}

\begin{theo}[Dufour and Molino, \cite{duf_mol}]\label{thm:toric}
  Suppose that $(M,\omega,F)$ is an integrable system and that
  $\mathcal{O} \subset M$ is a non-degenerate compact orbit of
  Williamson type $\left(k,k_e,0,0\right)$, where $k+k_e = n$. Then
  there exist open neighborhoods $U \subset M$,
  $V \subset T^* \mathbb{T}^k \times \R^{2(n-k)}$ of $\mathcal{O}$ and
  of $\mathbb{T}^k \times \{0\}$ respectively, such that the
  subsystems of $(M,\omega,F)$ and of
  \[
  \left(T^* \mathbb{T}^k \times \R^{2(n-k)},\,\omega = \omega_0 \oplus
    \omega_{\mathrm{can}},\, Q_{\mathbf{k}} = (q_1,\ldots,q_n)\right)
  \]
  with respect to $U$ and $V$ respectively, are strongly
  symplectically equivalent. Moreover, $V$ can be taken to be
  $Q_{\mathbf{k}}$-saturated.
\end{theo}

\begin{rema}\label{rmk:purely_elliptic}
  Compact purely elliptic orbits are leaves of the system; this should
  be compared with the regular case (cf. Theorem \ref{thm:aa}), and
  contrasted with the general non-degenerate case (cf. Remark
  \ref{rmk:local_model_will} and Theorem
  \ref{thm:Eliasson}). Furthermore, Theorem \ref{thm:toric} implies
  that, locally near any compact purely elliptic orbit, the
  Hamiltonian $\R^n$-action descends to a Hamiltonian
  $\mathbb{T}^n$-action, which, again, is reminiscent of the regular
  case (cf. Remark \ref{rmk:aa}). In fact, it is possible to extend
  the notion of regular leaf space as in Definition
  \ref{defn:regular_leaf} to include compact, purely elliptic orbits;
  this gives rise to a subset of the leaf space
  $\mathcal{L}_{\mathrm{lt}}$ which, in \cite{hsss}, is called the
  {\em locally toric} leaf space. 
  Like the regular leaf space, it inherits an integral affine
  structure which, in fact, extends that on the regular leaf space;
  however, unlike the regular leaf space, in general, this structure
  makes it into a smooth manifold {\em with corners}. (This should be
  compared with the structure of orbit spaces of symplectic toric
  manifolds, cf. \cite{kar_ler}.)
\end{rema}

\subsection{Almost-toric systems}\label{sec:almost-toric-systems}
Henceforth we restrict our attention to integrable systems with two
degrees of freedom all of whose singular orbits are non-degenerate,
unless otherwise stated. Motivated by notions which first appeared in
\cite{symington,san_polygon}, we introduce the following family of
integrable systems.

\begin{defi}\label{defn:at}
  An integrable system $\left(M,\omega,F\right)$ with two degrees of
  freedom and with compact fibers is said to be {\em almost-toric} if
  all of its singular orbits are non-degenerate without hyperbolic
  blocks, {\it i.e.} for any such orbit $k_h = 0$.
\end{defi}

The Williamson type of a singular orbit of an almost-toric system is
very constrained, for it can be one of three types: $(0,2,0,0)$,
$(1,1,0,0)$ or $(0,0,0,1)$. The first two are purely elliptic and are
known as {\em elliptic-elliptic} and {\em elliptic-regular} orbits,
while the last is a {\em focus-focus} point. Furthermore, the absence
of hyperbolic blocks has important consequences when describing
neighborhoods of singular orbits. First, Theorem \ref{thm:Eliasson}
implies that a neighborhood of any singular point of an almost-toric
system is strongly symplectically equivalent to the corresponding
local model. Second, together with compactness, it ensures that all
singular orbits are compact. (For a more general statement and a
proof, cf.  \cite[Proposition 3.5]{zung-st1}).

\begin{coro}\label{cor:sing_orb_compact}
  Any singular orbit of an almost-toric system is compact.
\end{coro}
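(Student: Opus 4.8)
The plan is to reduce the statement to the structure of the orbits provided by Corollary \ref{coro:sing} together with Eliasson's theorem (Theorem \ref{thm:Eliasson}) in the hyperbolic-free case, using compactness of the fibers as the essential input. Recall that by Corollary \ref{coro:sing}, a singular orbit $\mathcal{O}_p$ of rank $k$ is diffeomorphic to $\mathbb{T}^{c(p)} \times \R^{k-c(p)}$; thus the orbit fails to be compact precisely when $c(p) < k$, that is, when the orbit has a genuine $\R$-factor coming from a non-periodic flow direction. First I would fix a singular point $p$ with orbit $\mathcal{O}_p$ and observe that $\mathcal{O}_p$ is contained in the fiber $F^{-1}(F(p))$, which is compact by hypothesis; hence $\mathcal{O}_p$ is a relatively compact subset, but this alone does not give that the orbit itself is closed, so the work is to rule out the $\R$-factor.

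The key step is to invoke Definition \ref{defn:at}: since the system is almost-toric, the only possible Williamson types for a singular orbit are $(0,2,0,0)$, $(1,1,0,0)$, and $(0,0,0,1)$ — the first two purely elliptic of rank $0$ and $1$ respectively, the last focus-focus of rank $0$. For the rank-$0$ cases there is nothing to prove: a single point is compact. So the only case requiring an argument is the elliptic-regular orbit of Williamson type $(1,1,0,0)$, which has rank $1$; here I need to show $c(p) = 1$, i.e. the one-dimensional orbit is a circle rather than a line. For this I would apply Eliasson's theorem (Theorem \ref{thm:Eliasson}), which for $k_h = 0$ gives a strong symplectic equivalence between a neighborhood of $p$ and the local model $\left(\R^{4},\omega_{\mathrm{can}},Q_{(1,1,0,0)}\right)$ where $Q = (y_1, \tfrac{x_2^2+y_2^2}{2})$. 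In this model the Hamiltonian vector field $\ham{q_1} = -\partial_{x_1}$ has a flow that is \emph{not} periodic; however, the second component $\ham{q_2}$ generates a rotation in the $(x_2,y_2)$-plane which \emph{is} $2\pi$-periodic. The point is that the orbit of the full $\R^2$-action through a point near $p$ inside the singular fiber $\{q_2 = 0\}$ is just the $x_1$-line — so locally it looks like $\R$, not $\mathbb{T}$. Thus compactness cannot be read off purely locally, and the argument must be global.

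Therefore the main obstacle — and the heart of the proof — is a global compactness argument for the elliptic-regular orbit. The approach I would take is the following: the orbit $\mathcal{O}_p$ is an immersed submanifold of $M$ contained in the compact fiber $F^{-1}(c)$; since the system is almost-toric, the components of $F$ generate a Hamiltonian $\R^2$-action (Lemma \ref{lemma:cihs_ham_act}), and one shows that on the singular fiber this action has compact orbits by exploiting that, near $p$, the elliptic block forces the transverse dynamics to be a genuine rotation, so the orbit closure $\overline{\mathcal{O}_p}$ meets a neighborhood of $p$ only in $\mathcal{O}_p$ itself. More concretely, I would argue that $\overline{\mathcal{O}_p} \setminus \mathcal{O}_p$, if nonempty, would consist of lower-rank orbits (rank $0$) sitting in the same compact fiber; but then one uses a propagation/continuity argument along the flow together with the local normal form at those rank-$0$ points to derive a contradiction with the elliptic (rather than hyperbolic) nature of the singularity — in the hyperbolic-free case the "ends" of a would-be non-compact orbit cannot limit onto a fixed point, since elliptic blocks produce bounded, recurrent transverse behaviour rather than the asymptotic-to-a-point behaviour characteristic of hyperbolic separatrices. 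Alternatively, and more cleanly, I would simply cite \cite[Proposition 3.5]{zung-st1} as indicated in the text, which establishes exactly this compactness statement under the stated non-degeneracy and hyperbolic-freeness hypotheses; the self-contained argument above is essentially an unpacking of that result in the two-degree-of-freedom almost-toric setting.
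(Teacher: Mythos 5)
The paper itself does not prove Corollary \ref{cor:sing_orb_compact}: it is stated without proof, immediately after the remark that the absence of hyperbolic blocks together with compactness yields the claim, and with the reference to \cite[Proposition 3.5]{zung-st1}. So your fallback option (citing Zung) coincides exactly with what the paper does, and your reduction is the right one: by Corollary \ref{coro:sing} the rank-$0$ orbits (elliptic-elliptic and focus-focus points) are single points, so only the elliptic-regular orbits of type $(1,1,0,0)$ need an argument, and you correctly observe that the local model only shows the orbit is locally a line, so compactness must come from a global argument using compactness of the fiber.

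Your attempted self-contained argument, however, rests on two claims that are not justified as stated. First, you assert that $\overline{\mathcal{O}_p}\smallsetminus\mathcal{O}_p$, if nonempty, consists of rank-$0$ orbits. For a general $\R^2$-action an orbit can accumulate on orbits of its own rank (in particular on itself), so this needs a separate argument. It can be supplied from Theorem \ref{thm:Eliasson}: near any rank-$1$ point $q$ of the same fiber, the intersection of the fiber with a small neighbourhood is a single arc on which the span of $\ham{f_1},\ham{f_2}$ is generated by a nonvanishing vector field tangent to the arc; hence that arc lies in the single orbit through $q$ and is swept out by a bounded range of group parameters, so points $\phi^{t}(p)$ with $[t]\to\infty$ in $\R^2/\Sigma_p\cong\R$ cannot converge to $q$ (accumulation at a regular point is excluded simply because regularity is an open condition). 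Second, your heuristic that the ends ``cannot limit onto a fixed point because elliptic blocks produce recurrent rather than asymptotic behaviour'' is misleading: focus-focus points \emph{do} attract orbits --- the regular cylinder orbits in the pinched torus are asymptotic to them, cf.\ Proposition \ref{prop:ff}. What actually rules out a rank-$1$ orbit limiting onto a rank-$0$ point is a rank/Williamson-type comparison through the local fiber structure: near an elliptic-elliptic point the fiber is locally just the point itself, and near a focus-focus point every other point of the local fiber is regular, so in neither case can rank-$1$ points of the same fiber accumulate there. With these repairs your outline becomes a complete proof (and is then genuinely more detailed than the paper, which simply defers to Zung); as written, the key step is asserted rather than proved, and the stated mechanism for the focus-focus case is incorrect.
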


\begin{rema}\label{rmk:no_true}
  Corollary \ref{cor:sing_orb_compact} does not hold if hyperbolic
  blocks are allowed. Consider, for instance,
  $\left(S^2 \times \mathbb{T}^2, \omega, F = (f_1,f_2)\right)$, where
  $\omega = \omega_{S^2} \oplus \omega_{\mathbb{T}^2}$ is the sum of
  the standard symplectic forms on $S^2$ and $\mathbb{T}^2$
  respectively, and $f_1 :S^2 \times \mathbb{T}^2 \to \R$ is the
  pullback of the height function on $S^2$, while
  $f_2 : S^2 \times \mathbb{T}^2 \to \R$ is the pullback of a Morse
  function on $\mathbb{T}^2$ which possesses a point of index 0, two
  points of index 1 and a point of index 2. It can be checked directly
  that all singular orbits of the integrable system
  $\left(S^2 \times \mathbb{T}^2, \omega, F = (f_1,f_2)\right)$ are
  non-degenerate, but there are elliptic-regular orbits which are not
  compact.
\end{rema}

Corollary \ref{cor:sing_orb_compact} implies that Theorem
\ref{thm:toric} can be used to provide a local normal form for
neighborhoods of elliptic-regular orbits. In particular, this also
shows that a purely elliptic orbit of an almost-toric system is a
leaf. As a consequence, we have the following result.

\begin{coro}\label{cor:ff}
  If $\left(M,\omega,F\right)$ is an almost-toric system and $ p\in M$
  is a focus-focus point, then all singular points in the leaf
  containing $p$ are also focus-focus.
\end{coro}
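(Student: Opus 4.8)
The plan is to argue by contradiction, exploiting the rigidity of purely elliptic orbits in the almost-toric setting. Let $\Lambda$ denote the leaf (connected component of the fiber) through the focus-focus point $p$, and suppose that $\Lambda$ contains a singular point $q$ which is not focus-focus. Since $\left(M,\omega,F\right)$ is almost-toric, it has two degrees of freedom and the orbit $\mathcal{O}_q$ is non-degenerate without hyperbolic blocks, so its Williamson type is $(0,2,0,0)$ or $(1,1,0,0)$ (as recalled just after Definition \ref{defn:at}); in both cases $\mathcal{O}_q$ is purely elliptic.

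The key input is the observation recorded immediately before the statement: a purely elliptic orbit of an almost-toric system is a leaf, i.e.\ $\mathcal{O}_q$ is a connected component of $F^{-1}(F(q))$. If one needs to spell this out: Corollary \ref{cor:sing_orb_compact} gives that $\mathcal{O}_q$ is compact; Theorem \ref{thm:toric} in the rank-$1$ case — using the $Q_{\mathbf{k}}$-saturation clause, since $Q_{\mathbf{k}}^{-1}(0)$ is exactly $\mathbb{T}^k\times\{0\}$ because $x_i^2+y_i^2=0$ forces $x_i=y_i=0$ — and Theorem \ref{thm:Eliasson} with $k_h=0$ in the rank-$0$ case show that $\mathcal{O}_q$ coincides with $F^{-1}(F(q))$ on an open neighbourhood $U$ of $\mathcal{O}_q$; hence $\mathcal{O}_q = F^{-1}(F(q))\cap U$ is open in $F^{-1}(F(q))$, and being also compact (so closed) and connected, it is a connected component. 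Now, since $q\in\Lambda$ we have $F(q)=F(p)$, so $\Lambda$ and $\mathcal{O}_q$ are both connected components of $F^{-1}(F(p))$ containing $q$, whence $\Lambda=\mathcal{O}_q$.

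Finally, the Williamson type is invariant along orbits (as recalled after Definition \ref{defn:williamson_type}), so every point of $\Lambda=\mathcal{O}_q$ — in particular $p$ — has the same Williamson type as $q$, which is $(0,2,0,0)$ or $(1,1,0,0)$. This contradicts the hypothesis that $p$ is a focus-focus point, of type $(0,0,0,1)$. Therefore $\Lambda$ contains no singular point other than focus-focus points, which is the assertion. The only substantive point is the one made in the second paragraph, namely that a compact purely elliptic orbit is genuinely an entire connected component of its fiber rather than merely an orbit sitting inside a larger singular fiber; this is exactly where compactness of singular orbits (Corollary \ref{cor:sing_orb_compact}), and hence the absence of hyperbolic blocks, enters, and it is the place to be careful.
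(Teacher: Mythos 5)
Your proof is correct and follows essentially the same route as the paper, which deduces the corollary from the remark preceding it: by Corollary \ref{cor:sing_orb_compact} and Theorem \ref{thm:toric} (equivalently Theorem \ref{thm:Eliasson} with $k_h=0$ in rank $0$), a purely elliptic orbit of an almost-toric system is a leaf, so a leaf containing a focus-focus point cannot contain any elliptic singular point. Your write-up merely makes explicit the openness/closedness argument and the case split by Williamson type that the paper leaves implicit.
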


Corollary \ref{cor:ff} motivates the introduction of the following
notion.

\begin{defi}\label{defn:ff_fiber}
  A {\em focus-focus} leaf of an almost-toric system is a leaf
  containing a focus-focus point.
\end{defi}

When the fibers of an almost-toric system are connected (as in the
case of semi-toric systems, cf. Theorem \ref{thm:connected}), we talk
about focus-focus {\em fibers}. In what follows, we describe the
topology of focus-focus leaves in almost-toric systems
(cf. \cite[Section 9.8.1]{bolsinov-fomenko-book} and \cite[Section
6.3]{san_bohr} for proofs). The starting point is the following
topological statement, which hinges on compactness of the leaf.


\begin{lemm}\label{lemma:finite}
  Any focus-focus leaf of an almost-toric system contains finitely
  many focus-focus points.
\end{lemm}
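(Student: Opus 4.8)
The plan is to exploit two facts: (i) focus-focus points are isolated in the leaf (no curves of them), which follows from Eliasson's theorem applied to a focus-focus singularity, and (ii) the leaf is compact. Together these force finiteness by the usual "discrete + compact $\Rightarrow$ finite" argument.

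First I would recall the local model. By Corollary \ref{cor:ff}, every singular point on a focus-focus leaf $\Lambda$ is itself a focus-focus point, hence of Williamson type $(0,0,0,1)$; this is a rank-$0$ singularity. By Theorem \ref{thm:Eliasson} (with $k_h = 0$, so strong equivalence holds), near any such point $p$ there are open neighbourhoods $U \ni p$, $V \ni 0$ and a symplectic equivalence with the local model $\left(\R^4, \omega_{\mathrm{can}}, Q = (q_1, q_2)\right)$, where $q_1 = x_1 y_2 - x_2 y_1$, $q_2 = x_1 y_1 + x_2 y_2$. A direct inspection of this model shows that the origin is the \emph{only} critical point of $Q$ in $\R^4$: indeed $\DD q_1, \DD q_2$ fail to be linearly independent only at $(x_1,x_2,y_1,y_2) = 0$. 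Consequently, in the neighbourhood $U$, $p$ is the unique singular point of the system, and in particular the unique focus-focus point of $\Lambda$ lying in $U$. This means the set $S := \{$focus-focus points of $\Lambda\}$ is a discrete subset of $\Lambda$.

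Next I would invoke compactness. The leaf $\Lambda$ is compact (we are in the standing assumption that all fibers, hence leaves, are compact; more precisely $\Lambda$ is a compact connected component of a fiber of $F$). The set $S$ is closed in $\Lambda$: its complement is the set of regular points of $F$ lying in $\Lambda$, which is open in $M$ (rank is lower semicontinuous), hence open in $\Lambda$. Therefore $S$ is a closed, discrete subset of the compact space $\Lambda$, and every such set is finite — if it were infinite it would have an accumulation point in $\Lambda$, contradicting discreteness. This gives the claim.

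**The main obstacle**, such as it is, is purely the verification that the focus-focus local model $Q = (q_1, q_2)$ on $\R^4$ has the origin as its only critical point; everything else is soft topology. This is a short computation: writing $\DD q_1 = y_2\,\DD x_1 - y_1\,\DD x_2 - x_2\,\DD y_1 + x_1\,\DD y_2$ and $\DD q_2 = y_1\,\DD x_1 + y_2\,\DD x_2 + x_1\,\DD y_1 + x_2\,\DD y_2$, one checks that these are proportional only when all coordinates vanish (the $2 \times 4$ Jacobian drops rank only at $0$, e.g.\ because $q_1^2 + q_2^2 = (x_1^2+x_2^2)(y_1^2+y_2^2)$ forces both factors to be critical simultaneously). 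I would state this as an elementary verification and not belabour it. One should also note explicitly that Corollary \ref{cor:ff} is what licenses using the focus-focus model at \emph{every} singular point of $\Lambda$, rather than having to separately rule out elliptic-regular or elliptic-elliptic orbits on the leaf.
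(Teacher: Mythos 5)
Your proof is correct and follows exactly the route the paper indicates (it states the lemma without a written proof, noting only that it ``hinges on compactness of the leaf'' and citing the references): Corollary~\ref{cor:ff} plus the Eliasson normal form show that focus-focus points are isolated singular points, and since the singular set is closed and the leaf is compact, a closed discrete subset of it must be finite. Your explicit verification that the origin is the only critical point of $(q_1,q_2)$ in the local model is the right elementary input, so nothing is missing.
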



\begin{defi}\label{defn:mult}
  The {\em multiplicity} of a focus-focus leaf $\Lambda$ in an
  almost-toric system $\left(M,\omega,F\right)$ is the number of
  focus-focus points contained in $\Lambda$.
\end{defi}

Thinking of a focus-focus leaf as a point in the leaf space of an
almost-toric system, its multiplicity defines a natural number that
can be attached to this point and is invariant under symplectic
equivalence of neighborhoods of the leaves.

To prove further properties about focus-focus leaves in an
almost-toric system, we recall the local model for a focus-focus
point.  The integrable system under consideration is
$\left(\R^4, \omega_{\mathrm{can}},(q_1,q_2)\right)$, where
$\omega_{\mathrm{can}} = \DD x_1 \wedge \DD y_1 + \DD x_2 \wedge \DD
y_2$
and $q_1 = x_1y_2 - x_2y_1$, $q_2 = x_1y_1+x_2y_2$, and the
focus-focus point is the origin. The following result, whose proof is
left to reader, summarizes some important properties of this model
system.

\begin{prop}\label{prop:ff}
  Given the above integrable system
  $\left(\R^4, \omega_{\mathrm{can}},(q_1,q_2)\right)$,
  \begin{enumerate}[label=(\arabic*), ref = (\arabic*), leftmargin=*]
  \item \label{item:20} the flow of $\ham{q_1}$ is periodic. In
    particular, the function $q_1$ can be thought as the moment map of
    an effective Hamiltonian $S^1$-action. The $S^1$-action is, up to
    sign, unique;
  \item \label{item:21} the Hamiltonian $\R^2$-action descends to a
    Hamiltonian $S^1 \times \R$-action;
  \item \label{item:22} any open neighborhood of the origin contains a
    smaller open neighborhood which is saturated with respect to the
    above $S^1$-action, {\it i.e.} the latter contains whole orbits of
    the $S^1$-action;
  \item \label{item:23} the singular fiber $(q_1,q_2)^{-1}(0,0)$
    consists of the union of two Lagrangian disks intersecting
    transversally. These disks are the stable and unstable manifold
    for the flow of $\ham{q_2}$ restricted to $(q_1,q_2)^{-1}(0,0)$.
  \end{enumerate}
\end{prop}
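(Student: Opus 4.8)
The plan is to work entirely with the explicit quadratic functions $q_1 = x_1 y_2 - x_2 y_1$, $q_2 = x_1 y_1 + x_2 y_2$ on $\R^4$, and to pass to complex coordinates $z_1 = x_1 + i y_1$, $w = x_2 + i y_2$ (or, more symmetrically, $z = x_1 + i x_2$, $\zeta = y_1 + i y_2$). In the right complex coordinates one checks $q_1 + i q_2 = z \bar\zeta$ (up to conventions), and the flows of $\ham{q_1}$ and $\ham{q_2}$ become, respectively, the rotation $(z,\zeta) \mapsto (e^{it}z, e^{it}\zeta)$ and the hyperbolic dilation $(z,\zeta)\mapsto (e^{s}z, e^{-s}\zeta)$. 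This single observation essentially trivializes all four items, and the bulk of the work is setting up the complexification carefully and verifying the Hamiltonian vector fields via $\omega_{\mathrm{can}}(\ham{q_i},\cdot) = -\DD q_i$ as in Example \ref{exm:ham_vf_can_coords}.

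First I would prove \ref{item:20}: compute $\ham{q_1}$ explicitly, observe it is the infinitesimal generator of the diagonal $S^1$-rotation on $\CM^2 \cong \R^4$, hence its flow is $2\pi$-periodic; effectiveness and uniqueness up to sign follow because $q_1$ is, up to an additive constant and sign, the unique generator of a circle subgroup of the linear symplectic isotropy (this is the $k_{ff}=1$ Williamson normal form, and the periodic Hamiltonian with minimal period is pinned down by its linearization, cf. Theorem \ref{thm:williamson}). For \ref{item:21}, since $\{q_1,q_2\}=0$ the $\R^2$-action of Lemma \ref{lemma:cihs_ham_act} is well-defined; because the $q_1$-flow is $2\pi$-periodic while the $q_2$-flow is the non-periodic dilation, the action factors through $\R^2 / (2\pi\Z \times \{0\}) \cong S^1 \times \R$, and one checks freeness of the induced $\R$-factor away from fixed points is not needed for the mere statement that it descends. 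For \ref{item:22}, given any neighborhood $U$ of the origin, saturating a smaller ball under the compact group $S^1$ stays bounded; concretely $\bigcup_{t} \phi^t_{q_1}(B_\varepsilon) = B_\varepsilon$ since the $S^1$-action here is by isometries of the Euclidean metric, so any Euclidean ball is already $S^1$-saturated — this item is immediate from linearity of the rotation.

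The main work, and the step I expect to be the real obstacle, is \ref{item:23}: identifying the singular fiber $(q_1,q_2)^{-1}(0,0)$. In the complex picture $q_1 + i q_2 = z\bar\zeta$, so the fiber is $\{z = 0\} \cup \{\zeta = 0\}$, two real planes (each a copy of $\R^2$) meeting only at the origin; one checks each plane is Lagrangian for $\omega_{\mathrm{can}}$ by evaluating $\omega_{\mathrm{can}}$ on a basis of each plane, and that the intersection at $0$ is transverse by a dimension count ($T_0\{z=0\} \oplus T_0\{\zeta=0\} = T_0\R^4$). The statement says "disks" rather than "planes": here one restricts to a bounded saturated neighborhood as in \ref{item:22}, inside which each Lagrangian plane is cut down to an embedded Lagrangian disk. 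Finally, to see these are the stable and unstable manifolds of $\ham{q_2}|_{(q_1,q_2)^{-1}(0,0)}$: the flow of $\ham{q_2}$ restricted to the fiber is, in the complex coordinates, $(z,\zeta)\mapsto(e^s z, e^{-s}\zeta)$, so on the plane $\{\zeta = 0\}$ points flow outward ($|z|$ grows) — this is the unstable manifold — and on $\{z=0\}$ points flow inward toward $0$ as $s\to+\infty$ — the stable manifold; matching these to the two disks completes the proof. The delicate point throughout is bookkeeping the sign and normalization conventions so that $q_1 \mp i q_2$ really equals $z\bar\zeta$ with the stated $\omega_{\mathrm{can}}$; once that identity is nailed down, every assertion is a one-line consequence.
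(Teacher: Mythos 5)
Your proposal is correct and is essentially the argument the paper intends: the proposition is stated with its proof left to the reader, and the paper itself later works in exactly this complex picture ($z_1=x_1+ix_2$, $z_2=y_2+iy_1$, so that $(q_1,q_2)$ becomes $z_1z_2$ and the singular fiber is $\{z_1=0\}\cup\{z_2=0\}$) in the proof of Lemma \ref{lemma:sections}, with the flows of $\ham{q_1}$, $\ham{q_2}$ being the diagonal rotation and the hyperbolic dilation, just as you compute.

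Two small caveats. First, with the paper's conventions ($\omega_{\mathrm{can}}=\sum_i \DD x_i\wedge\DD y_i$ and $\omega(\ham{f},\cdot)=-\DD f$) the $q_2$-flow contracts the plane $\{y_1=y_2=0\}$ (your $\{\zeta=0\}$) and expands $\{x_1=x_2=0\}$, so your stable/unstable labels come out interchanged; this is harmless since the statement does not specify which disk is which, but it confirms that the sign bookkeeping you flag is where care is needed. Second, for the uniqueness up to sign in item \ref{item:20}, the appeal to Williamson's theorem only pins down the quadratic part of a candidate moment map at the fixed point; to conclude that \emph{any} effective system-preserving Hamiltonian $S^1$-action is the one generated by $\pm q_1$ (plus a constant) you still need an argument away from the fixed point, e.g. via the period lattice on the nearby regular (cylindrical) fibers, which is precisely the ingredient the paper outsources (see the citation used in the proof of Corollary \ref{cor:unique_sys_pres}). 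With that point either proved or cited, the rest of your sketch goes through verbatim.
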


Using Theorem \ref{thm:Eliasson}, Proposition \ref{prop:ff} provides a
description of the topology of a focus-focus leaf near a focus-focus
point, as well as giving some information about the $S^1 \times \R$
action defined along the leaf. (In fact, it also provides information
about the Hamiltonian action in a neighborhood of the leaf,
cf. \cite[Section 9.8.1]{bolsinov-fomenko-book}.) This
characterization can be used to infer further information about
focus-focus leaves in almost-toric systems.

\begin{coro}\label{cor:reg_ff}
  Any focus-focus leaf in an almost-toric system contains at least one
  regular orbit of the Hamiltonian $\R^2$-action. Moreover, each such
  orbit is diffeomorphic to $S^1 \times \R$.
\end{coro}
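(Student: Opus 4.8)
The plan is to first exhibit a regular orbit inside a focus-focus leaf $\Lambda$, and then to determine the diffeomorphism type of \emph{every} regular orbit it contains by importing the model $S^1$-action of Proposition~\ref{prop:ff} near the focus-focus points.

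First I would set up the regular part of the leaf. Let $p \in \Lambda$ be a focus-focus point. Since a focus-focus point has $k_h = 0$, Theorem~\ref{thm:Eliasson} gives a strong symplectic equivalence between a neighbourhood of $p$ and the focus-focus local model, so by Proposition~\ref{prop:ff}~\ref{item:23} the leaf $\Lambda$ is, near $p$, a union of two transverse Lagrangian disks; in particular it is $2$-dimensional there and $\Lambda \setminus \{p\} \neq \emptyset$. By Corollary~\ref{cor:ff} every singular point of $\Lambda$ is focus-focus, and by Lemma~\ref{lemma:finite} these are finite in number, so $\Lambda^{\circ} := \Lambda \setminus \{\text{focus-focus points of }\Lambda\}$ is a nonempty open $2$-manifold consisting of regular points. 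Since $F$ is a moment map for the $\R^2$-action (Lemma~\ref{lemma:cihs_ham_act}), that action preserves the fibres of $F$ and, being connected, preserves each leaf; hence the orbit through any point of $\Lambda^{\circ}$ is a regular orbit contained in $\Lambda$. Moreover, at a regular point the vector fields $\ham{f_1},\ham{f_2}$ span the tangent space of the leaf (Theorem~\ref{thm:dar_car}, or Proposition~\ref{prop:fibre}), so each regular orbit is open in $\Lambda$, hence in $\Lambda^{\circ}$; being also closed in $\Lambda^{\circ}$ (distinct orbits are disjoint) and connected, the regular orbits of $\Lambda$ are exactly the connected components of $\Lambda^{\circ}$. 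By Corollary~\ref{coro:sing} each such orbit $\mathcal{O}$ has dimension $2$, so $\mathcal{O} \cong \mathbb{T}^c \times \R^{2-c} \cong \R^2/\Gamma$ for some $c \in \{0,1,2\}$, where $\Gamma \subset \R^2$ is the discrete stabiliser of a point of $\mathcal{O}$ under the $\R^2$-action. It remains to show $c = 1$.

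The value $c = 2$ is excluded immediately: then $\mathcal{O} \cong \mathbb{T}^2$ is compact, hence closed in $\Lambda$, and being open in the connected set $\Lambda$ it would equal $\Lambda$, contradicting that $\Lambda$ contains a focus-focus point. For $c = 0$ I would argue as follows. Since $\mathcal{O} \neq \Lambda$, the orbit $\mathcal{O}$ is not closed in $\Lambda$, so $\overline{\mathcal{O}} \setminus \mathcal{O}$ is a nonempty subset of the (finite) set of focus-focus points of $\Lambda$; fix $p'$ in it. Near $p'$, Theorem~\ref{thm:Eliasson} provides a symplectomorphism $\varphi$ and a base diffeomorphism $g = (g_1,g_2)$ with $g \circ F = Q \circ \varphi$ onto the focus-focus local model; by Proposition~\ref{prop:ff}~\ref{item:20} and \ref{item:22} there is a neighbourhood $U'$ of $p'$, which we may shrink so that it contains no focus-focus point of $\Lambda$ other than $p'$, saturated for the effective periodic $S^1$-action generated by $\ham{q_1 \circ \varphi} = \ham{g_1 \circ F}$. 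On the single fibre $F^{-1}(F(p'))$ the coefficients $\partial_j g_1 \circ F$ are constant, equal to $v_j := \partial_j g_1(F(p'))$ (with $v := (v_1,v_2) \neq 0$ since $Dg$ is invertible), so on $\Lambda \cap U'$ this $S^1$-flow agrees with the flow of $v_1\ham{f_1} + v_2\ham{f_2}$, i.e. with the action of the line $\R v \subset \R^2$. Now $p' \in \overline{\mathcal{O}}$ forces $\mathcal{O} \cap U' \neq \emptyset$, and any point $q$ there lies in $(\Lambda \cap U') \setminus \{p'\}$, which by Proposition~\ref{prop:ff}~\ref{item:23} is a disjoint union of two punctured disks, each of which a direct computation in the local model shows is foliated by the $S^1$-orbits, and these are honest circles. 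Hence the orbit of $q$ under $\R v$ is an embedded circle contained in $\mathcal{O}$; writing $T > 0$ for its period, the nonzero vector $T v$ lies in $\Gamma$, so $\Gamma \neq \{0\}$ and $c \geq 1$. Combining this with $c \neq 2$ gives $c = 1$, so every regular orbit of $\Lambda$ is diffeomorphic to $S^1 \times \R$, and the first step has already produced at least one.

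The step I expect to be the main obstacle is ruling out $c = 0$, since it must also cover regular orbits lying far from the focus-focus points. The two ingredients that make it work are topological --- $\Lambda$ being compact and connected forces every component of $\Lambda^{\circ}$ to accumulate onto a focus-focus point --- and the observation that the generator of the model $S^1$-action, once restricted to the relevant fibre, is a \emph{constant}-coefficient combination of $\ham{f_1}$ and $\ham{f_2}$, so that the circle it sweeps out near the focus-focus point is genuinely the orbit of a one-parameter subgroup of the global $\R^2$-action and therefore detects a nonzero element of the stabiliser lattice.
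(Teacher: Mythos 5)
Your proof is correct and follows the same route the paper has in mind: the corollary is stated there as a consequence of Theorem~\ref{thm:Eliasson} together with Proposition~\ref{prop:ff} (periodicity of the model $S^1$-flow, the $S^1\times\R$-action, and the two transverse Lagrangian disks), which is exactly what you use, supplemented by the standard orbit bookkeeping (Corollary~\ref{coro:sing}, openness of regular orbits, compactness and connectedness of the leaf) that the paper leaves to the cited references. In particular your two key steps --- ruling out $\T^2$ by connectedness of $\Lambda$ and ruling out $\R^2$ by pulling the model circle action back to a constant-coefficient one-parameter subgroup of the $\R^2$-action on the fiber --- are the intended argument, carefully written out.
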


Finally, we can achieve a complete description of the topology of
focus-focus fibers in almost-toric systems.

\begin{theo}\label{theo:topology_ff}
  A focus-focus leaf of multiplicity $r \geq 1$ in an almost-toric
  system is homeomorphic to a torus with $r$ pinches.
\end{theo}

In fact, it is possible to give a `smooth' description of focus-focus
leaves, using the full strength of Proposition \ref{prop:ff}.

\begin{coro}\label{cor:smooth_ff}
  Let $\Lambda$ be a focus-focus leaf of multiplicity $r \geq 1$ in an
  almost-toric system. If $r =1$, $\Lambda$ is given by an immersed
  Lagrangian sphere with a single double point. If $r \geq 2$,
  $\Lambda$ is given by a chain of $r$ Lagrangian spheres with the any
  two of which either have empty intersection, or intersect
  transversally in a single point, or are equal.
\end{coro}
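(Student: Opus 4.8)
The plan is to upgrade the topological statement of Theorem \ref{theo:topology_ff} to a smooth one by using Proposition \ref{prop:ff} as the local model near each focus-focus point and then patching the local pictures along the regular orbits that connect them (cf. Corollary \ref{cor:reg_ff}). Fix a focus-focus leaf $\Lambda$ of multiplicity $r\geq 1$. By Corollary \ref{cor:ff}, the singular points of $\Lambda$ are exactly $r$ focus-focus points $p_1,\ldots,p_r$, and by Corollary \ref{cor:reg_ff} the complement $\Lambda\setminus\{p_1,\ldots,p_r\}$ consists of regular orbits, each diffeomorphic to $S^1\times\R$. Near each $p_k$, Theorem \ref{thm:Eliasson} gives a symplectomorphism carrying a neighborhood of $p_k$ in $M$ to a neighborhood of the origin in $\left(\R^4,\omega_{\mathrm{can}},(q_1,q_2)\right)$ intertwining the moment maps; by Proposition \ref{prop:ff}\ref{item:23}, the local piece of $\Lambda$ at $p_k$ is the union of the two Lagrangian coordinate disks $\{x_1=x_2=0\}$ and $\{y_1=y_2=0\}$, meeting transversally at the origin, and these are the stable/unstable manifolds for the $\ham{q_2}$-flow inside the fiber.

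**The main construction** is then to realize $\Lambda$ as a union of embedded Lagrangian disks glued cyclically. The point is that each local stable/unstable disk at $p_k$, when flowed out by the complete $\ham{q_2}$-dynamics (using completeness from Lemma \ref{lemma:cihs_ham_act}) along the single regular orbit emanating from it, limits onto another focus-focus point (or back onto $p_k$); this is precisely the mechanism that produces the pinched-torus topology in Theorem \ref{theo:topology_ff}. Concretely, I would argue that for each $k$ the unstable disk at $p_k$ and the stable disk at $p_{k+1}$ (indices mod $r$) together with the connecting regular $S^1\times\R$-orbit form a single smoothly immersed Lagrangian sphere $S_k$: a sphere because an open disk compactified by adding two points (its two limiting focus-focus points, which may coincide) is a sphere, Lagrangian because the regular part is Lagrangian by Proposition \ref{prop:fibre} and the disks are Lagrangian by Proposition \ref{prop:ff}\ref{item:23} and Lagrangianity is a closed condition, and immersed because the only failure of embeddedness is the transverse self-intersection at a focus-focus point. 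When $r=1$ the two endpoints of the connecting orbit are the same point $p_1$, so one obtains a single immersed Lagrangian sphere with one double point; when $r\geq 2$ one obtains a cyclic chain $S_1,\ldots,S_r$ in which consecutive spheres meet transversally in one point $p_{k+1}$, non-consecutive ones are disjoint, and the degenerate identifications (two spheres being equal) only occur in the low-multiplicity coincidences already present in Theorem \ref{theo:topology_ff}.

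**The main obstacle** I anticipate is the smoothness of the gluing at the focus-focus points: one must check that the union of the outgoing (unstable) disk of one local model and the incoming (stable) disk of the next fits together into a genuine \emph{smooth} immersed submanifold across $p_{k+1}$, rather than merely a topological one. Here the key is that the Eliasson normal form at $p_{k+1}$ identifies \emph{both} incoming pieces of $\Lambda$ with the two standard coordinate Lagrangian planes simultaneously, so in the Eliasson chart the two disks are manifestly smooth submanifolds crossing transversally; thus smoothness at the crossing is automatic once one knows each disk, away from its endpoints, is the flow-out of a coordinate disk under $\ham{q_2}$ and hence a smooth Lagrangian embedding. A secondary point requiring care is verifying that the connecting regular orbits are exactly matched up cyclically — i.e.\ that following the unstable disk of $p_k$ one lands on a well-defined next focus-focus point — which follows from the description of the $S^1\times\R$-action in Proposition \ref{prop:ff}\ref{item:21}--\ref{item:22} together with the global topological model of Theorem \ref{theo:topology_ff}: the pinched torus has exactly the cyclic incidence pattern, so the smooth Lagrangian pieces constructed above must assemble in the same combinatorial way. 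Assembling these observations yields the stated description, and I would record the $r=1$ and $r\geq 2$ cases separately exactly as in the statement.
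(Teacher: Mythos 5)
Your proposal is correct and follows essentially the same route as the paper, whose proof is just a one-line appeal to Proposition \ref{prop:ff}: the smooth local model (two transversally intersecting Lagrangian disks, the stable/unstable manifolds of $\ham{q_2}$) at each focus-focus point, combined with the pinched-torus topology of Theorem \ref{theo:topology_ff} and the regular $S^1\times\R$ orbits of Corollary \ref{cor:reg_ff}, to assemble the leaf into Lagrangian spheres. You simply make explicit the gluing and the cyclic combinatorics that the paper leaves implicit.
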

\begin{proof}
  The result follows from Proposition \ref{prop:ff}, which gives a
  {\em smooth} characterization of focus-focus leaves near a
  focus-focus point, since, locally, these are given by the
  transversal intersection of Lagrangian disks.
\end{proof}

\begin{rema}\label{rmk:obst}
  Corollary \ref{cor:smooth_ff} can be used to obtain topological {\em
    obstructions} for a symplectic manifold to support an almost-toric
  system with a focus-focus fiber with multiplicity $r \geq 2$
  (cf. \cite{smirnov}). 
\end{rema}

\subsection{Neighborhoods of focus-focus fibers}
\label{sec:monodr-focus-focus}

Theorem \ref{theo:topology_ff} and Corollary \ref{cor:smooth_ff}
characterize focus-focus leaves in almost-toric systems, showing that
they are determined by their multiplicities. The next natural question
is to study the (symplectic) geometry of sufficiently small saturated
neighborhoods of focus-focus leaves. In general, the multiplicity of a
focus-focus leaf determines a sufficiently small neighborhood thereof
only up to a suitable notion of homeomorphism preserving the foliation
(cf. \cite[Theorem 9.10]{bolsinov-fomenko-book}). If the multiplicity
is at least two, there are non-trivial obstructions for the existence
of a diffeomorphism preserving the foliation between sufficiently
small neighborhoods of focus-focus leaves (cf. \cite[Theorems A and
B]{bols_izo}). In the case of multiplicity one, \cite{san_invt} showed
that there is an obstruction to the existence of a strong symplectic
equivalence between sufficiently small foliated neighborhoods of
focus-focus leaves. The aim of this section is to present this
invariant, with an emphasis on some orientation issues that were not
considered in \cite{san_invt}.

To start, we introduce formally the type of integrable systems that we
are interested in.

\begin{defi}\label{defn:sat_neigh_ff}
  A {\em neighborhood of a focus-focus fiber of multiplicity one} is
  an almost-toric system $\left(M,\omega,F\right)$ such that:

  \begin{itemize}[leftmargin=*]
  \item All fibers are connected.
  \item There is only one singular fiber, $F^{-1}(c_0)$, which is of
    focus-focus type and has multiplicity one.
  \end{itemize}
\end{defi}

Examples of neighborhoods of focus-focus fibers of multiplicity one
can be constructed by performing self-plumbing of the unit disk bundle
of $T^* S^2$ (cf. \cite[Section 4.2]{symington} and \cite[Section
3]{zung_ff} for details). With notation as in Definition
\ref{defn:sat_neigh_ff}, we refer to $c_0$ as the {\em focus-focus
  value} of $\left(M,\omega,F\right)$.

\begin{rema}\label{rmk:sat_neigh}
  If $\left(M,\omega,F\right)$ is a neighborhood of a focus-focus
  fiber of multiplicity one, let $p_0 \in M$ denote the focus-focus
  point. Then
  \begin{enumerate}[label=(\alph*), ref = (\alph*), leftmargin=*]
  \item \label{item:15} $F$ is an open map. This is because, for any
    $U \subset M$ open,
    $F\left(U\right) = F \left(U \smallsetminus
      \left\{p_0\right\}\right)$,
    and $F|_{M \smallsetminus \left\{p_0\right\}}$ is open as it is a
    submersion.
  \item \label{item:16} $F$ is proper onto its image. This is a
    consequence of \cite[Theorem 3.3]{mrcun}. Using this fact, the
    leaf space $\mathcal{L}$ of such an integrable system can be
    identified topologically with the moment map image $F(M)$
    (cf. \cite[Lemma 2.15 and Proposition 3.20]{hsss}).
  \end{enumerate}
\end{rema}

The following result, stated below without proof
(cf. \cite[Proposition 6.3]{san_bohr}), establishes an important
property of neighborhoods of focus-focus fibers of multiplicity one
using, in a crucial fashion, Theorem \ref{thm:Eliasson}.

\begin{lemm}\label{lemma:connected}
  Let $\left(M,\omega,F\right)$ be neighborhood of a focus-focus fiber
  of multiplicity one whose focus-focus point is $p_0 \in M$. The
  subsystem of $\left(M,\omega,F\right)$ relative to
  $M \smallsetminus \left\{p_0\right\}$ has connected fibers.
\end{lemm}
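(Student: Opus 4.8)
The plan is to reduce the statement to a single fibre and then read off connectedness from the structural results already established for focus--focus leaves. First I would observe that the fibres of $F|_{M\smallsetminus\{p_0\}}$ are precisely the sets $F^{-1}(c)\smallsetminus\{p_0\}$, and that since $p_0$ is a focus--focus point it lies only on the singular fibre, i.e.\ $F(p_0)=c_0$. Hence for every $c\neq c_0$ one has $F^{-1}(c)\smallsetminus\{p_0\}=F^{-1}(c)$, which is connected by hypothesis (all fibres of $F$ are connected). Thus the entire content of the lemma is the connectedness of $F^{-1}(c_0)\smallsetminus\{p_0\}$.

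For that fibre I would argue as follows. Because all fibres of $F$ are connected and $F^{-1}(c_0)$ is the only singular one, it coincides with the focus--focus leaf $\Lambda$ through $p_0$, which has multiplicity one. By Corollary~\ref{cor:smooth_ff}, $\Lambda$ is an immersed Lagrangian sphere with a single double point; that is, there is an immersion $\iota\colon S^2\to M$ with image $\Lambda$ which is injective away from a pair of points $\{a,b\}$ with $\iota(a)=\iota(b)$. The double point must be $p_0$: by Theorem~\ref{thm:Eliasson} together with Proposition~\ref{prop:ff}, $\Lambda$ is, near $p_0$, the transversal union of two Lagrangian disks, hence not a manifold there, whereas it is a genuine submanifold near each of its regular points. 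Therefore $\iota^{-1}(p_0)=\{a,b\}$, and $\Lambda\smallsetminus\{p_0\}=\iota\bigl(S^2\smallsetminus\{a,b\}\bigr)$; since $S^2$ minus two points is connected (an open cylinder), so is its continuous image $\Lambda\smallsetminus\{p_0\}$. This would finish the proof.

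The argument above is short because it borrows the hard work from Corollary~\ref{cor:smooth_ff} (equivalently Theorem~\ref{theo:topology_ff}), which itself rests crucially on Eliasson's theorem. If instead a self-contained proof is wanted, I would proceed directly from Theorem~\ref{thm:Eliasson} and Proposition~\ref{prop:ff}: Eliasson's normal form shows that near $p_0$ the punctured fibre $\Lambda\smallsetminus\{p_0\}$ has exactly two local branches (two punctured Lagrangian disks), and one then has to show that these two branches lie in one and the same orbit of the Hamiltonian $S^1\times\RM$-action of Proposition~\ref{prop:ff}; once this is known, connectedness follows since every regular $\RM^2$-orbit is open in $\Lambda\smallsetminus\{p_0\}$ (a $2$-dimensional orbit inside a $2$-manifold) and hence also closed there, so a single orbit already exhausts $\Lambda\smallsetminus\{p_0\}$. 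The main obstacle in this second route — and the only place where the multiplicity-one hypothesis genuinely enters — is precisely the identification of the two local branches at $p_0$: in the local model they form two \emph{distinct} orbits, so one must use the global structure of the fibre (the fact that it is a torus with a single pinch) to see that they are glued up globally. In the first route this subtlety is entirely absorbed into Corollary~\ref{cor:smooth_ff}.
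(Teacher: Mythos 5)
Your main argument is correct, and it is worth noting that the paper itself gives no proof of this lemma: it is stated with a pointer to \cite[Proposition 6.3]{san_bohr}, with the remark that the proof there uses Theorem~\ref{thm:Eliasson} in a crucial way. Your first route instead derives the statement from results already recorded earlier in the paper: the regular fibers are connected by Definition~\ref{defn:sat_neigh_ff} and miss $p_0$, so everything reduces to $F^{-1}(c_0)\smallsetminus\{p_0\}$; connectedness of the fiber identifies it with the focus-focus leaf of multiplicity one, and Corollary~\ref{cor:smooth_ff} (equivalently Theorem~\ref{theo:topology_ff}) exhibits it as an immersed sphere whose single double point must be $p_0$, since $\Lambda$ fails to be a manifold exactly at the focus-focus point by Theorem~\ref{thm:Eliasson} and Proposition~\ref{prop:ff}. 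Removing $p_0$ then leaves the continuous image of a twice-punctured sphere, hence a connected set. This is legitimate within the paper's logical ordering; the only caveat is that the hard analytic content is entirely absorbed into the cited structure results for focus-focus leaves, which come from the same source (Section 6 of \cite{san_bohr}) as the lemma itself, so your proof is a repackaging rather than an independent argument --- but as a derivation inside this paper it is perfectly valid.

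One small remark on your ``self-contained'' alternative: as written it has a gap beyond the branch-identification issue you flag. Knowing that each regular $\RM^2$-orbit is open (hence clopen) in the punctured fiber, and that the two local branches at $p_0$ lie in a single orbit, only produces one clopen orbit; it does not by itself exclude further connected components consisting of other orbits. You also need that \emph{every} orbit in $\Lambda\smallsetminus\{p_0\}$ accumulates at $p_0$ --- which follows from compactness of $\Lambda$, since each orbit is a non-closed cylinder (Corollary~\ref{cor:reg_ff}) whose closure can only add singular points, and $p_0$ is the unique singular point --- so that every orbit meets one of the two local branches and there are at most two orbits in total. With that step added, the second route closes; without it, only the first route is complete.
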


While Theorem \ref{thm:Eliasson} gives a \emph{local} classification
of integrable systems near a focus-focus point, in this section we are
interested in a \emph{semiglobal} study, which means that we
investigate sufficiently small neighborhoods of the focus-focus
fiber. Therefore the object under study is really the germ of
subsystems in saturated neighborhoods of the focus-focus fiber: If
$\left(M,\omega,F\right)$ is a neighborhood of focus-focus fibers of
multiplicity one, then two subsystems of it,
$\left(M_1,\omega|_{M_1},F|_{M_1}\right)$ and
$\left(M_2,\omega|_{M_2},F|_{M_2}\right)$, have the \emph{same germ}
if they admit a common subsystem
$\left(M',\omega|_{M'},F|_{M'}\right)$ on which they agree, where
$M'=F^{-1}(W)$ for some open neighborhood $W$ of the focus-focus value
$c_0$.

Accordingly, we shall say that two neighborhoods of focus-focus fiber
of multiplicity one are \emph{equivalent} if they admit germs that are
equivalent in the sense of Definition~\ref{defi:eq-faible}.
Similarly, these two neighborhoods will be called \emph{isomorphic}
when they admit germs that are symplectically equivalent in the sense
of Definition~\ref{defn:equivalence}.

As a consequence of Theorem~\ref{thm:Eliasson}, there is no need to
distinguish here between the usual equivalence and the strong
equivalence of Definition~\ref{defn:strong_equiv}. This result is
mentioned already in~\cite{eliasson_thesis}; see
also~\cite{san-panoramas} and~\cite[Lemme 2.5]{san-fn}.
\begin{prop}
  Two neighborhoods of a focus-focus fiber of multiplicity one are
  isomorphic if and only if they admit germs that are strongly
  symplectically equivalent.
\end{prop}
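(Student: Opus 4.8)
The forward implication is immediate from Corollary~\ref{coro:stronger}: germs that are strongly symplectically equivalent are in particular symplectically equivalent. So the content is the converse, and the plan is to upgrade a given symplectomorphism of germs to a strong symplectic equivalence by manufacturing the missing diffeomorphism of the base.

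Suppose then that the two neighborhoods are isomorphic, witnessed by germs represented by subsystems $\left(M_i',\omega_i|_{M_i'},F_i|_{M_i'}\right)$, $i=1,2$, together with a symplectomorphism $\varphi:\left(M_1',\omega_1\right)\to\left(M_2',\omega_2\right)$ with $F_1\sim_{M_1'}\varphi^*F_2$ in the sense of Definition~\ref{defi:eq-faible}. Set $G:=\varphi^*F_2 = F_2\circ\varphi$. The triple $\left(M_1',\omega_1,G\right)$ is symplectomorphic, via $\varphi$, to $\left(M_2',\omega_2,F_2\right)$, hence is itself a germ of a neighborhood of a focus-focus fiber of multiplicity one: in particular both $F_1$ and $G$ are open, proper onto their images, have connected fibers, and have a single singular point each (Remark~\ref{rmk:sat_neigh}, Lemma~\ref{lemma:connected}); let $p_0$ be the focus-focus point of $F_1$ and $c_0$ its focus-focus value. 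To conclude it suffices to produce a diffeomorphism $g:F_1(M_1')\to F_2(M_2')$ with $G = g\circ F_1$, for then $(\varphi,g)$ is exactly a strong symplectic equivalence of the two germs in the sense of Definition~\ref{defn:strong_equiv}.

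First I would factor $G$ through $F_1$ topologically. Recall that $\mathcal{C}_{F_1}(M_1')$ coincides with the algebra of smooth functions that are constant on the (here connected) fibers of $F_1$: on $M_1'\smallsetminus\{p_0\}$ this follows from Proposition~\ref{prop:fibre} and connectedness of the fibers, and at $p_0$ one concludes by continuity, using that $F_1^{-1}(c_0)\smallsetminus\{p_0\}$ is connected and dense in the (pinched-torus) fiber $F_1^{-1}(c_0)$ (Theorem~\ref{theo:topology_ff}); the same holds for $G$. Since $F_1\sim_{M_1'}G$, the components of $G$ lie in $\mathcal{C}_G(M_1')=\mathcal{C}_{F_1}(M_1')$ and so are constant on the fibers of $F_1$; hence $G$ factors uniquely as $G=g\circ F_1$ for some map $g:F_1(M_1')\to\R^2$, and $g$ is continuous because the open surjection $F_1$ is a topological quotient map onto its image. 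Symmetrically $F_1=g'\circ G$ for a continuous $g'$, and $g'\circ g=\mathrm{id}$, $g\circ g'=\mathrm{id}$ by surjectivity of $F_1$ and $G$, so $g$ is a homeomorphism onto $G(M_1')=F_2(M_2')$.

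It then remains to check that $g$ and $g'$ are smooth. Given $c\in F_1(M_1')$, the fiber $F_1^{-1}(c)$ contains a regular point $m$ of $F_1$: this is clear if $c$ is a regular value, and holds for $c=c_0$ by Corollary~\ref{cor:reg_ff}. Near $m$ the map $F_1$ is a submersion, hence admits a smooth local section $\sigma$ on an open neighborhood $W\subset\R^2$ of $c$; then $G\circ\sigma:W\to\R^2$ is smooth and agrees with $g$ on $W\cap F_1(M_1')$, so $g$ is smooth at $c$. Running the same argument with $G$ in place of $F_1$ gives smoothness of $g'$, so $g$ is a diffeomorphism and $(\varphi,g)$ is the desired strong symplectic equivalence. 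I expect the smoothness of $g$ at the focus-focus value to be the only real obstacle, since there $F_1$ fails to be a submersion; it is overcome precisely because the focus-focus fiber still contains regular points of the moment map, and — there being no hyperbolic blocks — the flat-function phenomena that usually separate equivalence from strong equivalence (cf. the remarks following Theorem~\ref{thm:Eliasson}) do not intervene. Connectedness of all fibers is used twice: to identify commutants with algebras of fiberwise-constant functions, and to make $F_1$ and $G$ quotient maps.
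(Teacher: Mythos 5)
Your proof is correct, and its first half is the paper's own argument in different clothing: like the paper, you use the commutant equality $\C_{F_1}(M_1')=\C_{F_2\circ\varphi}(M_1')$ to factor $F_2\circ\varphi$ through $F_1$, and smoothness of the factor $g$ rests in both cases on the same mechanism — away from the focus-focus point the moment map is a submersion with connected fibers (Lemma~\ref{lemma:connected}), and the singular fiber still contains regular points through which one can take local sections. (The paper packages this as the identity $\C_{F_i}(M_i)=F_i^*\left(\Cinf(F_i(M_i))\right)$, obtained by density and openness of $F_i$; your explicit section through a regular point of the pinched torus, via Corollary~\ref{cor:reg_ff}, is exactly what makes that identity valid at the critical value.) Where you genuinely diverge is the final step. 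The paper shows $g$ is a diffeomorphism near the focus-focus value by a Hessian argument: it first uses Theorem~\ref{thm:will_preserved} to know that $\varphi$ maps the focus-focus point of $F_1$ to that of $F_2$, then differentiates $g\circ F_1=F_2\circ\varphi$ twice at that point and invokes non-degeneracy to see that $\DD g$ is invertible there, finally shrinking to germs. You instead run the factorization symmetrically, producing a smooth $g'$ with $F_1=g'\circ G$, and deduce $g'\circ g=\mathrm{id}$ and $g\circ g'=\mathrm{id}$ from surjectivity of $F_1$ and $G$ onto their (open) images; this yields a diffeomorphism on the whole given neighborhoods, with no shrinking and no appeal to Theorem~\ref{thm:will_preserved} or to the Hessian. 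The paper's route is quicker once those ingredients are in hand; yours is more self-contained and gives a marginally stronger, non-germ conclusion. One cosmetic remark: you only need (and only justify) the inclusion of the commutant into the fiberwise-constant functions, not the stated coincidence of the two algebras, and that inclusion is all your argument uses.
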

\begin{proof}
  In view of Corollary \ref{coro:stronger}, we just need to prove that
  two symplectically equivalent neighborhoods of a focus-focus fiber
  of multiplicity one $\left(M_i,\omega_i,F_i\right)$, $i=1,2$, have
  germs that are strongly symplectically equivalent. By definition,
  there is a symplectomorphism
  $\varphi : \left(M_1,\omega_1\right) \to \left(M_2,\omega_2\right)$
  such that $F_1 \sim \varphi^* F_2$, {\it i.e.}
  \begin{equation}
    \C_{F_1}\left(M_1\right) = \C_{F_2 \circ \varphi}\left(M_1\right) =
    \varphi^*\left(\C_{F_2}\left(M_2\right)\right).
    \label{equ:equiv-commutant} 
  \end{equation}
  Observe that, since, for $i=1,2$, $p_i$ is the only non-degenerate
  singular point of $\left(M_i,\omega_i,F_i\right)$, Theorem
  \ref{thm:will_preserved} implies that $\varphi(p_1) = p_2$.  By
  Lemma \ref{lemma:connected}, the subsystems of
  $\left(M_1,\omega_1,F_1\right)$ and $\left(M_2,\omega_2,F_2\right)$
  relative to $M_1 \smallsetminus \left\{p_1\right\}$ and
  $M_2 \smallsetminus \left\{p_2\right\}$ have connected fibers and,
  by Definition \ref{defn:sat_neigh_ff}, contain no singular
  points. Therefore, for $i=1,2$,
  $F_i|_{M_i \smallsetminus \left\{p_i\right\}}$ is a submersion. This
  fact, together with connectedness of the fibers of
  $F_i|_{M_i \smallsetminus \left\{p_i\right\}}$, imply that
  $\C_{F_i}\left(M_i \smallsetminus \left\{p_i\right\}\right) =
  F_i^*\left(C^{\infty}\left(F_i\left(M_i \smallsetminus
        \left\{p_i\right\}\right)\right)\right)$.
  By the density of $M_i\smallsetminus \left\{p_1\right\}$ in $M_i$,
  we have the equality:
  $\Cinf(M_i)\cap \C_{F_i}(M_i \smallsetminus \left\{p_i\right\})=
  \C_{F_i}(M_i)$.
  On the other hand,
  $F_i(M_i \smallsetminus \left\{p_i\right\}) = F_i(M_i)$ (see
  property \ref{item:15} of Remark \ref{rmk:sat_neigh}). Thus,
  \begin{equation}
    \label{equ:weak-strong}
    \C_{F_i}(M_i) = F_i^*(\Cinf(F_i(M_i))).
  \end{equation}

  From equations~\eqref{equ:equiv-commutant}
  and~\eqref{equ:weak-strong}, there exists a smooth map
  $g : F_1\left(M_1\right) \to \R^2$ such that
  $g \circ F_1 = F_2 \circ \varphi$ on $M_1$. To complete the proof,
  it suffices to show that, restricted to smaller neighborhoods of
  $F_j(p_j)$ if necessary, $g$ is a diffeomorphism. This follows
  directly from observing that $\DD g(F_1(p_1))$ sends the Hessian of
  $F_1$ at $p_1$ to the Hessian of $F_2\circ\phy$ at $p_2$, and these
  Hessians have maximal rank due to the non-degeneracy of the
  singularity.
\end{proof}

Throughout we denote by $[(M,\omega,F)]$ the isomorphism class of the
germ of the system $(M,\omega,F)$ at the focus-focus fiber. The
problem that we wish to solve is determining the moduli space
$\mathcal{G}_{\mathrm{ff}}$ of these isomorphism classes, or,
equivalently, to construct sharp invariants that determine the
isomorphism class of a germ of a neighborhood of a focus-focus fiber
of multiplicity one. This is carried out in the subsections below,
following~\cite{san_invt}. Throughout the following subsections, the
integrable system
$\left(\R^4,\omega_{\mathrm{can}},q:=(q_1,q_2)\right)$ denotes the
local model of a singular point of focus-focus type (see
Definition~\ref{defn:e_h_ff} and
Section~\ref{sec:almost-toric-systems}):
\begin{equation}
  \label{equ:ff-dim4}
  q_1(x_1,y_1,x_2,y_2) = x_1 y_2 - x_2 y_1, \qquad 
  q_2(x_1,y_1,x_2,y_2) = x_1 y_1 + x_2 y_2.
\end{equation}

\subsubsection{Normalized neighborhoods of focus-focus
  fibers}\label{sec:norm-repr}
The first step is to show that any element of
$\mathcal{G}_{\mathrm{ff}}$ has a representative that is, locally near
the focus-focus point, symplectically conjugate to the linear
model. This fact is used in Section \ref{sec:calc-bundle-peri} to show
that its bundle of periods of the above representative exhibits a
universal asymptotic behaviour at the critical value.

Fix $\left(M,\omega,F\right)$, a neighborhood of a focus-focus fiber
of multiplicity one, and let $p_0 \in F^{-1}(c_0)$ denote the
focus-focus point. By Theorem \ref{thm:Eliasson}, there exist open
neighborhoods $U \subset M$ and $V \subset \R^4$ of $p_0$ and the
origin respectively, and a pair $\left(\varphi,g\right)$ consisting of
a symplectomorphism
$\varphi : \left(U,\omega|_U\right) \to
\left(V,\omega_{\mathrm{can}}|_V\right)$
and of a diffeomorphism $g: F(U) \to q(V)$ with $g(c_0) = (0,0)$ such
that
\[
g \circ F = q \circ \phy \qquad \text{ on } U.
\]

\begin{defi}\label{defn:eliasson_iso_diffeo}
  A pair $\left(\varphi,g\right)$ as above is called an {\em Eliasson
    isomorphism} for $\is$, while $\varphi$ (respectively $g$) is
  referred to as an {\em Eliasson symplectomorphism} (respectively
  {\em diffeomorphism}) for $\is$.
\end{defi}

\begin{rema}\label{rmk:more_than_one_eliasson}
  Given $\is$, there may be more than one Eliasson isomorphism for
  $\is$ (see Section \ref{sec:taylor-series-orbit}).
\end{rema}

Next we make precise the notion of representative that we are after.

\begin{defi}\label{defn:normalized}
  A neighborhood of a focus-focus fiber of multiplicity one $\is$ is
  said to be {\em normalizable} if one of its Eliasson
  symplectomorphisms is of the form
  $\left(\varphi,\mathrm{id}\right)$.
 
  A normalizable $\is$ together with a choice of Eliasson
  symplectomorphism $\left(\varphi,\mathrm{id}\right)$ is called {\em
    normalized} and is denoted by $\left(\is,\varphi\right)$.
\end{defi}

A normalizable neighborhood inherits the $S^1$-invariance of the
focus-focus local model, as follows.
\begin{lemm}\label{lemma:norm}
  Let $\is$ be a normalizable neighborhood of a focus-focus fiber of
  multiplicity one. Then there exists an open neighborhood $W$ of
  $c_0$ such that the first component of $F$ is the moment map of an
  effective Hamiltonian $S^1$-action on $F^{-1}(W)$.
\end{lemm}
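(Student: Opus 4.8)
The plan is to produce the circle action by showing that the Hamiltonian flow of $f_1$, which is already globally defined on $M$, becomes $2\pi$-periodic once restricted to a suitable saturated neighbourhood of the focus-focus fibre. Write $F=(f_1,f_2)$, let $p_0\in F^{-1}(c_0)$ be the focus-focus point, and fix an Eliasson symplectomorphism $(\varphi,\mathrm{id})$ witnessing normalizability, so that $F=q\circ\varphi$ on some open neighbourhood $U$ of $p_0$, with $q=(q_1,q_2)$ the focus-focus model of \eqref{equ:ff-dim4}; in particular $f_1=\varphi^*q_1$ on $U$.

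First, one establishes periodicity near $p_0$. By part \ref{item:20} of Proposition \ref{prop:ff} the flow of $\ham{q_1}$ is $2\pi$-periodic and the associated $S^1$-action on $\R^4$ is effective, and by part \ref{item:22} we may shrink $\varphi(U)$ to an open neighbourhood $V'$ of the origin that is saturated for this $S^1$-action. Put $U':=\varphi^{-1}(V')$. By the naturality of Hamiltonian vector fields (Remark \ref{rmk:natural}), $\ham{f_1}|_{U'}=\varphi^*\ham{q_1}$; since $V'$ is invariant under the complete flow $\psi^t$ of $\ham{q_1}$, the curve $t\mapsto\varphi^{-1}\bigl(\psi^t(\varphi(x))\bigr)$ is, for every $x\in U'$, an integral curve of $\ham{f_1}$ contained in $U'$. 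Hence $U'$ is invariant under the $\ham{f_1}$-flow, that flow is $2\pi$-periodic on $U'$, and the induced $S^1$-action on $U'$ is effective, being conjugate via $\varphi$ to the one on $V'$.

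Next comes the globalization. Since $\is$ is almost-toric its fibres are compact, so by Lemma \ref{lemma:cihs_ham_act} the vector field $\ham{f_1}$ is complete; let $\Phi^t$ be its flow, which preserves every fibre of $F$ because $f_1\in\C_F(M)$. As $F$ is open (part \ref{item:15} of Remark \ref{rmk:sat_neigh}), $F(U')$ is an open neighbourhood of $c_0$; choose an open neighbourhood $W$ of $c_0$ with $W\subset F(U')$. I claim $\Phi^{2\pi}=\mathrm{id}$ on $F^{-1}(W)$. For a regular value $c\in W\smallsetminus\{c_0\}$, Lemma \ref{lemma:connected} together with compactness shows that $F^{-1}(c)$ is a compact connected regular leaf; arguing exactly as in the proof of Theorem \ref{thm:aa}, the commuting fields $\ham{f_1},\ham{f_2}$ define a transitive $\R^2$-action exhibiting $F^{-1}(c)$ as $\R^2/\Lambda_c$ for a rank-two lattice $\Lambda_c$, under which $\Phi^{2\pi}$ becomes a translation. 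By the choice of $W$ there is a point of $F^{-1}(c)$ lying in $U'$, which is fixed by $\Phi^{2\pi}$ by the previous step; but a translation of a torus that has a fixed point is the identity, so $\Phi^{2\pi}|_{F^{-1}(c)}=\mathrm{id}$. Because $F$ restricts to a submersion on the open dense set $M\smallsetminus\{p_0\}$, the fibre $F^{-1}(c_0)$ is closed with empty interior, so $F^{-1}(W\smallsetminus\{c_0\})$ is dense in $F^{-1}(W)$; since the fixed-point set of the continuous map $\Phi^{2\pi}$ is closed, the claim follows. Consequently $\Phi$ descends to an action of $S^1=\R/2\pi\Z$ on $F^{-1}(W)$; it is effective because its restriction to the neighbourhood $U'\cap F^{-1}(W)$ of $p_0$ is, and it is Hamiltonian with moment map the first component $f_1$ of $F$, since its infinitesimal generator is $\ham{f_1}$ and $\omega(\ham{f_1},\cdot)=-\DD f_1$ by definition.

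The crux is the globalization step: propagating periodicity from a neighbourhood of the singular point $p_0$ to a full saturated neighbourhood of the singular fibre. It rests on two facts that must be combined with some care — the $S^1$-saturation of part \ref{item:22} of Proposition \ref{prop:ff}, which guarantees that the local model flow near $p_0$ really is the restriction of the global flow and is genuinely $2\pi$-periodic there, and the rigidity of Liouville tori, which forces the time-$2\pi$ map on each nearby regular fibre to be the identity as soon as it fixes a single point. The passage across the singular fibre itself is then a routine density argument.
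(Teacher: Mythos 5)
Your proof is correct and follows essentially the same route as the paper: both start from the Eliasson normal form to obtain an effective Hamiltonian $S^1$-action with moment map $f_1$ on the neighborhood $U$ of the focus-focus point, and then propagate it to a saturated neighborhood $F^{-1}(W)$ using that $U$ meets every (connected) fiber. Your globalization step — completeness of $\ham{f_1}$, rigidity of Liouville tori (a translation with a fixed point is the identity), and a density/continuity argument across the singular fiber — is precisely the detail the paper's proof delegates to \cite{zung_ff}, and it is carried out correctly.
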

\begin{proof}
  Let $\left(\varphi: U\to V,\mathrm{id}\right)$ be an Eliasson
  symplectomorphism for $\is$.  The map $q$ is open; let
  $W:=q(V)=F(U)$.  Observe that the first component of $F|_U$ is
  $q_1\circ\phy$, and hence is the moment map of an effective
  Hamiltonian $S^1$-action on $U$. However, since $U$ intersects all
  fibers of $F$ and these are connected, the result follows
  (cf. \cite{zung_ff} for details).
\end{proof}

\begin{lemm}\label{lemma:nice_rep}
  Any $\left[\is\right] \in \mathcal{G}_{\mathrm{ff}}$ has a
  normalizable representative.
\end{lemm}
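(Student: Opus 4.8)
The guiding idea is simple: post-composing the moment map of $\is$ with a diffeomorphism of the base changes neither the isomorphism class of the germ at the focus-focus fiber nor any of the defining properties of Definition~\ref{defn:sat_neigh_ff}, and if one post-composes with an Eliasson diffeomorphism $g$, a generic Eliasson isomorphism $(\varphi,g)$ becomes one of normalized form $(\varphi,\mathrm{id})$. So the plan is to apply Theorem~\ref{thm:Eliasson}, read off $g$, and replace $F$ by $g\circ F$ on a small enough saturated neighborhood.

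Concretely, I would fix a neighborhood of a focus-focus fiber of multiplicity one $\is$ with focus-focus point $p_0\in F^{-1}(c_0)$, and an Eliasson isomorphism $\left(\varphi\colon U\to V,\ g\colon F(U)\to q(V)\right)$ supplied by Theorem~\ref{thm:Eliasson}, so that $g(c_0)=(0,0)$ and $g\circ F=q\circ\varphi$ on $U$. Since $F$ is open (Remark~\ref{rmk:sat_neigh}\ref{item:15}), $W:=F(U)$ is an open neighborhood of $c_0$ on which $g$ is defined; I would set $M':=F^{-1}(W)$ and $\tilde F:=g\circ F\colon M'\to\RM^2$, and then argue that $\left(M',\omega|_{M'},\tilde F\right)$ is a normalizable neighborhood of a focus-focus fiber of multiplicity one representing $[\is]$.

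The verification breaks into routine pieces. First, $\tilde F$ defines an integrable system: the chain rule for the Poisson bracket (a consequence of Lemma~\ref{lemm:properties_Poi}) gives, for every $h\in\Cinf(M')$,
\[
\{(\tilde F)_i,h\}=\sum_{j}\left(\partial_j g_i\circ F\right)\{f_j,h\},
\]
which for $h=(\tilde F)_k$ yields $\{(\tilde F)_i,(\tilde F)_k\}=0$, and, run in both directions (using also $F|_{M'}=g^{-1}\circ\tilde F$), yields $\C_{\tilde F}(M')=\C_{F}(M')$, i.e. $\tilde F\sim_{M'}F|_{M'}$; moreover $\DD g$ is everywhere invertible, so $\tilde F$ is functionally independent exactly where $F$ is, and its critical set coincides with that of $F$. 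Second, $\tilde F$ has the same fibers as $F|_{M'}$ as sets, so they are compact and connected, and it has a single singular fiber, namely $\tilde F^{-1}(0,0)=F^{-1}(c_0)$; since $\tilde F\sim F|_{M'}$ via the identity symplectomorphism, Theorem~\ref{thm:will_preserved} shows every singular point retains its Williamson type, so no hyperbolic blocks appear and the singular fiber remains of focus-focus type and of multiplicity one. Hence $\left(M',\omega|_{M'},\tilde F\right)$ satisfies Definition~\ref{defn:sat_neigh_ff}, and, being symplectically equivalent via the identity to the subsystem $\left(M',\omega|_{M'},F|_{M'}\right)$ of $\is$ — which represents the same germ, as $W$ is a neighborhood of $c_0$ — it satisfies $\left[\left(M',\omega|_{M'},\tilde F\right)\right]=[\is]$.

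Finally, normalizability: on $U$ we have $\mathrm{id}\circ\tilde F=\tilde F=g\circ F=q\circ\varphi$, so $(\varphi,\mathrm{id})$ is an Eliasson isomorphism for $\left(M',\omega|_{M'},\tilde F\right)$, exhibiting it as normalizable, with $\left(\left(M',\omega|_{M'},\tilde F\right),\varphi\right)$ a normalized representative in the sense of Definition~\ref{defn:normalized}. I do not expect a serious obstacle here: all of the analytic content is absorbed into Theorem~\ref{thm:Eliasson}, and the only point requiring care is the bookkeeping that every clause of Definition~\ref{defn:sat_neigh_ff} survives the substitution $F\mapsto g\circ F$ and that the germ-isomorphism class is unchanged — both of which reduce to the commutant identity $\C_{\tilde F}=\C_F$ together with the invariance of Williamson types under symplectic equivalence (Theorem~\ref{thm:will_preserved}).
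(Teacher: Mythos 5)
Your construction is exactly the paper's: the paper's proof also fixes an Eliasson isomorphism $(\varphi\colon U\to V,\,g)$ and declares that $\bigl((F^{-1}(F(U)),\omega|_{F^{-1}(F(U))},\,g\circ F),\varphi\bigr)$ is normalized, which is precisely your $\bigl(M',\omega|_{M'},\tilde F\bigr)$ with the pair $(\varphi,\mathrm{id})$. Your additional bookkeeping (commutant identity, preservation of Williamson types, same germ class) just spells out what the paper leaves implicit, so the proposal is correct and takes essentially the same route.
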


\begin{proof}
  Fix $\left[\is\right] \in \mathcal{G}_{\mathrm{ff}}$ and fix an
  Eliasson isomorphism $\left(\varphi: U \to V,g\right)$ for $\is$.
  Then the system
  $\left((F^{-1}(F(U)), \omega|_{F^{-1}(F(U))}, g\circ F\right),
  \phy)$ is normalized.

\end{proof}


Combining Lemmata \ref{lemma:norm} and \ref{lemma:nice_rep}, we show
an important property of germs of neighborhoods of focus-focus fibers
of multiplicity one. To this end, we recall the following notion,
which plays an important role in \cite{zung_st2}.

\begin{defi}\label{defn:sys_pres}
  Given an integrable system $\is$ and an open subset $U \subset M$,
  an $S^1$-action on $U$ is said to be {\em locally system-preserving}
  if for all $\theta \in S^1$ and for all $p \in U$,
  $F\left(\theta \cdot p\right) = F(p)$.
\end{defi}

\begin{rema}\label{rmk:ham_sys_pres}
  Given $\is$, any moment map of a Hamiltonian local system-preserving
  $S^1$-action on an open subset $U \subset M$ is an element of the
  commutant $\C_{F}\left(U\right)$, {\it i.e.} it commutes with every
  component of $F|_U$.
\end{rema}

The following result, first proved in \cite[Proposition 4]{zung_ff},
shows that germs of neighborhoods of focus-focus fibers are naturally
endowed with a unique system-preserving Hamiltonian $S^1$-action.

\begin{coro}\label{cor:unique_sys_pres}
  Any sufficiently small neighborhood of a focus-focus fiber of
  multiplicity one possesses a unique (up to sign) effective Hamiltonian
  system-preserving $S^1$-action. 
\end{coro}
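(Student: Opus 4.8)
The plan is to combine the two preceding lemmata to produce a system-preserving Hamiltonian $S^1$-action, and then to establish uniqueness (up to sign) by an argument that reduces everything to the local model near the focus-focus point. By Lemma \ref{lemma:nice_rep}, every class $\left[\is\right] \in \mathcal{G}_{\mathrm{ff}}$ has a normalizable representative, so it suffices to treat a normalized system $\left(\is,\varphi\right)$. By Lemma \ref{lemma:norm}, after shrinking $M$ to $F^{-1}(W)$ for a suitable neighborhood $W$ of $c_0$, the first component $f_1$ of $F$ is the moment map of an effective Hamiltonian $S^1$-action, whose flow is the Hamiltonian flow $\phi^t_{f_1}$ of $f_1$. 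Since $f_1$ commutes with every component of $F$, this $S^1$-action satisfies $F(\theta\cdot p)=F(p)$ for all $\theta,p$, hence is locally system-preserving in the sense of Definition \ref{defn:sys_pres}. This gives existence.

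For uniqueness, suppose we are given two effective Hamiltonian system-preserving $S^1$-actions on a (possibly smaller) neighborhood, with moment maps $h_1,h_2$. By Remark \ref{rmk:ham_sys_pres}, both $h_i$ lie in the commutant $\C_F(U)$. First I would argue that, after restricting to $M\smallsetminus\{p_0\}$, the commutant consists of functions that are basic with respect to $F$ (this uses Lemma \ref{lemma:connected}: $F$ restricted to $M\smallsetminus\{p_0\}$ is a submersion with connected fibers, exactly as in the proof of the isomorphism proposition), so that $h_i = u_i\circ F$ on $M\smallsetminus\{p_0\}$ for smooth functions $u_i$ defined on $F(M)$; by continuity this extends across $p_0$. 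Next, the condition that the Hamiltonian flow of $h_i$ be $2\pi$-periodic and effective forces, via the chain rule $\ham{u_i\circ F}=\sum_j (\partial_j u_i\circ F)\,\ham{f_j}$, that $\DD u_i$ take values in the bundle of periods of the regular part of the system; since the focus-focus fiber has multiplicity one, the period lattice near $c_0$ is generated (up to $\mathrm{GL}(2;\Z)$) in a constrained way — concretely, one of the two periodic generators extends continuously to $c_0$ (the one coming from the $S^1$-action in the local model, Proposition \ref{prop:ff}\ref{item:20}) while the other has the well-known logarithmic blow-up. The key point is that the $S^1$-moment map must correspond to the generator that stays bounded at $c_0$, and this generator is unique up to sign and addition of a constant. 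Comparing $h_1$ and $h_2$ near $p_0$ using the Eliasson normal form, where by Proposition \ref{prop:ff}\ref{item:20} the linear $S^1$-action is unique up to sign, pins down $\DD u_1 = \pm \DD u_2$, hence $h_1 = \pm h_2 + \mathrm{const}$, so the two $S^1$-actions coincide up to sign.

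The main obstacle I expect is the uniqueness part, and specifically the claim that an $S^1$-momentum map in $\C_F(U)$ must be, up to affine reparametrization, the distinguished generator of the period lattice that extends smoothly across the critical value. This requires knowing the asymptotic structure of the bundle of periods near a multiplicity-one focus-focus value — that it is spanned by one smooth section and one section with a logarithmic singularity — which is precisely the content of the calculation carried out in Section \ref{sec:calc-bundle-peri} below (following \cite{san_invt}); a cleaner route, which I would prefer here to avoid circularity, is to work directly with the local model: any system-preserving $S^1$-action on a saturated neighborhood restricts, near $p_0$, to a system-preserving $S^1$-action on a neighborhood of the origin in $\left(\R^4,\omega_{\mathrm{can}},q\right)$, and Proposition \ref{prop:ff}\ref{item:20} says this linear model carries a unique (up to sign) effective $S^1$-action; since the saturated neighborhood retracts onto (a neighborhood of) the focus-focus fiber and every fiber meets any neighborhood of $p_0$, the global action is determined by its germ at $p_0$, giving uniqueness up to sign. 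This is essentially the argument of \cite[Proposition 4]{zung_ff}, to which I would refer for the topological details of why the germ at $p_0$ determines the action on the whole saturated neighborhood.
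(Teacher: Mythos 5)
Your proof is correct and, in the route you say you prefer, is essentially the paper's own argument: existence by pulling back the $S^1$-action from a normalized representative (Lemmata \ref{lemma:nice_rep} and \ref{lemma:norm}), and uniqueness by restricting any system-preserving action to the domain of an Eliasson symplectomorphism, invoking the up-to-sign uniqueness in the local model from Proposition \ref{prop:ff}\ref{item:20}, and then propagating to the whole neighborhood because that domain meets every (connected) fiber, as in \cite{zung_ff}. The period-lattice alternative you sketch first is indeed unnecessary (and, as you note, would borrow material from Section \ref{sec:calc-bundle-peri}), so you were right to set it aside.
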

\begin{proof}
  Let $\is$ be a neighborhood of a focus-focus fiber of multiplicity
  one that is isomorphic to a normalizable one, denoted by
  $\left(M',\omega', F'\right)$. Lemma \ref{lemma:norm} shows that
  $\is$ has an effective system-preserving Hamiltonian $S^1$-action induced by
  pulling back the Hamiltonian $S^1$-action one of whose moment maps
  is the first component of $F'$. It remains to show that this action
  is unique up to sign. To this end, let $U \subset M$ be the domain
  of an Eliasson symplectomorphism for $\is$. Any effective system-preserving
  Hamiltonian $S^1$-actions on $\is$ restricts to a local
  system-preserving effective Hamiltonian $S^1$-action on $U$. This induces, via the
  given Eliasson isomorphism, an effective local system-preserving Hamiltonian
  $S^1$-action on an open neighborhood of the origin in the local
  model for a singular point of focus-focus type. It can be shown that
  this action is unique up to sign (cf. \cite[Proposition
  3.9]{hss}). By item \ref{item:20} in Proposition \ref{prop:ff}, it
  equals the Hamiltonian local system-preserving $S^1$-action induced
  by the first component of the moment map of the model. Thus, up to
  sign, any effective system-preserving Hamiltonian $S^1$-action on $\is$ is
  completely determined when restricted to the domain of an Eliasson
  symplectomorphism. Arguing as in the proof of Lemma
  \ref{lemma:norm}, this implies that it is uniquely determined up to
  sign on $M$ (cf. \cite{zung_ff} for details).
\end{proof}

\subsubsection{Regularized actions of normalized neighborhoods of
  focus-focus fibers of multiplicity one}\label{sec:calc-bundle-peri}

In this subsection we assume that $\left(M,\omega,F=(f_1,f_2)\right)$
is normalized in the sense of Definition~\ref{defn:normalized}, which
means that there exists a local symplectomorphism
$\phy:U\to V\subset\RM^4$ defined on an open neighborhood $U$ of the
focus-focus point $p_0\in M$, such that
\begin{equation}
  F  = q \circ \phy \qquad \text{ on } U.
  \label{equ:Fq}
\end{equation}
In particular, the focus-focus value of $\left(M,\omega,F\right)$ is
the origin in $\R^2$. Our goal is to construct the `regularized
action' for such a neighborhood of a focus-focus fiber of multiplicity
one (cf. \cite[Section 3]{san_invt}). In view of
Lemma~\ref{lemma:norm} and passing to a subsystem if necessary, we
assume that $M=F^{-1}(W)$, with $W=F(U)$ being simply connected.

\paragraph{Calculating the bundle of periods.}
Using the above data we calculate the {\em bundle of periods}
associated to the subsystem of $\left(M,\omega,F\right)$ (cf. Remark
\ref{rmk:aa}). Looking at the proof of Theorem \ref{thm:aa}, one way
to obtain this bundle (locally) is to fix a (local) Lagrangian section
$\sigma$ and calculate which closed 1-forms $\alpha$ satisfy
$\phi^{2\pi}_{\alpha} \circ \sigma = \sigma$, where $\phi^t_{\alpha}$
is the flow at time $t$ of the vector field $\ham{\alpha}$.  Lemma
\ref{lemma:norm} gives that the flow of $\ham{f_1}$ has period equal
to $2\pi$. Thus, if $\Sigma \to F(M) \smallsetminus \left\{0\right\}$
denotes the period bundle and $(a,b)$ denote the standard coordinates
on $\R^2$, we have that $\Z \langle \DD a \rangle \subset \Sigma$ and
$\DD a$ is primitive, {\it i.e.} for each point
$c \in F(M) \smallsetminus \left\{0\right\}$, the quotient
$\Sigma_c / \Z \langle \DD_c a \rangle$ has no torsion.

To complete the calculation of $\Sigma$, first we prove another
important consequence of Theorem \ref{thm:Eliasson}.

\begin{lemm}[\cite{san_invt}]\label{lemma:sections}
  Let $\left(\R^4,\omega_{\mathrm{can}},q\right)$ denote the local
  model for focus-focus points. Then there exist smooth Lagrangian
  sections $\sigma_s, \sigma_u : \R^2 \to \R^4$ with the property that
  $\sigma_s(0)$ (respectively $\sigma_u(0)$) intersects the stable
  (respectively unstable) manifold of the flow of $\ham{q_2}$.
\end{lemm}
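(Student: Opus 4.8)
The plan is to write the two sections down explicitly in the linear model and verify by hand the three properties required of each: that it is a global section of $q$, that its image is Lagrangian, and that at the origin it meets the prescribed separatrix. The one preliminary step is to identify the stable and unstable manifolds. Computing $\ham{q_2}$ for $q_2 = x_1y_1 + x_2y_2$ with respect to $\omega_{\mathrm{can}} = \DD x_1 \wedge \DD y_1 + \DD x_2 \wedge \DD y_2$ (using the sign convention of Example~\ref{exm:ham_vf_can_coords}) gives $\ham{q_2} = -x_1\partial_{x_1} + y_1\partial_{y_1} - x_2\partial_{x_2} + y_2\partial_{y_2}$, whose flow is $(x_i,y_i)\mapsto(e^{-t}x_i,\, e^{t}y_i)$. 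Hence the stable manifold of the origin is the plane $W^s = \{y_1 = y_2 = 0\}$ and the unstable manifold is $W^u = \{x_1 = x_2 = 0\}$; both sit inside $q^{-1}(0,0)$, in accordance with Proposition~\ref{prop:ff}\ref{item:23}.

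Next I would exhibit the sections. Writing $(a_1,a_2)$ for the coordinates on the base $\RM^2 = \{(q_1,q_2)\}$, set
\[
\sigma_s(a_1,a_2) := (x_1,y_1,x_2,y_2) = (1,\,a_2,\,0,\,a_1), \qquad \sigma_u(a_1,a_2) := (x_1,y_1,x_2,y_2) = (a_2,\,1,\,-a_1,\,0).
\]
Substituting into \eqref{equ:ff-dim4} gives $q_1\circ\sigma_s = q_1\circ\sigma_u = a_1$ and $q_2\circ\sigma_s = q_2\circ\sigma_u = a_2$, so these are genuine (linear, hence smooth) sections of $q$ defined on all of $\RM^2$. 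Their images are the affine $2$-planes $\{x_1 = 1,\, x_2 = 0\}$ and $\{y_1 = 1,\, y_2 = 0\}$; on the first $\DD x_1 = \DD x_2 = 0$ and on the second $\DD y_1 = \DD y_2 = 0$, so in either case $\omega_{\mathrm{can}}$ restricts to $0$ and the image is Lagrangian. Finally $\sigma_s(0,0) = (1,0,0,0)\in W^s$ and $\sigma_u(0,0) = (0,1,0,0)\in W^u$, which is the claimed incidence. Equivalently: identifying $\RM^4 \cong \CM^2$ via $z_1 = x_1 + ix_2$, $z_2 = y_1 + iy_2$ one has $q_2 + iq_1 = \bar z_1 z_2$, $W^s = \{z_2 = 0\}$, $W^u = \{z_1 = 0\}$, and the sections are simply $\zeta \mapsto (1,\zeta)$ and $\zeta \mapsto (\bar\zeta, 1)$.

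There is no genuine obstacle here: the whole content is the observation that coordinate-type affine planes transverse to the fibres can be chosen to pass through either separatrix at the origin while remaining Lagrangian, so no cutoff or averaging argument is needed. The only points demanding care are routine bookkeeping — keeping the $\ham{\cdot}$ sign convention consistent with Example~\ref{exm:ham_vf_can_coords}, and ordering the base coordinates $(a_1,a_2)$ so that the displayed formulas genuinely invert $q$ on the relevant plane. Theorem~\ref{thm:Eliasson} is not used in proving the lemma itself; it enters only afterwards, when these model sections are transported to an actual normalized neighbourhood of a focus-focus fibre in order to compute its bundle of periods.
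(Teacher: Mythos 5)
Your proposal is correct and takes essentially the same route as the paper: the paper passes to complex coordinates in which $q$ becomes $(z_1,z_2)\mapsto z_1z_2$ and writes down the explicit affine Lagrangian sections $w\mapsto(w/\epsilon,\epsilon)$ and $w\mapsto(\epsilon,w/\epsilon)$, which coincide with your sections up to the harmless scale parameter $\epsilon$ (kept in the paper only because it resurfaces in the later period computation, cf.\ the $-2\ln\epsilon\,\DD b$ term in \eqref{eq:4}) and a slightly different choice of complex identification. Note that your labelling of the stable and unstable planes ($W^s=\{y_1=y_2=0\}$, $W^u=\{x_1=x_2=0\}$) is the one forced by the paper's convention $\omega(\ham{f},\cdot)=-\DD f$, whereas the paper's proof states the two labels the other way around; since the lemma is symmetric in $s$ and $u$, this discrepancy only amounts to renaming the two sections.
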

\begin{proof}
  Fix the following complex coordinates on $\R^4 \cong \mathbb{C}^2$
  and on $\R^2 \cong \mathbb{C}$: $z_1=x_1 + i x_2$,
  $z_2 = y_2 + i y_1$, $w = a+ i b$. Then the map $(q_1,q_2)$ can be
  written as $\left(z_1,z_2\right) \mapsto z_1z_2$, {\it i.e.} as a
  complex hyperbolic map. It can be checked that the stable and
  unstable manifolds of the flow of $\ham{q_2}$ are given by
  $\left\{z_1 = 0\right\}$ and $\left\{z_2 = 0 \right\}$. Fix
  $\epsilon > 0$ and consider the sections
  $\sigma_s(w):=\left(\frac{w}{\epsilon}, \epsilon\right)$ and
  $\sigma_u(w):=\left(\epsilon,\frac{w}{\epsilon}\right)$. These
  sections are the desired ones.
\end{proof}

By abuse of notation, denote the restrictions of the smooth Lagrangian
sections of Lemma \ref{lemma:sections} to the open set $q(V)$ by
$\sigma_s$ and $\sigma_u$. From~\eqref{equ:Fq},
$\varsigma_s := \varphi^{-1}\circ \sigma_s$ and
$\varsigma_u:= \varphi^{-1}\circ \sigma_u$ are smooth Lagrangian
sections of $F$ such that $\varsigma_s(0)$ and $\varsigma_u(0)$ `lie
on opposite sides of' the focus-focus point $p_0$. These sections can
help us to determine $\Sigma$ using the method of \cite[Section
3]{san_invt}, as follows.

The aim is to construct a $\ZM$-basis of $\Sigma$ of the form
$(\DD a, \alpha)$ where $\alpha=\tau_1 \DD a + \tau_2 \DD b$ with
$\tau_2\neq 0$. Because of monodromy around $c_0$ (see Corollary
\ref{cor:monodromy} below) this basis cannot be globally defined on
$W\smallsetminus 0$, but its pull-back to the universal cover of
$W \smallsetminus 0$ can be. In the fact, the above basis is going to
be well-defined on $W\smallsetminus 0$ modulo
$\ZM \langle \DD a \rangle$. Let $U'\subset U$ be an open neighborhood
of $p_0\in M$ such that $M\smallsetminus U'$ contains the images of
$\varsigma_u$ and $\varsigma_s$ and $U'$ intersects all fibers of $F$
(thus $F(U')=F(U)=W$; such a $U'$ can be explicitly constructed in the
normal form). Since the action of
$T^* \left(W \smallsetminus \left\{0\right\}\right)$ on $M$ is
abelian, we may split $\alpha$ as $\alpha = \alpha_0 + \alpha_1$,
where $\alpha_0$ is the solution to
$\phi^{2\pi}_{\alpha_0} \circ \varsigma_u = \varsigma_s$, while
$\alpha_1$ is the solution to
$\phi^{2\pi}_{\alpha_1} \circ \varsigma_s = \varsigma_u$ with the
following properties:
\begin{enumerate}[label=(\arabic*.), leftmargin=*]
\item
  $\forall t\in [0,2\pi], \quad \forall w\in W, \quad
  \phi^t_{\alpha_0}\circ \varsigma_u(w)\in U$;
\item
  $\forall t\in [0,2\pi], \quad \forall w\in W, \quad
  \phi^t_{\alpha_1}\circ \varsigma_s(w)\in M\smallsetminus U'$.
\end{enumerate}
The upshot of this method, which justifies the passage to a normalized
system~\eqref{equ:Fq}, is that $\alpha_0$ can be calculated
explicitly. Near any $w\in W\smallsetminus 0$, $\alpha_0$ is given by
the closed form that satisfies
$\Phi^{2\pi}_{\alpha_0} \sigma_s = \sigma_u$, where
$\Phi^t_{\alpha_0}$ is the flow of the vector field corresponding to
$q^*\alpha_0$ using $\omega_{\mathrm{can}}$. Using the complex
coordinates of the proof of Lemma \ref{lemma:sections}, it can be
checked that, modulo $\Z \langle \DD a\rangle $,
\begin{equation}
  \label{eq:4}
  \alpha_0 := \frac{1}{2\pi} \mathrm{Im}\left(\DD \left( w \log w -
      w\right)\right) - \left(2 \ln \epsilon \right)\DD b, 
\end{equation}
where $\mathrm{Im}$ denotes the imaginary part and $\log$ is any
determination of the complex logarithm that is smooth near $w$
(cf. \cite[proof of Proposition 3.1]{san_invt}, taking into account
that, there, the convention for $q=(q_1,q_2)$ is to set the
$S^1$-moment map on the second component). Since
$\textup{Im}(w \log w) = a \arg w + b \log \abs{w}$, $\alpha_0$ is
smooth on the universal cover of $W\smallsetminus 0$, and well-defined
on $W\smallsetminus 0$ modulo $\ZM\langle \DD a \rangle$.

In order to define $\alpha_1$ we observe that the fibration $F$
restricted to $M\smallsetminus U'$ is trivial above the whole of $W$;
in fact the fibers of this restriction of $F$ are cylinders, see
Theorem~\ref{theo:topology_ff} and Lemma \ref{lemma:connected}.  Hence
the equation $\phi^{2\pi}_{\alpha_1} \circ \varsigma_s = \varsigma_u$
admits a unique smooth solution on $W$ for which the flow stays in
$M\smallsetminus U'$, as required by item (2.) above.  Moreover, since
$\varsigma_u, \varsigma_s$ are smooth Lagrangian sections, $\alpha_1$
is closed. Finally, writing
$\alpha = \alpha_0 + \alpha_1=\tau_1 \DD a + \tau_2 \DD b$ we see that
$\tau_2 \sim \frac{1}{2\pi}\log \abs{w}$ as $w\to 0$ and hence
$\tau_2\neq 0$ if $w$ is small enough, which proves that $\alpha$ is
linearly independent from $\DD a$.


Since $W$ simply connected, $\alpha_1$ is exact. This discussion
proves the following result (cf. \cite[Proposition 3.1]{san_invt}).
\begin{lemm}\label{lemma:period_ff}
  The bundle of periods associated a normalized neighborhood of a
  focus-focus fiber of multiplicity one is given by
  \begin{equation}
    \label{eq:10}
    \Sigma := \Z \left\langle \DD a, 
      \frac{1}{2\pi} \mathrm{Im}\left(\DD \left( w \log w -
          w\right)\right) + \DD h \right\rangle, 
  \end{equation}
  where $\log$ is any determination of the complex logarithm and
  $h : F(M) \to \R$ is a smooth function.
\end{lemm}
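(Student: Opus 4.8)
The plan is to apply the general recipe for the bundle of periods extracted from the proof of Theorem~\ref{thm:aa}: after fixing a local Lagrangian section $\sigma$ of $F$ over $W\smallsetminus\{0\}$, the fibre $\Sigma_c$ over a regular value $c$ consists exactly of those (germs of) closed $1$-forms $\alpha$ near $c$ for which the time-$2\pi$ flow $\phi^{2\pi}_\alpha$ of $\ham{\alpha}$ maps $\sigma(c)$ to itself. Theorem~\ref{thm:aa} guarantees that $\Sigma\to W\smallsetminus\{0\}$ is a rank-$2$ lattice bundle, and Lemma~\ref{lemma:norm} supplies one element: the flow of $\ham{f_1}$ is $2\pi$-periodic on $F^{-1}(W)$, so $\DD a\in\Sigma_c$ for all $c$, and (using effectiveness of the $S^1$-action together with its uniqueness up to sign from Corollary~\ref{cor:unique_sys_pres}) $\DD a$ is primitive in $\Sigma_c$. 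It therefore suffices to produce a single further element $\alpha\in\Sigma$ that, together with $\DD a$, forms a $\Z$-basis --- equivalently, whose class generates $\Sigma_c/\Z\langle\DD_c a\rangle$.

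To build $\alpha$ I would use the two smooth Lagrangian sections of $F$ obtained by transporting the model sections of Lemma~\ref{lemma:sections} through the normalizing symplectomorphism, $\varsigma_s=\varphi^{-1}\circ\sigma_s$ and $\varsigma_u=\varphi^{-1}\circ\sigma_u$, whose values at $0$ lie on opposite sides of the focus-focus point. Since the action of $T^*(W\smallsetminus\{0\})$ on $M$ is abelian, one can look for $\alpha$ in the form $\alpha=\alpha_0+\alpha_1$, where $\alpha_0$ is the closed $1$-form whose flow carries $\varsigma_u$ to $\varsigma_s$ while staying inside the Eliasson chart $U$, and $\alpha_1$ is the closed $1$-form whose flow carries $\varsigma_s$ back to $\varsigma_u$ while staying in $M\smallsetminus U'$; then $\phi^{2\pi}_\alpha$ returns $\varsigma_u$ to itself, so $\alpha\in\Sigma$. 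The whole point of having normalized the system is that $\alpha_0$ can be computed \emph{inside the linear model}: writing $(q_1,q_2)$ in the complex coordinates of Lemma~\ref{lemma:sections} as $(z_1,z_2)\mapsto z_1z_2$ and integrating the flow of the vector field associated with $q^*\alpha_0$ between $\sigma_u$ and $\sigma_s$, one obtains, modulo $\Z\langle\DD a\rangle$, the expression $\alpha_0=\frac{1}{2\pi}\mathrm{Im}\bigl(\DD(w\log w-w)\bigr)-(2\ln\epsilon)\DD b$; because $\mathrm{Im}(w\log w)=a\arg w+b\log\abs{w}$, this $1$-form is smooth on the universal cover of $W\smallsetminus\{0\}$ and well-defined on $W\smallsetminus\{0\}$ modulo $\Z\langle\DD a\rangle$.

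For $\alpha_1$ I would use that the restriction of $F$ to $M\smallsetminus U'$ is a trivial bundle with cylinder fibres over all of $W$ (Theorem~\ref{theo:topology_ff} and Lemma~\ref{lemma:connected}), so the equation $\phi^{2\pi}_{\alpha_1}\circ\varsigma_s=\varsigma_u$ has a unique smooth solution on $W$ whose flow remains in $M\smallsetminus U'$; being the difference of two Lagrangian sections, $\alpha_1$ is closed, and since $W$ is simply connected it is exact. Folding the exact terms $-(2\ln\epsilon)\DD b$ and $\alpha_1$ into a single $\DD h$ with $h\colon F(M)\to\R$ smooth yields $\alpha=\frac{1}{2\pi}\mathrm{Im}\bigl(\DD(w\log w-w)\bigr)+\DD h$. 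Writing $\alpha=\tau_1\DD a+\tau_2\DD b$, the asymptotics give $\tau_2\sim\frac{1}{2\pi}\log\abs{w}\to-\infty$ as $w\to 0$, so $\tau_2\neq 0$ near $0$ and $\alpha$ is linearly independent of $\DD a$; since the loop realized by this flow construction completes the $S^1$-orbit to a basis of $H_1$ of the regular fibre torus, $(\DD a,\alpha)$ is a $\Z$-basis of $\Sigma$, which is precisely \eqref{eq:10}.

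The step I expect to be the main obstacle is the explicit integration producing the formula for $\alpha_0$: one must set up the flow of $\ham{q^*\alpha_0}$ in the complex hyperbolic normal form, determine the time needed to travel from the section $\sigma_u$ to the section $\sigma_s$ as a function of $w$, and recognize the resulting multivalued primitive as $\frac{1}{2\pi}\mathrm{Im}(w\log w-w)$ up to $\Z\langle\DD a\rangle$ --- the emergence of $\log\abs{w}$ and $\arg w$ here is exactly what later forces the monodromy of Corollary~\ref{cor:monodromy}. A secondary, more bookkeeping-type point is verifying that $(\DD a,\alpha)$ is genuinely a $\Z$-basis and not merely a finite-index sublattice; this is where primitivity of $\DD a$ and the fact that the constructed cycle generates $H_1(\text{fibre})/\Z\langle S^1\text{-orbit}\rangle$ are used.
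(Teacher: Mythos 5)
Your proposal is correct and follows essentially the same route as the paper: primitivity of $\DD a$ from the $2\pi$-periodic flow of $f_1$, the splitting $\alpha=\alpha_0+\alpha_1$ using the transported Lagrangian sections $\varsigma_u,\varsigma_s$, the explicit computation of $\alpha_0$ in the normalized model giving $\frac{1}{2\pi}\mathrm{Im}\left(\DD\left(w\log w-w\right)\right)$ modulo $\Z\langle\DD a\rangle$ plus exact terms, closedness and exactness of $\alpha_1$ over the simply connected $W$, and the asymptotics $\tau_2\sim\frac{1}{2\pi}\log\abs{w}$ for independence. The points you flag as delicate (the model integration and the basis-versus-finite-index check) are exactly the ones the paper handles by citing the explicit computation in \cite[Proposition 3.1]{san_invt} and by the cylinder-fiber/homology argument, so no gap.
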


\paragraph{Intermezzo: Hamiltonian monodromy of neighborhoods of
  focus-focus fibers of multiplicity one.}
As remarked in~\cite{san_bohr}, Lemma \ref{lemma:period_ff} allows to
describe the Hamiltonian monodromy of any neighborhood of a
focus-focus fiber of multiplicity 1, a result that was first proved in
\cite{zou,matveev,zung_ff}. To see this, observe that Hamiltonian
monodromy measures precisely the non-triviality of the bundle of first
homology groups of the fibers (cf. \cite{dui,zung_ff}); and that this
bundle is dual to the bundle of periods via equation
\eqref{eq:12}. Furthermore, the latter is an invariant of the
isomorphism type of (a germ of) an integrable system (at a singular
fiber). Thus it suffices to consider normalized neighborhoods of
focus-focus fibers to calculate this invariant.

First, observe that, while the expression for the period bundle
$\Sigma$ of $\is$ of \eqref{eq:10} makes sense, the bundle
$\Sigma \to F(M) \smallsetminus \left\{(0,0)\right\}$ is {\em not
  trivial}, for the function $\log$ is multivalued. This is what gives
rise to non-trivial Hamiltonian monodromy in the presence of
focus-focus fibers (cf. \cite{matveev,zung_ff}); using Lemma
\ref{lemma:period_ff}, this can be formulated as follows. Set
$ l := \left\{ (a,b) \in \R^2 \mid a = 0, b \geq 0 \right\}$ and
identify in the following $\R^2 \cong \mathbb{C}$ using the complex
coordinate $w = a + i b$ of Lemma \ref{lemma:sections}. Let
$\mathrm{Log} : \mathbb{C} \smallsetminus l \to \mathbb{C}$ denote the
branch of the complex logarithm defined by setting, for all $b >0$,
\begin{equation}
  \label{eq:22}
  \lim\limits_{\stackrel{(a,b) \to (0,b)}{a > 0}} \arg(a+ib) =
  \frac{\pi}{2} \qquad \text{and} \qquad \lim\limits_{\stackrel{(a,b) \to (0,b)}{a < 0}} \arg(a+ib) =
  -\frac{3\pi}{2},
\end{equation}
where $\arg : \mathbb{C} \smallsetminus l \to \R$ is the induced
argument function. Moreover, set $\beta_1 := \DD a$,
$\beta_2 := \frac{1}{2\pi} \mathrm{Im}\left(\DD \left( w \log w -
    w\right)\right) + \DD h$.

\begin{coro}[Linear holonomy of a focus-focus fiber with
  multiplicity one]\label{cor:monodromy}
  For any $b > 0$, the following equality holds
  \begin{equation}
    \label{eq:11}
    \lim\limits_{\stackrel{(a,b) \to (0,b)}{a > 0}}
    \begin{pmatrix}
      \beta_1 (a,b)\\
      \beta_2 (a,b)
    \end{pmatrix} =
    \begin{pmatrix}
      1 & 0 \\
      1 & 1
    \end{pmatrix}
    \lim\limits_{\stackrel{(a,b) \to (0,b)}{a < 0}}
    \begin{pmatrix}
      \beta_1 (a,b)\\
      \beta_2 (a,b)
    \end{pmatrix}.
  \end{equation}
\end{coro}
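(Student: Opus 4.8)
The plan is to unwind the explicit formula for $\beta_2$ supplied by Lemma~\ref{lemma:period_ff} and thereby reduce \eqref{eq:11} to a statement about the jump of the argument function across the half-line $l$. First I would record that $\beta_1 = \DD a$ extends smoothly across $l \smallsetminus \{0\}$, so that at a point $(0,b)$ with $b > 0$ its one-sided limits from $a>0$ and from $a<0$ coincide. Writing $\beta_i^{+}$ and $\beta_i^{-}$ for these two one-sided limits, this already yields the first row of \eqref{eq:11}, namely $\beta_1^{+} = \beta_1^{-}$ (equivalently $\beta_1^{+} = 1\cdot\beta_1^{-} + 0\cdot\beta_2^{-}$).

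Next I would compute the relevant differential. Since $\frac{\DD}{\DD w}\left(w \log w - w\right) = \log w$, one has $\DD\left(w\log w - w\right) = \log w \, \DD w$ with $\DD w = \DD a + i \, \DD b$; substituting $\log w = \ln\abs{w} + i \arg w$ and taking imaginary parts gives
\[
\mathrm{Im}\left(\DD\left(w \log w - w\right)\right) = \arg w \, \DD a + \ln\abs{w}\, \DD b,
\]
so that, by Lemma~\ref{lemma:period_ff},
\[
\beta_2 = \frac{1}{2\pi}\left(\arg w \, \DD a + \ln\abs{w}\, \DD b\right) + \DD h.
\]
Because $h$ is smooth on the open set $F(M) \subset \R^2$ and $\ln\abs{w}$ is continuous on $\CM \smallsetminus \{0\}$, the terms $\DD h$ and $\frac{1}{2\pi}\ln\abs{w}\, \DD b$ have equal one-sided limits at $(0,b)$ for $b>0$, hence do not contribute to the jump. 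Thus the entire holonomy is carried by the coefficient $\arg w$ of $\DD a$.

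Finally I would invoke the defining property \eqref{eq:22} of the branch $\mathrm{Log}$: for $b > 0$ one has $\lim_{a \to 0^{+}}\arg(a+ib) = \frac{\pi}{2}$ and $\lim_{a \to 0^{-}}\arg(a+ib) = -\frac{3\pi}{2}$, whence the one-sided limits of $\arg w$ differ by exactly $2\pi$. Consequently
\[
\beta_2^{+} - \beta_2^{-} = \frac{1}{2\pi}\left(\lim_{a\to 0^{+}}\arg w - \lim_{a\to 0^{-}}\arg w\right)\DD a = \DD a = \beta_1^{-},
\]
using once more that $\beta_1 = \DD a$ is continuous. Together with $\beta_1^{+} = \beta_1^{-}$ this is precisely the matrix identity \eqref{eq:11}. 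Since the substantive work is already contained in Lemma~\ref{lemma:period_ff}, the corollary itself is essentially a computation; the one point I would treat with care is the bookkeeping of the sign of the jump of $\arg$, i.e. checking that the convention \eqref{eq:22} makes the argument increase by $+2\pi$ (rather than decrease by $2\pi$) upon crossing $l$ from the $a<0$ side to the $a>0$ side, since the opposite sign would produce the inverse monodromy matrix $\begin{pmatrix} 1 & 0 \\ -1 & 1 \end{pmatrix}$.
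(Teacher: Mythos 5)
Your proposal is correct and follows essentially the same route as the paper's proof: continuity of $\beta_1=\DD a$ gives the first row, and the jump of $\beta_2$ across the cut comes from the multivalued $\arg w$ coefficient together with smoothness of $h$, yielding exactly $\beta_1(0,b)$. You simply spell out the computation $\mathrm{Im}\left(\DD\left(w\log w - w\right)\right)=\arg w\,\DD a+\ln\abs{w}\,\DD b$ and the $2\pi$ jump dictated by \eqref{eq:22}, which the paper leaves implicit, and your sign bookkeeping is right.
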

\begin{proof}
  Fix $b > 0$. Since $\beta_1$ is globally defined on $\R^2$, it
  follows that
  \[
  \lim\limits_{\stackrel{(a,b) \to (0,b)}{a > 0}}\beta_1(a,b) =
  \lim\limits_{\stackrel{(a,b) \to (0,b)}{a < 0}}\beta_1(a,b).
  \]

  On the other hand, with the above choice of complex logarithm, have
  that
  $$ \lim\limits_{\stackrel{(a,b) \to (0,b)}{a > 0}}\beta_2(a,b) -
  \lim\limits_{\stackrel{(a,b) \to (0,b)}{a < 0}}\beta_2(a,b) =
  \beta_1(0,b);$$
  this uses the fact that the smooth function $h$ in Lemma
  \ref{lemma:period_ff} is continuous at the origin.
\end{proof}


\paragraph{The regularized action.}
Lemma \ref{lemma:period_ff} contains data of the normalized system
that is finer than the Hamiltonian monodromy. Fix the above
identification $\R^2 \cong \mathbb{C}$ and the complex logarithm
$\mathrm{Log} : \mathbb{C} \smallsetminus l \to \mathbb{C}$ determined
by equation \eqref{eq:22}, and consider the restriction of the bundle
of periods $\Sigma|_{F(M) \smallsetminus l}$. First, observe that the
smooth function $h \in C^{\infty}\left(F(M)\right)$ is not uniquely
defined by \eqref{eq:10}. This is because if
$h_0 \in C^{\infty}\left(F(M)\right)$ is a smooth function satisfying
\[
\Sigma|_{F(M) \smallsetminus l}= \Z \left\langle \DD a, \frac{1}{2\pi}
  \mathrm{Im}\left(\DD \left( w \, \mathrm{Log}\, w - w\right)\right)
  + \DD h_0 \right\rangle,
\]
then, for any $k \in \Z$ and $c \in \R$, the function
$h:= h_0 + ka + c$ is another function satisfying the above
equality. However, there is a unique smooth function
$\mathsf{h} \in C^{\infty}\left(F(M)\right)$ satisfying the above
equality and the conditions
\begin{equation}
  \label{eq:23}
  \mathsf{h}(0,0) = 0 \qquad \text{and} \qquad \frac{\partial
    \mathsf{h}}{\partial a}(0,0) \in [0,1[.
\end{equation}
Following \cite[Remark 3.2]{san_invt}, we introduce the following
terminology.

\begin{defi}\label{defn:reg_action}
  Let $\left(\is,\varphi\right)$ be a normalized neighborhood of a
  focus-focus fiber of multiplicity one. The function
  $\mathsf{h}\left(\is,\varphi\right) \in C^{\infty}\left(F(M)\right)$
  constructed above is called the {\em regularized action of
    $\left(\is,\varphi\right)$}.
\end{defi}

\begin{rema}\label{rmk:choices}
  Strictly speaking, the above construction of the regularized action
  of $\left(\is,\varphi\right)$ depends on the following other
  choices:
  \begin{itemize}[leftmargin=*]
  \item smooth Lagrangian sections of the local model for a singular
    point of focus-focus type which satisfy the conditions of Lemma
    \ref{lemma:sections};
  \item a branch of the complex logarithm;
  \item the conditions \eqref{eq:23}.
  \end{itemize}
  While there are ways to adapt the above construction so that it
  becomes independent of the above choices, we follow the blueprint of
  \cite[Section 3]{san_invt} and fix the above choices as there is no
  loss in generality in doing so.
\end{rema}

\subsubsection{The symplectic invariant}\label{sec:taylor-series-orbit}
The aim of this subsection is to construct an invariant of the
isomorphism class of the germ of a neighborhood of a focus-focus fiber
of multiplicity one starting from the Taylor series at the origin of
the regularized action of Definition \ref{defn:reg_action}, following
the approach of \cite{san_invt}. In what follows, given a smooth
function $h$ defined near the origin, we denote the Taylor series of
$h$ at $(0,0)$ by $\left(h\right)^{\infty}$.

To start, observe that given any
$\left[\is\right] \in \mathcal{G}_{\mathrm{ff}}$, the proof of Lemma
\ref{lemma:nice_rep} constructs a normalized
$\left(\left(M',\omega',F'\right),\varphi\right)$ such that
$\left(M',\omega',F'\right) \in \left[\is\right]$. This construction
depends on two choices, namely:
\begin{enumerate}[label=(\Roman*), ref = (\Roman*), leftmargin=*]
\item \label{item:19} a representative $\is \in \left[\is\right]$;
\item \label{item:26} an Eliasson symplectomorphism
  $\left(\varphi,g\right)$ for the representative $\is$.
\end{enumerate}
Section \ref{sec:calc-bundle-peri} associates to
$\left(\left(M',\omega',F'\right),\varphi\right)$ its regularized
action $\mathsf{h}\left(\left(M',\omega',F'\right),\varphi\right)$
(see Definition \ref{defn:reg_action}). Thus, in order to define an
invariant of $\left[\is\right]$ starting from
$\left(\mathsf{h}\left(\left(M',\omega',F'\right),\varphi\right)\right)^{\infty}$,
it suffices to construct an object that does not depend on the choices
\ref{item:19} and \ref{item:26}. This is achieved below dealing first
with the case in which the choice \ref{item:19} is fixed and then with
the general case. However, in order to define the invariant, we first
need to understand strong symplectic equivalences of the local model
for a singular point of focus-focus type relative to subsets
containing the origin as well as a group action on Taylor series of
smooth functions at the origin. This is achieved in the following two
intermezzos.

\paragraph{First intermezzo: germs of automorphisms of the local model
  for a singular point of focus-focus type.}


Let $\mathrm{Aut}\left((\R^4,0),\omega_{\mathrm{can}},q\right)$ be the
set of germs of symplectomorphisms $\phy$ defined in a neighborhood of
the origin in $\RM^4$ that act on the strong equivalence class of $q$,
\emph{i.e.} for which there exists a (germ of) local diffeomorphism
$g$ of $(\RM^2,0)$ such that
\begin{equation}
  q \circ \phy = g \circ q.
  \label{equ:automorphism}
\end{equation}

One question that we need to address in order to define the desired
symplectic invariant is: what are the possible $g$'s obtained in this
fashion?

As mentioned in \cite[proof of Lemma 4.1]{san_invt}, if
$\left(\varphi,g \right)$ satisfies~\eqref{equ:automorphism}, then the
germ of $g$ at the origin in $\R^2$ is determined by that of $\varphi$
at the origin in $\R^4$, as the fibers of $q$ are locally connected
near the origin. Hence we may identify
$\mathrm{Aut}\left((\R^4,0),\omega_{\mathrm{can}},q\right)$ with the
set of pair $(\phy,g)$
satisfying~\eqref{equ:automorphism}. Furthermore, if $(\phy',g')$ is
another pair of germs satisfying~\eqref{equ:automorphism}, then
$q\circ \phy \circ \phy' = g \circ g ' \circ q$; this shows that the
group structure of
$\mathrm{Aut}\left((\R^4,0),\omega_{\mathrm{can}},q\right)$ extends to
pairs.

The analysis performed in~\cite{san_invt} gives the following answer
to the above question.  Let $G$ be the group of germs of local
diffeomorphisms of $(\RM^2,0)$ of the form
\begin{equation}
  \label{equ:g-group}
  g(a,b) = \left(\epsilon_1 a,\epsilon_2 b +
    \mathrm{O}\left(\infty\right)\right), 
\end{equation}
where $a,b$ are the standard coordinates on $\R^2$, for $i=1,2$,
$\epsilon_i \in \{ \pm 1\}$, and $\mathrm{O}\left(\infty\right)$
denotes a flat function in the variables $a$ and $b$ defined near
$(0,0)$ (this is by definition a smooth function $h$ with
$(h)^\infty=0$).
\begin{prop}
  \label{prop:germs_auto}
  There exists a pair $(\phy,g)$ satisfying~\eqref{equ:automorphism}
  if and only of $g\in G$. (In other words, the natural group
  homomorphism
  $\mathrm{Aut}\left((\R^4,0),\omega_{\mathrm{can}},q\right) \to G$ is
  onto.)
\end{prop}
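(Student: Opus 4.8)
The plan is to prove the two inclusions: that any $g$ arising from an automorphism lies in $G$, and conversely that every $g \in G$ is realized by some symplectomorphism $\varphi$. For the first direction, suppose $(\varphi, g)$ satisfies $q \circ \varphi = g \circ q$ near the origin. Since $\varphi(0) = 0$ (as $0$ is the unique rank-0 singular point of $q$, being focus-focus, and symplectomorphisms preserve Williamson type by Theorem \ref{thm:will_preserved}), we have $g(0,0) = (0,0)$. Differentiating the relation at the origin, $\DD g(0,0)$ conjugates the linearized foliation data, hence maps the quadratic part of $q$ to itself up to composition; concretely, using the complex model $q = (q_1, q_2)$ with $z_1 = x_1 + ix_2$, $z_2 = y_2 + iy_1$ so that $(q_1,q_2)$ corresponds to $(z_1,z_2) \mapsto z_1 z_2$, the $S^1$-action generated by $q_1$ is unique up to sign (Proposition \ref{prop:ff} \ref{item:20}), so $\varphi$ either preserves or reverses it; this forces $\DD g(0,0)$ to be diagonal with $\pm 1$ entries, i.e. $g(a,b) = (\epsilon_1 a, \epsilon_2 b) + \mathrm{O}(2)$. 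To upgrade "$\mathrm{O}(2)$" to "$\mathrm{O}(\infty)$", I would invoke the key rigidity fact (this is where the real content of \cite{san_invt} enters, cf. \cite[Lemma 4.1]{san_invt}): any two Eliasson diffeomorphisms for the same focus-focus germ differ by a flat perturbation in the base, because the Taylor series of the base map is rigidly determined by the requirement of intertwining the (formal) normal forms — the formal Birkhoff normal form of a focus-focus singularity has no nontrivial formal automorphisms beyond the $\epsilon_i$ signs. Hence $g \in G$.

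For the converse — which I expect to be the main obstacle — I must show every $g \in G$ is hit. The signs $\epsilon_1, \epsilon_2$ are easy: one exhibits explicit linear symplectomorphisms of $\R^4$ realizing $(a,b) \mapsto (-a, b)$ and $(a,b) \mapsto (a,-b)$ (for instance, in the complex coordinates, $z_1 \mapsto \bar z_1$, $z_2 \mapsto \bar z_2$ type maps, suitably adjusted to be symplectic, or swapping/conjugating the two complex planes), and composes. The substantial point is realizing an arbitrary flat perturbation: given $g(a,b) = (a, b + \phi(a,b))$ with $\phi$ flat at the origin, construct a symplectomorphism $\varphi$ of a neighborhood of $0 \in \R^4$ with $q \circ \varphi = g \circ q$. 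The natural approach is a Moser-type / time-dependent flow argument: interpolate $g_t(a,b) = (a, b + t\phi(a,b))$, lift the generating vector field to $\R^4$ using the infinitesimal action $\mathfrak{a}$ of Lemma \ref{lemm:cihs_as_inf_act} (i.e. pull back the $1$-form generating $\dot g_t$ along $q$ and contract with $\omega_{\mathrm{can}}^{-1}$), and integrate the resulting Hamiltonian flow; since $\phi$ is flat the generating Hamiltonian is flat along the focus-focus fiber and one checks the flow is defined on a (possibly shrunk) neighborhood of the origin for all $t \in [0,1]$. The output $\varphi := \phi^1$ is symplectic by construction and satisfies $q \circ \varphi = g_1 \circ q = g \circ q$. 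Flatness is what guarantees both that the vector field extends smoothly across the singular fiber and that the flow does not escape the neighborhood.

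The one technical subtlety to be careful about is that the $1$-form on the base generating $\dot g_t$ need not be closed, so $\mathfrak{a}$ applied to it is merely a vector field tangent to the fibers of $q$, not a Hamiltonian one — but that is fine, since all we need is that its flow is a fiber-preserving diffeomorphism inducing $g_t$ on the base; symplecticity of the \emph{composite} $\varphi$ can instead be arranged by a separate Moser argument comparing $\varphi^*\omega_{\mathrm{can}}$ with $\omega_{\mathrm{can}}$ (both inducing the same foliation with the same $g$, they differ by a flat term and one corrects by a further flat symplectomorphism fixing $q$), or one can simply cite \cite[Lemma 4.1]{san_invt} where exactly this realization statement is established. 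I would structure the written proof so that the "$g \mapsto$ realizable" direction is reduced cleanly to the flat-perturbation case and then to the cited normal-form lemma, keeping the new content to the bookkeeping of signs and the observation that the formal automorphism group of the focus-focus normal form is exactly $\{\pm 1\}^2$.
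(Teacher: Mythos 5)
Your overall architecture coincides with the paper's: the necessity direction is delegated to \cite[Lemma 4.1]{san_invt}, the signs are realized by explicit linear symplectomorphisms and then composed away (the paper uses $A_{1,0}(x_1,y_1,x_2,y_2)=(x_2,y_2,x_1,y_1)$ and $A_{0,1}(x_1,y_1,x_2,y_2)=(y_1,-x_1,y_2,-x_2)$, which satisfy $q\circ A_{1,0}=(-q_1,q_2)$, $q\circ A_{0,1}=(q_1,-q_2)$ and give an injective homomorphism of $K_4$ into $\mathrm{Sp}(4,\RM)$), and the remaining case $\vec\epsilon(g)=(0,0)$, i.e.\ $g$ equal to the identity up to a flat term, is not reproved but cited --- in the paper from \cite[Lemma 5.1, step (2) of the proof]{san_invt}; note that Lemma 4.1 of that reference, which you invoke for this step, is the \emph{necessity} statement, not the realization.

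The genuine gap is in your proposed self-contained argument for the flat case. The lift you describe --- pull back the $1$-form generating $\dot g_t$ by $q$ and contract with $\omega_{\mathrm{can}}^{-1}$ --- is exactly the infinitesimal action $\mathfrak{a}(\alpha)=\omega_{\mathrm{can}}^{-1}(q^*\alpha)$ from the proof of Theorem \ref{thm:dar_car}, and such a vector field lies in $\Gamma\left(\ker \DD q\right)$: its flow satisfies $q\circ\phi^t_\alpha=q$, hence induces the \emph{identity} on the base, never a nontrivial $g_t$. You notice that the lifted field is ``merely tangent to the fibers'' but then assert its flow is a fiber-preserving diffeomorphism inducing $g_t$ on the base; these two statements contradict each other. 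To realize $g_t$ one needs a vector field $Y_t$ with $\DD q\left(Y_t\right)=X_t\circ q$, i.e.\ a transverse lift, and arranging such a lift to be symplectic (or correcting it by a Moser-type argument) in a neighborhood of the singular point of $q$ is precisely the nontrivial content of the cited step of \cite{san_invt}, where flatness of the perturbation is what makes the correction possible. As written, your flow produces nothing beyond fiberwise motion, so either repair the lift and the subsequent symplectic correction, or fall back on the citation --- with the corrected reference.
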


\begin{proof}
  The necessity $g\in G$ is proved in \cite[Lemma 4.1]{san_invt}.  In
  order to prove the converse, let
  \begin{equation}
    \vec\epsilon:G\to (\ZM/2\ZM)\times (\ZM/2\ZM)
    \label{equ:signs}
  \end{equation}
  be the homomorphism that assigns to $g\in G$ of the
  form~\eqref{equ:g-group} the `signs'
  $(\mathsf{s}(\epsilon_1),\mathsf{s}(\epsilon_2))$, where
  $\mathsf{s} : \left\{+1,-1\right\} \to \ZM/2\ZM$ is the natural
  isomorphism. 
  If $\vec\epsilon(g)=(0,0)$, then the existence a pair
  $\left(\phy,g\right)$ satisfying \eqref{equ:automorphism} is
  established in \cite[Lemma 5.1, step (2) of the
  proof]{san_invt}. The general case is briefly mentioned in
  \cite[proof of Lemma 4.1]{san_invt} and we recall it because it
  plays an important role.

  Notice that the homomorphism $\vec\epsilon$ is onto, for, given any
  $j=(j_1,j_2)\in (\ZM/2\ZM)\times (\ZM/2\ZM)$, the linear
  diffeomorphism $g_j(a,b):=(\epsilon_1 a, \epsilon_2 b)$, where
  $\epsilon_k:=\mathsf{s}^{-1}(j_k)$, satisfies
  $\vec\epsilon(g_j) = j$.  Consider the two linear symplectomorphisms
  $A_{1,0}$ and $A_{0,1}$ in $\textup{Sp}(4,\RM)$ given by
  \begin{equation}
    \label{equ:A}
    \begin{split}
      A_{1,0}\left(x_1,y_1,x_2,y_2\right) &=
      \left(x_2,y_2,x_1,y_1\right) \\
      A_{0,1}\left(x_1,y_1,x_2,y_2\right) &=
      \left(y_1,-x_1,y_2,-x_2\right),
    \end{split}
  \end{equation}
  and notice, in view of~\eqref{equ:ff-dim4}, that
  $q\circ A_{1,0} = (-q_1,q_2)$ and $q\circ A_{0,1} = (q_1,-q_2)$. In
  other words, the pairs $(A_{1,0}, g_{1,0})$ and $(A_{0,1}, g_{0,1})$
  both satisfy~\eqref{equ:automorphism}. Since $A_{1,0}$ and $A_{0,1}$
  commute, setting $A_{1,1}:=A_{1,0}\circ A_{0,1}$, we obtain an
  injective homomorphism from $\ZM/2\ZM\times\ZM/2\ZM$ to
  $\textup{Sp}(4,\RM)$ mapping $j=(j_1,j_2)$ to $A_j$. This, in turn,
  defines an injective homomorphism from $\ZM/2\ZM\times \ZM/2\ZM$ to
  $\mathrm{Aut}\left((\R^4,0),\omega_{\mathrm{can}},q\right)$, sending
  $j$ to $(A_j, g_j)$.

  Hence, if a general $g\in G$ is given, we are reduced to the case of
  vanishing $\vec\epsilon$, for, once we find
  $(\phy,g\circ g_j)\in
  \mathrm{Aut}\left((\R^4,0),\omega_{\mathrm{can}},q\right)$,
  with $j=\vec\epsilon (g)$, then
  $(\tilde\phy,g)\in
  \mathrm{Aut}\left((\R^4,0),\omega_{\mathrm{can}},q\right)$
  with $\tilde\phy := \phy\circ A_j$.

\end{proof}

\begin{rema}
  \label{rema:orientation-preserving}
  Depending on the type of equivalence of neighborhoods of focus-focus
  fibers of multiplicity one under consideration, we may restrict our
  attention to specific subgroups of
  $\mathrm{Aut}\left((\R^4,0),\omega_{\mathrm{can}},q\right)$. For
  instance, if we consider only strong symplectic equivalences whose
  underlying diffeomorphisms preserve orientation, then the
  corresponding subgroup of
  $\mathrm{Aut}\left((\R^4,0),\omega_{\mathrm{can}},q\right)$ is the
  preimage of the subgroup $G_+ \subset G$ consisting of
  orientation-preserving germs. Observe that $G_+$ surjects onto the
  diagonal $\ZM/2\ZM \subset (\ZM/2\ZM)\times (\ZM/2\ZM)$ under
  $\vec\epsilon$.

  Using the notion of isomorphism of semi-toric systems (see
  Definition \ref{defn:equiv_st}), the corresponding subgroup of
  $\mathrm{Aut}\left((\R^4,0),\omega_{\mathrm{can}},q\right)$ is the
  preimage of $G_0 =\ker \vec\epsilon \subset G$, which consists of
  elements which are, up to a flat term, equal to the identity.
    
\end{rema}

\paragraph{Second intermezzo: actions of $K_4:=\Z/2\Z \times \Z/2\Z$
  on (equivalence classes of) Taylor series.}
As shown above the Klein group $K_4=\Z/2\Z \times \Z/2\Z$ appears
naturally when studying (germs of) automorphisms of the local model
for a singular focus-focus point. The proof of Proposition
\ref{prop:germs_auto} shows that $K_4$ acts on $\RM^4$ by linear
symplectomorphisms and on $\RM^2$ by linear automorphisms.  In order
to define the desired symplectic invariant, we introduce an action of
$K_4$ on (some quotient of) the space of Taylor series in two
variables, which is \emph{not} the natural action induced by the
linear automorphisms of $\RM^2$, but which is natural for the problem
under consideration, see Lemma \ref{lemm:K4-equivariant} and Theorem
\ref{thm:well_defined}.

By Borel's lemma, the space of Taylor series (at the origin) of smooth
functions on $(\RM^2,0)$ is identified with the space
$\RM\formel{a,b}$ of formal power series in two variables. Let
\begin{equation}
  \RM\formel{a,b}_\textup{ff} := \RM\formel{a,b} / (\RM \oplus \ZM a)
  \label{equ:taylor-ff}
\end{equation}
be the space of formal power series $T(a,b)$ with no constant term and
for which the $a$ coefficient is taken modulo $\ZM$. The following
lemma, whose elementary proof is left to the reader, introduces the
desired $K_4$-action on $\RM\formel{a,b}_\textup{ff}$.

\begin{lemm}\label{lemm:new_action}
  The map
  $\RM\formel{a,b}_\textup{ff} \times K_4 \to
  \RM\formel{a,b}_\textup{ff}$
  given by
  $\left(T (a,b),(j_1,j_2)\right) \mapsto T (a,b) \star (j_1,j_2)$,
  where
  \begin{equation}
    \label{equ:j-action}
    T(a,b) \star
    (j_1,j_2) := \mathsf{s}^{-1}(j_2) T(g_{j_1,j_2}(a,b)) + \frac{j_1 a}{2}
  \end{equation}
  defines an effective right $K_4$-action on
  $\RM\formel{a,b}_\textup{ff}$.
\end{lemm}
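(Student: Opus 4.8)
The plan is to unpack the phrase ``effective right $K_4$-action'' into three verifications: (i) the formula~\eqref{equ:j-action} descends to the quotient $\RM\formel{a,b}_\textup{ff}=\RM\formel{a,b}/(\RM\oplus\ZM a)$, (ii) it satisfies the right-action axioms, and (iii) it is effective. Before any of this I would record the two elementary facts on which everything rests. First, $\mathsf{s}\colon(\{\pm1\},\cdot)\to(\ZM/2\ZM,+)$ is a group isomorphism, so $\mathsf{s}^{-1}(j_2)\,\mathsf{s}^{-1}(j_2')=\mathsf{s}^{-1}(j_2+j_2')$, and, writing $\epsilon_k=\mathsf{s}^{-1}(j_k)$, the linear maps $g_{j_1,j_2}(a,b)=(\epsilon_1 a,\epsilon_2 b)$ compose by $g_{j_1,j_2}\circ g_{j_1',j_2'}=g_{j_1+j_1',\,j_2+j_2'}$. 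Second, modulo $\ZM a$ one has $\pm\tfrac{j_1 a}{2}\equiv\tfrac{j_1 a}{2}$ for $j_1\in\{0,1\}$ (the difference is $j_1 a\in\ZM a$), and $\tfrac{j_1 a}{2}+\tfrac{j_1' a}{2}\equiv\tfrac{(j_1+j_1'\bmod 2)\,a}{2}\pmod{\ZM a}$ (they differ by $0$ or by $a$). These two facts are exactly what makes the quotient by $\ZM a$ — rather than by $\tfrac12\ZM a$ — the natural target.

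\emph{Step 1: well-definedness.} If $T'=T+c+ka$ with $c\in\RM$, $k\in\ZM$, then $T'(g_{j_1,j_2}(a,b))=T(g_{j_1,j_2}(a,b))+c+k\epsilon_1 a$, so $T'\star(j_1,j_2)=T\star(j_1,j_2)+\mathsf{s}^{-1}(j_2)c+\mathsf{s}^{-1}(j_2)k\epsilon_1 a$. Since $\mathsf{s}^{-1}(j_2)c\in\RM$ and $\mathsf{s}^{-1}(j_2)k\epsilon_1\in\ZM$, the right-hand side differs from $T\star(j_1,j_2)$ by an element of $\RM\oplus\ZM a$, so the class of $T\star(j_1,j_2)$ depends only on the class of $T$. (Also, if $T$ has no constant term the chosen representative $\mathsf{s}^{-1}(j_2)T(g_{j_1,j_2}(a,b))+\tfrac{j_1 a}{2}$ has none, so the formula does land in $\RM\formel{a,b}_\textup{ff}$.)

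\emph{Step 2: the right-action axioms.} For $(j_1,j_2)=(0,0)$ one has $\mathsf{s}^{-1}(0)=1$ and $g_{0,0}=\mathrm{id}$, so $T\star(0,0)=T$. For compatibility $(T\star j)\star j'=T\star(j\cdot j')$, I would substitute $S:=T\star j$ into~\eqref{equ:j-action}: the $T$-part becomes $\mathsf{s}^{-1}(j_2+j_2')\,T\bigl(g_{j_1+j_1',\,j_2+j_2'}(a,b)\bigr)$ by the two composition facts of Step~0, which is precisely the $T$-part of $T\star(j\cdot j')$; and the affine corrections combine as $\mathsf{s}^{-1}(j_2')\epsilon_1'\tfrac{j_1 a}{2}+\tfrac{j_1' a}{2}\equiv\tfrac{j_1 a}{2}+\tfrac{j_1' a}{2}\equiv\tfrac{(j_1+j_1'\bmod 2)a}{2}\pmod{\ZM a}$, which is the correction of $T\star(j\cdot j')$. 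In particular each $\star j$ is an involution, hence a bijection of $\RM\formel{a,b}_\textup{ff}$.

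\emph{Step 3: effectiveness, and the only delicate point.} Suppose $j=(j_1,j_2)\neq(0,0)$. If $j_1=1$, apply $\star j$ to $T=0$: the result is $\tfrac a2$, which is nonzero in $\RM\formel{a,b}_\textup{ff}$ because $\tfrac12\notin\ZM$. If $j_1=0$ and $j_2=1$, then $T\star j=-T(a,-b)$; taking $T=b^2$ gives $-b^2$, and $-b^2-b^2=-2b^2\notin\RM\oplus\ZM a$, so $T\star j\neq T$. Hence only the identity of $K_4$ acts trivially, i.e.\ the action is effective. I do not expect a genuine obstacle here: the lemma is pure bookkeeping. The one place a careless computation goes wrong is the behaviour of the affine term $\tfrac{j_1 a}{2}$ under the substitution $g_{j_1',j_2'}$ (it acquires a sign $\epsilon_1'$) and under addition (the sum can be $a$ rather than $0$); both discrepancies lie in $\ZM a$ and therefore vanish in $\RM\formel{a,b}_\textup{ff}$, which is exactly why the quotient is taken modulo $\ZM a$.
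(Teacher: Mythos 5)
Your proof is correct: the well-definedness check modulo $\RM\oplus\ZM a$, the composition computation in which the sign discrepancy $\epsilon_1'\epsilon_2'\tfrac{j_1a}{2}$ and the possible extra $a$ in the sum of affine terms are absorbed into $\ZM a$, and the effectiveness test on $T=0$ and $T=b^2$ are all sound. The paper explicitly leaves this verification to the reader as an elementary exercise, and your argument is precisely the intended direct bookkeeping, so there is nothing to compare beyond noting that you have supplied the omitted details accurately.
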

Henceforth, if $h\in\Cinf(\RM,0)$, we denote by $[h]^\infty$ the class
of its Taylor series $(h)^\infty$ in $\RM\formel{a,b}_\textup{ff}$;
moreover, the orbit of $T = T (a,b) \in \RM\formel{a,b}_\textup{ff}$
under the action of Lemma \ref{lemm:new_action} is denoted by
$\mathcal{O}(T)$. The symplectic invariant of isomorphism classes of a
germ $\left[\left(M,\omega,F\right)\right]$ of a neighborhood of a
focus-focus fiber of multiplicity one is precisely
$\mathcal{O}\left([\mathsf{h}]^\infty\right)$, where
$\mathsf{h} =
\mathsf{h}\left(\left(M',\omega',F'\right),\varphi\right)$
is the regularized action of any normalized
$\left(\left(M',\omega',F'\right),\varphi\right)$ with
$\left(M',\omega',F'\right) \in \left[\left(M,\omega,F\right)\right]$
(see Definition \ref{defn:symp_invariant} and Theorem
\ref{thm:taylor_sharp}).


Before showing the above claim (which is carried out in the
subsections below), we describe the orbit space
$\RM\formel{a,b}_\textup{ff}/K_4$. Consider the leading order
coefficients of $ T \in \RM\formel{a,b}_\textup{ff}$: the coefficients
of the monomials $a$ and $b$. It is clear from \eqref{equ:j-action}
that the coefficient of $b$ does not depend on the representative in
$\mathcal{O}\left(T\right)$. This number is directly related to the
dynamics of the radial hyperbolic vector field (which, in normalized
coordinates, is given by the Hamiltonian $q_2$, see also \cite[Step 2
of Section 5.2]{pel_san_inv}), and has been explicitly calculated in a
number of famous examples (cf. \cite{pel_san_spin, dullin13,
  papadopoulos-dullin13}). On the other hand, the $K_4$-action on the
coefficient of $a$ is non-trivial and can be used to classify almost
all equivalence classes.

\begin{prop}
  \label{prop:K4-domain}
  Identify the $a$-coefficient of an element in
  $\RM\formel{a,b}_\textup{ff}$ with a point in the unit circle $U(1)$
  via $t\mapsto e^{i2\pi t}$. Then
  \begin{enumerate}[label = \arabic*., ref= \arabic*, leftmargin=*]
  \item \label{item:17} The $K_4$-action on
    $\RM\formel{a,b}_\textup{ff}$ induces the standard $K_4$-action on
    $U(1)$, \emph{i.e.} $(1,0)$ acts as reflection in the imaginary
    axis, while $(0,1)$ acts as reflection in the real axis.
  \item \label{item:18} A fundamental domain for the $K_4$-action on
    $\RM\formel{a,b}_\textup{ff}$ is given by the subset whose
    elements are formal series
    \begin{equation}
      T = C_{1,0}a + C_{0,1}b + \sum_{i_1+i_2\geq 2} C_{i_1,i_2}a^{i_1}
      b^{i_2}\label{equ:formal-series}
    \end{equation}
    such that:
    \begin{enumerate}[label=(\roman*), ref = (\roman*)]
    \item \label{item:25} either
      $C_{1,0}\in \left]0,\tfrac{1}{4}\right[ \mod 1$;
    \item \label{item:27} or $C_{1,0}=0 \mod 1$ and, if
      $i_{b}:=\inf\{(i_1,2k) \mid (i_1,k)\in \NM\times \NM \text{ and
      } C_{i_1,2k}\neq 0\} \neq \infty$,
      where the $\inf$ is taken for the lexicographic order,
      $C_{i_{b}}>0$;
    \item \label{item:28} or $C_{1,0}=\tfrac{1}{4} \mod 1$ and, if
      $i_{a}:=\inf\{(2k+1,i_2) \mid (k,i_2)\in\NM\times \NM \text{ and
      } C_{2k+1,i_2}\neq 0\}\neq \infty$, $C_{i_{a}}>0$.
    \end{enumerate}
  \item \label{item:24} Let $\ZM_{1,1}\subset K_4$ denote the diagonal
    subgroup (isomorphic to $\ZM/2\ZM$). A fundamental domain for the
    $\ZM_{1,1}$-action on $\RM\formel{a,b}_\textup{ff}$ is given by
    elements~\eqref{equ:formal-series} for which
    $C_{1,0}\in \left[0,\frac{1}{2} \right[ \mod
    1$.
  \end{enumerate}
\end{prop}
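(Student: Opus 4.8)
The plan is to reduce the entire statement to an explicit computation of how the $\star$-action of Lemma~\ref{lemm:new_action} permutes Taylor coefficients. Write $T=\sum_{i_1,i_2}C_{i_1,i_2}a^{i_1}b^{i_2}$, with the convention $C_{0,0}=0$ and $C_{1,0}$ defined only modulo $\ZM$. Since $g_{j_1,j_2}(a,b)=((-1)^{j_1}a,(-1)^{j_2}b)$ and $\mathsf{s}^{-1}(j)=(-1)^{j}$, equation~\eqref{equ:j-action} gives $T\star(1,0)=T(-a,b)+\tfrac{a}{2}$, $T\star(0,1)=-T(a,-b)$ and $T\star(1,1)=-T(-a,-b)+\tfrac{a}{2}$. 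Reading off coefficients: $(1,0)$ sends $C_{1,0}\mapsto -C_{1,0}+\tfrac12\pmod{\ZM}$ and $C_{i_1,i_2}\mapsto(-1)^{i_1}C_{i_1,i_2}$ for $(i_1,i_2)\neq(1,0)$; $(0,1)$ sends $C_{i_1,i_2}\mapsto(-1)^{i_2+1}C_{i_1,i_2}$ (in particular $C_{1,0}\mapsto-C_{1,0}$); and $(1,1)$ sends $C_{1,0}\mapsto C_{1,0}+\tfrac12\pmod{\ZM}$ and $C_{i_1,i_2}\mapsto(-1)^{i_1+i_2+1}C_{i_1,i_2}$ for $(i_1,i_2)\neq(1,0)$.

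For item~\ref{item:17} I would substitute $C_{1,0}\leftrightarrow z=e^{i2\pi C_{1,0}}$ into the formulas above: $(1,0)$ then acts on the circle as $z\mapsto-\bar z$, i.e.\ reflection in the imaginary axis, and $(0,1)$ as $z\mapsto\bar z$, i.e.\ reflection in the real axis — the standard $K_4$-action. In particular the map $T\mapsto C_{1,0}$ is $K_4$-equivariant, and the $K_4$-orbits on the circle are of exactly three types, which I would list: the free four-point orbits meeting $C_{1,0}\in{}]0,\tfrac14[\pmod{1}$; the orbit $\{1,-1\}$, i.e.\ $C_{1,0}\in\{0,\tfrac12\}$, with stabiliser $\langle(0,1)\rangle$; and the orbit $\{i,-i\}$, i.e.\ $C_{1,0}\in\{\tfrac14,\tfrac34\}$, with stabiliser $\langle(1,0)\rangle$.

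For item~\ref{item:18} I would treat these three cases, using that the stabiliser of $T$ lies in the stabiliser of $C_{1,0}$ by equivariance. \emph{(i)} If $C_{1,0}$ is generic, $K_4$ acts freely on $\mathcal{O}(T)$, so the four elements of $\mathcal{O}(T)$ carry the four distinct $a$-coefficients of $K_4\cdot C_{1,0}$, exactly one of which lies in $]0,\tfrac14[\pmod{1}$: this is the unique representative. \emph{(ii)} If $C_{1,0}\equiv0$, first normalise the $a$-coefficient to be exactly $0$; the residual involution $(0,1)$ fixes this and, by the formula above, negates exactly the $C_{i_1,i_2}$ with $i_2$ even — so it fixes $T$ entirely precisely when $i_b=\infty$, and otherwise exactly one of $T,T\star(0,1)$ has $C_{i_b}>0$; choosing the representative with $C_{1,0}\equiv0$ (rather than $\equiv\tfrac12$) discards the other half of the orbit $\{1,-1\}$, and the sign condition removes the last ambiguity. \emph{(iii)} Symmetrically, if $C_{1,0}\equiv\tfrac14$ the residual involution $(1,0)$ fixes the value $\tfrac14$ and negates exactly the $C_{i_1,i_2}$ of odd $a$-degree with $(i_1,i_2)\neq(1,0)$; normalising by $C_{i_a}>0$ when $i_a\neq\infty$ (and doing nothing when $i_a=\infty$) picks a unique representative. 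Since the three conditions on $C_{1,0}$ are mutually exclusive and the $K_4$-saturations of the three circle pieces cover all of $\RM/\ZM$, the described set meets every $K_4$-orbit exactly once, hence is a fundamental domain.

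For item~\ref{item:24}, the formulas show $(1,1)$ shifts $C_{1,0}$ by $\tfrac12\pmod{1}$, which has no fixed point, so $\ZM_{1,1}$ acts freely on $\RM\formel{a,b}_\textup{ff}$; every orbit therefore contains exactly one element with $C_{1,0}\in[0,\tfrac12[\pmod{1}$. The only genuinely delicate part, which I would write out with care, is the bookkeeping in cases \emph{(ii)} and \emph{(iii)}: one must verify that the residual involution preserves the already-fixed value of $C_{1,0}$, acts on the remaining coefficients by precisely the stated sign pattern, and that the ``leading nonzero coefficient'' rule is well-posed — covering the degenerate subcases $i_b=\infty$, $i_a=\infty$, where the involution is trivial and nothing needs normalising — and is consistent with, rather than redundant against, the constraint already imposed on $C_{1,0}$. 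Everything else is the formal computation of how $K_4$ permutes the coefficients.
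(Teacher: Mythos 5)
Your proposal is correct and follows essentially the same route as the paper's proof: both read off the explicit coefficient action from \eqref{equ:j-action} (item \ref{item:17} being immediate from it), split item \ref{item:18} into the cases $4C_{1,0}\neq 0$, $C_{1,0}\equiv 0$ and $C_{1,0}\equiv\tfrac14 \mod 1$, break the residual $\ZM/2\ZM$-symmetry by the sign of the first negated nonzero coefficient, and settle item \ref{item:24} by noting that $(1,1)$ acts on $C_{1,0}$ as the antipody of $\RM/\ZM$. Your write-up is simply a more explicit version of the paper's argument (which dispatches the $C_{1,0}=\tfrac14$ case with ``similarly''), so no gap to report.
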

\begin{demo}
  The first item is immediate from \eqref{equ:j-action}.  Consider
  $T\in\RM\formel{a,b}_\textup{ff} $ as in~\eqref{equ:formal-series}
  and its $K_4$-orbit $\mathcal{O}(T)$. If $4C_{1,0}\neq 0 \mod 1$,
  then item \ref{item:17} implies that there is a unique
  representative in $\mathcal{O}(T)$ whose $a$-coefficient belongs to
  $\left]0,\tfrac{1}{4}\right[ \mod 1$, thus establishing
  \ref{item:25}. 
  Suppose that $4C_{1,0} = 0 \mod 1$; using the $K_4$-action, there is
  no loss in generality in assuming that $C_{1,0}=0$ or
  $C_{1,0}=\tfrac{1}{4}$ (modulo $1$). However, in either case there
  are two elements of $\mathcal{O}(T)$ that satisfy the corresponding
  condition. Consider first the case $C_{1,0}=0$. The two elements of
  $\mathcal{O}(T)$ in question are $T$ and $T\star (0,1)$, and they
  are equal if and only if $C_{i_1,2k}=0$ for all $i_1,k\geq 0$.
  Therefore we may distinguish them by requiring that the first
  nonzero such coefficient is positive. The case
  $C_{1,0}=\tfrac{1}{4}$ can be treated similarly, thus establishing
  \ref{item:27} and \ref{item:28}, and completing the proof of
  \eqref{equ:formal-series}.

  Concerning \ref{item:24}, observe that $\ZM_{1,1}$ acts on the
  coefficient $C_{1,0}$ by the antipody of the circle $\RM/\ZM$. Hence
  there exists a unique representative of a class in
  $\RM\formel{a,b}_\textup{ff}/\ZM_{1,1}$ with
  $C_{1,0}\in \left[0,\frac{1}{2}\right[$.
\end{demo}

\begin{rema} 
  Item \ref{item:24} in Proposition~\ref {prop:K4-domain} implies
  that, instead of considering the $K_4$-orbit $\mathcal{O}(T)$, which
  generally consists of $4$ elements, one can always pick a
  representative for which $C_{1,0} \in [0,1/2[ \mod 1$ and consider
  its $\ZM/2\ZM$ orbit by the action of $(1,0)$.

  Even more interestingly, we know that, in order to define the
  desired symplectic invariant under the stricter notion of
  isomorphism that allows only for symplectic equivalences whose
  underlying diffeomorphisms preserve orientation, we only need to
  consider the action of the subgroup $\ZM_{1,1}$ (see Remark
  \ref{rema:orientation-preserving}). In this case, item \ref{item:24}
  of Proposition~\ref {prop:K4-domain} shows that the symplectic
  invariant can be identified with a Taylor series whose
  $a$-coefficient belongs to $\left[0,\frac{1}{2}\right[$.
\end{rema}

\paragraph{The Taylor series orbit of a germ of a neighborhood of a
  focus-focus fiber.}

If $((M,\omega,F), \phy)$ is a normalized neighborhood of a
focus-focus fiber of multiplicity one (see
Definition~\ref{defn:normalized}), and
$(\psi,g)\in
\mathrm{Aut}\left((\R^4,0),\omega_{\mathrm{can}},q\right)$,
then the subsystem of $\left(M,\omega,g^{-1}\circ F\right)$ relative
to a sufficiently small saturated neighborhood of the focus-focus
fiber is normalized by $\psi^{-1}\phy$. This defines an action of
$\mathrm{Aut}\left((\R^4,0),\omega_{\mathrm{can}},q\right)$ on the set
of 
germs of normalized neighborhoods.  Thus, using~\eqref{equ:A}, we
obtain an action of the group $K_4=\Z/2\Z \times \Z/2\Z$ on the same
set, using the map $j\mapsto (A_j,g_j)$.

On the other hand, the regularized action of
Definition~\ref{defn:reg_action} can be interpreted as a map
$\mathsf{h}$ from the set of normalized neighborhoods to
$\Cinf(\RM^2,0)$. We denote by $[\mathsf{h}]^\infty$ the left
composition of this map by the projection onto
$\RM\formel{a,b}_\textup{ff}$ (see \eqref{equ:taylor-ff}); the latter
is endowed with the $K_4$-action of Lemma~\ref{lemm:new_action}.

\begin{lemm}
  \label{lemm:K4-equivariant}
  The map $[\mathsf{h}]^\infty$ is $K_4$-equivariant.
\end{lemm}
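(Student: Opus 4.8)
The plan is to track explicitly how the regularized action $\mathsf{h}$ transforms when we replace a normalized neighborhood $((M,\omega,F),\phy)$ by the one obtained from the $K_4$-action, namely $((M,\omega, g_j^{-1}\circ F), A_j^{-1}\phy)$ for $j=(j_1,j_2)\in K_4$, and to compare the result with the $\star$-action of Lemma \ref{lemm:new_action} on $\RM\formel{a,b}_\textup{ff}$. It suffices to check this on the two generators $(1,0)$ and $(0,1)$ of $K_4$, since the map is multiplicative in $j$ (both the action on germs of normalized neighborhoods and the $\star$-action are genuine $K_4$-actions). For each generator, the key is the description of the bundle of periods in Lemma \ref{lemma:period_ff}: the new system has period bundle $\Sigma_j = (g_j^{-1})^*\Sigma$, and one must read off from this how the function $\mathsf{h}$ determined by the normalization conditions \eqref{eq:23} changes.

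First I would treat $j=(0,1)$, corresponding to $g_{0,1}(a,b)=(a,-b)$ and $A_{0,1}$ as in \eqref{equ:A}, which satisfies $q\circ A_{0,1}=(q_1,-q_2)$, hence $g_{0,1}^{-1}\circ F = (f_1,-f_2)$. Since the first component $f_1$ — the $S^1$-moment map — is unchanged, the primitive generator $\DD a$ of $\Sigma$ is preserved, while the second generator transforms by $w\mapsto \bar w$ (i.e. $b\mapsto -b$): pulling back $\frac{1}{2\pi}\mathrm{Im}(\DD(w\log w - w)) + \DD h$ under $g_{0,1}$ negates both the $\mathrm{Im}$-term and $\DD h$, so the new regularized action is, up to the normalization, $-h(a,-b)$. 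Imposing \eqref{eq:23} — vanishing at the origin and $a$-derivative in $[0,1[$ — one finds $\mathsf{h}_{\text{new}}(a,b) = -\mathsf{h}(a,-b)$ modulo the indeterminacy $\RM\oplus\ZM a$; at the level of $[\mathsf{h}]^\infty\in\RM\formel{a,b}_\textup{ff}$ this is exactly $T\mapsto \mathsf{s}^{-1}(1)\,T(g_{0,1}(a,b)) + 0 = T\star(0,1)$. Then I would do $j=(1,0)$: here $q\circ A_{1,0}=(-q_1,q_2)$, so $g_{1,0}^{-1}\circ F = (-f_1,f_2)$, the $S^1$-action is reversed, and the primitive generator becomes $-\DD a = \DD(-a)$. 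Pulling back by $g_{1,0}(a,b)=(-a,b)$ sends $w\mapsto -\bar w$ in the complex picture; the subtle point is that $\log(-\bar w)$ differs from $-\log w$ by an additive multiple of $2\pi i$ depending on the branch, which is precisely what produces the extra half-integer term: after re-normalizing so that the $a$-coefficient lands back in $[0,1[$, one picks up the shift $+a/2$. This matches $T\mapsto \mathsf{s}^{-1}(0)\,T(g_{1,0}(a,b)) + \tfrac{a}{2} = T\star(1,0)$.

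The main obstacle I expect is bookkeeping the branch of the complex logarithm and the normalization \eqref{eq:23} simultaneously: the formula \eqref{eq:4}–\eqref{eq:10} for $\alpha_0$ is only well-defined modulo $\ZM\langle\DD a\rangle$, and modding out correctly while also enforcing $\frac{\partial\mathsf{h}}{\partial a}(0,0)\in[0,1[$ is exactly where the $\tfrac{j_1 a}{2}$ term in \eqref{equ:j-action} is forced — a sign error or an off-by-$\tfrac12$ here would break equivariance. A clean way to organize this is to work on the universal cover of $W\smallsetminus 0$, where $\alpha_0$ and hence the "unreduced" regularized action are honest smooth functions, establish the transformation law there (where it is a straightforward substitution $w\mapsto g_j(w)$ together with the symmetry of $\mathrm{Im}(w\log w - w)$ under complex conjugation), and only at the end descend to $\RM\formel{a,b}_\textup{ff}$ and invoke the definition of $\star$. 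Uniqueness of the system-preserving $S^1$-action up to sign (Corollary \ref{cor:unique_sys_pres}) guarantees that the only ambiguity in the first component of the moment map is the overall sign encoded by $\epsilon_1$, so no further choices enter; finiteness of the check (two generators) then completes the argument.
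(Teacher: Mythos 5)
Your proposal is correct and follows essentially the same route as the paper's proof: both use the transformation of the period bundle of Lemma \ref{lemma:period_ff} under $g_j$ (via the invariance of period bundles under strong symplectic equivalence), carry out the explicit computation for the two generators $(1,0)$ and $(0,1)$ — with the $\pi$-shift of the argument under $w\mapsto -\bar w$ producing the $\tfrac{a}{2}$ term, and conjugation producing the sign flip — and settle $(1,1)$ by the group homomorphism property. The only difference is presentational (working on the universal cover of $W\smallsetminus 0$), not mathematical.
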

\begin{proof}
  We need to show that, for any $j\in K_4$,
  \begin{equation}
    \label{eq:36}
    [\mathsf{h}]^\infty((M,\omega,g_j^{-1}\circ F), A_j^{-1}\phy) =
    [\mathsf{h}]^\infty((M,\omega,F), \phy) \star j.
  \end{equation}
  Notice that, since any element of $K_4$ has order two,
  $A_j^{-1} = A_j$ and $g_j^{-1} = g_j$. Moreover the choice of a
  determination of the complex logarithm has no influence on
  $[\mathsf{h}]^\infty$, because different choices of the former
  modify $\mathsf{h}$ by adding integer multiples of $\DD a$,
  see~\eqref{eq:10}.

  The key point to prove the result is Corollary \ref{cor:ias_invt},
  {\it i.e.} the fact that the bundles of periods associated to
  strongly symplectically equivalent systems are isomorphic. Fix the
  germ of $((M,\omega,F), \phy)$ of a normalized neighborhood of a
  focus-focus fiber of multiplicity one and let $\Sigma$ denote the
  bundle of periods associated to $((M,\omega,F), \phy)$ as in Lemma
  \ref{lemma:period_ff}. For any $j \in K_4$, let $\Sigma_j$ denote
  the bundle of periods associated to
  $\left(\left(M,\omega,g^{-1}_j\circ F\right), A^{-1}_j \phy\right)$.
  Since $\left(\mathrm{id},g_j\right)$ is a strong symplectic
  equivalence between $\left(M,\omega,g^{-1}_j\circ F\right)$ and
  $(M,\omega,F) $, Corollary \ref{cor:ias_invt} implies that
  \begin{equation}
    \label{eq:37}
    g_j^* \Sigma = \Sigma_j.
  \end{equation}
  By Lemma \ref{lemma:period_ff}, there exists a smooth function $h$
  defined near the origin in $\R^2$ such that
  \[
  \Sigma= \Z \left\langle \DD a, \frac{1}{2\pi} \mathrm{Im}\left(\DD
      \left( w \log w - w\right)\right) + \DD h\right\rangle.
  \]
  Fix any such function $h$. If $j = (0,0)$, \eqref{eq:36} is
  trivially satisfied; the remaining cases are dealt with separately.

\paragraph{Case $j=(1,0)$:}
By \eqref{eq:37}, we obtain that
\[
\Sigma(g_j\circ F) = \Z \left\langle g_j^* \DD a, \frac{1}{2\pi}
  \mathrm{Im}\left(g_j^* \DD \left( w \log w - w\right)\right) + \DD
  h\circ g_j \right\rangle.
\]
Since $j=(1,0)$, $g_j(w)=-\bar w$. Writing
$\mathrm{Im} (\DD (w \log w - w)) = \arg w\, \DD a + \log \abs{w} \DD
b$, we get
\begin{align*}
  \mathrm{Im}\left(g_j^* \DD \left( w \log w - w\right)\right)
  & = (\pi - \arg w) g_j^* \DD a + \log \abs{w} g_j^*\DD b \quad 
    \mod 2\pi\ZM \langle \DD a \rangle \\
  & = (\arg w - \pi)\DD a + \log \abs{w}\DD b \quad
    \mod 2\pi\ZM \langle \DD a \rangle\\
  & = \mathrm{Im}\left(\DD \left( w \log w - w\right)\right) - \pi \DD a \quad
    \mod 2\pi\ZM \langle \DD a \rangle.
\end{align*}
Hence
\[
\Sigma_j = \Z \left\langle \DD a, \frac{1}{2\pi} \mathrm{Im}\left( \DD
    \left( w \log w - w\right)\right) + \DD \tilde h_j \right\rangle,
\]
where $\tilde h_j := h\circ g_j - \pi a $. Thus,
$[\tilde h_j]^\infty = [h \circ g_j - \pi a]^\infty = [h]^\infty \star
j$,
see Lemma \ref{lemm:new_action}. Hence, in view of
Definition~\ref{defn:reg_action}, we get the result.

\paragraph{Case $j=(0,1)$.}
In this case, $g_j(w)=\bar w$, and we obtain
\begin{align*}
  \mathrm{Im}\left(g_j^* \DD \left( w \log w - w\right)\right)
  & = - \arg w \, g_j^* \DD a + \log \abs{w} g_j^*\DD b \quad 
    \mod 2\pi\ZM \langle \DD a \rangle \\
  & = - \arg w \, \DD a - \log \abs{w}\DD b \quad
    \mod 2\pi\ZM \langle \DD a \rangle\\
  & = - \mathrm{Im}\left(\DD \left( w \log w - w\right)\right)  \quad
    \mod 2\pi\ZM \langle \DD a \rangle.
\end{align*}
Hence,
$$
\Sigma_j = \Z \left\langle \DD a, \frac{1}{2\pi} \mathrm{Im}\left( \DD
    \left( w \log w - w\right)\right) + \DD \tilde h_j \right\rangle,
$$
where $\tilde h_j := - h\circ g_j$. Thus,
$[\tilde h_j]^\infty = [-h \circ g_j]^\infty = [h]^\infty \star j$,
(again, see Lemma \ref{lemm:new_action}), thus yielding the result in
this case.

Finally, by the group homomorphism property, the case $j=(1,1)$
follows from the previous cases, thus completing the proof.
\end{proof}

Using Lemma \ref{lemm:K4-equivariant}, we construct the symplectic
invariant for any given representative $\is \in \left[\is\right]$ (in
other words, we fix choice \ref{item:19}). For $i=1,2$, let
$\left(\varphi_i,g_i\right)$ be an Eliasson symplectomorphism of $\is$
and denote by $\left(\left(M_i,\omega_i,F_i\right),\varphi_i\right)$
the normalized neighborhood constructed as in the proof of Lemma
\ref{lemma:nice_rep} starting from $\is$ and
$\left(\varphi_i,g_i\right)$. For $i=1,2$, denote the regularized
action of $\left(\left(M_i,\omega_i,F_i\right),\varphi_i\right)$ by
$\mathsf{h}_i$.

\begin{theo}\label{thm:well_defined}
  With the above notation,
  $\mathcal{O}\left([\mathsf{h}_1]^\infty\right) =
  \mathcal{O}\left([\mathsf{h}_2]^\infty\right)$.
\end{theo}
\begin{proof}
  Fix data as above. Begin by observing that, for $i=1,2$, since
  $\mathcal{O}\left([\mathsf{h}_i]^\infty\right)$ does not depend on
  the germs of $\left(M_i,\omega_i,F_i\right)$ and of $\varphi_i$,
  there is no loss in generality in assuming that the domains of
  $\varphi_1$ and $\varphi_2$ are equal to the open subset
  $U \subset M$, and that $M=F^{-1}\left(F(U)\right)$ (and hence
  $\omega_1=\omega_2$ and $F_i = g_i \circ F$).  Setting, for $i=1,2$,
  $V_i := \varphi_i(U)$, $g:= g_2 \circ g_1^{-1}$, and
  $\varphi:= \varphi_2 \circ \varphi^{-1}_1$, the following diagram
  commutes:
  \begin{equation}
    \label{eq:26}
    \xymatrix{V_1 \ar[dd]_-{q} \ar@/^2pc/[rr]^-{\varphi} & U \ar[l]_-{\varphi_1}
      \ar[r]^-{\varphi_2} \ar@{^{(}->}[d] & V_2 \ar[dd]^-{q} \\
      & \left(M,\omega\right) \ar[dl]_-{F_1} \ar[d]^-{F} \ar[dr]^-{F_2}
      & \\
      q(V_1) = F_1(M) \ar@/_2pc/[rr]_-{g}& F(M) = F(U) \ar[r]_-{g_2} \ar[l]^-{g_1} & F_2(M)
      = q(V_2).
    }
  \end{equation}
  In particular, $\left(\varphi,g\right)$ and
  $\left(\mathrm{id},g\right)$ are strong symplectic equivalences
  between the subsystems of
  $\left(\R^4,\omega_{\mathrm{can}},q\right)$ relative to $V_1$ and
  $V_2$, and between $\left(M,\omega,F_1\right)$ and
  $\left(M,\omega,F_2\right)$ respectively. As in
  Proposition~\ref{prop:germs_auto}, set $j:=\vec\epsilon(g)$. Acting
  on the normalized neighborhood $(M,\omega,F_2)$ by $j$, we obtain a
  new normalized neighborhood
  for which the regularized action belongs to the same orbit
  $\mathcal{O}\left([\mathsf{h}_2]^\infty\right)$ by to
  Lemma~\ref{lemm:K4-equivariant}.  Replacing
  $\left(M,\omega,F_2\right)$ by the system constructed above, we
  obtain the corresponding diagram~\eqref{eq:26} above with the
  property that $\vec\epsilon(g) = \left(0,0\right)\in K_4$. Thus,
  Proposition~\ref{prop:germs_auto} implies that
  \begin{equation}
    \label{equ:g-identity}
    g(a,b)=(a,b+\textup{O}(\infty)).
  \end{equation}

  Arguing as in the proof of Lemma~\ref{lemm:K4-equivariant}, we may
  write $g^*\Sigma_2$ and observe that the smooth function $h$ from
  the computation of $\Sigma_1$ is modified by a term in
  $\textup{O}(\infty)$. This shows that
  $[\mathsf{h}_1]^\infty= [\mathsf{h}_2]^\infty$, hence proving the
  theorem.

\end{proof}

Theorem \ref{thm:well_defined} gives that the following notion is
well-defined.

\begin{defi}\label{defn:tso}
  The {\em Taylor series orbit} of the germ of $\is$, a neighborhood of a
  focus-focus fiber of multiplicity one, is the $K_4$-orbit 
  \[
  \mathcal{O}\is:=\mathcal{O}\left(\left[\mathsf{h}\left(\left(M',\omega',
          F'\right),\varphi\right)\right]^{\infty}\right) \subset
  \RM\formel{a,b}_\textup{ff},
  \]
  where $\left(\left(M',\omega', F'\right),\varphi\right)$ is any
  normalized neighborhood of a focus-focus fiber of multiplicity one
  constructed from $\is$ as in the proof of Lemma
  \ref{lemma:nice_rep}, and
  $\mathsf{h}\left(\left(M',\omega', F'\right),\varphi\right)$ is its
  regularized action.
\end{defi}



\subsubsection*{Independence of the representative of the isomorphism
  class of the germ}
The aim of this subsection is to show that, in fact, the Taylor series
orbit of the germ of $\is$ defines an invariant of
$\left[\is\right]$. This is the content of the following result.

\begin{prop}\label{prop:indep}
  For $i=1,2$, let $\left(M_i,\omega_i,F_i\right) \in
  \left[\is\right]$. Then
  $$\mathcal{O}\left(M_1,\omega_1,F_1\right) =
  \mathcal{O}\left(M_2,\omega_2,F_2\right).$$ 
\end{prop}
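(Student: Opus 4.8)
The plan is to reduce this to the case already settled by Theorem~\ref{thm:well_defined} (fixed representative, varying Eliasson symplectomorphism) by \emph{transporting} a chosen Eliasson isomorphism of one representative to the other along the given equivalence. Since $\left(M_1,\omega_1,F_1\right)$ and $\left(M_2,\omega_2,F_2\right)$ lie in the same class $\left[\is\right]$, the earlier proposition asserting that isomorphic neighborhoods of a focus-focus fiber of multiplicity one admit germs that are strongly symplectically equivalent provides, after passing to saturated neighborhoods of the focus-focus fibers, a symplectomorphism $\Phi : \left(M_1,\omega_1\right)\to\left(M_2,\omega_2\right)$ and a diffeomorphism $\gamma : F_1(M_1)\to F_2(M_2)$ with $\gamma\circ F_1 = F_2\circ\Phi$; by Theorem~\ref{thm:will_preserved}, $\Phi$ carries the focus-focus point $p_1$ of the first system to that, $p_2$, of the second. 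Next I would fix \emph{any} Eliasson isomorphism $\left(\varphi_2 : U_2\to V_2,\, g_2\right)$ for $\left(M_2,\omega_2,F_2\right)$, form the normalized neighborhood $\left(\left(M_2',\omega_2',F_2'\right),\varphi_2\right)$ as in the proof of Lemma~\ref{lemma:nice_rep} (so $F_2' = g_2\circ F_2$ on $M_2' = F_2^{-1}(F_2(U_2))$), and let $\mathsf{h}_2$ be its regularized action, so that $\mathcal{O}\left(M_2,\omega_2,F_2\right)=\mathcal{O}\left(\left[\mathsf{h}_2\right]^\infty\right)$ by Definition~\ref{defn:tso}. Setting $U_1:=\Phi^{-1}(U_2)$, $\varphi_1:=\varphi_2\circ\Phi|_{U_1}$ and $g_1:=g_2\circ\gamma$, a one-line computation from $g_2\circ F_2 = q\circ\varphi_2$ and $\gamma\circ F_1 = F_2\circ\Phi$ gives $g_1\circ F_1 = q\circ\varphi_1$ near $p_1$, and $g_1$ sends the focus-focus value to the origin, so $\left(\varphi_1,g_1\right)$ is an Eliasson isomorphism for $\left(M_1,\omega_1,F_1\right)$; let $\left(\left(M_1',\omega_1',F_1'\right),\varphi_1\right)$ be the associated normalized neighborhood and $\mathsf{h}_1$ its regularized action. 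By Theorem~\ref{thm:well_defined} (equivalently, by well-definedness of Definition~\ref{defn:tso}), $\mathcal{O}\left(M_1,\omega_1,F_1\right)=\mathcal{O}\left(\left[\mathsf{h}_1\right]^\infty\right)$ regardless of this particular choice.

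The key point is then that $\Phi$ restricts to a strong symplectic equivalence $\left(\Phi,\mathrm{id}\right)$ between the two \emph{normalized} systems: since $F_i' = g_i\circ F_i$, one computes $F_2'\circ\Phi = g_2\circ F_2\circ\Phi = g_2\circ\gamma\circ F_1 = g_1\circ F_1 = F_1'$, and the base map is the identity of the common germ of neighborhood of $0\in\R^2$ (both focus-focus values having been normalized to the origin). Restricting to the regular parts and applying Corollary~\ref{cor:ias_invt} to this strong symplectic equivalence yields that the associated bundles of periods coincide, $\Sigma_1 = \Sigma_2 \subset T^*\left(F(M)\smallsetminus\{0\}\right)$. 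Fixing once and for all the determination of the complex logarithm of~\eqref{eq:22}, Lemma~\ref{lemma:period_ff} writes both $\Sigma_1$ and $\Sigma_2$ in the form $\Z\left\langle \DD a,\ \tfrac{1}{2\pi}\mathrm{Im}\left(\DD\left(w\,\mathrm{Log}\,w - w\right)\right) + \DD h\right\rangle$, and the normalizing conditions~\eqref{eq:23} single out a \emph{unique} such $h$; hence $\mathsf{h}_1 = \mathsf{h}_2$. In particular $\left[\mathsf{h}_1\right]^\infty = \left[\mathsf{h}_2\right]^\infty$, so $\mathcal{O}\left(\left[\mathsf{h}_1\right]^\infty\right) = \mathcal{O}\left(\left[\mathsf{h}_2\right]^\infty\right)$, which together with the two identities from the previous paragraph gives $\mathcal{O}\left(M_1,\omega_1,F_1\right) = \mathcal{O}\left(M_2,\omega_2,F_2\right)$, as desired. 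One can also see $\mathsf{h}_1 = \mathsf{h}_2$ directly: because $\varphi_1 = \varphi_2\circ\Phi$, the entire construction of Section~\ref{sec:calc-bundle-peri}---Lagrangian sections included---is carried by $\Phi$ from one normalized neighborhood to the other.

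I expect the only genuine work to be the germ bookkeeping in the first paragraph: one must shrink all neighborhoods consistently so that $\Phi$ is an honest symplectomorphism onto its image, $U_1=\Phi^{-1}(U_2)$ still meets every fiber, the saturated neighborhoods $F_1^{-1}(F_1(U_1))$ and $F_2^{-1}(F_2(U_2))$ correspond under $\Phi$, and all the compositions $\varphi_2\circ\Phi$ and $g_2\circ\gamma$ are defined; once this is arranged the argument is purely formal. A minor additional check is that the normalized systems, restricted to saturated neighborhoods of their focus-focus fibers, have compact connected fibers---tori over $F(M)\smallsetminus\{0\}$ and a once-pinched torus over $0$ (Theorem~\ref{theo:topology_ff})---so that Corollary~\ref{cor:ias_invt}, or rather its regular-part incarnation from Remark~\ref{rmk:connected_fibers}, genuinely applies.
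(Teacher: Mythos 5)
Your proposal is correct and follows essentially the same route as the paper: you transport a fixed Eliasson isomorphism of one representative to the other by composing with the given strong equivalence $\left(\Phi,\gamma\right)$, and then conclude $\mathsf{h}_1=\mathsf{h}_2$ from Corollary~\ref{cor:ias_invt}, Lemma~\ref{lemma:period_ff} and the normalizing conditions~\eqref{eq:23}. The only (harmless) difference is organizational: the paper first reduces to the case $g=\mathrm{id}$ (using that the Taylor series orbit is independent of the Eliasson isomorphism) and compares the period bundles of the original systems before pulling back by $g_1=g_2$, whereas you apply Corollary~\ref{cor:ias_invt} directly to the two normalized systems via the strong equivalence $\left(\Phi,\mathrm{id}\right)$, which amounts to the same computation.
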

\begin{proof}
  Since, for $i=1,2$, $\left(M_i,\omega_i,F_i\right) \in
  \left[\is\right]$, there
  is no loss in generality in assuming that $\left(M_1,\omega_1,F_1\right)$
  and $\left(M_2,\omega_2,F_2\right)$ are strongly symplectically
  equivalent. Let $\left(\varphi,g\right)$ denote this equivalence and
  choose an Eliasson symplectomorphism $\left(\varphi_2,g_2\right)$
  for $\left(M_2,\omega_2,F_2\right)$. Arguing as above, there is no loss
  in generality in assuming that the domain of $\varphi_2$ intersects
  all fibers of $F_2$. Then
  $\left(\varphi_1,g_1\right):= \left(\varphi_2 \circ
    \varphi,g_2 \circ g\right)$ is an Eliasson symplectomorphism for
  $\left(M_1,\omega_1,F_1\right)$ with the property that the domain of
  $\varphi_1$ intersects all fibers of $F_1$. Moreover, for $i=1,2$, denote by
  $\left(\left(M_i,\omega_i, g_i \circ F_i\right),\varphi_i\right)$ the
  normalized neighborhood of a focus-focus fiber of multiplicity one
  obtained from $\left(M_i,\omega_i,F_i\right)$ and
  $\left(\varphi_i,g_i\right)$ as in the proof of Lemma
  \ref{lemma:nice_rep}. 

  First, observe that 
  \begin{equation*}
    \begin{split}
      \mathcal{O}\left(M_1,\omega_1,F_1\right)
      & =
      \mathcal{O}\left(\left[\mathsf{h}\left(\left(M_1,\omega_1,g_1
              \circ F_1\right),\varphi_1\right)\right]^{\infty}\right) \\
      & =
      \mathcal{O}\left(\left[\mathsf{h}\left(\left(M_1,\omega_1,g_2
              \circ \left(g
                \circ F_1\right)\right),\varphi_1\right)\right]^{\infty}\right) \\
      & = \mathcal{O}\left(M_1,\omega_1,g \circ F_1\right),
    \end{split}
  \end{equation*}
  where the last equality follows from the fact that
  $\left(\varphi_1,g_2\right)$ is, by construction, an Eliasson
  diffeomorphism for $\left(M_1,\omega_1,g \circ F_1\right)$. Thus
  it suffices to prove the result under the assumption that $g =
  \mathrm{id}$; under
  this assumption, $g_1 = g_2$. For $i=1,2$, set $\mathsf{h}_i:=\mathsf{h}\left(\left(M_i,\omega_i,g_i
      \circ F_i\right),\varphi_i\right)$ and let $\Sigma_i \to F_i(M)
  \smallsetminus \left\{c_{0,i}\right\}$ denote the bundle of periods
  of the subsystem of $\left(M_i,\omega_i, F_i\right)$ relative to
  $M_i \smallsetminus F^{-1}_i\left(c_{0,i}\right)$. Since
  $\left(\varphi,\mathrm{id}\right)$ is a strong symplectic
  equivalence between $\left(M_1,\omega_1, F_1\right)$ and
  $\left(M_2,\omega_2, F_2\right)$, $F_1(M_1) = F_2(M_2)$ and $\Sigma_1 = \Sigma_2$. Moreover,
  since $g_1 = g_2$, $g_1\left(F_1(M)\right) =
  g_2\left(F_2(M_2)\right)$ and $\left(g^{-1}_1\right)^* \Sigma_1 =
  \left(g^{-1}_2\right)^* \Sigma_2$. As above, set $l = \left\{(a,b)
    \mid a =0 , b \geq 0\right\}$, and fix the standard identification
  $\R^2 \cong \mathbb{C}$ and the choice of complex
  logarithm $\mathrm{Log} : \mathbb{C} \smallsetminus l \to
  \mathbb{C}$ determined by \eqref{eq:22}. Then, for $i=1,2$,
  \[
  \left(\left(g^{-1}_i\right)^*
    \Sigma_i\right)\bigg|_{g_i\left(F_i(M_i)\right) \smallsetminus l}
  = \Z \left\langle \DD a, \frac{1}{2\pi} \mathrm{Im}\left(\DD \left(
        w \, \mathrm{Log}\, w - w\right)\right) + \DD \mathsf{h}_i
  \right\rangle.
  \]
  Arguing as in the proof of Theorem \ref{thm:well_defined} and using
  the defining conditions \eqref{eq:23}, the above equalities imply
  that $\mathsf{h}_1 = \mathsf{h}_2$. Since, for $i=1,2$,
  $\mathcal{O}\left(\left[\mathsf{h}_i\right]^{\infty}\right) =
  \mathcal{O}\left(M_i,\omega_i,F_i\right)$,
  the desired equality follows.
\end{proof}

In light of Proposition \ref{prop:indep}, the following definition
makes sense.

\begin{defi}\label{defn:symp_invariant}
  The {\em Taylor series orbit} of $\left[\is\right]$, the isomorphism
  class of a germ of a neighborhood of a focus-focus fiber of
  multiplicity one, is the Taylor series orbit of any of its
  representatives, and is denoted by $\mathcal{O}\left[\is\right]$. 
\end{defi}

\subsubsection{The classification result}\label{sec:class-result}
The Taylor series orbit determines completely the isomorphism class of
the germ of a neighborhood of a focus-focus fiber of multiplicity
one. This is the content of the following result, which is a precised
version of the results in \cite{san_invt}, and whose proof, sketched
below, relies both on the arguments from [\emph{op. cit.}, Sections 5
and 6] and on the analysis of the $K_4$-action from the previous
paragraphs.
\begin{theo}\label{thm:taylor_sharp}
  The map
  \begin{equation*}
    \begin{split}
      \mathcal{G}_{\mathrm{ff}} &\to
      \RM\formel{a,b}_\textup{ff}/K_4 \\
      \left[\is\right] &\mapsto \mathcal{O}\left[\is\right]
    \end{split}
  \end{equation*}
  is a bijection. In other words, two neighborhoods of focus-focus
  fibers of multiplicity one have isomorphic germs if and only if
  their Taylor series orbits are equal. Moreover, for any
  $\mathcal{O}\left(\left[\mathsf{h}\right]^{\infty}\right) \in
  \RM\formel{a,b}_\textup{ff}/K_4 $,
  there exists a neighborhood of a focus-focus fiber of multiplicity
  one whose Taylor series orbit equals
  $\mathcal{O}\left(\left[\mathsf{h}\right]^{\infty}\right)$.
\end{theo}
\begin{proof}[Sketch of proof]
  The map in the statement is well-defined in light of Theorem
  \ref{thm:well_defined} and Proposition \ref{prop:indep}. To show
  that it is surjective, it suffices that, given any formal power
  series in two variables
  $\sum\limits_{i,j =0}^{\infty} t_{ij} X^i Y^j$ with $t_{00} = 0$ and
  $t_{10} \in \left[0,1\right[$, there exists a normalized
  neighborhood of a focus-focus fiber of multiplicity one whose
  regularized action has Taylor series at $(0,0)$ equal to
  $\sum\limits_{i,j =0}^{\infty} t_{ij} X^i Y^j$. This is proved in
  \cite[Section 6]{san_invt}.

  Thus it remains to show that the map is injective. Suppose that
  \begin{equation}
    \label{eq:32}
    \mathcal{O}\left[M_1,\omega_1,F_1\right] =
    \mathcal{O}\left[M_2,\omega_2,F_2\right].
  \end{equation}
  Then, without loss of generality, it may be assumed that, for
  $i=1,2$, $\left(M_i,\omega_i,F_i\right)$ is normalizable. Let
  $\left(\varphi_i,\mathrm{id}\right)$ be an Eliasson
  symplectomorphism for $\left(M_i,\omega_i,F_i\right)$, so that
  $\left(\left(M_i,\omega_i,F_i\right), \varphi_i\right)$ is
  normalized. If, for $i=1,2$, $\mathsf{h}_i$ denotes the regularized
  action of $\left(\left(M_i,\omega_i,F_i\right), \varphi_i\right)$,
  \eqref{eq:32} implies that there exists
  $j \in K_4 $ such
  that $\left[\mathsf{h}_1\right]^{\infty} =
  \left[\mathsf{h}_2\right]^{\infty} \star
  j$. Let
  $\left(A_j,g_j\right)$
  be the automorphism of the local model for a singular point of
  focus-focus type determined by \eqref{equ:A}. Then
  $\left(\left(M_1,\omega_1,g_j\circ F_1\right),
    \varphi_j \circ \varphi_1\right)$ is
  normalized and
  $\left[\left(M_1,\omega_1,g_j\circ
      F_1\right)\right] = \left[\left(M_1,\omega_1,
      F_1\right)\right]$. If $\bar{\mathsf{h}}_1$ denotes the
  regularized action of $\left(\left(M_1,\omega_1,g_{\epsilon_1,\epsilon_2}\circ F_1\right),
    \varphi_{\epsilon_1,\epsilon_2} \circ \varphi_1\right)$, Lemma
  \ref{lemm:K4-equivariant} implies that 
  $\left[\mathsf{h}_1\right]^{\infty} =
  \left[\bar{\mathsf{h}}_1\right]^{\infty} \star
  j$. It follows that $
  \left[\bar{\mathsf{h}}_1\right]^{\infty} =
  \left[\mathsf{h}_2\right]^{\infty}$. Therefore, without loss of
  generality, it may be assumed that the normalized
  $\left(\left(M_1,\omega_1,F_1\right), \varphi_1\right)$ and
  $\left(\left(M_2,\omega_2,F_2\right), \varphi_2\right)$ are such
  that $\left[\mathsf{h}_1\right]^{\infty} =
  \left[\mathsf{h}_2\right]^{\infty}$. This equality, together with
  the defining conditions \eqref{eq:23} imply that
  $\left(\mathsf{h}_1\right)^{\infty} =
  \left(\mathsf{h}_2\right)^{\infty}$, {\it i.e.} the regularized
  actions have equal Taylor series at the origin. This is precisely
  the case considered in \cite[Section 5]{san_invt}, which proves that
  $\left(M_1,\omega_1,F_1\right)$ and $\left(M_2,\omega_2,F_2\right)$
  have isomorphic germs. 
\end{proof}

\begin{rema}\label{rmk:higher_mult}
  The analog of Theorem \ref{thm:taylor_sharp} for focus-focus fibers
  with higher multiplicity is sketched in \cite[Section 7]{san_invt}
  (without taking into account the analog of the above
  $K_4$-action). It is worthwhile remarking that the classification is
  expected to be more involved in the presence of several focus-focus points (cf. the
  forthcoming \cite{pel-tang-ff}).
\end{rema}



\section{Semi-toric systems}\label{sec:lecture-3-semi}
The aim of this section is to describe properties of {\em semi-toric}
systems, which have been intensively studied in the last few years
from both the classical and quantum perspectives
(cf. \cite{san_polygon,pel_san_inv,pel_san_acta,pel_rat_san_affine,
  hss,hsss,pel_san_spin, san-lefloch-pelayo:jc} amongst others). Not
only are they, in some sense, the simplest non-trivial family of
almost-toric systems admitting focus-focus leaves, but also they are
extremely useful in physics: several physical systems can be modeled
using semi-toric systems
(cf. \cite{jaynes_cummings,cummings,sad_zhi}).

\subsection{Definition, Examples and First properties}
\label{sec:defin-exampl-first}
We begin with introducing semi-toric systems, first defined in
\cite{san_polygon}.

\begin{defi}\label{defn:semi-toric}
  An integrable system $\left(M,\omega, F= (J,H)\right)$ is said to be
  {\em semi-toric} if
  \begin{enumerate}[label=(S\arabic*), ref = (S\arabic*),
    leftmargin=*]
  \item \label{item:7} all its singular orbits are non-degenerate
    without hyperbolic blocks;
  \item \label{item:8} the first component $J$ is a proper moment map
    of an effective Hamiltonian $S^1$-action.
  \end{enumerate}
\end{defi}

Properness of $J$ above implies that of $F$, thus making semi-toric
systems into examples of almost-toric ones (see Definition
\ref{defn:at}). 


\begin{exm}\label{exm:st_as_semi-toric}
  Four dimensional toric integrable systems on closed manifolds (see Example
  \ref{exm:symp_toric}) are examples of semi-toric systems without
  focus-focus leaves. More generally, four-dimensional 
  toric integrable systems whose moment map has a proper first component are
  semi-toric.
\end{exm}

\begin{exm}\label{exm:coupled_spin}
  For all but two values of the parameter $t$ in the family of
  integrable systems given by the coupled angular momenta on
  $S^2 \times S^2$ (see Example \ref{exm:cou_ang_mom}), the
  corresponding integrable systems are semi-toric (cf. \cite[Example
  6.1]{san_polygon}). This family of examples can be adapted to
  construct examples of semi-toric systems on non-compact manifolds by
  linearizing one of the spheres at one of the poles: this gives rise
  to the so-called coupled spin oscillator on $S^2 \times \R^2$
  (cf. \cite[Example 6.2]{san_polygon}).
\end{exm}

\begin{rema}\label{rmk:ham_spaces}
  Let $\left(M,\omega,F=(J,H)\right)$ be a semi-toric system; the
  triple $\left(M,\omega,J\right)$ obtained by `forgetting' $H$
  encodes an effective Hamiltonian $S^1$-action on $(M,\omega)$ one of
  whose moment maps $J : (M,\omega) \to \R$ is proper. We say that
  $\left(M,\omega,J\right)$ {\em underlies}
  $\left(M,\omega,F=(J,H)\right)$. In this case, the fibers of $J$ are
  connected (cf. \cite{atiyah,gui_ste_conv,lmtw}); moreover, if
  $(M,\omega)$ is closed, a classification of such triples, known as
  {\em Hamiltonian $S^1$-spaces} is achieved in \cite{karshon}.
\end{rema}

The first fundamental result in the study of semi-toric system is a
{\em connectedness} result for the fibers of its moment map: this can
be seen as a generalization of connectedness of the fibers of
(four-dimensional) symplectic toric manifolds whose moment maps are
proper (cf. \cite{atiyah,gui_ste_conv,lmtw}). Its proof, which is
omitted here, uses connectedness of the fibers of the first component
(see Remark \ref{rmk:ham_spaces}), together with the control on the
singular fibers which arises from restricting the types of singular
orbits that can arise (see property \ref{item:7} in Definition
\ref{defn:semi-toric}).

\begin{theo}[Theorem 3.4 in \cite{san_polygon}]\label{thm:connected}
  The fibers of a semi-toric system are connected.
\end{theo}

An immediate consequence of Theorem \ref{thm:connected} is that the
leaf space of a semi-toric system is homeomorphic to its moment map
image endowed with the subspace topology (cf. \cite{hsss}). In fact, the
moment map image of a semi-toric system and the associated {\em
  bifurcation diagram} ({\it i.e.} the set of singular values of the
moment map) satisfy the following properties (cf. \cite[Proposition
2.9 and Theorem 3.4]{san_polygon}). Let $\left(M,\omega,F\right)$ be a
semi-toric system and set $B = F(M)$; then:
\begin{itemize}[leftmargin=*]
\item $B$ is contractible;
\item $\partial B \subset B$  consists of the image of purely
  elliptic orbits;
\item the set of {\em focus-focus values}, {\it i.e.} the image of
  focus-focus fibers, is discrete in $B$ and, hence, countable. Denote
  it by $B_{\mathrm{ff}} := \{c_i\}_{i \in I}$;
\item the subset of $B_{\mathrm{reg}} \subset B$ consisting of regular
  values equals $\mathrm{Int}(B) \smallsetminus B_{\mathrm{ff}}$. This
  implies that $B_{\mathrm{lt}}$, the locally toric leaf space of $\left(M,\omega,F\right)$ equals
  $B \smallsetminus B_{\mathrm{ff}}$.
\end{itemize}

In fact, slightly more is true (cf. \cite[Corollary 5.10]{san_polygon}
for a slightly different argument).

\begin{coro}\label{cor:finite_ff}
  Let $\left(M,\omega,F\right)$ be a semi-toric system and let
  $B_{\mathrm{ff}}$ denote its set of focus-focus values. Then the
  cardinality of $B_{\mathrm{ff}}$ is finite.
\end{coro}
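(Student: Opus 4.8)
The plan is to combine the discreteness of the focus-focus values with a compactness argument that comes for free from properness. Since $\left(M,\omega,F\right)$ is semi-toric, property \ref{item:8} tells us that the first component $J$ is a \emph{proper} moment map; as remarked just above, this forces $F=(J,H)$ itself to be proper onto its image $B=F(M)$. Meanwhile, the list of properties of $B$ recalled above gives us that $B_{\mathrm{ff}}$ is \emph{discrete} in $B$, and in particular $B_{\mathrm{ff}}$ is a closed, discrete subset of $\mathrm{Int}(B)$.

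So the heart of the argument is to show that $B_{\mathrm{ff}}$ is contained in a compact subset of $\R^2$; a closed discrete subset of a compact set is finite, and the result follows. First I would use the fact that each focus-focus value $c_i$ is the image of a focus-focus \emph{fiber}, which in particular contains a focus-focus \emph{point} $p_i\in M$ of rank $0$; at such a point $\DD_{p_i}J = 0$. Thus every $c_i$ lies in the image under $J$ (first coordinate) of the set of critical points of $J$, i.e.\ $J(c_i) \in J(\mathrm{Crit}(J))$. Now I would invoke that $J$ is a proper moment map of an \emph{effective} Hamiltonian $S^1$-action: its critical set is the fixed-point set of the circle action (together with points where the action is locally non-free, but in any case the set of critical values of $J$ is discrete — this is part of the structure theory of Hamiltonian $S^1$-spaces referenced in Remark~\ref{rmk:ham_spaces}, and already $J$ restricted to each fiber-component is Morse-Bott). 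Hence the first coordinates of the $c_i$ take only finitely many values, say $\{a_1,\dots,a_m\}$, provided the relevant level sets are compact — which they are, by properness of $J$.

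Having pinned the first coordinate to a finite set, I would fix one such value $a_k$ and bound the focus-focus values lying in the line $\{J=a_k\}$. Properness of $J$ gives that $J^{-1}(a_k)$ is compact, so $H$ restricted to it is bounded; therefore the second coordinates of the $c_i$ with first coordinate $a_k$ lie in a bounded interval. Doing this for each of the finitely many values $a_1,\dots,a_m$ shows $B_{\mathrm{ff}}$ is bounded; combined with it being closed (discreteness in $B$ plus the fact that focus-focus values do not accumulate at $\partial B$, since $\partial B$ is the image of purely elliptic orbits) we conclude $B_{\mathrm{ff}}$ is finite.

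The step I expect to be the main obstacle is making rigorous the claim that the first coordinates of the focus-focus values form a finite set — more precisely, that the set of critical values of the proper $S^1$-moment map $J$ is finite (not merely discrete). Discreteness alone is automatic, but finiteness genuinely uses properness: one shows that critical values of $J$ correspond to connected components of the fixed-point set $M^{S^1}$ (plus exceptional orbits), each of which is a compact symplectic submanifold on which $J$ is locally constant, and that properness of $J$ forces $M^{S^1}$ to have finitely many components meeting any given $J$-level — indeed the whole bifurcation analysis shows $J$ has finitely many critical values. An alternative, cleaner route avoiding this: use properness of $F$ directly to argue that $B_{\mathrm{ff}}$, being discrete in $B$ and contained in $\mathrm{Int}(B)$, could only be infinite by accumulating on $\partial B$; but $F$ proper onto $B$ means $F^{-1}$ of any compact subset of $B$ is compact, and one shows a sequence of focus-focus values cannot converge to a boundary point because boundary points are regular or purely-elliptic values whose fibers have no focus-focus points — a contradiction with upper semicontinuity of the "number of focus-focus points in nearby fibers." I would present whichever of these two arguments leads to the shortest clean write-up, most likely the second, phrased via properness of $F$ and the already-recalled description of $B$.
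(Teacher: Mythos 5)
There is a genuine gap, and it sits exactly where you predicted: the claim that the focus-focus values are confined to finitely many $J$-levels (Route A), or to a compact part of $B$ (Route B). Both routes fail for semi-toric systems on \emph{non-compact} manifolds, which are expressly allowed --- only $J$ is assumed proper, not $M$ compact (e.g.\ the coupled spin oscillator on $S^2\times\R^2$). In Route A you need the set of critical values of $J$ to be finite, and you assert that ``the bifurcation analysis shows $J$ has finitely many critical values''; this is false in general. Take a non-compact symplectic toric $4$-manifold whose moment polygon is an unbounded region with compact vertical slices and infinitely many unimodular vertices marching off to infinity in the $x$-direction (top boundary with slopes $1,\tfrac12,\tfrac13,\dots$, say, over a horizontal bottom edge): then $J=x$ is proper, the system is semi-toric (indeed toric), and the $x$-coordinates of the vertices give infinitely many critical values of $J$. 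Properness only makes the critical values a closed discrete subset of $\R$, which is not enough. Route B has the same defect: when $B$ is unbounded, an infinite discrete subset of $B$ need not accumulate on $\partial B$ at all --- it can simply escape to infinity --- and properness of $F$ onto its image does nothing to forbid that. So in the compact case your argument reduces to the trivial remark that a closed discrete subset of a compact set is finite, while in the non-compact case, which is the only one with content, no bound on the region containing $B_{\mathrm{ff}}$ is ever produced; the finiteness of the relevant $J$-levels is essentially what you are trying to prove.

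What is missing is a global mechanism that makes each focus-focus point ``cost'' a definite amount, and this is precisely what the paper's sketch supplies: the Duistermaat--Heckman function of the proper $S^1$-moment map $J$. It is non-negative and piecewise linear, the jumps of its slope at critical values are given by the Guillemin--Lerman--Sternberg/Karshon formula in terms of the isotropy weights at isolated fixed points, and at a focus-focus point the weights are forced to be $\pm 1$ (Zung), so each focus-focus value decreases the slope by a fixed positive amount (namely $1$). Non-negativity of the Duistermaat--Heckman function, together with the form of the other contributions, then caps the total number of such slope drops, hence the number of focus-focus values, even when $J(M)$ is unbounded. Nothing playing the role of this quantitative input appears in your proposal, so the gap is essential rather than cosmetic; your observations that focus-focus values have $\DD J=0$ at the corresponding rank-$0$ points and that $H$ is bounded on each compact level $J^{-1}(a)$ are correct but do not substitute for it.
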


\begin{proof}[Sketch of proof]
  The idea is to use the {\em Duistermaat-Heckman} measure associated
  to the $S^1$-action one of whose moment map is $J$
  (cf. \cite{dui_heck}). This is a non-negative function whose value
  at a point $x \in J(M)$ is the symplectic volume of the symplectic
  reduction $J^{-1}(x)/S^1$; in this case, it turns out to be a
  piecewise linear function, whose changes in slope depend on isolated
  singular points of the $S^1$-action and their {\em isotropy
    weights}, {\it i.e.} a pair of coprime integers which can be
  naturally associated to the linearized $S^1$-action near an isolated
  fixed point (cf. \cite[Section 3]{gls} and \cite[Lemma
  2.12]{karshon}). The result follows by observing that the isotropy
  weights of the $S^1$-action at the focus-focus points need be
  $\pm 1$ (cf. \cite[Theorem 1.2]{zung_another}), and the formula of
  \cite[Section 3]{gls} and \cite[Lemma 2.12]{karshon} for the
  Duistermaat-Heckman function implies that there can be at most
  finitely many such isolated fixed points.
\end{proof}

The above properties of semi-toric systems would justify the use of
strong symplectic equivalence as `the' notion of equivalence for
semi-toric systems; however, the property of being semi-toric is {\em
  not} invariant under such isomorphisms. Seeing as the $S^1$-action
plays an important role in establishing the basic properties of
semi-toric systems, the following stricter notion of equivalence is
considered.

\begin{defi}\label{defn:equiv_st}
  Two semi-toric systems $\left(M_1,\omega_1,F_1\right)$,
  $\left(M_2,\omega_2,F_2\right)$ are said to be {\em isomorphic} if
  they are strongly symplectically equivalent via a pair
  $\left(\varphi,g\right)$, where
  $g(x,y) = \left(x, g^{(2)}(x,y)\right)$ for some smooth function
  $g^{(2)} : B_1 \to \R$ satisfying $\frac{\partial g^{(2)}}{\partial y}>0$.
\end{defi}

\begin{rema}\label{rmk:iso}
  While the notion of isomorphism introduced in~\cite[Section
  2]{pel_san_inv} uses the more general condition
  $\frac{\partial g^{(2)}}{\partial y}\neq 0$, the rest of the
  discussion in that article implicitly assumes that $g$ preserves
  orientation, which \emph{in fine} coincides with our definition here
  (see, for instance, \cite[Section 4]{pel_san_acta} and
  \cite[Definition 1.5]{pel_rat_san_affine}).
\end{rema}

\subsection{Invariants of semi-toric
  systems}\label{sec:invar-semi-toric}

The aim of this section is to construct data associated to a semi-toric
system that is invariant under isomorphisms, and can therefore be
used to classify these systems (see Theorem
\ref{thm:classification}). Henceforth, any semi-toric system $\left(M,\omega, F=(J,H)\right)$ is
assumed to satisfy the following extra condition:

\begin{enumerate}[label=(S\arabic*), ref = (S\arabic*), start=3,
  leftmargin=*]
\item \label{item:10} There is at most one focus-focus point on any
  given level set of the function $J$.
\end{enumerate}

\begin{rema}\label{rmk:generic}
  Property \ref{item:10} is equivalent to demanding that any
  focus-focus fiber has multiplicity one and that any level set of $J$
  contains at most one focus-focus fiber. Many works in the literature
  require the former (however, cf.
  \cite{san_invt,san_polygon,pel_rat_san_affine,pel-tang-ff} for
  results without this restriction); in fact, \cite{zung-st1} states
  that this condition is `generic'. On the other hand, imposing the
  latter merely simplifies the exposition below.
\end{rema}

In what follows, fix a semi-toric system $\left(M,\omega,
  F\right)$ whose moment map image is denoted by $B$ and whose
set of focus-focus values is denoted by $B_{\mathrm{ff}}$. By
Corollary \ref{cor:finite_ff}, there exists a non-negative integer
$m_{\mathrm{ff}} \in  \Z_{\geq 0}$ which equals the cardinality of
$B_{\mathrm{ff}}$. Let $c_1 =\left(x_1,y_1\right), \ldots ,
c_{m_{\mathrm{ff}}} =
\left(x_{m_{\mathrm{ff}}},y_{m_{\mathrm{ff}}}\right)$ denote the
elements of $B_{\mathrm{ff}}$ ordered so that $i < j$ implies $x_i < x_j$.

\subsubsection{The number of focus-focus values and the Taylor
  series invariant}\label{sec:card-b_mathrmff-tayl}
The cardinality $m_{\mathrm{ff}}$ of $B_{\mathrm{ff}}$ is the simplest
datum that can be associated to $\left(M,\omega,
  F=(J,H)\right)$ and is invariant under isomorphism; it is
henceforth referred to as the {\em number of focus-focus
  points}.  \\

The next invariant of $\left(M,\omega,
  F = (J,H)\right)$ comes from a symplectic invariant associated to (the
isomorphism class of the germ at) each focus-focus fiber, and can be
constructed using the ideas of Section
\ref{sec:monodr-focus-focus}. For
each $i=1,\ldots,m_{\mathrm{ff}}$, let $V_i \subset
\mathrm{Int}\left(B\right)$ be an open, connected neighborhood of $c_i$ which
contains precisely one singular value of $F$. Then the subsystem of $\left(M,\omega,
  F\right)$ relative to $F^{-1}\left(V_i\right)$ is a neighborhood of
a focus-focus fiber of multiplicity one (see Definition
\ref{defn:sat_neigh_ff}). However, it has a special property: by
construction, the
restriction of $J$ to $F^{-1}\left(V_i\right)$ is the moment map of an
effective Hamiltonian $S^1$-action. 

\subsubsection*{Intermezzo: Vertical neighborhoods of focus-focus
  fibers of multiplicity one}

A subsystem of a semi-toric system is in general not semi-toric
anymore, because the restriction of $J$ to the subsystem need not be proper. However, it still retains the
fundamental property that its first component generates an effective
Hamiltonian $S^1$-action (provided that the subsystem is $S^1$-invariant). Therefore we take the liberty to employ
Definition~\ref{defn:equiv_st} for such subsystems. This remark is at
the heart of the recently introduced notion of {\em vertical
  almost-toric systems} in \cite{hsss}.

The terminology \emph{vertical neighborhood of a focus-focus fiber of
  multiplicity one} will hereinafter denote a neighborhood of a
focus-focus fiber of multiplicity one for which the first component of
the moment map is a moment map for an effective $S^1$ action. This is
the case, for instance, for a \emph{normalized} neighborhood of a
focus-focus fiber of multiplicity one, see
Definition~\ref{defn:normalized} and Lemma~\ref{lemma:norm}. From the
uniqueness (up to sign) of the $S^1$-action, see Corollary
\ref{cor:unique_sys_pres}, we obtain the following result.
\begin{lemm}
\label{lemm:vertical}
Let $(M,\omega,F=(J,H))$ be a semi-toric system. Then, any focus-focus
fiber of this system, with critical point $p_0$, admits a normalized
neighborhood of the form $(M'\subset M, \omega|_{M'}, g\circ F|_{M'})$
where the Eliasson diffeomorphism $g$ has the form
  \begin{equation}
    g\left(x,y\right) = \left(x - x_0 ,
      g^{(2)}\left(x,y\right)\right),
    \label{equ:vertical} 
  \end{equation}
  where $x_0=J(p_0)$, and $\frac{\partial g^{(2)}}{\partial y}>0$.
\end{lemm}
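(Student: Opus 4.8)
The plan is to start from a focus-focus fiber $F^{-1}(c_0)$ of the semi-toric system $(M,\omega,F=(J,H))$ with $c_0 = (x_0,y_0)$ and critical point $p_0$, where $J(p_0)=x_0$. By Theorem~\ref{thm:Eliasson}, since the focus-focus point has $k_h=0$, there is an Eliasson isomorphism $(\varphi, g_0)$ defined on neighborhoods $U$ of $p_0$ and $q(V)$ of the origin, with $g_0 \circ F = q \circ \varphi$ on $U$. First I would show that after composing with a translation we may assume $g_0(c_0)=(0,0)$, so that the modified system $(M', \omega|_{M'}, g_0 \circ F|_{M'})$, restricted to an $S^1$-invariant $M' = F^{-1}(W)$ with $W$ a sufficiently small simply connected neighborhood of $c_0$ and shrunk as in Lemma~\ref{lemma:nice_rep}, is normalized in the sense of Definition~\ref{defn:normalized}; this is essentially the content of Lemma~\ref{lemma:nice_rep} applied to the subsystem, whose first component still generates an effective $S^1$-action.

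The substantive point is to arrange that the diffeomorphism $g$ takes the special \emph{vertical} form \eqref{equ:vertical}, i.e.\ that its first component is exactly $x \mapsto x - x_0$. The key tool is the uniqueness (up to sign) of the effective system-preserving Hamiltonian $S^1$-action near the focus-focus fiber, Corollary~\ref{cor:unique_sys_pres}. On the model side, by item~\ref{item:20} of Proposition~\ref{prop:ff}, the first component $q_1$ of the model moment map generates the unique (up to sign) effective $S^1$-action. On the system side, the given semi-toric structure already provides an effective Hamiltonian $S^1$-action with moment map $J$; restricted to $M'$ this is a system-preserving effective Hamiltonian $S^1$-action. Hence, under any Eliasson isomorphism, $J|_{M'}$ must be pulled back (up to an additive constant and a sign) to $q_1 \circ \varphi$, i.e.\ the first component of the Eliasson diffeomorphism $g$ satisfies $g^{(1)}(x,y) = \pm(x - x_0)$; after possibly replacing the $S^1$-action by its inverse (equivalently composing $\varphi$ with the linear symplectomorphism $A_{1,0}$ of \eqref{equ:A}, which flips the sign of $q_1$) and recalling that semi-toric systems come with a \emph{chosen} $S^1$-action, we get $g^{(1)}(x,y) = x - x_0$. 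Writing $g(x,y) = (x-x_0, g^{(2)}(x,y))$, non-degeneracy of $g$ forces $\partial g^{(2)}/\partial y \neq 0$ everywhere on the (connected) domain, and composing with the sign flip in the second variable if necessary (i.e.\ using $A_{0,1}$, which changes $q_2$ to $-q_2$ and preserves $q_1$) we arrange $\partial g^{(2)}/\partial y > 0$.

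I expect the main obstacle to be the bookkeeping around the two sign ambiguities and making sure that fixing the sign of $g^{(1)}$ does not obstruct fixing the sign of $\partial g^{(2)}/\partial y$: one must use that the sign of $q_1$ and the sign of $q_2$ in the model can be flipped \emph{independently} by the commuting symplectomorphisms $A_{1,0}$ and $A_{0,1}$ of \eqref{equ:A} (so the relevant group is $K_4$, not $\Z/2\Z$), so that after fixing $g^{(1)}(x,y)=x-x_0$ using the $S^1$-action one still has a free $A_{0,1}$ to adjust the second component. The remaining verifications — that the translation by $c_0$ does not spoil normalizability, that the shrunk domain still meets every fiber so that the $S^1$-action extends, and that the resulting pair $(\varphi, g)$ is still an Eliasson isomorphism — are routine, relying on openness and properness of $F$ onto its image (Remark~\ref{rmk:sat_neigh}) and on Lemma~\ref{lemma:norm}. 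The conclusion is precisely the stated form $(M' \subset M, \omega|_{M'}, g \circ F|_{M'})$ with $g$ as in \eqref{equ:vertical}.
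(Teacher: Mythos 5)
Your proposal is correct and follows essentially the same route as the paper: it invokes Corollary~\ref{cor:unique_sys_pres} (together with the model $S^1$-action from Proposition~\ref{prop:ff}) to force the first component of the Eliasson diffeomorphism to be $\pm(x-x_0)$, and then adjusts the two signs independently via the commuting symplectomorphisms $A_{1,0}$, $A_{0,1}$ of \eqref{equ:A}, exactly as in the paper's use of the $K_4$-action on $\mathrm{Aut}\left((\R^4,0),\omega_{\mathrm{can}},q\right)$. Your worry about the two sign ambiguities is resolved the same way the paper resolves it, so there is no gap.
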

\begin{proof}
  Let $(\tilde\phy:U\to V,\tilde g=(\tilde g^{(1)}, \tilde g^{(2)}))$
  be the Eliasson isomorphism used in Lemma~\ref{lemma:nice_rep} to
  construct the normalized neighborhood.  Since
  $\tilde g^{(1)}(p_0)=0$, Corollary \ref{cor:unique_sys_pres} entails
  that there exists $\epsilon=\pm 1$ such that
  $\epsilon \tilde g^{(1)}\circ F=J-J(p_0)$, and hence
  $\tilde g(x,y)=(\epsilon x-\epsilon x_0, \tilde g^{(2)}(x,y))$ for
  all $(x,y)\in F(U)$.  Finally, using the $K_4$-action on
  $\mathrm{Aut}\left((\R^4,0),\omega_{\mathrm{can}},q\right)$ of
  Section \ref{sec:taylor-series-orbit}, we construct a new Eliasson
  isomorphism of the form
  $(\phy,g)=(A_j\circ\tilde\phy, g_j\circ \tilde g)$ for which both
  $\deriv{g^{(1)}}{x}>0$ and $\deriv{g^{(2)}}{y}>0$, which gives the
  desired result.
\end{proof}
If $g$ and $\tilde g$ are two such Eliasson diffeomorphisms, then
$\tilde g g^{-1}$ belongs to the group $G$ defined
in~\eqref{equ:g-group}, and, more precisely, because of the special
form of these diffeomorphisms, we have
$\tilde g g^{-1} \in G_0 =\ker \vec\epsilon \subset G$ (see Remark
\ref{rema:orientation-preserving}). Thus, if, in the discussion of
Section~\ref{sec:monodr-focus-focus}, we assume that all Eliasson
diffeomorphisms are of the form given by Lemma~\ref{lemm:vertical},
the action of the group $K_4$ has to be replaced by the action of the
subgroup of $K_4$ that preserves $G_0$, and this subgroup is simply
the identity. In particular, the Taylor series orbit of a vertical
neighborhood of a focus-focus fiber of multiplicity one reduces to a
single Taylor series, which is precisely the one obtained from
a regularized action of a normalized neighborhood of the form given by
Lemma~\ref{lemm:vertical}.

In order to adhere to the notation used in~\cite{pel_san_inv}, let
$\RM\formel{a,b}_0\subset \RM\formel{a,b}$ be the subset consisting of
formal power series whose constant term vanishes and whose
$a$-coefficient lies in $\left[0,1\right[$. (This set is in
natural bijection with $\RM\formel{a,b}_\textup{ff}$.)

The above discussion motivates introducing the following notion.
\begin{defi}\label{defn:taylor}
  Given the germ at the focus-focus fiber of
  $\left(M',\omega',F'\right)$, a vertical almost-toric neighborhood
  of a focus-focus fiber of multiplicity one, its {\em associated Taylor series}
  $ T \left(M',\omega', F'\right) \in  \R\llbracket a,b
  \rrbracket_{0}$ is the Taylor series at the origin of the
  regularized action of any normalized vertical almost-toric
  neighborhood with germ at the focus-focus fiber equal to that of $\left(M',\omega',F'\right)$.
\end{defi}
An important consequence is that Theorem~\ref{thm:taylor_sharp},
restated in the vertical category, asserts that this associated Taylor
series completely classifies a vertical neighborhood of the
focus-focus fiber, up to vertical isomorphisms.


\subsubsection*{The Taylor series invariant of a semi-toric system}
Using the above intermezzo, we can introduce the desired invariant of
semi-toric systems. 

\begin{defi}\label{defn:taylor_invariant_st}
  Let $(M,\omega,F)$ be a semi-toric system with $m_{\mathrm{ff}}$
  focus-focus points.  For each $i=1,\ldots, m_{\mathrm{ff}}$, the
  power series $T_i \in \R \llbracket a,b \rrbracket_0$ associated
  with the focus-focus critical value $c_i$ via
  Definition~\ref{defn:taylor} is said to be the {\em Taylor series
    invariant} at $c_i$ of $\left(M,\omega, F\right)$, while the
  ordered collection
  $\mathbf{T} := \left(T_i,\ldots,T_{m_{\mathrm{ff}}}\right) \in
  \left(\R \llbracket a,b \rrbracket_0\right)^{m_{\mathrm{ff}}}$
  is called the {\em Taylor series invariant} of
  $\left(M,\omega, F\right)$.
\end{defi}

The following result shows that the Taylor series invariant of
Definition \ref{defn:taylor_invariant_st} does not depend on the
choice of representative in the isomorphism class of a semi-toric
system. 

\begin{lemm}\label{lemma:necessary}
  If two semi-toric systems are isomorphic, their Taylor series
  invariants are equal.
\end{lemm}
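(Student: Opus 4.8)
The plan is to reduce the statement to the already-established invariance results for germs of neighborhoods of focus-focus fibers, most notably Proposition \ref{prop:indep} (and the well-definedness statement Theorem \ref{thm:well_defined}), combined with the rigidity of the $S^1$-action provided by Corollary \ref{cor:unique_sys_pres}. Suppose $\left(M_1,\omega_1,F_1=(J_1,H_1)\right)$ and $\left(M_2,\omega_2,F_2=(J_2,H_2)\right)$ are isomorphic semi-toric systems via a pair $\left(\varphi,g\right)$ as in Definition \ref{defn:equiv_st}, so $g(x,y) = \left(x,g^{(2)}(x,y)\right)$ with $\partial g^{(2)}/\partial y > 0$ and $F_1 \sim \varphi^* F_2$. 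First I would observe that, since $g$ fixes the first coordinate and $\varphi$ is a symplectomorphism intertwining the systems, $\varphi$ carries the effective Hamiltonian $S^1$-action generated by $J_1$ to the one generated by $J_2$, and it maps focus-focus points to focus-focus points of the same $J$-value (this uses Theorem \ref{thm:will_preserved} together with $g^{(1)}(x,y)=x$). In particular, $\varphi$ induces a bijection $B_{\mathrm{ff},1} \to B_{\mathrm{ff},2}$ which, because $g$ preserves the $x$-coordinate and its second component is increasing in $y$, respects the chosen orderings $c_1,\ldots,c_{m_{\mathrm{ff}}}$ of focus-focus values. Thus it suffices to fix an index $i$ and show $T_i\left(M_1,\omega_1,F_1\right) = T_i\left(M_2,\omega_2,F_2\right)$.

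Next I would localize. Pick an open connected neighborhood $V_i \subset \mathrm{Int}(B_2)$ of $c_i^{(2)} := g(c_i^{(1)})$ containing exactly one singular value of $F_2$, and set $W_i := g^{-1}(V_i)$, an analogous neighborhood of $c_i^{(1)}$ in $B_1$. The pair $\left(\varphi,g\right)$ restricts to a strong symplectic equivalence between the subsystem of $\left(M_1,\omega_1,F_1\right)$ relative to $F_1^{-1}(W_i)$ and the subsystem of $\left(M_2,\omega_2,F_2\right)$ relative to $F_2^{-1}(V_i)$; by property \ref{item:8} both of these restrictions are vertical neighborhoods of a focus-focus fiber of multiplicity one in the sense of the intermezzo preceding Definition \ref{defn:taylor}. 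Moreover the restricted equivalence is again of the form $(x,y)\mapsto(x,\star)$ with positive $y$-derivative, i.e.\ it is a vertical isomorphism. By Definition \ref{defn:taylor}, $T_i\left(M_j,\omega_j,F_j\right)$ is exactly the Taylor series orbit (which, as explained in that intermezzo, collapses to a single series in $\RM\formel{a,b}_0$) of the germ at the focus-focus fiber of this vertical neighborhood. Since these two germs are strongly symplectically equivalent, hence in particular isomorphic, Proposition \ref{prop:indep} (equivalently, the well-definedness of the Taylor series orbit of a germ, Theorem \ref{thm:well_defined} together with Definition \ref{defn:tso}) gives equality of the Taylor series orbits, and the vertical refinement of Theorem \ref{thm:taylor_sharp} gives equality of the single associated Taylor series in $\RM\formel{a,b}_0$. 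Running this over all $i=1,\ldots,m_{\mathrm{ff}}$ yields $\mathbf{T}\left(M_1,\omega_1,F_1\right) = \mathbf{T}\left(M_2,\omega_2,F_2\right)$.

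The main obstacle I anticipate is bookkeeping rather than conceptual: one must check carefully that the isomorphism $\left(\varphi,g\right)$, which is a \emph{global} strong symplectic equivalence of semi-toric systems, genuinely restricts to a \emph{vertical} isomorphism of the local (germ-level) focus-focus neighborhoods, i.e.\ that the restricted $g$ still has the special triangular form with positive $y$-derivative and that the restricted systems still meet Definition \ref{defn:sat_neigh_ff} (connectedness of fibers, a single focus-focus fiber of multiplicity one). Connectedness follows from Theorem \ref{thm:connected} applied to the ambient semi-toric systems (so fibers of $F_j$, and hence of its restriction to an $F_j$-saturated set, are connected), multiplicity one is guaranteed by property \ref{item:10}, and the triangular form of the restricted $g$ is inherited verbatim. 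A secondary point requiring care is matching the \emph{normalizations} built into Definition \ref{defn:taylor}: the associated Taylor series lives in $\RM\formel{a,b}_0$ precisely because, in the vertical category, the residual $K_4$-action is killed (as noted after Lemma \ref{lemm:vertical}), so there is no orbit ambiguity left to worry about — but one should explicitly invoke that reduction to conclude equality of series, not merely of $K_4$-orbits. Once these compatibility checks are in place the argument is immediate.
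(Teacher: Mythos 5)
Your proposal is correct and follows essentially the same route as the paper: match up the focus-focus values using the special form of $g$, restrict $(\varphi,g)$ to a vertical isomorphism between the subsystems over small neighborhoods $V_i$ of each focus-focus value, and conclude equality of the associated Taylor series from their invariance (Definition \ref{defn:taylor} together with the vertical restatement of Theorem \ref{thm:taylor_sharp}). The extra bookkeeping you flag (connectedness via Theorem \ref{thm:connected}, multiplicity one via \ref{item:10}, the inherited triangular form of the restricted $g$) is exactly what the paper's terser argument leaves implicit.
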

\begin{proof}
  Suppose that $\is$ and $\left(M',\omega', F'\right)$ are isomorphic
  via $\left(\phy,g\right)$. Then they have an equal number of
  focus-focus points, {\it i.e.}
  $m_{\mathrm{ff}} = m_{\mathrm{ff}}' = m$. If this number is zero,
  there is nothing to prove. Suppose that $m \geq 1$. For
  $i=1,\ldots,m$, let $c_i$ and $c_i'$ denote the $i$th focus-focus
  value of $\is$ and $\left(M',\omega', F'\right)$ respectively,
  ordered as stated at the beginning of Section
  \ref{sec:invar-semi-toric}. Since $\left(\phy,g\right)$ is a strong
  symplectic equivalence, it sends focus-focus points to focus-focus
  points. This fact, together with the special form of $g$ (see
  Definition \ref{defn:equiv_st}), implies that for all
  $i=1,\ldots, m$, $g(c_i) = c'_i$. Fix $i=1,\ldots,m$.

  If $V_i \subset B_i$ denotes an open neighborhood of $c_i$
  containing no other singular value of $F$, then
  $V_i':=g\left(V_i\right)$ is an open neighborhood of $c_i'$
  satisfying an analogous property for $F'$. Moreover, the restriction
  of $\left(\phy,g\right)$ to
  $\left(F^{-1}\left(V_i\right),V_i\right)$ gives a vertical
  isomorphism between the subsystems of $\is$ and
  $\left(M',\omega',F'\right)$ relative to $F^{-1}\left(V_i\right)$
  and to $\left(F'\right)^{-1}\left(V'_i\right)$. 
  It follows that their Taylor series $T_i$ and $ T'_i$ are equal.
\end{proof}

\subsubsection{Cartographic invariant}\label{sec:st_polygons}
The last remaining invariant of semi-toric systems generalizes the
moment map image for (four dimensional) toric integrable systems with
proper moment map (cf. \cite[Proposition 6.3]{kar_ler}). Loosely
speaking, this invariant encodes the integral affine structure
$\mathcal{A}$ on 
the locally toric leaf space
$B_{\mathrm{lt}} = B \smallsetminus B_{\mathrm{ff}} \subset \R^2$
(cf.  Corollary
\ref{cor:ia_reg_ls} and Remark
\ref{rmk:purely_elliptic}). To make the above precise, we need to
recall the notion of {\em developing map} of an (integral)
affine structure\footnote{The discussion holds {\em mutatis
    mutandis} in the more general context of
  $\left(G,X\right)$-structures in the sense of \cite{smillie}.}
(cf. \cite{aus_mar} for further details).

\subsubsection*{Intermezzo: developing maps for (integral) affine manifolds}
Let $\left(N,\mathcal{A}\right)$ be an $n$-dimensional integral affine
manifold and let $\tilde{N}$ denote its universal cover. The universal
covering map $q : \tilde{N} \to N$ induces an integral affine
structure $\tilde{\mathcal{A}}$ on $\tilde{N}$ by pulling back
$\mathcal{A}$. Upon a choice
of basepoint $x_0 \in N$ and of an integral affine coordinate chart
$\chi_0$ defined near $x_0$, there is a local diffeomorphism
$\mathrm{dev}_{x_0,\chi_0} : \tilde{N} \to \R^n$ which is a {\em
  global} integral affine coordinate chart for $\tilde{\mathcal{A}}$
(cf. \cite{aus_mar} for the explicit construction). 
 
\begin{defi}\label{defi:dev_map}
  The map $\mathrm{dev}_{x_0,\chi_0} : \tilde{N} \to \R^n$ is called
  the {\em developing map} of $\left(N,\mathcal{A}\right)$ (relative
  to the choices of $x_0$ and of $\chi_0$).
\end{defi}

\begin{rema}\label{rmk:choices_dev_map}
  If $x'_0 \in N$ and $\chi'_0$ are different choices of
  basepoint and of integral affine coordinate map respectively, then
  there exists an element $\rho \in \mathrm{AGL}\left(n;\Z\right)$ such
  that
  $\mathrm{dev}_{x'_0, \chi'_0} = \rho \circ
  \mathrm{dev}_{x_0,\chi_0}$.
  Conversely, for any $\rho \in \mathrm{AGL}\left(n;\Z\right)$, the map
  $\rho \circ \mathrm{dev}_{x_0,\chi_0}$ is a developing map.
\end{rema}

Henceforth, fix choices $x_0 \in N$ and $\chi_0$ and, to simplify notation, denote the
resulting developing map by
$\mathrm{dev}$. The action of the fundamental group
$\pi_1(N) = \pi_1(N,x_0)$ on $\tilde{N}$ by deck transformations is
via integral affine diffeomorphisms, {\it i.e.} there is a group homomorphism
$\mathfrak{a} : \pi_1(N) \to \mathrm{AGL}\left(n;\Z\right)$ which,
for any $[\gamma] \in \pi_1(N)$, makes the following diagram commute
\begin{equation*}
  \xymatrix{\tilde{N} \ar[r]^-{\mathrm{dev}} \ar[d]_-{\cdot
      [\gamma]} & \R^n \\
    \tilde{N} \ar[r]_-{\mathrm{dev}} & \R^n \ar[u]_-{\mathfrak{a}\left([\gamma]\right)},}
\end{equation*}
where $\cdot [\gamma] : \tilde{N} \to \tilde{N}$ denotes the
diffeomorphism induced by acting by $[\gamma]$. 



\begin{defi}\label{defn:affine_holo}
  The homomorphism
  $\mathfrak{a} : \pi_1(N) \to \mathrm{AGL}\left(n;\Z\right)$ is the {\em affine holonomy}
  of $\left(N,\mathcal{A}\right)$. Its
  composite with the natural projection
  $\mathrm{Lin}: \mathrm{AGL}\left(n;\Z\right) \to \mathrm{GL}\left(n;\Z\right)$ is
  denoted by
  $\mathfrak{l} : \pi_1(N) \to \mathrm{GL}\left(n;\Z\right)$ and is
  the {\em linear holonomy} of
  $\left(N,\mathcal{A}\right)$.
\end{defi}

\begin{exm}\label{exm:ff_lin_hol}
  Unraveling the above constructions and the proof of Corollary
  \ref{cor:monodromy}, we obtain that equation \eqref{eq:11} is
  nothing but the calculation of the linear holonomy of the integral
  affine structure induced by an almost-toric system as in Section
  \ref{sec:monodr-focus-focus} near a focus-focus value. Exactness of
  the symplectic form in a neighborhood of the focus-focus fiber
  implies, in fact, that the linear and affine holonomies coincide for
  such integral affine structures.
\end{exm}

\begin{rema}\label{rmk:corners_dev}
  If $\left(N,\mathcal{A}\right)$ is an $n$-dimensional integral
  affine manifold with corners, the above discussion constructs a
  developing map and an affine holonomy representation for
  $\left(N,\mathcal{A}\right)$. The types of integral affine manifolds
  with corners that we deal with when studying semi-toric systems (or,
  more generally, almost-toric systems) are, in fact, special, for the
  facets and corners satisfy a {\em unimodularity} condition
  (cf. Definition \ref{defn:polygon} below). This condition can be described as
  follows: the image of any codimension $k = 1, 2$ face of $\tilde{N}$
  under the developing map is the intersection of $k$ linear
  hyperplanes in $\R^2$ whose normals can be chosen to span a
  $k$-dimensional unimodular sublattice of $\Z^2$, {\it i.e.}  the
  quotient of $\Z^2$ by the lattice has no torsion. This is a
  consequence of Theorem \ref{thm:toric} which provides integral
  affine coordinate maps near the boundary of the locally toric leaf
  space (cf. Remark \ref{rmk:purely_elliptic}).
\end{rema}

\subsubsection*{The cartographic invariant in the case $m_{\mathrm{ff}}=0$}\label{sec:cart-invar-case}
Going back to the semi-toric system $\left(M,\omega,F\right)$, the aim
is to encode the integral affine structure $\mathcal{A}$ on
$B_{\mathrm{lt}}$; intuitively, the idea is to construct a map defined
on the whole of $B$ which plays the role of the developing map for the
`{\em singular} integral affine structure on $B$' and whose image
satisfies familiar properties, which are recalled below.

\begin{defi}\label{defn:polygon}
  A {\em polygon} is a closed subset of $\R^2$
  whose boundary is a piece-wise linear curve with finitely many
  vertices contained in any compact subset of $\R^2$. Each linear
  piece of the boundary is called an {\em edge}, while a {\em vertex}
  is a point at which the boundary fails to be
  differentiable. A polygon is said to be
  \begin{itemize}[leftmargin=*]
  \item {\em convex} if it is the convex hull of isolated points in
    $\R^2$;
  \item {\em simple} if there are exactly two edges incident to any
    vertex;
  \item {\em rational} if the slope of any edge is a rational number.
  \end{itemize}
  A vertex $v$ of a simple, rational polygon is said to be {\em
    smooth} (or {\em unimodular}) if the normal vectors to the edges
  incident to $v$, each chosen so that its components are coprime,
  generate $\Z^2$. A simple, rational polygon is said to be {\em
    smooth} (or {\em unimodular}) if all its vertices are. The set of
  convex, simple, rational (and smooth) polygons is denoted by
  $\mathrm{RPol}\left(\R^2\right)$ (respectively $\mathrm{DPol}\left(\R^2\right)$).
\end{defi}

\begin{rema}\label{rmk:polys_act}
  Clearly, the following chain of inclusions holds
  $\mathrm{DPol}\left(\R^2\right) \subset \mathrm{RPol}
  \left(\R^2\right) \subset \mathrm{Pol}\left(\R^2\right)$, where
  $\mathrm{Pol}\left(\R^2\right)$ is the set of all polygons in $\R^2$. Moreover,
  the natural $\mathrm{AGL}\left(2;\Z\right)$-action on $\R^2$
  defines a $\mathrm{AGL}\left(2;\Z\right)$-action on
  $\mathrm{Pol}\left(\R^2\right)$ which leaves both
  $\mathrm{DPol}\left(\R^2\right)$ and $ \mathrm{RPol}
  \left(\R^2\right)$ invariant. 
\end{rema}

Suppose first that $m_{\mathrm{ff}}=0$; then the following result holds.

\begin{prop}\label{prop:toric_polygons_st}
  Let $\left(M,\omega, F=(J,H)\right)$ be a semi-toric system
  with $m_{\mathrm{ff}}=0$. There exists a smooth map
  $f : B = F(M) \to \R^2$ of the form
  $f(x,y) := \left(x,f^{(2)}(x,y)\right)$, where $f^{(2)}: B \to \R$
  is a smooth function with $\frac{\partial f^{(2)}}{\partial y} >0$, such that
  \begin{itemize}[leftmargin=*]
  \item $f$ is a diffeomorphism onto its image;
  \item the composite $f \circ F : \left(M,\omega\right) \to \R^2$ is
    the moment map of an effective Hamiltonian
    $\mathbb{T}^2$-action\footnote{Throughout we fix an identification
      $\mathrm{Lie}\left(\mathbb{T}^2 \right) \cong \R^2$.}.
  \end{itemize}
  In particular, $f(B)$ is a convex, rational, simple, smooth polygon,
  {\it i.e.} $f(B) \in \mathrm{DPol}\left(\R^2\right)$.
\end{prop}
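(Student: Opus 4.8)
The plan is to realize $f$ as a suitably normalized developing map for the integral affine structure that $B$ carries when there are no focus--focus values. Since $m_{\mathrm{ff}}=0$, the locally toric leaf space satisfies $B_{\mathrm{lt}} = B\smallsetminus B_{\mathrm{ff}} = B$, so by Corollary~\ref{cor:ia_reg_ls} and Remark~\ref{rmk:purely_elliptic} the whole of $B$ is an integral affine manifold with corners, whose facets and corners are unimodular (Remark~\ref{rmk:corners_dev}). As $B$ is contractible, hence simply connected, its affine holonomy $\pi_1(B)\to\mathrm{AGL}(2;\Z)$ is trivial, so the developing map of Definition~\ref{defi:dev_map} is a globally defined local diffeomorphism $B\to\R^2$, well defined up to post-composition with an element of $\mathrm{AGL}(2;\Z)$. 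The real work is to choose it compatibly with the $S^1$-action and then to identify its image.

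First I would produce, around every point of $B$, a \emph{vertical} integral affine chart, meaning one of the form $\chi=(x,\chi^{(2)})$ whose first component is the $J$-coordinate and with $\partial\chi^{(2)}/\partial y>0$. Over $B_{\mathrm{reg}}$ this uses that $\DD J$ is a \emph{primitive} section of the bundle of periods $\Sigma$ (effectiveness of the $S^1$-action): one completes $\DD J$ to a local $\Z$-basis $(\DD J,\beta)$ of $\Sigma$, writes $\beta=\DD\chi^{(2)}$ locally since $\beta$ is closed, and changes sign if needed; the resulting $\chi\circ q$ is the moment map of an effective Hamiltonian $\mathbb{T}^2$-action precisely because $(\DD J,\beta)$ is a basis of $\Sigma$. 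Near a boundary orbit, which is compact by Corollary~\ref{cor:sing_orb_compact}, Theorem~\ref{thm:toric} (via Remark~\ref{rmk:purely_elliptic}) supplies a local toric moment map $\mu=(\mu_1,\mu_2)$, i.e.\ an integral affine chart with corners; since $J$ is constant on the fibers of $F$, which are the local $\mathbb{T}^2$-orbits, $J$ is a function of $\mu$, and comparing periodic flows on a regular leaf forces $\DD J = a\,\DD\mu_1+b\,\DD\mu_2$ with $(a,b)\in\Z^2$ primitive. Acting on $\mu$ by a matrix of $\mathrm{GL}(2;\Z)$ with first row $(a,b)$, and adjusting a sign and a constant, then yields a vertical chart with corners.

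Next I would observe that any transition between two vertical charts is an element of $\mathrm{AGL}(2;\Z)$ that fixes the first coordinate and is increasing in the second, hence of the form $(x,y)\mapsto(x,\,y+kx+c)$ with $k\in\Z$ and $c\in\R$; such maps form a subgroup $G_{\mathrm{vert}}\subset\mathrm{AGL}(2;\Z)$. Running the developing-map construction from a vertical base chart and passing only through vertical charts therefore produces a map $f\colon B\to\R^2$ of the claimed form $f(x,y)=(x,f^{(2)}(x,y))$ with $\partial f^{(2)}/\partial y>0$. It is a local diffeomorphism, and it is injective: it preserves $x$, and each slice $B\cap\{x=x_0\}=\{x_0\}\times H(J^{-1}(x_0))$ is an interval because $J^{-1}(x_0)$ is connected (Remark~\ref{rmk:ham_spaces}), on which $f^{(2)}(x_0,\cdot)$ is strictly increasing. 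Hence $f$ is a diffeomorphism onto its image. Moreover $f\circ F\colon M\to\R^2$ has Poisson-commuting components (both are functions of $F$); $J$ generates an effective Hamiltonian $S^1$-action, and $f^{(2)}\circ F$ generates one as well, since locally $\DD f^{(2)}$ differs from the second differential of a vertical chart by a multiple of $\DD x=\DD J\in\Sigma$, so $\DD f^{(2)}$ is a section of $\Sigma$ and the flow of $\ham{f^{(2)}\circ F}$ --- complete because the fibers of $F$ are compact --- is periodic of period $1$. Thus $f\circ F$ is the moment map of a Hamiltonian $\mathbb{T}^2$-action, effective because the period lattice of a regular fiber is exactly $\Z^2$ in these coordinates.

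Finally, $f\circ F$ is a \emph{proper} moment map, since its first component $J$ is proper (so the preimage of a compact $K\subset\R^2$ is a closed subset of the compact set $J^{-1}(\mathrm{pr}_1 K)$); by the convexity theorem for proper Hamiltonian torus actions (cf.~\cite{lmtw,kar_ler}) the image $f(B)=(f\circ F)(M)$ is a convex polygon, and the local toric models along $\partial B$ (Remark~\ref{rmk:corners_dev}) make it simple, rational and unimodular, i.e.\ $f(B)\in\mathrm{DPol}(\R^2)$. I expect the main obstacle to lie in the second and third steps: one must check carefully that the vertical normalization is available at every leaf --- in particular that along the boundary $J$ really is part of a toric chart, which rests on $\ham J$ being an \emph{integral} infinitesimal generator of the local $\mathbb{T}^2$-action --- and that $G_{\mathrm{vert}}$ is simultaneously large enough to contain every transition between vertical charts and small enough to force the developing map to be vertical. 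Once that bookkeeping is in place, contractibility of $B$ trivializes the holonomy and supplies the global $f$ for free, and the rest is standard toric geometry.
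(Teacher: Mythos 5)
Your proposal is correct and follows essentially the same route as the paper's (sketched) proof: realize $f$ as a developing map for the integral affine structure on $B = B_{\mathrm{lt}}$, normalized via the effective $S^1$-action so that its first component is $J$ and it is fiberwise increasing, deduce injectivity from connectedness of the $J$-fibers, and conclude convexity, rationality, simplicity and smoothness of the image from properness of $J$ together with the convexity theorem and the toric local models along the boundary. You merely spell out details (vertical charts, primitivity of $\DD J$ in the period lattice, the group of vertical transitions) that the paper leaves implicit.
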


\begin{proof}[Sketch of proof]
  The idea is to choose an appropriate developing map for the integral
  affine manifold $\left(B,\mathcal{A}\right)$. Since $B$ is
  contractible, upon a choice of basepoint and integral affine
  coordinate map, there
  exists a developing map $\mathrm{dev} : B \to \R^2$. By definition
  of the integral affine structure on $B$ (cf. Corollary
  \ref{cor:ia_reg_ls} and Remark \ref{rmk:purely_elliptic}) and by
  property \ref{item:8}, it is
  possible to fix the above choices so that
  $\mathrm{dev}(x,y) = \left(x,\mathrm{dev}^{(2)}(x,y)\right)$ and
  $\mathrm{dev}$ is orientation-preserving. Set
  $f:= \mathrm{dev}$; since $\mathrm{dev}$ is a local diffeomorphism,
  the above form implies that $\mathrm{dev}$ is
  injective. This proves the first item. By
  definition of the integral affine structure $\mathcal{A}$, $f \circ F$ is the moment map of an effective Hamiltonian
  $\mathbb{T}^2$-action. Moreover, the first component of $f \circ F$
  is equal to $J$, which is proper by assumption; therefore
  $f \circ F$ is proper, which implies that $f(B)$ is a convex,
  rational, simple, smooth polygon by \cite[Theorem 1.1]{lmtw} and
  \cite{delz}.
\end{proof}

\begin{defi}\label{defn:carto_toric}
  Given a semi-toric system $\left(M,\omega,F\right)$ with $m_{\mathrm{ff}}=0$, any map $f: B \to \R^2$ as in Proposition
  \ref{prop:toric_polygons_st} is said to be a {\em cartographic diffeomorphism}.
\end{defi}

\begin{rema}\label{rmk:rephrase}
  A consequence of Proposition \ref{prop:toric_polygons_st} is that
  any semi-toric system without focus-focus points is isomorphic to a toric integrable system.
\end{rema}

\begin{defi}\label{defn:dec_carto_no_ff}
  Given a semi-toric system $\left(M,\omega,F\right)$ with
  $m_{\mathrm{ff}}=0$ and a cartographic diffeomorphism $f$, the {\em
    decorated semi-toric polygon associated to $f$} is 
  $f\left(B\right) \in \times
  \mathrm{DPol}\left(\R^2\right)$.
\end{defi}

The cartographic invariant of $\left(M,\omega,F\right)$ with
$m_{\mathrm{ff}} = 0$ is the collection of {\em all} decorated
semi-toric polygons of {\em all} semi-toric systems isomorphic to
$\left(M,\omega,F\right)$. In fact, these can be described in terms of
some group actions; to this end, we
introduce the subgroup of $\mathrm{AGL}(2;\Z)$ of elements which are
orientation-preserving and fixes 
vertical lines, and denote it by $\mathcal{V}$. Explicitly, 
\[
\mathcal{V} = \left\{\left(
    \begin{pmatrix}
      1 & 0 \\
      k & 1
    \end{pmatrix},
    \begin{pmatrix}
      a \\
      b
    \end{pmatrix}\right) \bigg| \, k \in \ZM \text{ and } a,b \in \R
\right\}.
\]
Combining Remark \ref{rmk:choices_dev_map} with the proof of
Proposition \ref{prop:toric_polygons_st}, we obtain the following
characterization.

\begin{coro}\label{cor:family_no_ff}
  Let $\left(M,\omega, F\right)$ be a semi-toric system with
  $m_{\mathrm{ff}} = 0$, and let $f: B
  \to \R^2$ be a cartographic diffeomorphism. Then $f' : B \to \R^2$ is a
  cartographic diffeomorphism if and only if
  there exists $\rho \in \mathcal{V}$ such that
  $f'= \rho \circ f$.
\end{coro}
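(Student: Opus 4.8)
The plan is to exploit the identification, implicit in the proof of Proposition~\ref{prop:toric_polygons_st}, between cartographic diffeomorphisms of $\left(M,\omega,F\right)$ and suitably normalized developing maps of the integral affine manifold $\left(B,\mathcal{A}\right)$, and then to invoke the essential uniqueness of the developing map of a simply connected integral affine manifold recorded in Remark~\ref{rmk:choices_dev_map}. The key preliminary observation I would isolate is that a smooth map $f:B\to\R^2$ is a cartographic diffeomorphism precisely when it is a developing map of $\left(B,\mathcal{A}\right)$ that is orientation preserving and whose first component is the coordinate $x$ on $B\subset\R^2$. One implication is exactly the construction carried out in the proof of Proposition~\ref{prop:toric_polygons_st}; for the other, since $B$ is contractible, hence simply connected, any developing map is a global integral affine chart, the normalization of the first component makes $J$ the first component of $f\circ F$, and the definition of $\mathcal{A}$ (Corollary~\ref{cor:ia_reg_ls} and Remark~\ref{rmk:purely_elliptic}) then forces $f\circ F$ to be the moment map of an effective Hamiltonian $\mathbb{T}^2$-action, with properness of $J$ supplying the polygon statement as in the cited proof.

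Granting this, the \textbf{if} direction is straightforward. Given a cartographic diffeomorphism $f$ and $\rho\in\mathcal{V}$, the composite $f':=\rho\circ f$ is again a developing map of $\left(B,\mathcal{A}\right)$ by the converse part of Remark~\ref{rmk:choices_dev_map}; it is a diffeomorphism onto its image because $\rho$ is an affine bijection of $\R^2$; since the linear part of $\rho$ is $\left(\begin{smallmatrix}1&0\\k&1\end{smallmatrix}\right)$ and $\rho$ preserves orientation and the foliation by vertical lines, $f'$ retains the normalized form and its second component still satisfies $\frac{\partial {f'}^{(2)}}{\partial y}>0$; and $f'\circ F=\rho\circ(f\circ F)$ is still the moment map of an effective Hamiltonian $\mathbb{T}^2$-action, because postcomposition by an element of $\mathrm{AGL}(2;\Z)$ amounts to an automorphism of $\mathbb{T}^2$ together with a translation of the moment map, neither of which affects effectiveness. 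Hence $f'$ is a cartographic diffeomorphism.

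For the \textbf{only if} direction, let $f$ and $f'$ be two cartographic diffeomorphisms. Both are developing maps of $\left(B,\mathcal{A}\right)$, so by Remark~\ref{rmk:choices_dev_map} and simple connectedness of $B$ there is a unique $\rho=(A,t)\in\mathrm{AGL}(2;\Z)$ with $f'=\rho\circ f$. Writing $f(x,y)=\left(x,f^{(2)}(x,y)\right)$ and comparing the first component of $\rho\circ f$ with the normalization of $f'$, and using that $\frac{\partial f^{(2)}}{\partial y}>0$ so that $f^{(2)}$ is a genuine function of $y$, I would read off that the first row of $A$ equals $(1,0)$; orientation preservation of $\rho$ then forces $\det A=1$, hence $A=\left(\begin{smallmatrix}1&0\\k&1\end{smallmatrix}\right)$ with $k\in\Z$. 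Thus $\rho\in\mathcal{V}$, which completes the argument.

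The step I expect to be the main obstacle is not any isolated computation but making the first-paragraph identification genuinely airtight: one has to be precise about exactly which normalization data is built into the notion of a cartographic diffeomorphism, and track the additive ambiguities carefully (the translation part of $\rho$, the constant in $J$, the antiderivative used to produce the action coordinates) so that the group of transformations relating two cartographic diffeomorphisms comes out to be exactly $\mathcal{V}$ and not a proper subgroup or overgroup. The $\mathbb{T}^2$-moment-map bookkeeping in the \textbf{if} direction, while routine, likewise needs the dictionary between $\mathrm{AGL}(2;\Z)$ and automorphisms-plus-translations of the torus to be spelled out.
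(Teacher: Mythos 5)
Your argument is correct and is essentially the paper's: the corollary is obtained there precisely by combining Remark~\ref{rmk:choices_dev_map} with the proof of Proposition~\ref{prop:toric_polygons_st}, i.e. by identifying cartographic diffeomorphisms with orientation-preserving developing maps of $\left(B,\mathcal{A}\right)$ whose first component is $x$ (via Corollary~\ref{cor:ia_reg_ls} and Remark~\ref{rmk:purely_elliptic}) and then invoking uniqueness of developing maps up to $\mathrm{AGL}(2;\Z)$, with the normalization forcing the transition element into $\mathcal{V}$. The one caveat, inherited from the statement itself rather than introduced by you, is that an element of $\mathcal{V}$ with nonzero horizontal translation turns the first component $x$ into $x+a$, so the `if' direction requires reading the normalization of Proposition~\ref{prop:toric_polygons_st} up to an additive constant in the first component (your `only if' computation in fact shows that the translation relating two strictly normalized cartographic diffeomorphisms is purely vertical).
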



There is a natural
action of $\mathcal{V}$ on $
\mathrm{DPol}\left(\R^2\right)$, given by $H \cdot \Delta := H\left(\Delta\right)$. 
  

\begin{defi}\label{defn:carto_no_ff}
  Let $\left(M,\omega,F\right)$ be a semi-toric system with $m_{\mathrm{ff}}=0$. The $\mathcal{V}$-orbit of the decorated semi-toric polygon associated to
  any of its cartographic diffeomorphisms is called the {\em
  cartographic invariant} of $\left(M,\omega,F\right)$.
\end{defi}

\subsubsection*{Cartographic homeomorphisms in the case
  $m_{\mathrm{ff}} > 0$}

Our next aim is to generalize Proposition \ref{prop:toric_polygons_st}
to the case $m_{\mathrm{ff}} \geq 1$. Henceforth, fix
$m_{\mathrm{ff}} > 0$, which implies that $B_{\mathrm{lt}}$ is not
simply connected; the main insight of \cite{san_polygon} (which also
appears, with fewer details, in \cite{symington}) is that, by choosing
a suitable open, simply connected subset of $B_{\mathrm{lt}}$, it is
possible to define a homeomorphism of $B$ onto its image which
restricts to a developing map for the above chosen simply connected
subset of $B_{\mathrm{lt}}$! The idea behind constructing these simply
connected subsets comes from an understanding of the affine holonomy
of the integral affine structure near focus-focus fibers
(cf. Corollary \ref{cor:monodromy} and Example \ref{exm:ff_lin_hol}),
which, in suitable coordinates, leaves a vertical line invariant.

We show how to construct these subsets. For each $i =1,\ldots, m_{\mathrm{ff}}$,
choose a sign $\epsilon_i \in \{+1,-1\}$; this choice is henceforth encoded in
the vector
$\boldsymbol{\epsilon} =
\left(\epsilon_1,\ldots,\epsilon_{m_{\mathrm{ff}}}\right) \in
\{+1,-1\}^{m_{\mathrm{ff}}}$.
For each $i=1,\ldots,m_{\mathrm{ff}}$, consider the {\em (vertical)
  $\epsilon_i$-cut at $c_i$},
\[
l^{\epsilon_i} := \left\{ (x,y) \in B \mid x = x_i, \, \epsilon_i y
  \geq \epsilon_i y_i \right\},
\]
and set
$l^{\boldsymbol{\epsilon}} := \bigcup\limits_{i=1}^{m_{\mathrm{ff}}}
l^{\epsilon_i}$,
which denotes the union of all the cuts associated to
$\boldsymbol{\epsilon}$. The following result, stated below without
proof, characterizes the complement of the cuts.

\begin{lemm}\label{lemma:compl_cuts}
  For any $\boldsymbol{\epsilon} \in \{+1,-1\}^{m_{\mathrm{ff}}}$, the
  complement of the cuts $B \smallsetminus l^{\boldsymbol{\epsilon}}$
  is open and dense in $B$, and is simply connected.
\end{lemm}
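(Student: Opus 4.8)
The plan is to prove Lemma~\ref{lemma:compl_cuts}, which has three assertions about $B \smallsetminus l^{\boldsymbol{\epsilon}}$: it is open, it is dense, and it is simply connected. I will treat each in turn, relying on the structural facts about the moment map image $B$ of a semi-toric system recalled just before Definition~\ref{defn:equiv_st}: $B$ is contractible, $B_{\mathrm{ff}}$ is a finite set of interior points, and the boundary $\partial B$ consists of images of purely elliptic orbits.

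\emph{Openness and density.} Each cut $l^{\epsilon_i}$ is a closed half-line $\{x = x_i,\ \epsilon_i y \geq \epsilon_i y_i\}$ intersected with $B$, hence closed in $B$; a finite union of closed sets is closed, so the complement is open. For density, note that $l^{\boldsymbol\epsilon}$ is contained in the union of finitely many vertical lines $\{x = x_i\}$, which has empty interior in $\R^2$ and hence in $B$ (since $B$ has nonempty interior and is the closure of its interior — it is contractible with piecewise-linear boundary). So $B \smallsetminus l^{\boldsymbol\epsilon}$ contains the dense open set $\mathrm{Int}(B)$ minus finitely many vertical segments, which is still dense. I would phrase this cleanly by observing that removing a finite union of closed sets each of topological dimension $\leq 1$ from a $2$-dimensional set whose interior is dense cannot destroy density.

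\emph{Simple connectedness.} This is the substantive point, and the main obstacle. The set $B$ is contractible; $l^{\boldsymbol\epsilon}$ is a disjoint union of $m_{\mathrm{ff}}$ half-lines, each starting at an interior focus-focus value $c_i$ and running vertically off to (or reaching) the boundary of $B$ in the direction $\epsilon_i$. The key geometric fact is that each cut $l^{\epsilon_i}$, being a properly embedded arc in $B$ that starts at an interior point $c_i$ and otherwise stays in $\mathrm{Int}(B) \cup \partial B$ with its other "end" escaping to infinity or hitting $\partial B$, separates a contractible planar region locally but, because it is "attached to the boundary at one end", removing it does not create a loop that fails to bound. Concretely, I would argue by a deformation-retraction / direct homotopy argument: $B \smallsetminus l^{\boldsymbol\epsilon}$ is homeomorphic to $B$ with $m_{\mathrm{ff}}$ "slits" cut from interior points out to the boundary (or to infinity). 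Such a slit domain in the plane is simply connected: one can build an explicit isotopy of $\R^2$ opening each slit into a thin wedge removed from the region, showing $B \smallsetminus l^{\boldsymbol\epsilon}$ is homeomorphic to a convex-like contractible region. Alternatively, apply van Kampen inductively on $i$: removing one cut $l^{\epsilon_i}$ from a simply connected planar set that is "nice near $c_i$ and near the point where the cut meets $\partial B$ or exits" yields again a simply connected set, because a loop in the complement can be pushed off the (codimension-one, boundary-anchored) cut.

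The technical care needed — and the reason I flag it as the obstacle — is handling the two cases for where a cut terminates: it may hit $\partial B$ transversally (in which case the slit genuinely disconnects a neighborhood, but only locally, and the "missing" piece is anchored to the boundary so no noncontractible loop appears), or $l^{\epsilon_i}$ may be unbounded and exit $B$ without meeting $\partial B$ if $B$ is unbounded in that direction (again anchored "at infinity"). In both cases the cut is a properly embedded copy of $[0,\infty)$ meeting $\partial B$ only possibly at its free end, so $B \smallsetminus l^{\epsilon_i}$ deformation retracts onto a subset avoiding a collar neighborhood of the cut, and one checks this retract is still contractible. I would organize the final write-up as: (1) openness, one line; (2) density, one line; (3) reduce simple connectedness to the planar slit-domain statement via the explicit description of $B$ and the cuts, then (4) prove the slit-domain statement by induction on $m_{\mathrm{ff}}$ using a Mayer–Vietoris / van Kampen step, citing that a loop in a planar region can be homotoped off a properly embedded boundary-anchored arc.
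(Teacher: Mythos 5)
The paper states Lemma~\ref{lemma:compl_cuts} without proof, so there is no argument of the authors to compare yours against; what follows is an assessment of your proposal on its own terms. The openness claim is fine. For density, your justification leans on facts that are not available at this stage: $B$ does not have a piecewise-linear boundary (that is a property of the image under the cartographic homeomorphism, i.e.\ the \emph{conclusion} of Theorem~\ref{thm:polygons}), and contractibility does not give that $B$ is the closure of its interior. What you actually need is that regular values are dense in $B$ (equivalently $\overline{\mathrm{Int}(B)}=B$), which follows from the structure results recalled before Definition~\ref{defn:equiv_st} (local normal forms, $B_{\mathrm{reg}}=\mathrm{Int}(B)\smallsetminus B_{\mathrm{ff}}$); once you have that, removing finitely many vertical segments indeed cannot destroy density.

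For simple connectedness, your slit-domain picture is the right one, but as written the crux --- ``a loop can be pushed off a boundary-anchored cut'', or the ``explicit isotopy opening each slit'' --- is asserted rather than proved; it is essentially a restatement of the claim, and there is no standard citable lemma in that form. You also miss the structural fact that makes the argument clean and disposes of your worry about unbounded cuts: since $J$ is proper with connected fibers (Remark~\ref{rmk:ham_spaces}), each vertical slice $B\cap\{x=\mathrm{const}\}=\{x\}\times H\left(J^{-1}(x)\right)$ is a \emph{compact interval}, so every cut $l^{\epsilon_i}$ is a compact vertical segment joining the interior point $c_i$ to $\partial B$, and (by assumption \ref{item:10}, which guarantees the $x_i$ are distinct) the cuts are pairwise disjoint. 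Consequently every vertical slice of $B\smallsetminus l^{\boldsymbol{\epsilon}}$ is again a nonempty interval (the full segment if $x\neq x_i$, a half-open subinterval if $x=x_i$), i.e.\ $B\smallsetminus l^{\boldsymbol{\epsilon}}$ is vertically convex with interval projection; simple connectedness then follows from an elementary argument for such sets (for instance: any point of the complement of its interior in $S^2$ escapes to infinity along a vertical ray avoiding the set, so the complement is connected), with no induction, van Kampen, or ambient isotopy needed. So: correct strategy, minor misstatements in the density step, and the central topological step should either be carried out via the vertical-convexity route above or given an actual proof rather than a citation placeholder.
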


For any
$\left(\mathsf{x},\mathsf{y}\right) \in B_{\mathrm{lt}}$ and any
choice $\boldsymbol{\epsilon} \in \{+1,-1\}^{m_{\mathrm{ff}}}$, define
\[
j_{\boldsymbol{\epsilon}} \left(\mathsf{x},\mathsf{y}\right) :=
\sum\limits_{\left\{ i = 1,\ldots, m_{\mathrm{ff}} \mid
    \left(\mathsf{x},\mathsf{y}\right) \in l^{\epsilon_i} \right\}}
\epsilon_i,
\]
where the convention is that, if
$\left(\mathsf{x},\mathsf{y}\right) \in B \smallsetminus
l^{\boldsymbol{\epsilon}}$,
then
$ j_{\boldsymbol{\epsilon}} \left(\mathsf{x},\mathsf{y}\right) = 0$.
The next result, whose proof is only sketched, establishes the
existence of the required `singular' developing maps, thus
generalizing Proposition \ref{prop:toric_polygons_st}
(cf. \cite{pel_rat_san_affine,san_polygon} for details).

\begin{theo}[Theorem 3.8 in \cite{san_polygon}]\label{thm:polygons}
  Let $\left(M,\omega,F\right)$ be a semi-toric system with
  $m_{\mathrm{ff}}>0$. For any
  $\boldsymbol{\epsilon} \in \{+1,-1\}^{m_{\mathrm{ff}}}$, there
  exists an orientation-preserving $f_{\boldsymbol{\epsilon}} : B \to \R^2$ of
  the form
  $f_{\boldsymbol{\epsilon}}(x,y) :=
  \left(x,f_{\boldsymbol{\epsilon}}^{(2)}(x,y)\right)$ such that
  \begin{itemize}[leftmargin=*]
  \item
    $f_{\boldsymbol{\epsilon}}|_{B \smallsetminus
      l^{\boldsymbol{\epsilon}}}$
    is a developing map for the restriction of $\mathcal{A}$ to
    $B \smallsetminus l^{\boldsymbol{\epsilon}}$;
  \item $f$ is a homeomorphism onto its image;
  \item for any
    $\left(\mathsf{x},\mathsf{y}\right) \in B_{\mathrm{lt}}$,
    \begin{equation}
      \label{eq:13}
      \lim\limits_{\stackrel{(x,y) \to (\mathsf{x},\mathsf{y})}{x <
          \mathsf{x}}} \DD f_{\boldsymbol{\epsilon}} (x,y) =
      \begin{pmatrix}
        1 & 0 \\
        j_{\boldsymbol{\epsilon}}(\mathsf{x},\mathsf{y})
        & 1
      \end{pmatrix}
      \lim\limits_{\stackrel{(x,y) \to (\mathsf{x},\mathsf{y})}{x >
          \mathsf{x}}} \DD f_{\boldsymbol{\epsilon}} (x,y).
    \end{equation}
  \end{itemize}
  In particular, $f_{\boldsymbol{\epsilon}}(B)$ is a convex, rational,
  simple polygon, {\it i.e.} $f_{\boldsymbol{\epsilon}}(B) \in \mathrm{RPol}\left(\R^2\right)$. Finally, $f_{\boldsymbol{\epsilon}}$ is unique up to
  composition on the left by an element of $\mathcal{V}$.
\end{theo}

\begin{proof}[Sketch of proof]
  Fix $\boldsymbol{\epsilon} \in \{+1,-1\}^{m_{\mathrm{ff}}}$, which,
  in turn, fixes the vertical cuts. Since
  $B \smallsetminus l^{\boldsymbol{\epsilon}}$ is open in
  $B_{\mathrm{lt}}$, it inherits an integral affine structure; since
  it is simply connected, this integral affine structure can be
  developed. Using the arguments of the proof of Proposition
  \ref{prop:toric_polygons_st}, the developing map can be taken to be
  orientation-preserving and
  of the form $(x,y) \mapsto \left(x, a(x,y)\right)$ for some smooth
  function $a$. The idea is to show that the above map extends to a
  continuous map on $B$ (which is unique, since
  $B \smallsetminus l^{\boldsymbol{\epsilon}}$ is dense), which is
  denoted by $f_{\boldsymbol{\epsilon}}$; to prove that
  $f_{\boldsymbol{\epsilon}}$ exists an understanding of the integral
  affine structure near the focus-focus values is needed (cf. Lemma
  \ref{lemma:period_ff}, as well as \cite[Step 4 of the proof of
  Theorem 3.8]{san_polygon} and \cite[Step 4 of the proof of Theorem
  B]{pel_rat_san_affine}). The map $f_{\boldsymbol{\epsilon}}$ has the
  required form and satisfies the first item; moreover, \cite[Step 4
  of the proof of Theorem B]{pel_rat_san_affine} shows that it is a
  homeomorphism onto its image. 
  Equation \eqref{eq:13} also follows from the way in
  which existence of $f_{\boldsymbol{\epsilon}}$ is shown
  (cf. \cite[Step 4 of the proof of Theorem 3.8]{san_polygon} for the
  case $\mathrm{sgn}(f_{\boldsymbol{\epsilon}}) = +1$ and \cite{hsss}
  for the general case). Equation \eqref{eq:13}, together with the
  definition of the integral affine structure on
  $B \smallsetminus l^{\boldsymbol{\epsilon}}$, implies that
  $f_{\boldsymbol{\epsilon}}(B)$ is a convex, rational, simple polygon
  (cf. \cite[Step 6 of the proof of Theorem 3.8]{san_polygon}). To
  complete the proof, observe that the above choice of developing map
  for the induced integral affine structure on
  $B \smallsetminus l^{\boldsymbol{\epsilon}}$ is unique up to
  composition on the left by an element of $\mathcal{V}$
  (cf. Corollary \ref{cor:family_no_ff}); since
  $B \smallsetminus l^{\boldsymbol{\epsilon}} \subset B$ is dense,
  this shows that $f_{\boldsymbol{\epsilon}}$ also satisfies the same
  property.
\end{proof}

\begin{rema}\label{rmk:not_smooth}
  It follows from the above proof that the only vertices of
  $f_{\boldsymbol{\epsilon}}(B)$ that may fail to be smooth in the
  sense of Definition \ref{defn:polygon} are the ones which belong to
  the image of vertical cuts. As a
  consequence, if a vertex $v$ of $f_{\boldsymbol{\epsilon}}(B)$ is not
  smooth, then we can conclude that there is a focus-focus value whose
  first coordinate equals that of $v$. Equation \eqref{eq:13} can be used to
  `measure' the failure of any such vertex to be smooth
  (cf. \cite[Lemma 2.28]{hss} for a precise statement). 
\end{rema}

\begin{rema}\label{rmk:true_more_genearl}
  The above sketch of proof uses assumption \ref{item:10}, for, in
  this case, for any choice of $\boldsymbol{\epsilon}$, $B
  \smallsetminus l^{\boldsymbol{\epsilon}}$ is simply connected. In
  fact, Theorem \ref{thm:polygons} holds for any semi-toric system
  (cf. \cite[Theorem 3.8]{san_polygon}, \cite[Theorem
  B]{pel_rat_san_affine} and \cite[Theorem 4.24]{hsss} for proofs in
  the more general case).
\end{rema}


\begin{defi}\label{defn:st_polygons}
  Given a semi-toric system $\left(M,\omega,F\right)$ with
  $m_{\mathrm{ff}} > 0$ and any
  $\boldsymbol{\epsilon} \in \{+1,-1\}^{m_{\mathrm{ff}}}$, a map
  $f_{\boldsymbol{\epsilon}} : B \to \R^2$ as in Theorem \ref{thm:polygons}
  is said to be
  a {\em cartographic homeomorphism} of
  $\left(M,\omega,F\right)$  (relative to
  $\boldsymbol{\epsilon}$). Its image
  $\Delta_{\boldsymbol{\epsilon}}:= f_{\boldsymbol{\epsilon}}(B)$ is a {\em semi-toric
    polygon} associated to $\left(M,\omega,F\right)$ (relative
  to $\boldsymbol{\epsilon}$).
\end{defi}

\begin{rema}\label{rmk:from_st_ham}
  Any semi-toric polygon associated to a semi-toric system defined on a
  closed manifold can be used to recover the invariants of the
  underlying Hamiltonian $S^1$-space (cf. Remark
  \ref{rmk:ham_spaces} and \cite{hss}). The forthcoming
  \cite{hsss_converse} shows that, given a Hamiltonian $S^1$-space 
  which satisfies some necessary conditions (called {\em thinness} in
  \cite{hsss_converse}), there exists a semi-toric system whose
  underlying Hamiltonian $S^1$-space is isomorphic to the original
  one. 
\end{rema}

Our next aim is to encode information of a cartographic homeomorphism
$f_{\boldsymbol{\epsilon}}$ so as to generalize Definition
\ref{defn:dec_carto_no_ff}. To this end, we first need to introduce an invariant of a cartographic homeomorphism (cf. \cite[Section
5.2]{pel_san_inv} for details) which relies on properties of the group
$\mathcal{V}$. 

\subsubsection*{Twisting indices of a cartographic homeomorphism} 
Let $\kappa: \mathcal{V} \to \Z$ be the homomorphism
obtained by composing the restriction of $\mathrm{Lin}: \mathrm{AGL}\left(2;\Z\right) \to \mathrm{GL}\left(2;\Z\right)$
with the isomorphism $\mathrm{Lin}\left(\mathcal{V}\right) \cong \Z$
given by  $\left( \begin{smallmatrix}
    1 & 0 \\
    k & 1
  \end{smallmatrix}\right) \mapsto k$. We refer to $\kappa$ as {\em twisting cocycle} of
$\mathcal{V}$ and  can be used to associate {\em
  twisting indices} to a cartographic homeomorphism as follows. 



As in Section
  \ref{sec:card-b_mathrmff-tayl}, for 
$i=1,\ldots,m_{\mathrm{ff}}$, let $V_i \subset \mathrm{Int}(B)$ denote
an open neighborhood of $c_i$ which contains precisely one singular
value. For each $i = 1, \ldots, m_{\mathrm{ff}}$, there is a
unique (up to sign) Hamiltonian vector
field $\ham{i}$ defined in $F^{-1}\left(V_i \smallsetminus l^{\epsilon_i} \right)$
which is `radial' (this is the vector field constructed in \cite[Step
2 of Section 5.2]{pel_san_inv}). 
In fact, there
exists a map $\nu_{\epsilon_i} : V_i \to \R^2$ which is a cartographic
homeomorphism relative to $\epsilon_i$ for the subsystem of
$\left(M,\omega,F\right)$ relative to $F^{-1}\left(V_i\right)$, such
that
\begin{itemize}[leftmargin=*]
\item $\ham{\nu^{(2)}_{\epsilon_i}} =
\ham{i}$, where $\nu_{\epsilon_i}(x,y) = \left(x,\nu_{\epsilon_i}^{(2)}(x,y)\right)$
(cf. \cite[Lemma 5.6]{pel_san_inv}). 
\end{itemize}

\begin{defi}\label{defn:priv_cart}
  For each $i=1,\ldots, m_{\mathrm{ff}}$, the map $\nu_{\epsilon_i}$ is a {\em privileged} cartographic
  homeomorphism for $c_i$ relative to
  $\epsilon_i$. The collection $\boldsymbol{\nu}_{\boldsymbol{\epsilon}} :=
  (\nu_{\epsilon_1},\ldots,\nu_{\epsilon_{m_{\mathrm{ff}}}})$
  is a choice of {\em privileged} cartographic
  homeomorphisms for $\left(c_1,\ldots,c_{m_{\mathrm{ff}}}\right)$ relative to $\boldsymbol{\epsilon} = \left(\epsilon_1,\ldots,\epsilon_{m_{\mathrm{ff}}}\right)$.
\end{defi}

\begin{rema}\label{rmk:priv}
  For given $i = 1,\ldots, m_{\mathrm{ff}}$, $\epsilon_i$ and
  privileged cartographic homeomorphism $\nu_i$, $\nu_i'$ is a
  privileged cartographic homeomorphism for $c_i$ relative to
  $\epsilon_i$ if and only if there exists $\tau_i \in \mathcal{T}$ with $\nu_{\epsilon_i}' =
  \tau_i\circ \nu_{\epsilon_i}$, where $\mathcal{T} \subset \mathcal{V}$
  is the subgroup of vertical translations.
\end{rema}

Fix a
cartographic homeomorphism $f_{\boldsymbol{\epsilon}}$ and privileged
cartographic homeomorphisms $\boldsymbol{\nu}_{\epsilon}$. Since $f_{\boldsymbol{\epsilon}}|_{V_i}$
and $\nu_{\epsilon_i}$ are cartographic homeomorphisms relative to $\epsilon_i$ for the subsystem of
$\left(M,\omega,F\right)$ relative to $F^{-1}\left(V_i\right)$, there exists $\rho_i\left(f_{\boldsymbol{\epsilon}},\nu_{\epsilon_i}\right) \in \mathcal{V}$ such that
$f_{\boldsymbol{\epsilon}}|_{V_i} =
\rho_i\left(f_{\boldsymbol{\epsilon}},\nu_{\epsilon_i}\right) \circ
\nu_{\epsilon_i}$. (Observe that $\rho_i\left(f_{\boldsymbol{\epsilon}},\nu_{\epsilon_i}\right)$
does not depend on the choice of neighborhood $V_i$.)

\begin{defi}\label{defn:twist_index_ff}
  The {\em twisting index} of the cartographic homeomorphism
  $f_{\boldsymbol{\epsilon}}$ with respect to the privileged cartographic
  homeomorphisms $\boldsymbol{\nu}_{\boldsymbol{\epsilon}}$ is
  $$\boldsymbol{\kappa}\left(f_{\boldsymbol{\epsilon}},\boldsymbol{\nu}_{\boldsymbol{\epsilon}}\right):=\left(\kappa\left(\rho_1\left(f_{\boldsymbol{\epsilon}},\nu_{\epsilon_1}\right)\right),\ldots,
  \kappa\left(\rho_{m_{\mathrm{ff}}}\left(f_{\boldsymbol{\epsilon}},\nu_{\epsilon_{m_{\mathrm{ff}}}}\right)\right)\right)\in \Z^{m_{\mathrm{ff}}}.$$
\end{defi}

In fact, the twisting index of
any cartographic homeomorphism relative to $\boldsymbol{\epsilon}$ is independent
of the choice of privileged cartographic homeomorphisms relative to
$\boldsymbol{\epsilon}$. 

\begin{coro}\label{cor:indep}
  For $\boldsymbol{\epsilon} \in
  \left\{+1,-1\right\}^{m_{\mathrm{ff}}}$, let $f_{\boldsymbol{\epsilon}}$ be a cartographic homeomorphism and
  let
  $\boldsymbol{\nu}_{\boldsymbol{\epsilon}},\boldsymbol{\nu}'_{\boldsymbol{\epsilon}}$
  be privileged cartographic homeomorphisms relative to
  $\boldsymbol{\epsilon}$. Then
  $\boldsymbol{\kappa}\left(f_{\boldsymbol{\epsilon}},\boldsymbol{\nu}_{\boldsymbol{\epsilon}}\right)
  = \boldsymbol{\kappa}\left(f_{\boldsymbol{\epsilon}},\boldsymbol{\nu}'_{\boldsymbol{\epsilon}}\right)$.
\end{coro}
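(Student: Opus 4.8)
The plan is to reduce the whole statement to the single observation that the twisting cocycle $\kappa \colon \mathcal{V} \to \Z$ vanishes on vertical translations. First I would fix $\boldsymbol{\epsilon} \in \{+1,-1\}^{m_{\mathrm{ff}}}$, a cartographic homeomorphism $f_{\boldsymbol{\epsilon}} \colon B \to \R^2$, and two choices of privileged cartographic homeomorphisms $\boldsymbol{\nu}_{\boldsymbol{\epsilon}} = (\nu_{\epsilon_1},\dots,\nu_{\epsilon_{m_{\mathrm{ff}}}})$ and $\boldsymbol{\nu}'_{\boldsymbol{\epsilon}} = (\nu'_{\epsilon_1},\dots,\nu'_{\epsilon_{m_{\mathrm{ff}}}})$ relative to $\boldsymbol{\epsilon}$. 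Since $\boldsymbol{\kappa}$ is defined componentwise, it suffices to fix an index $i \in \{1,\dots,m_{\mathrm{ff}}\}$ and prove that $\kappa\bigl(\rho_i(f_{\boldsymbol{\epsilon}},\nu_{\epsilon_i})\bigr) = \kappa\bigl(\rho_i(f_{\boldsymbol{\epsilon}},\nu'_{\epsilon_i})\bigr)$; assembling over $i$ then gives the corollary.

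The next step is to express the two privileged maps in terms of each other. By Remark \ref{rmk:priv}, there is a vertical translation $\tau_i \in \mathcal{T} \subset \mathcal{V}$ with $\nu'_{\epsilon_i} = \tau_i \circ \nu_{\epsilon_i}$. Now, by the defining property of the elements $\rho_i$, both $\rho_i(f_{\boldsymbol{\epsilon}},\nu_{\epsilon_i}) \circ \nu_{\epsilon_i}$ and $\rho_i(f_{\boldsymbol{\epsilon}},\nu'_{\epsilon_i}) \circ \nu'_{\epsilon_i}$ coincide with $f_{\boldsymbol{\epsilon}}|_{V_i}$. Substituting $\nu'_{\epsilon_i} = \tau_i \circ \nu_{\epsilon_i}$ into the second expression and cancelling the homeomorphism $\nu_{\epsilon_i}$ on the right yields the cocycle relation $\rho_i(f_{\boldsymbol{\epsilon}},\nu_{\epsilon_i}) = \rho_i(f_{\boldsymbol{\epsilon}},\nu'_{\epsilon_i}) \circ \tau_i$ inside the group $\mathcal{V}$.

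Finally, since $\kappa \colon \mathcal{V} \to \Z$ is a group homomorphism, this gives $\kappa\bigl(\rho_i(f_{\boldsymbol{\epsilon}},\nu_{\epsilon_i})\bigr) = \kappa\bigl(\rho_i(f_{\boldsymbol{\epsilon}},\nu'_{\epsilon_i})\bigr) + \kappa(\tau_i)$. But $\tau_i$ is a vertical translation, hence lies in the kernel of $\mathrm{Lin}\colon \mathrm{AGL}(2;\Z) \to \mathrm{GL}(2;\Z)$, so $\kappa(\tau_i) = 0$ by the definition of the twisting cocycle as a factor of $\mathrm{Lin}$ restricted to $\mathcal{V}$. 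This is the desired equality of twisting indices at $c_i$, and taking the tuple over all $i$ finishes the proof.

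I do not expect any genuine obstacle here: the argument is a one-line cocycle computation. The only point warranting a moment's attention is verifying that the cocycle relation $\rho_i(f_{\boldsymbol{\epsilon}},\nu_{\epsilon_i}) = \rho_i(f_{\boldsymbol{\epsilon}},\nu'_{\epsilon_i}) \circ \tau_i$ holds exactly — that is, that the element of $\mathcal{V}$ relating two cartographic homeomorphisms of the subsystem relative to $F^{-1}(V_i)$ is uniquely determined — which is precisely Corollary \ref{cor:family_no_ff} applied to that subsystem, together with the fact that $\nu_{\epsilon_i}$ and $\nu'_{\epsilon_i}$ are homeomorphisms onto their images.
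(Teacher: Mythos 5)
Your proposal is correct and follows essentially the same route as the paper: reduce to each index $i$, use Remark \ref{rmk:priv} to write $\nu'_{\epsilon_i} = \tau_i \circ \nu_{\epsilon_i}$ with $\tau_i \in \mathcal{T}$, deduce the corresponding relation between $\rho_i\left(f_{\boldsymbol{\epsilon}},\nu_{\epsilon_i}\right)$ and $\rho_i\left(f_{\boldsymbol{\epsilon}},\nu'_{\epsilon_i}\right)$, and conclude because $\kappa$ is a homomorphism with $\kappa|_{\mathcal{T}} \equiv 0$. Your extra care in justifying the cocycle relation (cancelling $\nu_{\epsilon_i}$ and invoking uniqueness as in Corollary \ref{cor:family_no_ff}) is a harmless elaboration of what the paper asserts directly.
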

\begin{proof}
  It suffices to check that, for each $i=1,\ldots,m_{\mathrm{ff}}$,
  $\kappa\left(\rho_i\left(f_{\boldsymbol{\epsilon}},\nu_{\epsilon_i}\right)\right)
  =
  \kappa\left(\rho_i\left(f_{\boldsymbol{\epsilon}},\nu'_{\epsilon_i}\right)\right)$. By
  Remark \ref{rmk:priv}, there exists $\tau_i \in \mathcal{T}$ such
  that $\rho_i\left(f_{\boldsymbol{\epsilon}},\nu'_{\epsilon_i}\right) =
  \tau_i \circ
  \rho_i\left(f_{\boldsymbol{\epsilon}},\nu_{\epsilon_i}\right)$. Then,
  using that $\kappa$ is a homomorphism, 
  \[
  \kappa\left(
    \rho_i\left(f_{\boldsymbol{\epsilon}},\nu'_{\epsilon_i}\right)\right)
  = \kappa(\tau_i) +
  \kappa\left(\rho_i\left(f_{\boldsymbol{\epsilon}},\nu_{\epsilon_i}\right)\right)
  =
  \kappa\left(\rho_i\left(f_{\boldsymbol{\epsilon}},\nu_{\epsilon_i}\right)\right),
  \]
  since $\kappa|_{\mathcal{T}} \equiv 0 $.
\end{proof}

In light of Corollary \ref{cor:indep}, the following notion makes sense.

\begin{defi}\label{defn:twi_carto}
  Given a cartographic homeomorphism $f_{\boldsymbol{\epsilon}}$
  relative to $\boldsymbol{\epsilon}\in
  \left\{+1,-1\right\}^{m_{\mathrm{ff}}}$, the collection of {\em twisting indices} of
  $f_{\boldsymbol{\epsilon}}$ is
  $\boldsymbol{\kappa}\left(f_{\boldsymbol{\epsilon}}\right):=
  \boldsymbol{\kappa}\left(f_{\boldsymbol{\epsilon}},\boldsymbol{\nu}_{\boldsymbol{\epsilon}}\right)$,
  where $\nu_{\boldsymbol{\epsilon}}$ is any choice of privileged
  cartographic homeomorphisms relative to $\boldsymbol{\epsilon}$. 
\end{defi}

\subsubsection*{Decorated semi-toric polygons and cartographic
  invariants in the case $m_{\mathrm{ff}} > 0$}
With the twisting indices of a cartographic homeomorphism at hand, it
is possible to define an invariant of a cartographic homeomorphism,
which generalizes Definition \ref{defn:dec_carto_no_ff}.

\begin{defi}\label{defn:dec_st_poly}
  Given a semi-toric system $\left(M,\omega,F\right)$ with
  $m_{\mathrm{ff}} \geq 1$ and a cartographic homeomorphism
  $f_{\boldsymbol{\epsilon}} : B \to \R^2$ relative to
  $\boldsymbol{\epsilon}$, the {\em decorated semi-toric polygon
    associated to $f_{\boldsymbol{\epsilon}}$} is
  \[
  \left(\boldsymbol{\epsilon},
    f_{\boldsymbol{\epsilon}}\left(B\right), \left(\left(\mathsf{c}_1,
        \kappa_1\left(f_{\boldsymbol{\epsilon}}\right)\right),\ldots,\left(\mathsf{c}_{m_{\mathrm{ff}}},\kappa_{m_{\mathrm{ff}}}\left(f_{\boldsymbol{\epsilon}}\right)\right)\right)
  \right)
  \]
  where for each $i=1,\ldots,m_{\mathrm{ff}}$,
  $\mathsf{c}_i = f_{\boldsymbol{\epsilon}}(c_i)$, and
  $\boldsymbol{\kappa}\left(f_{\boldsymbol{\epsilon}}\right)= (\kappa
  _1\left(f_{\boldsymbol{\epsilon}}\right),\ldots, \kappa
  _{m_{\mathrm{ff}}}\left(f_{\boldsymbol{\epsilon}}\right))$
  is as in Definition \ref{defn:twi_carto}.
\end{defi}
By Theorem \ref{thm:polygons}, the decorated semi-toric polygon
associated to a cartographic homeomorphism is an element of
$\{+1,-1\}^{m_{\mathrm{ff}}} \times \mathrm{RPol}\left(\R^2\right)
\times \left(\R^2 \times \Z\right)^{m_{\mathrm{ff}}}$,
and it should be clear how Definition \ref{defn:dec_st_poly}
generalizes Definition \ref{defn:dec_carto_no_ff}.

To make the above notion of decorated semi-toric polygons into an
invariant of the isomorphism class of $\left(M,\omega,F\right)$, the
idea is to consider the possible decorated semi-toric polygons of {\em
  all} cartographic homeomorphisms of {\em all} semi-toric systems
isomorphic to $\left(M,\omega,F\right)$. To describe this family
explicitly, we view a decorated semi-toric polygon as an element of
$$ \{+1,-1\}^{m_{\mathrm{ff}}} \times
\mathrm{Pol}\left(\R^2\right) \times \left(\R^2 \times
  \Z\right)^{m_{\mathrm{ff}}},$$
\noindent
and define a
$\mathcal{V} \times \left\{+1,-1\right\}^{m_{\mathrm{ff}}}$-action on
this set as follows. First, we define a $\mathcal{V}$-action on
$ \{+1,-1\}^{m_{\mathrm{ff}}} \times \mathrm{Pol}\left(\R^2\right)
\times \left(\R^2 \times \Z\right)^{m_{\mathrm{ff}}}$ by setting
\begin{equation}
  \label{eq:14}
  \begin{split}
    \rho \cdot (\boldsymbol{\epsilon},    \Delta,&((\mathsf{c}_1,\kappa_1)\ldots,(\mathsf{c}_{m_{\mathrm{ff}}},\kappa_{\mathrm{m_{\mathrm{ff}}}})
    ) : = (\boldsymbol{\epsilon},
      \rho(\Delta), \\
      & ((\rho(\mathsf{c}_1), \kappa(\rho) +
          \kappa_1),\ldots,(\rho(\mathsf{c}_{m_{\mathrm{ff}}}),\kappa(\rho) +
          \kappa_{m_{\mathrm{ff}}}))),
    \end{split}
\end{equation}
for $\rho \in \times \mathcal{V}$,
$\left(\boldsymbol{\epsilon}, \Delta,
  \left(\left(\mathsf{c}_1,\kappa_1\right),\ldots,\left(\mathsf{c}_{m_{\mathrm{ff}}},\kappa_{m_{\mathrm{ff}}}\right)\right)
\right) \in \{+1,-1\}^{m_{\mathrm{ff}}} \times
\mathrm{Pol}\left(\R^2\right) \times \left(\R^2 \times
  \Z\right)^{m_{\mathrm{ff}}}$.
Second, we define a $\left\{+1,-1\right\}^{m_{\mathrm{ff}}}$-action
which illustrates the fact that semi-toric polygons corresponding to
different signs are related by {\em piece-wise} integral affine
diffeomorphisms (cf. \cite[Section 4]{san_polygon} for details). For
any $x_0 \in \R$, let $\mathsf{l}_{x_0} : \R^2 \to \R^2$ be the
piece-wise integral affine diffeomorphism which is the identity on
$\left\{(x,y) \mid x \leq x_0\right\}$ and acts as the shear
$ \left(\begin{smallmatrix}
    1 & 0 \\
    1 & 1
  \end{smallmatrix}\right)$ on $\left\{(x,y)
  \mid x > x_0\right\}$. Fix
$\mathbf{x}=\left(x_1,\ldots,x_{N}\right) \in \R^{N}$; for any
$\mathbf{k} = \left(k_1,\ldots,k_N\right) \in \Z^{N}$, set 
$\mathsf{l}^{\mathbf{k}}_{\mathbf{x}} := \mathsf{l}^{k_1}_{x_1}
\circ \ldots \circ \mathsf{l}^{k_N}_{x_N}$. These transformations
allow to state the following lemma, whose proof is left to the reader.

\begin{lemm}\label{lemma:action}
  For any fixed integer $m_{\mathrm{ff}} \geq 1$, the formula
  \begin{gather}
    \label{eq:15}
    \boldsymbol{\epsilon}' \star \left(\boldsymbol{\epsilon},
      \Delta,
      \left(\left(\mathsf{c}_1,\kappa_1\right),\ldots,\left(\mathsf{c}_{m_{\mathrm{ff}}},\kappa_{m_{\mathrm{ff}}}\right)\right)
    \right) : =\\
    \left(\boldsymbol{\epsilon}'\bullet \boldsymbol{\epsilon},
      \mathsf{l}^{\mathbf{k}\left(\boldsymbol{\epsilon},\boldsymbol{\epsilon}'\right)}_{\mathbf{x}}\left(\Delta\right),
      \left(\left(\mathsf{c}_1
          \kappa_1\right),\ldots,\left(\mathsf{c}_{m_{\mathrm{ff}}},\kappa_{m_{\mathrm{ff}}}\right)\right)
    \right),
  \end{gather}
  where
  $\boldsymbol{\epsilon}'\bullet\boldsymbol{\epsilon} :=
  \left(\epsilon_1'\epsilon_1,\ldots,
    \epsilon_{m_{\mathrm{ff}}}'\epsilon_{m_{\mathrm{ff}}}\right)$,
  $\mathbf{k}\left(\boldsymbol{\epsilon},\boldsymbol{\epsilon}'\right):=
  \boldsymbol{\epsilon}\bullet \frac{\boldsymbol{1} -
    \boldsymbol{\epsilon}'}{2}$,
  and $\mathbf{x}$ is the obtained by taking the first components of
  $\mathsf{c}_1,\ldots,\mathsf{c}_{m_{\mathrm{ff}}}$, defines an
  action of $\{+1,-1\}^{m_{\mathrm{ff}}}$ on
  $\{+1,-1\}^{m_{\mathrm{ff}}} \times \mathrm{Pol}\left(\R^2\right)
  \times \left(\R^2 \times \Z\right)^{m_{\mathrm{ff}}}$
  which commutes with the $\mathcal{V}$-action given by equation
  \eqref{eq:14}.
\end{lemm}

\begin{rema}\label{rmk:why_not_rat}
  The above
  $\left\{+1,-1\right\}^{m_{\mathrm{ff}}}$-action does not leave $ \{+1,-1\}^{m_{\mathrm{ff}}} \times
  \mathrm{RPol}\left(\R^2\right) \times
  \left(\R^2\right)^{m_{\mathrm{ff}}}$ invariant, as not all elements of a given
  orbit are necessarily convex (cf. \cite[Section
  2.2]{pel_san_acta}). On the other hand, the $\mathcal{V}$-action of
  equation \eqref{eq:14} restricts to an action on $ \{+1,-1\}^{m_{\mathrm{ff}}} \times
  \mathrm{RPol}\left(\R^2\right) \times
  \left(\R^2\right)^{m_{\mathrm{ff}}}$.
\end{rema}

Lemma \ref{lemma:action} yields an action of $ \mathcal{V}
\times \left\{+1,-1\right\}^{m_{\mathrm{ff}}}$ on $\{+1,-1\}^{m_{\mathrm{ff}}} \times
\mathrm{Pol}\left(\R^2\right) \times
\left(\R^2 \times \Z\right)^{m_{\mathrm{ff}}}$, which can be used to
introduce the following data attached to a semi-toric system (which is
well-defined by \cite[Proposition 4.1]{san_polygon} and
\cite[Lemma 5.1 and Proposition 5.8]{pel_san_inv}). 

\begin{defi}\label{defn:carto_invt}
  Let $\left(M,\omega,F\right)$ be a semi-toric system with
  $m_{\mathrm{ff}} \geq 1$ focus-focus values. The {\em cartographic invariant} of
  $\left(M,\omega,F\right)$ is the $\mathcal{V} \times
  \left\{+1,-1 \right\}^{m_{\mathrm{ff}}}$-orbit of the decorated
  semi-toric polygon associated to any cartographic homeomorphism.
\end{defi}

\begin{rema}\label{rmk:contrast}
  The cartographic invariant encodes three of the invariants of
  semi-toric systems as defined in \cite{pel_san_inv}, namely the {\em
    semi-toric polygon invariant}, the {\em volume invariant} and the
  {\em twisting index invariant} (cf.  \cite[Definition 4.5,
  Definition 5.2 and Definition
  5.9]{pel_san_inv}). 
\end{rema}

The cartographic invariant of a semi-toric polygon is an invariant of
its isomorphism class by \cite[Lemmata 4.6, 5.3 and 5.10]{pel_san_inv}.

\begin{coro}\label{cor:poly_invt}
  If two semi-toric systems are isomorphic, then they have equal
  cartographic invariants.
\end{coro}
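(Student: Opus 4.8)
The plan is to unwind the construction of the cartographic invariant and show that an isomorphism of semi-toric systems identifies the two families of decorated semi-toric polygons. I treat the case $m_{\mathrm{ff}}\geq 1$; the case $m_{\mathrm{ff}}=0$ is identical, using the $\mathcal{V}$-orbit of a decorated semi-toric polygon (Definition~\ref{defn:carto_no_ff}) together with Corollary~\ref{cor:family_no_ff}. So let $\left(M_i,\omega_i,F_i\right)$, $i=1,2$, be isomorphic, and fix an isomorphism $\left(\varphi,g\right)$ as in Definition~\ref{defn:equiv_st}: $\varphi : \left(M_1,\omega_1\right)\to\left(M_2,\omega_2\right)$ is a symplectomorphism, $g : B_1\to B_2$ has the form $g(x,y)=\left(x,g^{(2)}(x,y)\right)$ with $\partial g^{(2)}/\partial y>0$, and $g\circ F_1=F_2\circ\varphi$.

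First I would record the data transported by $\left(\varphi,g\right)$. Since $\left(\varphi,g\right)$ is a strong symplectic equivalence, Theorem~\ref{thm:will_preserved} gives that $\varphi$ carries non-degenerate singular points of $F_1$ to non-degenerate singular points of $F_2$ of the same Williamson type; in particular it maps focus-focus points to focus-focus points and purely elliptic orbits to purely elliptic orbits. Passing to moment map images, $g$ maps the focus-focus values of $F_1$ bijectively to those of $F_2$ and $\partial B_1$ to $\partial B_2$; since $g$ preserves the first coordinate it respects the chosen ordering of the focus-focus values, whence $m_{\mathrm{ff}}$ agrees for the two systems and $g(c_i)=c_i'$ for all $i$. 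Restricting the equivalence to the saturated subsystems over the regular parts and invoking Corollaries~\ref{cor:ia_reg_ls} and~\ref{cor:ias_invt} (together with the extension to the locally toric leaf space, Remark~\ref{rmk:purely_elliptic}), $g$ sends the integral affine structure on $B_{1,\mathrm{lt}}$ to the one on $B_{2,\mathrm{lt}}$. Because $g$ fixes each vertical line and, being orientation-preserving with $\partial g^{(2)}/\partial y>0$, preserves the sign of $y-y_i$ on $\left\{x=x_i\right\}$, it maps the $\epsilon_i$-cut $l_1^{\epsilon_i}$ at $c_i$ onto the $\epsilon_i$-cut $l_2^{\epsilon_i}$ at $c_i'$; hence $g(l_1^{\boldsymbol{\epsilon}})=l_2^{\boldsymbol{\epsilon}}$ for every $\boldsymbol{\epsilon}\in\{+1,-1\}^{m_{\mathrm{ff}}}$. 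Finally, the first component of $g\circ F_1$ is $J_1$, so $J_2\circ\varphi=J_1$ and $\varphi$ intertwines the underlying effective Hamiltonian $S^1$-actions.

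Next I would check that $g$ intertwines cartographic homeomorphisms: if $f_{\boldsymbol{\epsilon}}: B_2\to\R^2$ is a cartographic homeomorphism of $\left(M_2,\omega_2,F_2\right)$ relative to $\boldsymbol{\epsilon}$, then $f_{\boldsymbol{\epsilon}}\circ g : B_1\to\R^2$ is one of $\left(M_1,\omega_1,F_1\right)$ relative to the same $\boldsymbol{\epsilon}$. Indeed it is an orientation-preserving homeomorphism onto $f_{\boldsymbol{\epsilon}}(B_2)$ of the form $(x,y)\mapsto\left(x,\cdot\right)$; on $B_1\smallsetminus l_1^{\boldsymbol{\epsilon}}$ it is the composite of the integral affine isomorphism $g$ with a developing map of $B_2\smallsetminus l_2^{\boldsymbol{\epsilon}}$, hence itself a developing map of $B_1\smallsetminus l_1^{\boldsymbol{\epsilon}}$; and the jump relation~\eqref{eq:13} across vertical cuts is preserved because $g$ acts on the affine coordinate without altering the first coordinate, so it does not change the transition shears. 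For the twisting indices, the ``radial'' Hamiltonian vector field $\ham{i}$ near $c_i$ used to define privileged cartographic homeomorphisms (Definition~\ref{defn:priv_cart}) is intrinsic up to sign — it is read off the germ of the focus-focus fibre together with the system-preserving $S^1$-action, which is unique up to sign by Corollary~\ref{cor:unique_sys_pres} — and $\varphi$ intertwines these with matching signs since $J_2\circ\varphi=J_1$. Therefore, if $\nu_{\epsilon_i}$ is a privileged cartographic homeomorphism for $c_i'$ of the second system, then $\nu_{\epsilon_i}\circ g$ is one for $c_i$ of the first; consequently the elements $\rho_i\left(f_{\boldsymbol{\epsilon}}\circ g,\nu_{\epsilon_i}\circ g\right)=\rho_i\left(f_{\boldsymbol{\epsilon}},\nu_{\epsilon_i}\right)\in\mathcal{V}$ coincide, so by Definition~\ref{defn:twi_carto} the twisting indices agree. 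Hence the decorated semi-toric polygon of $f_{\boldsymbol{\epsilon}}\circ g$ for the first system equals that of $f_{\boldsymbol{\epsilon}}$ for the second: same sign vector $\boldsymbol{\epsilon}$, same polygon, same marked points $\mathsf{c}_i=f_{\boldsymbol{\epsilon}}(c_i')=\left(f_{\boldsymbol{\epsilon}}\circ g\right)(c_i)$, and same twisting indices.

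To conclude, the previous paragraph exhibits every decorated semi-toric polygon of $\left(M_2,\omega_2,F_2\right)$ as one of $\left(M_1,\omega_1,F_1\right)$; running the same argument with the inverse isomorphism $\left(\varphi^{-1},g^{-1}\right)$ gives the reverse inclusion, so the two systems have the same family of decorated semi-toric polygons. Since, by the results of \cite{pel_san_inv} underlying Definition~\ref{defn:carto_invt}, this family forms a single $\mathcal{V}\times\{+1,-1\}^{m_{\mathrm{ff}}}$-orbit, the two cartographic invariants coincide. I expect the main obstacle to be the verification that $f_{\boldsymbol{\epsilon}}\circ g$ again satisfies the holonomy condition~\eqref{eq:13} of Theorem~\ref{thm:polygons} and that $g$ intertwines privileged cartographic homeomorphisms (equivalently, intertwines the radial vector fields with matching signs); both ultimately reduce to the two structural facts that $g$ preserves the first coordinate and is orientation-preserving, so that $g$ acts on the relevant developing data only through the subgroup $\mathcal{V}$.
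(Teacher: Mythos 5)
Your argument is correct, and it supplies a proof where the paper itself gives none: the text establishes Corollary \ref{cor:poly_invt} purely by citing Lemmata 4.6, 5.3 and 5.10 of \cite{pel_san_inv}, which respectively give invariance of the polygon, of the marked focus-focus values, and of the twisting indices. Your transport argument --- using Corollary \ref{cor:ias_invt} (plus Remark \ref{rmk:purely_elliptic}) to see that $g$ is an isomorphism of the locally toric integral affine structures, noting that $g$ preserves each cut $l^{\epsilon_i}$ with the same sign because it fixes the first coordinate and has $\partial g^{(2)}/\partial y>0$, and concluding that $f_{\boldsymbol{\epsilon}}\circ g$ is again a cartographic homeomorphism (the jump relation \eqref{eq:13} survives composition since $\DD g$ is continuous across the cuts and multiplies on the right) with the same decorated polygon --- is exactly the mechanism behind the cited lemmata, so in substance you are reproving them rather than taking a different route. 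The one place where your write-up is looser than it should be is the twisting-index step: the phrase that $\varphi$ intertwines the radial vector fields ``with matching signs since $J_2\circ\varphi=J_1$'' is not by itself a reason, since the sign of $\ham{i}$ is not determined by $J$. What actually closes this point is that $\varphi^*\ham{2,i}$ is again radial for the first system, hence equals $\pm\ham{1,i}$ by the uniqueness up to sign invoked before Definition \ref{defn:priv_cart}, and the minus sign is impossible: any two cartographic homeomorphisms of the same subsystem relative to $\epsilon_i$ differ by an element of $\mathcal{V}$, which changes the Hamiltonian vector field of the second component only by an integer multiple of $\ham{J}$, and the radial field is not proportional to $\ham{J}$. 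With that small repair (or by deferring to \cite[Lemma 5.10]{pel_san_inv}, as the paper does), your identity $\rho_i\left(f_{\boldsymbol{\epsilon}}\circ g,\nu_{\epsilon_i}\circ g\right)=\rho_i\left(f_{\boldsymbol{\epsilon}},\nu_{\epsilon_i}\right)$ holds, the decorated polygons coincide, and equality of the $\mathcal{V}\times\{+1,-1\}^{m_{\mathrm{ff}}}$-orbits follows from the well-definedness built into Definition \ref{defn:carto_invt}.
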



\subsection{Classification of semi-toric systems}\label{sec:classification}

Section \ref{sec:invar-semi-toric} provides all invariants
of semi-toric systems needed to determine their isomorphism type
(cf. Theorem \ref{thm:classification}).

\begin{defi}\label{defn:complete}
  Given a semi-toric system $\left(M,\omega,F=(J,H)\right)$, its {\em
    complete set of invariants} is given by
  \begin{enumerate}[label=(\arabic*), ref=(\arabic*), leftmargin=*]
  \item the number of focus-focus values;
  \item its Taylor series invariant 
    (cf. Definition \ref{defn:taylor_invariant_st});
  \item its cartographic invariant 
    (cf. Definition \ref{defn:carto_invt}).
  \end{enumerate}
\end{defi}

The main result of \cite{pel_san_inv}, stated below without proof,
shows that the invariants of Definition \ref{defn:complete} completely
determine a semi-toric system up to isomorphism (cf. \cite[Theorem
6.2]{pel_san_inv}).

\begin{theo}\label{thm:classification}
  Two semi-toric systems are isomorphic if and only if their complete
  sets of invariants are equal.
\end{theo}

To complete the classification of semi-toric systems, it suffices to
determine which abstract data corresponds to the complete set of
invariants of some semi-toric system. Without going into details, the
possible cartographic invariants with twisting indices are restricted:
for instance, the elements of $\left(\R^2\right)^{m_{\mathrm{ff}}}$
appearing in any such invariant lie in the interior of the
corresponding polygon\footnote{This corresponds to the fact that the
  {\em volume invariants} of \cite{pel_san_inv,pel_san_acta} are
  necessarily positive.}. Moreover, the polygon {\em and} the elements
of $\left(\R^2\right)^{m_{\mathrm{ff}}}$ are constrained
(cf. \cite[Section 4]{pel_san_acta} for details). Being loose, we
refer to any
$ \mathcal{V} \times \left\{+1,-1\right\}^{m_{\mathrm{ff}}}$- orbit in
$\{+1,-1\}^{m_{\mathrm{ff}}} \times \mathrm{Pol}\left(\R^2\right)
\times \left(\R^2 \times \Z\right)^{m_{\mathrm{ff}}}$
which satisfies the necessary conditions of \cite{pel_san_acta} as
being {\em admissible}. Thus we can state the last result of this
section, which completes the classification of semi-toric systems
(cf. \cite[Theorem 4.6]{pel_san_acta}).

\begin{theo}\label{thm:real}
  For any non-negative integer $m_{\mathrm{ff}}$, any element of $ \left(\R
    \llbracket a,b \rrbracket_0 \right)^{m_{\mathrm{ff}}}$ and any
  admissible $\mathcal{V}
  \times \left\{+1,-1\right\}^{m_{\mathrm{ff}}}$- orbit in $\{+1,-1\}^{m_{\mathrm{ff}}} \times
  \mathrm{Pol}\left(\R^2\right) \times
  \left(\R^2 \times \Z\right)^{m_{\mathrm{ff}}}$, there exists a
  semi-toric system whose complete set of invariants equals the
  above data.
\end{theo}


\section{Quantum systems and the inverse problem}
\label{sec:quantum}

\subsection{The joint spectrum}

Recall from Definition~\ref{defn:ci} that a classical completely
integrable system is the data of $n$ Poisson-commuting independent
functions $f_1,\dots,f_n$ on a $2n$-dimensional symplectic
manifold. These functions are typically `classical quantities',
{\it e.g.} energy, angular momentum, etc. In the quantum world, observables
are linear operators acting on a Hilbert space, and usually bear the
same name (quantum energy, quantum angular momentum, etc.). According
to the correspondence principle, that we discuss below, the Poisson
bracket is the classical limit of the operator bracket. This leads to
the following general definition of a quantum integrable system.
\begin{defi}
  \label{defi:quantum-ci}
  Let $\mathcal{H}$ be a Hilbert space `quantizing' the symplectic
  manifold $M^{2n}$. A {\em quantum completely integrable system} is
  an $n$-tuple $\left(T_1,\ldots,T_n\right)$ pairwise commuting, `independent' selfadjoint operators
  acting on $\mathcal{H}$:
  \begin{equation}
    \forall i,j, \qquad [T_j,T_j] =0.\label{equ:quantum-commute}
  \end{equation}
\end{defi}
In case the $T_j$ are not necessarily bounded, the commutation
property~\eqref{equ:quantum-commute} is taken in the strong sense: the
spectral measure (obtained via the spectral theorem as a
projector-valued measure) of $T_i$ and $T_j$ commute.

We need, of course, to define the terms in quotation marks, see
Section \ref{sec:corr-princ-semicl} below. Assuming this, we can
introduce the most important object for us, which is the quantum
analogue of the image of the moment map $F=(f_1,\dots,f_n)$: namely
the \emph{joint spectrum} of $(T_1,\dots, T_n)$.

Recall that the point spectrum of a (possibly unbounded) operator $T$
acting on a Hilbert space $\mathcal{H}$ is the set
\[
\sigma_{p}(T) = \{ \lambda\in\CM \quad | \quad \ker (T-\lambda I) \neq
\{0\}. \}
\]
An element of $\sigma_p$ is called an eigenvalue of $T$. More
generally, the spectrum $\sigma(T)$ of $T$ is by definition the set of
$\lambda\in\CM$ such that $(T-\lambda I)$ does not admit a bounded
inverse; thus it contains $\sigma_p$. The spectrum of $T$ is called
\emph{discrete} when it consists of isolated eigenvalues of finite
algebraic multiplicity.

\begin{defi}
  \label{defi:joint-spectrum}
  Let $T_1,\dots, T_n$ be pairwise commuting operators; the
  \emph{(discrete) joint spectrum} of $\left(T_1,\ldots,T_n\right)$ is the set of simultaneous
  eigenvalues of the operators $T_j$, $j=1,\ldots,n$, \emph{i.e.}
  \[
  \Sigma(T_1,\dots, T_n) := \{(\lambda_1,\dots,\lambda_n)\in\RM^n |
  \quad \bigcap_{j=1}^n \ker(T_j-\lambda_j I) \neq \{0\} \}.
  \]
\end{defi}
A more general, not necessarily discrete, notion of joint spectrum can
be obtained by considering the support of the joint spectral measure;
see~\cite{charbonnel}. In this text we only consider the discrete
case, which physically speaking corresponds to the existence of common
localized quantum states for $T_1,\dots,T_n$.

Our goal is to relate $F(M)$ and $\Sigma(T_1,\dots, T_n)$. Part of the
question is hidden in the meaning of `$\mathcal{H}$ quantizing $M$',
but it is not limited to this. The operators $T_j$ themselves must possess a good
semiclassical limit.

\subsection{The correspondence principle and the semiclassical limit}
\label{sec:corr-princ-semicl}

What is the relation between the symplectic manifold $(M,\omega)$ and
the Hilbert space $\mathcal{H}$? How can a quantum observable (an operator
$T$) correspond to a classical observable (a function
$f\in\Cinf(M)$)? The way to go from the classical setting to the
quantum setting is called \emph{quantization} by mathematicians; the
other direction, which often makes more sense from the point of view
of quantum mechanics, is called the \emph{semiclassical limit}. It is
out of the scope of this text to explain the various mathematical
answers to these questions. We instead try to convey the general
idea, and then propose a simple axiomatization that will be enough to
prove some non-trivial results concerning the relationship between the
joint spectrum and the image of the moment map.

The traditional, naive, approach to the correspondence bewteen
classical and quantum mechanics is to consider polynomials in
canonical (Darboux) coordinates. Assume that, in some open set of $M$,
$\omega=\sum_{j=1}^n d\xi_j\wedge dx_j$. Set
$\xi=(\xi_1,\dots,\xi_n)\in\RM^n$ (momentum coordinates) and
$x=(x_1,\dots, x_n)\in\RM^n$ (position coordinates). The corresponding
`quantizing' Hilbert space is the space $L^2(\RM^n)$ of
square-integrable functions in the $x$ variable. The Dirac rule
asserts on the one hand that the `quantum position operator'
associated with the variable $x_j$ is the operator of multiplication
by $x_j$:
\begin{equation}
  L^2(\RM^n) \ni u \mapsto x_j u \in L^2(\RM^n).
  \label{equ:quantize-x}
\end{equation}
Notice that this operator (that we simply denote by $x_j$) is
unbounded, \emph{i.e.} only defined on a dense subset of $L^2(\RM^n)$,
which can be taken to be the set $\Cinf_0$ of compacty supported
smooth functions, or the Schwartz space $\mathscr{S}(\RM^n)$.

On the other hand, the `quantum momentum operator' associated with
$\xi_j$ is the differentiation operator:
\begin{equation}
  L^2(\RM^n) \ni u \mapsto \frac{\h}{i}\deriv{u}{x_j} \in L^2(\RM^n),
  \label{equ:quantize-xi}
\end{equation}
which is equally unbounded. While the Dirac rules are very natural,
the difficulty arises as soon as one wants to quantize polynomials in
$(x,\xi)$. Indeed, there is a choice to make, as position and momentum
do not commute:
\begin{equation}
  \left[\frac{\h}{i}\deriv{}{x_j}\,,\, x_j\right] = \frac{\h}{i}.
  \label{equ:uncertainty}
\end{equation}

Hence both operators $\frac{\h}{i}\deriv{}{x_j} \circ x_j$ and
$x_j \circ \frac{\h}{i}\deriv{}{x_j}$ have the same classical limit
$x_j \xi_j$. This property is called the \emph{uncertainty principle}
because it makes precise the fact that the operation of measuring the
position and the momentum (or speed) of a quantum particle will yield
different results depending on the order with which they are
performed. This uncertainty vanishes in the \emph{semiclassical limit}
$\h\to 0$, and this simple observation goes a long way into building
semiclassical theories where the knowledge of classical mechanics will
give relevant information on the quantum spectrum, \emph{provided}
$\h$ is small enough.

In the mathematics literature, there are two widely used semiclassical
theories:

\begin{enumerate}[leftmargin=*]
\item Weyl quantization on $\RM^n$, or more generally
  pseudo-differential quantization on $M=T^*X$, where $X$ is a smooth
  manifold of dimension $n$
  (see~\cite{guillemin-sternberg-semiclassical}
  or~\cite{zworski-book-12});
\item Berezin-Toeplitz quantization on a prequantizable compact Kähler
  manifold, or the more general version on symplectic manifolds,
  see~\cite{charles-toeplitz} or~\cite{ma-marinescu}.
\end{enumerate}

In both cases, any smooth function on $M$ can be quantized to an
operator on $\mathcal{H}$. However, in the case of Berezin-Toeplitz
quantization, due to the compactness of $M$, the set of admissible
values of $\h$ is quantized ($\h=1/k$, with $k\in\NM^*$) and the
finite dimensional Hilbert space $\mathcal{H}_\h$ must depend on $\h$.

\subsection{The spherical pendulum and the spin-oscillator}

\subsubsection*{Spherical pendulum} 
Recall from Example~\ref{exm:sp} that the spherical pendulum is an
integrable system on $TS^2\simeq T^* S^2\subset T^*\RM^3$ given by the
commuting functions
\[
J(x,y) = x_1y_2 - x_2 y_1; \quad H(x,y) = \frac{1}{2}\lVert y \rVert^2
+ x_3.
\]
It is a good exercise to show that the integrable system $(J,H)$ is
\emph{almost-toric} (see Section~\ref{sec:almost-toric-sing});
see~\cite[Exercise 4]{lf-p-s-vn_exercises} and~\cite{san-panoramas}.

The functions $H$ and $J$ can be viewed as restrictions to $T^*S^2$ of
functions on $T^*\RM^3$, and as such, can be quantized using (for
instance) the Weyl quantization rule (see for instance~\cite[Section
9.6]{guillemin-sternberg-semiclassical}), which in this case is a
direct application of the correspondence
principle~\eqref{equ:quantize-x},~\eqref{equ:quantize-xi}, yielding
the following differential operators acting on functions of
$(x_1,x_2,x_3)$:
\[
\hat J = \frac{\h}{i}\left(x_1\deriv{}{x_2} - x_2\deriv{}{x_1}\right);
\quad \hat H = -\frac{\h^2}{2}\Delta + x_3,
\]
where
$\Delta = \frac{\partial^2}{\partial_{x_1}^2} +
\frac{\partial^2}{\partial_{x_2}^2} +
\frac{\partial^2}{\partial_{x_3}^2}$
is the Laplacian, and $x_3$ in the formula for $\hat H$ stands for
$x_3 \textup{Id}$, \emph{i.e.}  the operator of multiplication by
$x_3$. It is a special property of Weyl's quantization that, since $H$
(or $J$) is a quadratic function, and $\{J,H\}=0$, we get
\[
[\hat H, \hat J] = 0
\]
as a differential operator. Thus, this commutation property remains
when we restrict these operators to functions on $S^2\subset \RM^3$.
The corresponding restricted operators (which we continue to denote by
$\hat J$ and $\hat H$) form a quantum integrable system in the sense
of Definition~\ref{defi:quantum-ci}, where the Hilbert space is the
Lebesgue space $\mathcal{H}=L^2(S^2)$ (the sphere $S^2$ is equipped
with the Euclidean density inherited from $\RM^3$).

Since $S^2$ is a closed manifold, the spectral theory of $\hat H$ is
relatively easy. Note that the restriction of $\Delta$ to smooth
functions on $S^2$ is nothing by the Laplace-Beltrami operator on the
Riemannian $S^2$. It is a general fact that the Laplace-Beltrami
operator on a closed Riemannian manifold is essentially selfadjoint
(see for instance~\cite [Chapter 8]{taylor-II}), and its closure is a
selfadjoint operator with compact resolvent. Since $x_3$ is a bounded
operator, the same conclusion continues to hold for $\hat H$. Thus,
$\hat H$ has a discrete spectrum. For each eigenvalue
$\lambda\in\sigma(\hat H)$, the eigenspace $\ker(\hat H - \lambda)$ is
finite dimensional and, by the ellipticity of $\hat H$, consists of
smooth functions. Since $[\hat J, \hat H]=0$, one can then restrict
the angular momentum operator $\hat J$ to this eigenspace, which gives
a Hermitian matrix, and compute its eigenvalues $\mu_1,\dots, \mu_N$.
The set of all such $(\mu_j,\lambda)\in\RM^2$ constitute the joint
spectrum of $(\hat J, \hat H)$.  A numerical approximation of it (with
$\h=0.1$) is depicted in Figure~\ref{fig:quantum-pendulum}. A striking
fact is that this joint spectrum perfectly fits within the image of
the classical moment map $(J,H)$. That this is so, at least for
small values of $\h$, can be proven in a very general setting, see
Theorem~\ref{theo:quantum1}.

\begin{figure}[h]
  \centering
  \includegraphics[width=0.6\linewidth]{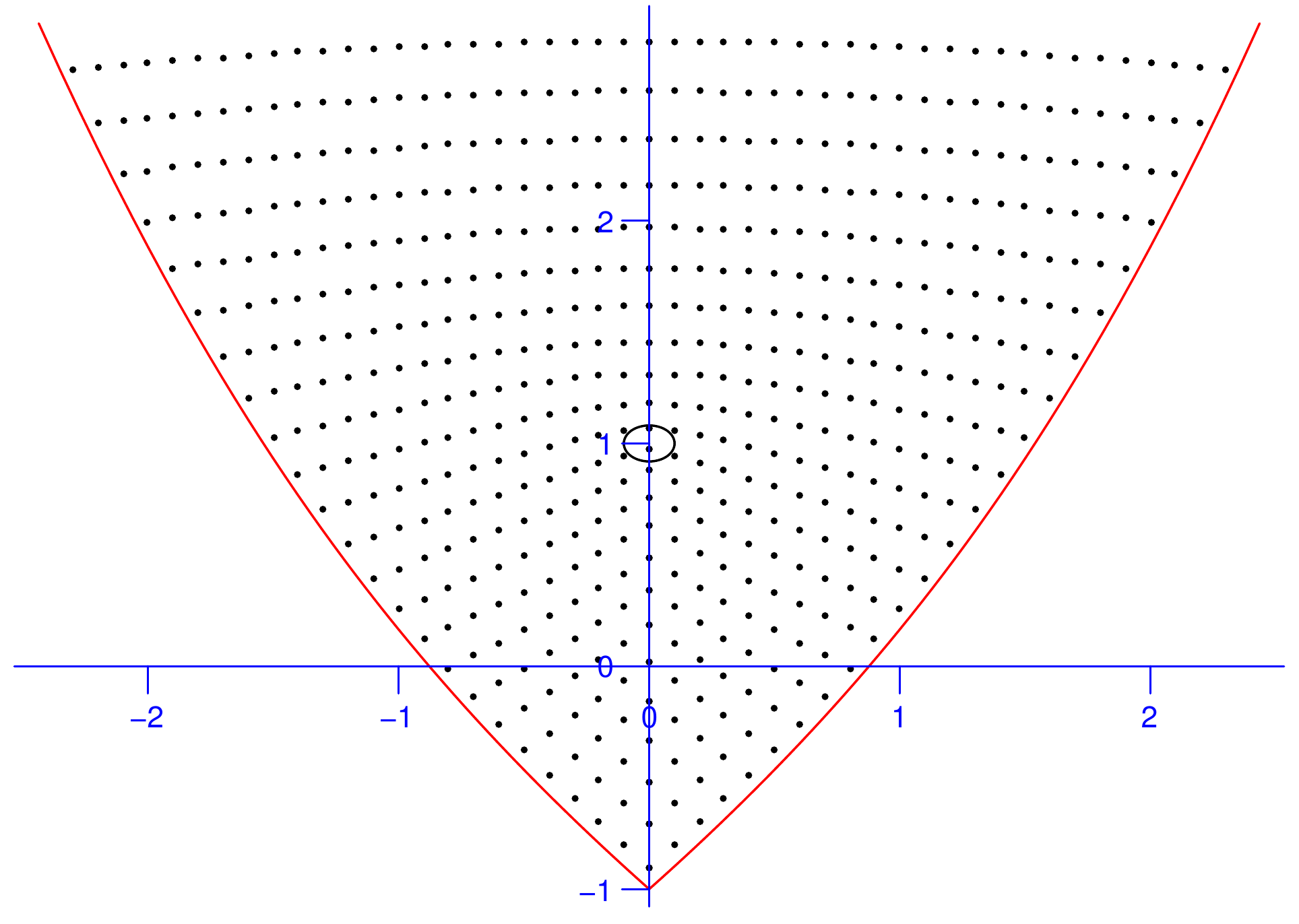}
  \caption{Joint spectrum of the quantum spherical pendulum (black
    dots) for $\h=0.1$. The circle indicates the position of the
    focus-focus critical value, and the red curve consists of all
    other critical values of the moment map $(J,H)$. Compare
    with~\autoref{fig:classical-pendulum}.}
  \label{fig:quantum-pendulum}
\end{figure}

Starting from the seminal articles \cite{cushman-duist,dui},
the spherical pendulum has been an inspiring toy-model for the
understanding of the many links between classical and quantum
integrable systems. The $J$-action is a global $S^1$ symmetry, however
the spherical pendulum is not strictly speaking a semi-toric system
(Definition~\ref{defn:semi-toric}), because the function $J$ is not
proper. A quantum manifestation of this non-properness can be seen on
Figure~\ref{fig:quantum-pendulum}: for each fixed eigenvalue $\mu$ of
$\hat J$, the corresponding eigenspace is infinite dimensional (and
hence there is an infinite number of joint eigenvalues which project
down onto $\mu$). Generalized semi-toric systems with non-proper $J$
can behave in many pathological ways, and their classification is
still open, see~\cite{pel_rat_san_affine}.

\subsubsection*{Spin-oscillator (or Jaynes-Cummings)}
It turns out that there is another simple integrable system whose
local properties are essentially similar to the spherical pendulum,
with the crucial difference that it is a genuine semi-toric system:
the so-called spin-oscillator coupling~\cite{pel_san_spin}, or, in the
physics terminology, the Jaynes-Cummings
system~\cite{jaynes_cummings}. It is very similar to the coupled
angular momenta of Example~\ref{exm:cou_ang_mom}, but it enjoys the
additional property, like the spherical pendulum does, to have a
non-compact phase space.

Again, we let $S^2$ be the unit sphere in $\R^3$ with coordinates
$(x_1,\,x_2,\,x_3)$, and let $\R^2$ be equipped with coordinates
$(u,\, v)$.  Let $M$ be the product manifold $S^2\times\R^2$ equipped
with the product symplectic structure $\omega_{S^2} \oplus \omega_{\mathrm{can}}$,
where $\omega_{S^2}$ is the standard symplectic form on the sphere and
$\omega_{\mathrm{can}}$ is the canonical symplectic form on $\R^2$.  Let
$J,\,H \colon M \to \R$ be the smooth maps defined by
$$J := (u^2+v^2)/2 +  z \,\, \textup{and}\,\, H :=
\frac{1}{2} \, (ux+vy).
 $$ 
 The \emph{coupled spin\--oscillator} is the $4$\--dimensional
 integrable system given by
 $(M,\, \omega_{S^2} \oplus \omega_{\mathrm{can}},\, (J,\,H))$.  As for the
 spherical pendulum, the spin-oscillator is an almost-toric system,
 and it is in fact a semi-toric system
 (Definition~\ref{defn:semi-toric}).  Its bifurcation diagram is very
 similar to that of the spherical pendulum: one isolated focus-focus
 critical value, and two branches of elliptic-regular values connected
 to each other at an elliptic-elliptic value.

 However, from the quantum viewpoint, the Jaynes-Cummings model is
 very different the quantum spherical pendulum: because its phase
 space is not a cotangent bundle, it cannot be quantized using
 (pseudo)differential operators. Moreover, it contains a compact,
 invariant symplectic manifold $S^2\times \{0\}$, and hence can be
 quantized only for a discrete set of values of $\h\in(0,1]$. There
 are two natural ways of obtaining commuting operators for this
 system. One is to view the sphere $S^2$ as a symplectic reduction of
 $\CM^2$ and use invariant differential operators on $\RM^2$,
 see~\cite{pel_san_spin}; another possibility is to perform
 Berezin-Toeplitz quantization of the $S^2$,
 see~\cite{san-pelayo-polterovich}. Figure~\ref{fig:joint-spectrum-spin}
 shows the joint spectrum of the Jaynes-Cummings model which, as was
 the case with the spherical pendulum, nicely fits within the image of
 the classical moment map.
 \begin{figure}[h]
   \centering
   \includegraphics[width=0.7\linewidth]{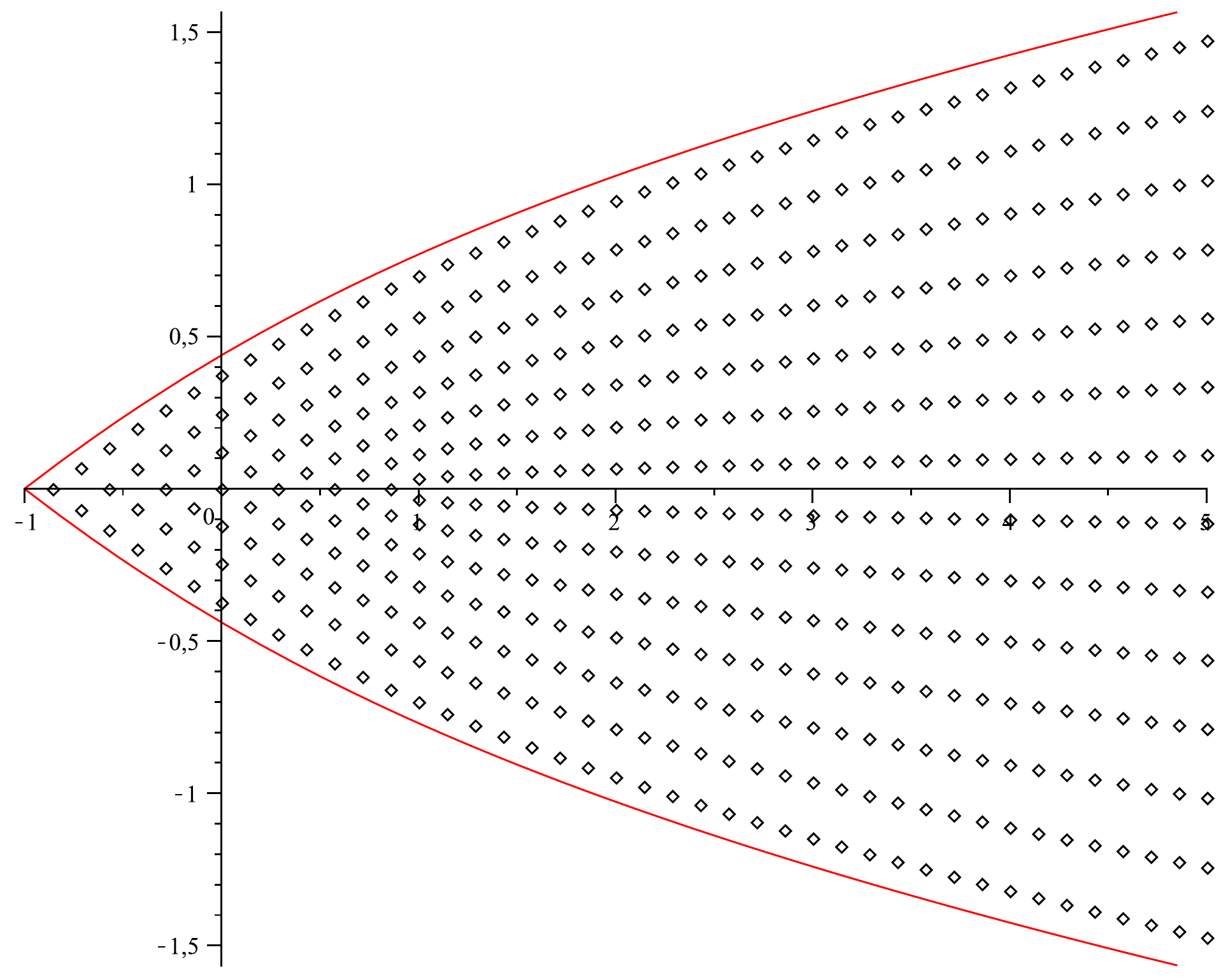}
   \caption{The joint spectrum of the Jaynes-Cummings model
     (cf. \cite[Figure 6]{pel_san_spin}).}
   \label{fig:joint-spectrum-spin}
 \end{figure}

\subsection{Semiclassical quantization}

Following~\cite{san-pelayo-polterovich}, we shall not give the
technical details of the Weyl or Berezin-Toeplitz quantization, but
instead introduce a minimal set of simple axioms, that are satisfied
by these quantizations, and sufficient to understand how to obtain the
semiclassical limit of a joint spectrum.

Let $M$ be a connected manifold (either closed or open).  Let
$\mathcal{A}_0$ be a subalgebra of $\Cinf(M;\RM)$ containing the
constants and all compactly supported functions. We fix a subset
$I \subset (0,1]$ that accumulates at $0$.  If $\mathcal{H}$ is a
complex Hilbert space, we denote by $\mathcal{L}(\mathcal{H})$ the set
of linear (possibly unbounded) selfadjoint operators on
$\mathcal{H}$. By a slight abuse of notation, we write $\norm{T}$ for
the \emph{operator norm} of an operator, and $\norm{f}$ for the
\emph{uniform norm} of a function on $M$.

\begin{defi} \label{defi:semiclassical} A \emph{semiclassical
    quantization} of $(M,\mathcal{A}_0)$ consists of a family of
  complex Hilbert spaces $\mathcal{H}_{\hbar},\; \hbar\in I$, and a
  family of $\RM$\--linear maps
  $\OP \colon \mathcal{A}_0 \to \mathcal{L}(\mathcal{H}_{\hbar})$
  satisfying the following properties, where $f$ and $g$ are in
  $\mathcal{A}_0$:
  \begin{enumerate}[leftmargin=*]
  \item \label{item:one}
    $\norm{\OP (1) - {\rm Id}} = \mathcal{O}(\hbar)$ {\bf
      (normalization)};
  \item \label{item:garding} for all $f \geq 0$ there exists a
    constant $C_f$ such that $\OP (f) \geq -C_f \hbar $ {\bf
      (quasi-positivity)}; (this means
    $\pscal{\OP(f)u}{u}\geq -C\h\norm{u}^2$ for all
    $u\in\mathcal{H}_\h$)
  \item \label{item:norm} let $f\in\mathcal{A}_0$ such that $f\neq 0$
    and has compact support, then
  $$\liminf_{\hbar\to 0}
  \norm{\OP(f)}>0$$ {\bf (non-degeneracy)};
\item \label{item:symbolic} if $g$ has compact support, then for all
  $f$, $\OP(f) \circ \OP(g)$ is bounded, and we have
  \[ \norm{\OP(f) \circ \OP(g) - \OP(fg)} = \mathcal{O}(\hbar),
  \] {\bf (product formula)}.
\end{enumerate}
A \emph{quantizable} manifold is a manifold for which there exists a
semiclassical quantization.
\end{defi}

\medskip
\noindent We shall often use the following consequence of these
axioms: for a bounded function $f$, the operator $\OP(f)$ is
bounded. Indeed, if $c_1 \leq f \leq c_2$ for some $c_1,c_2 \in \RM$,
normalization and quasi-positivity yield
\begin{equation}\label{eq-gard-cor}
  c_1\cdot\textup{Id} - \mathcal{O}(\hbar) \leq \OP(f) \leq c_2\cdot \textup{Id} +  \mathcal{O}(\hbar).
\end{equation}
Since our operators are selfadjoint, this is enough to obtain
\begin{equation} \label{eq-norm-bounds-vsp} \norm{\OP(f)} \leq
  \norm{f} + \cO(\hbar),
\end{equation}
see Lemma~\ref{lemm:rayleigh} below:

\begin{lemm}
  \label{lemm:rayleigh}
  Let $T$ be a (not necessarily bounded) selfadjoint operator on a
  Hilbert space with a dense domain and with spectrum $\sigma(T)$, we
  have
  \begin{equation}
    \sup \sigma(T) = \sup_{u\neq 0} \frac{\pscal{Tu}{u}}{\pscal{u}{u}}\;.
    \label{equ:rayleigh}
  \end{equation}
  In particular,
  \begin{equation}
    \sup \{|s|: s \in \sigma(T)\} = \sup_{u\neq 0} \frac{|\pscal{Tu}{u}|}{\pscal{u}{u}} = \norm{T}\leq \infty ;.
    \label{equ:rayleigh-1}
  \end{equation}
\end{lemm}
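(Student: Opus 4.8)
The plan is to reduce everything to the spectral theorem for a (possibly unbounded) selfadjoint operator $T$ with dense domain $D(T)$. Write $T = \int_{\sigma(T)} \lambda\, dE_\lambda$, where $E$ is the associated projector-valued measure. For $u \in D(T)$ with $\|u\|=1$, set $\mu_u(\cdot) := \pscal{E(\cdot)u}{u}$, a Borel probability measure supported on $\sigma(T)$, and observe that $\pscal{Tu}{u} = \int_{\sigma(T)} \lambda\, d\mu_u(\lambda)$. Since $\mu_u$ is a probability measure supported in $\sigma(T)$, this immediately gives $\pscal{Tu}{u} \le \sup\sigma(T)$, hence the inequality $\sup_{u\neq 0}\pscal{Tu}{u}/\pscal{u}{u} \le \sup\sigma(T)$ (the case $\sup\sigma(T)=+\infty$ being vacuous, and $D(T)$ being dense so that the supremum over $D(T)$ equals the supremum over all $u\neq 0$ after noting $\pscal{Tu}{u}$ is only defined on $D(T)$).

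For the reverse inequality, first I would handle the finite case: suppose $s_0 := \sup\sigma(T) < \infty$. For any $\epsilon > 0$, the spectral subspace $E((s_0-\epsilon, s_0])\mathcal{H}$ is nonzero (otherwise $\sigma(T) \subset (-\infty, s_0-\epsilon]$, contradicting the definition of $s_0$); picking a unit vector $u$ in this subspace, we have $u \in D(T)$ (because $\lambda \mapsto \lambda$ is bounded on $(s_0-\epsilon,s_0]$) and $\pscal{Tu}{u} = \int_{(s_0-\epsilon,s_0]}\lambda\, d\mu_u(\lambda) \ge s_0 - \epsilon$. Letting $\epsilon \to 0$ gives the matching lower bound, establishing~\eqref{equ:rayleigh}. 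If $\sup\sigma(T) = +\infty$, the same argument with spectral subspaces $E((n,\infty))\mathcal{H}$, which are nonzero for all $n$, produces unit vectors $u_n \in D(T)$ with $\pscal{Tu_n}{u_n} \ge n$, so the supremum on the right of~\eqref{equ:rayleigh} is also $+\infty$.

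For~\eqref{equ:rayleigh-1}: applying~\eqref{equ:rayleigh} to both $T$ and $-T$ (noting $\sigma(-T) = -\sigma(T)$) gives $\sup_{u\neq 0}|\pscal{Tu}{u}|/\pscal{u}{u} = \max(\sup\sigma(T), -\inf\sigma(T)) = \sup\{|s| : s\in\sigma(T)\}$. The remaining identity $\sup\{|s|:s\in\sigma(T)\} = \|T\|$ is, for bounded selfadjoint $T$, the classical formula for the spectral radius equalling the operator norm; in the unbounded case it is the statement that $\|T\| = +\infty$ exactly when $\sigma(T)$ is unbounded, which follows from the same spectral calculus ($T$ bounded $\iff \sigma(T)$ bounded, with equal sup). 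I expect no serious obstacle here; the only point requiring a little care is the bookkeeping around domains and the $+\infty$ cases, ensuring that "$\sup_{u\neq 0}$" is correctly interpreted as a supremum over unit vectors in $D(T)$ (extended by $+\infty$ when $u\notin D(T)$, consistent with the convention that $T$ is not defined there), so that the statement reads cleanly whether or not $T$ is bounded.
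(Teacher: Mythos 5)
Your argument is correct, and it reaches the same two inequalities as the paper but by a slightly different mechanism. For the upper bound the paper notes that $\sigma(T)\subset(-\infty,c]$ gives the operator inequality $T\le c\cdot\mathrm{Id}$ via the spectral theorem, which is the same content as your observation that $\mu_u$ is a probability measure supported in $\sigma(T)$. For the lower bound the paper invokes the Weyl criterion: for each $\lambda\in\sigma(T)$ there is a normalized sequence $(u_n)$ with $\norm{(T-\lambda\,\mathrm{Id})u_n}\to 0$, whence $\pscal{Tu_n}{u_n}\to\lambda$ by Cauchy--Schwarz; you instead pick unit vectors in the nonzero spectral subspaces $E((s_0-\epsilon,s_0])\mathcal{H}$. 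The two are essentially interchangeable (vectors in such spectral subspaces are the standard way to build Weyl sequences), but your version handles domain questions more transparently, since a unit vector in $E((s_0-\epsilon,s_0])\mathcal{H}$ automatically lies in $D(T)$, and your write-up is more careful about the $\pm\infty$ bookkeeping, which the paper leaves implicit. One small point to patch: in the case $\sup\sigma(T)=+\infty$, an arbitrary unit vector of $E((n,\infty))\mathcal{H}$ need not belong to $D(T)$, because that spectral subspace corresponds to an unbounded part of the spectrum; you should instead take $E((n,M_n))\neq 0$ for a suitable finite $M_n$ (possible since $E((n,M))\to E((n,\infty))$ strongly as $M\to\infty$), and then the chosen unit vector is in $D(T)$ and satisfies $\pscal{Tu_n}{u_n}\ge n$ as you claim. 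With that one-line repair the proof is complete and fully consistent with the lemma as stated, including the identity with $\norm{T}$ in the unbounded case.
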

\begin{proof}
  If $\lambda\in\sigma(T)$, then by the Weyl criterion, there exists a
  sequence $(u_n)$ with $\norm{u_n}=1$ such that
  \[
  \lim_{n\to\infty} \norm{(T-\lambda {\rm Id})u_n}=0.
  \]
  Therefore, $\lim_{n\to\infty}\pscal{Tu_n}{u_n} = \lambda$, which
  implies that
  \[
  \sup_{\norm{u}=1} \pscal{Tu}{u} \geq \sup \sigma(T).
  \]
  Conversely, if $\sigma(T)$ lies in $(-\infty,c]$, then by the
  spectral theorem $T \leq c \cdot \textup{Id}$ which yields
  \[
  \sup_{\norm{u}=1} \pscal{Tu}{u} \leq \sup \sigma(T).
  \]
\end{proof}

\subsection{Semiclassical operators}

Consider the algebra $\mathcal{A}_I$ whose elements are collections
$\vec{f} =(f_\hbar)_{\hbar \in I}$, $f_\hbar \in \mathcal{A}_0$ with
the following property: for each $\f$ there exists
$f_0 \in \mathcal{A}_0$ so that
\begin{equation}\label{eq-vector}
  f_\hbar = f_0+ \hbar f_{1,\hbar}\;,
\end{equation}
where the sequence $f_{1,\hbar}$ is uniformly bounded in $\hbar$ and
supported in the same compact set $K=K(\f) \subset M$.  The function
$f_0$ is called the {\it principal part} of $\f$. If $f_0$ is
compactly supported as well, we say that $\f$ is \emph{compactly
  supported}.

\begin{defi}
  We define a map
  $$\text{Op}: \mathcal{A}_I \to \prod_{\hbar \in I}
  \mathcal{L}(\mathcal{H}_\hbar),\;\; \f=(f_\hbar) \mapsto
  (\OP(f_\hbar))\;.$$
  {\it A semiclassical operator} is an element in the image of
  $\text{Op}$. Given $\f \in \mathcal{A}_I$, the function
  $f_0 \in \mathcal{A}_0$ defined by \eqref{eq-vector} is called {\it
    the principal symbol} of $\text{Op}(\f)$.
\end{defi}

By \eqref{eq-norm-bounds-vsp}
\begin{equation}\label{eq-cO}
  \OP(f_\hbar) = \OP(f_0)+ \cO(\hbar)\;.
\end{equation}
This together with the product formula readily yields that for every
$\vec{g}$ with compact support and every $\f$,
\begin{equation}
  \label{eq-symbolic-prime}
  \norm{\OP(f_\hbar) \circ \OP(g_\hbar) -
    \OP(f_\hbar g_\hbar)} = \mathcal{O}(\hbar)\;.
\end{equation}

Now we are ready to show that {\it the principal symbol of a
  semiclassical operator is unique}.  Indeed, if $\text{Op}(\f)=0$,
then for any compactly supported function $\chi$, we get by
\eqref{eq-symbolic-prime}
\[
\OP(f_\hbar\chi) = \OP(f_\hbar)\OP(\chi) + \mathcal{O}(\hbar) =
\mathcal{O}(\hbar),
\]
and then by \eqref{eq-cO}, $\OP(f_0\chi) = \mathcal{O}(\hbar)$. By the
normalization axiom, we conclude that $f_0\chi=0$. Since $\chi$ is
arbitrary, $f_0=0$.

\medskip

\begin{rema}
  It is interesting to notice that this abstract semiclassical
  quantization does not use the uncertainty
  principle~\eqref{equ:uncertainty}; and in fact, we don't even
  require $M$ to be symplectic! This, of course, is necessary for
  obtaining finer results, see~\cite{san-panoramas}.
\end{rema}

\subsection{Convergence of the joint spectrum for semiclassical
  integrable systems}

Let $M$ be a quantizable manifold in the sense of
Definition~\ref{defi:semiclassical}. Following Definition~\ref{defi:quantum-ci}, we can say that a semiclassical
integrable system on $M$ is a set of independent commuting
semiclassical operators $(T_1(\h),\dots,T_n(\h))$. Here, `commuting'
has to be understood for any fixed value of $\h$. Let $f_1,\dots,f_n$
be the principal symbols of $T_1(\h),\dots,T_n(\h)$,
respectively. Then, by definition, the term `independent' means that
the differentials $df_1,\dots,df_n$ must be almost everywhere linearly
independent, as in Definition~\ref{defi:CI}.

Because we have not taken into account, in this weak version of
quantization, the Poisson bracket and the uncertainty principle, we
cannot relate the commutation $[T_i(\h),T_j(\h)]=0$ to a classical
property. In fact, in the very general theorem below, we don't even
need the independence of $T_1(\h),\dots, T_n(\h)$. In any case, we may
define the joint spectrum $\Sigma_\h (T_1,\dots,T_d)$, see
Definition~\ref{defi:joint-spectrum}.

\begin{theo}[\cite{san-pelayo-polterovich}]
  \label{theo:quantum1} Let $M$ be a quantizable manifold.  Let
  $d\geq 1$ and let $(T_1,\dots T_d)$ be pairwise commuting
  semiclassical operators on $M$.  Let
  $F=(f_1,\dots,f_n) : M\to\RM^n$, where $f_j$ is the principal symbol
  of $T_j$. Let $J$ be a subset of $I$ that accumulates at $0$.  Then
  from the family
  \[
  \Big\{ \Sigma_\h (T_1,\dots,T_d) \Big\}_{\hbar \in J}
  \]
  one can recover the closed convex hull of $F(M)$.
\end{theo}

The theorem is in fact constructive, in the sense that the convex hull
of $\Sigma_\h (T_1,\dots,T_d)$ converges locally, in the Hausdorff
sense, to $F(M)$; more precisely, we have:
\begin{theo}[\cite{san-pelayo-polterovich}]
  Under the hypothesis of Theorem~\ref{theo:quantum1}, if the
  operators $T_j$ are uniformly bounded as $\h\to 0$, then the closed
  convex hull of $\Sigma_\h (T_1,\dots,T_d) $ converges in the
  Hausdorff metric, as $\h\to 0$, to the closed convex hull of $F(M)$.
\end{theo}

\begin{proof}
  We restrict here to the one-dimensional case ($n=1$); the general
  case can be recovered by using linear combinations of the form
  $T_\xi:=\sum \xi_j T_j$ (see~\cite{san-pelayo-polterovich}). If
  $n=1$, the statement is quite easy to write down:
  \begin{quote}
    «Prove that
    \[ [\inf \sigma(T), \sup \sigma(T)] \to [\inf f, \sup f] \quad \text{ as }
    {\h\to 0},
    \]
    where $T$ is a bounded semiclassical operator, and $f$ its
    principal symbol.»
  \end{quote}
Let $\epsilon>0$ be fixed, independent on $\h$. The easy part is to prove that, when $\h$ is small enough,
\[
\sup \sigma(T) \leq \sup f + \epsilon,
\]
as this is a direct consequence
of~\eqref{eq-norm-bounds-vsp},~\eqref{equ:rayleigh},
and~\eqref{equ:rayleigh-1}. Now let us show that, conversely,
\[
\sup \sigma(T) \geq \sup f - 2\epsilon.
\]
The strategy is to construct a good `test function' $u$ that is close
to realizing the $\sup$ in~\eqref{equ:rayleigh}. Let
$F_\epsilon:=\sup f - \epsilon$; since $f$ is continuous, there exists
a connected open set $B\subset M$ where $f\geq F_\epsilon$. Let
$\chi\geq 0$ with $\chi\in\Cinf_0(B)$, \emph{i.e.} $\chi$ is smooth
function on $M$ with compact support $K\subset B$. Thus
$(f-F_\epsilon)\chi\geq 0$ on $M$ and $(f-F_\epsilon)\chi=0$ outside
of $K$ (see \autoref{fig:quasimode}). Let $\tilde\chi$ be
another cut-off function, with $\tilde\chi\geq 0$, $\tilde\chi = 1$ on
a open set $\tilde B$ whose closure is contained in $B$, and
$\tilde\chi\in\Cinf_0(K)$, so that $\tilde \chi \chi = \tilde \chi$,
and hence, by the product formula,
\begin{equation}
  \OP\chi\circ\OP\tilde\chi = \OP \chi\tilde\chi + \O(\h) = \OP
  \tilde\chi + \O(\h).
  \label{equ:chi}
\end{equation}
\begin{figure}[h]
  \centering
  \includegraphics[width=0.5\linewidth]{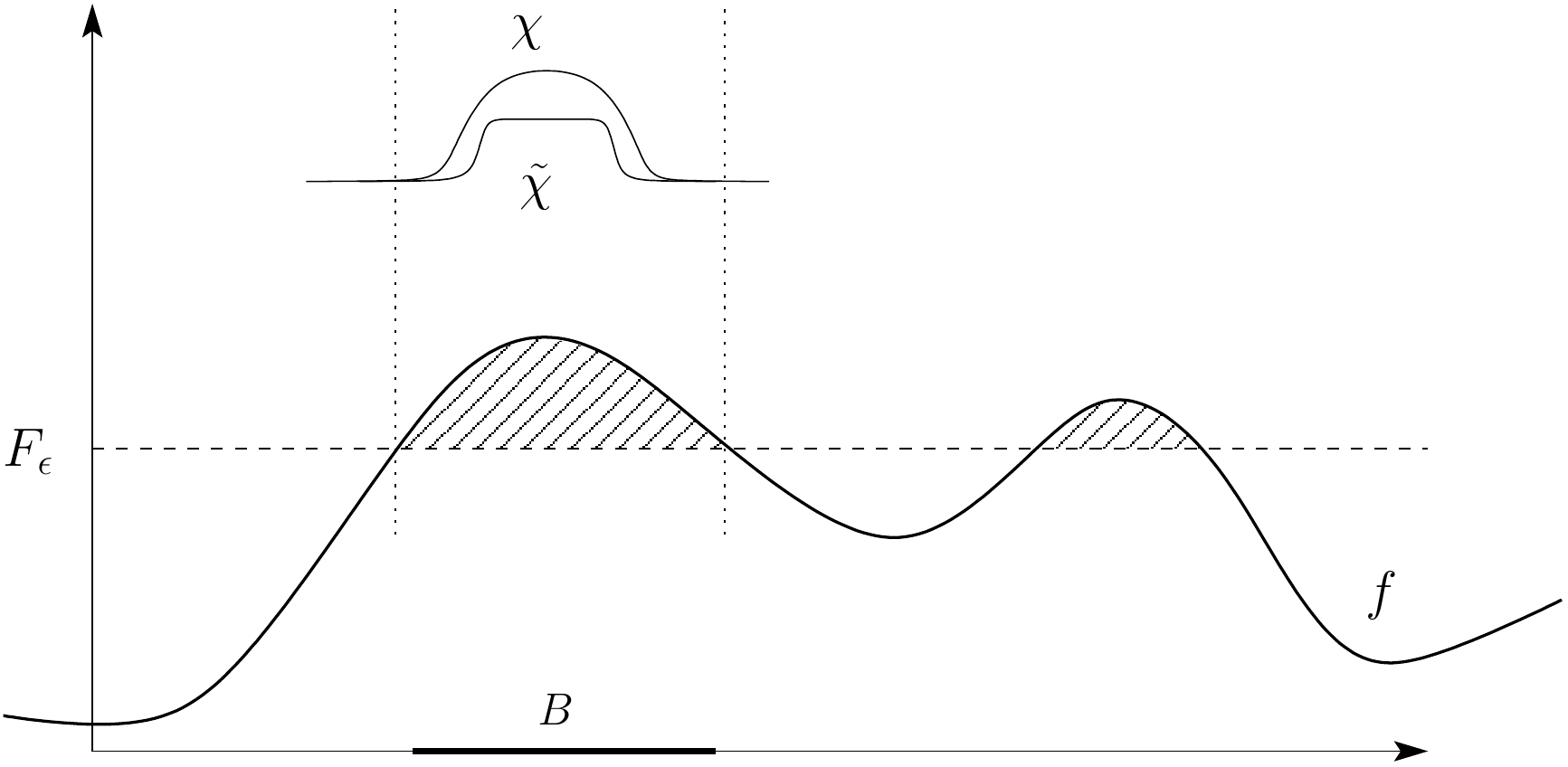}
  \caption{Construction of the `quasimode' $w_\h$}
  \label{fig:quasimode}
\end{figure}

The non-degeneracy axiom,
together with Lemma~\ref{lemm:rayleigh}, implies that one can find a
vector $v_\h\in\mathcal{H}_\h$ such that
\[
\norm{(\OP\tilde\chi)v_\h} > c/2, \qquad c=\norm{\OP\tilde\chi},
\]
uniformly in $\h \leq 1$. Let 
\[
u_\h := \frac{(\OP\tilde\chi) v_\h}{\norm{(\OP\tilde\chi) v_\h}};
\]
then, using~\eqref{equ:chi}, we obtain
\[
(\OP\chi) u_\h = u_\h + \O(\h).
\]
Finally, let $w_\h:=(\OP\chi) u_\h$; we have
\[
\OP(f-F_\epsilon) w_\h = \OP((f-F_\epsilon)\chi) u_\h + \O(\h).
\]
Hence, the quasi-positivity axiom implies
\[
\pscal{\OP(f-F_\epsilon) w_\h}{u_\h} \geq -C_\epsilon \h \norm{u_\h},
\]
for some constant $C_\epsilon$ depending on $\epsilon$.
Since $\norm{u_\h}=1$ and $\norm{w_\h}=1+\O(\h)$, we get, for a possibly different constant $\tilde C_\epsilon$, 
\[
\pscal{\OP(f-F_\epsilon) w_\h}{w_\h} \geq -\tilde C_\epsilon \h
\norm{u_\h},
\]
which implies, due to Lemma~\ref{lemm:rayleigh}, that $\sup\sigma(T)\geq F_\epsilon - \tilde C_\epsilon\h$. Thus, when $\h$ is small enough, we can reach
\[
\sup\sigma(T)\geq F_\epsilon - \epsilon,
\]
as required.
\end{proof}

\subsection{The inverse problem in the toric case}

Theorem~\ref{theo:quantum1} has a rather spectacular consequence in
the case of toric systems.  Recall that an integrable system is called
\emph{toric} if the Hamiltonian flow of each function $f_1,\dots,f_n$
is $2\pi$-periodic, and the corresponding $\T^n$-action is effective,
see Example~\ref{exm:symp_toric}. Accordingly, a set of commuting
semiclassical operators $T_1,\dots,T_n$ will, by definition,
constitute a \emph{quantum toric system} if the principal symbols
$f_1,\dots,f_n$ form a toric system.

Such systems have been studied in details in~\cite{san-charles-pelayo}
in the framework of Berezin-Toeplitz quantization. It was proven that
the joint spectrum of such a system is a regular deformation (in the
semiclassical parameter $\h$) of the set of $\h$-integral points of
the image of the moment map $\mu:=(f_1,\dots,f_n)$; precisely:
\[
\Sigma_\h(T_1,\dots,T_n) = g_\h(\mu(M)\cap (v+\h\ZM^n)) +
\O(\h^\infty),
\]
where $v$ is any vertex of the polytope $\mu(M)$, and $g_\h$ is a
deformation of the identity:
\[
g_h = \textup{Id} + \h g_1 + \h^2 g_2 + \cdots,
\]
in the sense of an asymptotic expansion in the $\Cinf(\RM^n)$
topology.

As a consequence, the inverse problem was solved; in fact, this also
follows directly from Theorem~\ref{theo:quantum1} for a general
semiclassical quantization:

\begin{theo}[\cite{san-charles-pelayo,san-pelayo-polterovich}]
\label{theo:q-toric}
  In the class of quantum toric systems (on a compact symplectic
  manifold $M$), the asymptotics of the joint spectrum completely
  determines the symplectic manifold $M$ and the toric moment map
  $\mu$.
\end{theo}
\begin{proof}
  The proof is a simple application of Theorem~\ref{theo:quantum1}, in
  view of the Delzant classification~\cite{delz}. Indeed, the
  asymptotics of the joint spectrum determine the closed convex hull
  of $\mu(M)$. But we know that in the class of toric systems, the
  image $\mu(M)$ is closed and convex. Thus, we may recover $\mu(M)$
  from the joint spectrum. By the Delzant result, $\mu(M)$ in turn
  completely determines $(M,\mu)$.
\end{proof}

\subsection{The general inverse problem}
In view of Theorem~\ref{theo:q-toric} above, it is tempting to state a
general conjecture (which is a slight refinement of the statements
in~\cite[Conjecture 9.1]{san-pelayo-bams11}, \cite[Conjecture
3.4]{san-alvaro-first-steps}), as follows:
\begin{conj}
  Let $\mathscr{N\!D}_n$ be the class of completely integrable systems
  on a $2n$ symplectic manifold with non-degenerate singularities (see
  Definition~\ref{defn:non-deg}). Let $Q(\mathscr{N\!D}_n)$ be the
  class of $n$-uples $T=(T_1,\dots,T_n)$ of commuting operators whose
  principal symbols form an element in $\mathscr{N\!D}_n$. Then the
  asymptotics of the joint spectrum of an element in
  $T\in Q(\mathscr{N\!D}_n)$ completely determines the symplectic
  manifold and the principal symbols of $T$.
\end{conj}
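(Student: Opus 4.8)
The plan is to bootstrap from the two inverse results already available --- the toric case (Theorem~\ref{theo:q-toric}) and, in dimension four, the semi-toric classification (Theorem~\ref{thm:classification}) --- by extracting progressively finer geometric data from the asymptotics of the joint spectrum and then feeding that data into the relevant symplectic classification. The overall architecture is: (i) recover $F(M)$ and its bifurcation diagram; (ii) recover the Williamson type of every singular fibre; (iii) recover the full symplectic invariants of the foliation; (iv) apply a classification theorem.

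First I would strengthen Theorem~\ref{theo:quantum1}: instead of only recovering the closed convex hull of $F(M)$, one wants the moment map image $F(M)$ itself together with its bifurcation diagram. The heuristic is that a joint eigenvalue of $(T_1,\dots,T_n)$ must, to leading order, lie near $F(M)$, and the local counting density of joint eigenvalues encodes the symplectic volume of the reduced fibres --- a Duistermaat--Heckman-type statement (cf.~\cite{san-charles-pelayo} in the toric case, and the proof of Corollary~\ref{cor:finite_ff}). Over regular values the joint spectrum should be, up to $\mathcal{O}(\h^\infty)$, a deformed lattice governed by Bohr--Sommerfeld rules, and the boundary of $F(M)$ as well as the singular values should be detectable as the loci where this lattice structure degenerates. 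Making this rigorous requires a microlocal normal form near each singular fibre, which for non-degenerate singularities without hyperbolic blocks is exactly the quantum analogue of Eliasson's Theorem~\ref{thm:Eliasson}, and, near purely elliptic orbits, of Theorem~\ref{thm:toric}.

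Second, having located the singular values, I would read off the Williamson type (Theorem~\ref{thm:williamson}, Definition~\ref{defn:non-deg}) of each singular fibre from the local shape of the spectrum: elliptic blocks produce a regular lattice accumulating at a face or corner of the image, focus-focus fibres produce the characteristic monodromy defect of the integral affine structure (Corollary~\ref{cor:monodromy}), and hyperbolic blocks produce logarithmic eigenvalue clustering. Next, the semiclassical action integrals, obtained from the spacing of joint eigenvalues, recover the integral affine structure on the locally toric leaf space, hence the cartographic invariant (Definition~\ref{defn:carto_invt}) in the semi-toric case; and the subprincipal corrections to the Bohr--Sommerfeld rule near a focus-focus value recover the Taylor series invariant (Definition~\ref{defn:taylor_invariant_st}), exactly as carried out for semi-toric systems in~\cite{pel_san_inv}. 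Finally one invokes the symplectic classification --- Theorem~\ref{thm:classification} in the semi-toric case, and its conjectural analogues more generally --- to conclude that the recovered invariants determine $(M,\omega)$ and the principal symbols up to isomorphism, which pins down the moment map by the same rigidity used in Theorem~\ref{theo:q-toric}.

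The hard part will be the last step: there is at present no symplectic classification of general completely integrable systems with non-degenerate singularities --- not even in dimension four once hyperbolic blocks are allowed, where flat functions obstruct strong equivalence (cf.~Remark~\ref{rmk:hyperbolic_Eliasson}), and certainly not in dimension $2n$ with $n\geq 3$, where no complete list of semi-local models is known. A second, genuine difficulty is that the conjecture as stated imposes no properness of $F$, so even recovering $F(M)$ from its convex hull is open in general (the spherical pendulum already shows the pathologies that a non-proper component can create); one presumably has to add a properness or confinement hypothesis. Thus the realistic first target is the class of semi-toric, or almost-toric, systems, where steps (i)--(iv) above can be made rigorous by combining~\cite{pel_san_inv,pel_san_acta} with~\cite{san-pelayo-polterovich}; the general conjecture should be viewed as a program whose feasibility is gated by progress on the purely classical symplectic classification problem.
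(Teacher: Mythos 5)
There is nothing in the paper to compare your argument against: the statement is a \emph{conjecture}, and the paper says explicitly that at the time of writing it is open. The only things proved are the partial results you also invoke --- the generic one-degree-of-freedom case of \cite{san-inverse}, the toric case (Theorem~\ref{theo:q-toric}), and the partial semi-toric recovery (number of focus-focus values, Taylor series invariant, volume and polygon invariants) of \cite{san-lefloch-pelayo:jc}. Your text is therefore not a proof but a research program, as you yourself concede in the final paragraph, and it should be presented as such.

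The gaps you flag are precisely the ones that keep the conjecture open, and they are fatal if the text is read as a proof. Step (iv) needs a symplectic classification of integrable systems with non-degenerate singularities by computable invariants; this exists only in the toric and semi-toric settings, and nothing comparable is known once hyperbolic blocks appear (flat-function obstructions, Remark~\ref{rmk:hyperbolic_Eliasson}; non-connected fibres) or in more than two degrees of freedom, where no complete list of semi-local models is available. Steps (i)--(iii) presuppose quantum normal forms and Bohr--Sommerfeld analysis near every type of non-degenerate fibre; these are known for pseudo-differential and Berezin--Toeplitz quantizations in the elliptic and focus-focus cases, but not for hyperbolic singularities, and certainly not within the weak axiomatic quantization of Definition~\ref{defi:semiclassical}, which is exactly why Theorem~\ref{theo:quantum1} yields only the convex hull of $F(M)$. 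Finally, even where a classification is available, the conjecture demands more than recovering the isomorphism class: it asks to determine the symplectic manifold and the principal symbols themselves, i.e.\ to show that the map $g$ of Definition~\ref{defn:equiv_st} can be taken to be the identity, and the paper notes that this last step is open even for semi-toric systems. So your proposal is a reasonable summary of the state of the art and of the strategy everyone expects to follow, but it does not, and at present cannot, establish the statement.
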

At the time of writing, this conjecture is open. However, preliminary
results can shed some light. In one degree of freedom, the conjecture
was shown to hold in a generic (and large) subset of
$\mathscr{N\!D}_1$~\cite{san-inverse}. Genericity was necessary
because of possible symmetries that generate eigenvalues with
multiplicities. Hence, for a general statement, it should be clear
that `joint spectrum' is understood as a spectrum with multiplicities.

Theorem \ref{theo:q-toric} states that the conjecture holds when
$\mathscr{N\!D_n}$ is replaced by the set of toric systems. Recently,
several papers have been trying to attack the semi-toric case (which
was already formulated as a conjecture in~\cite{san-pelayo-bams11},
cf. \cite{san-alvaro-first-steps,san-lefloch-pelayo:jc} and references
therein). In~\cite{san-lefloch-pelayo:jc}, the authors prove the
following result:
\begin{theo}[\cite{san-lefloch-pelayo:jc}]
  For quantum semi-toric systems which are either semiclassical
  pseudo-differential operators, or semiclassical Berezin-Toeplitz
  operators, the joint spectrum (modulo $\O(\h^2)$) determines the
  following invariants:
\begin{enumerate}[leftmargin=*]
  \item[{\rm (1)}] the number $m_{f}$ of focus-focus values,
  \item[{\rm (2)}] the Taylor series invariant (see
    Definition~\ref{defn:taylor_invariant_st}),
  \item[{\rm (3)}] the volume invariant associated with each
    focus-focus value,
  \item[{\rm (4)}] the polygonal invariant of the system.
  \end{enumerate}
\end{theo}
The last two ingredients are part of what we describe in this work as
the \emph{cartographic invariant}, see
Section~\ref{sec:cart-invar-case}.  In order to obtain a full answer
in the semi-toric case, it remains on the one hand to be able to
detect the full cartographic invariant (which means detecting the
twisting cocycle), and on the other hand to recover exactly the
principal symbols, not only up to isomorphism (in other words, if two
systems have the same joint spectrum, can we prove that the map $g$ in
Definition~\ref{defn:equiv_st} is the identity?).

In comparison to semi-toric systems, more general integrable systems
in $\mathscr{N\!D}_n$ become quickly much more delicate to analyze,
due to the presence of \emph{hyperbolic singularities}, which allow
for non-connected fibers of the moment map. It is currently not
known how to obtain a tractable classification of integrable systems
with hyperbolic singularities; however a solid topological foundation
was laid out in the book~\cite{bolsinov-fomenko-book}. A reasonable
approach would be to first consider the case of hyperbolic
singularities in the presence of a global $S^1$-action.

Finally, we conclude with two open questions closely related to the
conjecture.
\begin{enumerate}[leftmargin=*]
\item Can one detect from the joint spectrum of a system in
  $Q(\mathscr{N\!D}_n)$ whether the system is semi-toric?
\item Can one tell from the joint spectrum of a general quantum
  integrable system whether it belongs to $Q(\mathscr{N\!D}_n)$?
\end{enumerate}

\bibliographystyle{abbrv}%
\bibliography{bibli}
\end{document}